\documentclass[reqno,11pt]{amsart}
\usepackage{amsfonts}
\usepackage{a4wide}
\usepackage{color}
\usepackage{mathrsfs}
\usepackage{mathtools}
\usepackage{amsmath}
\usepackage{amssymb}
\usepackage{bbm}
\usepackage{bm}
\usepackage{esint}
\usepackage{nicefrac}
\usepackage{enumitem}
\usepackage{comment}
\newenvironment{conditions}[1]
  {\begin{enumerate}[
      label=(#1\arabic*),
      ref=(#1\arabic*),
      leftmargin=*,
      labelsep=0.6em,
      itemsep=0.4em
   ]}
  {\end{enumerate}}
\numberwithin{equation}{section}
\newcommand{\fm}{\mathfrak{m}}
\newcommand{\Ext}{\operatorname{Ext}}
\newcommand{\Tr}{\operatorname{Tr}}
\newcommand{\loc}{\operatorname{loc}}
\usepackage[colorlinks,citecolor=green,linkcolor=red]{hyperref}

\usepackage[utf8]{inputenc}
\input xy
\xyoption{all}
\newtheorem{theorem}{Theorem}[section]
\newtheorem{Ca}[theorem]{Corollary}
\newtheorem{Th}[theorem]{Theorem}
\newtheorem{Lm}[theorem]{Lemma}
\newtheorem{Prop}[theorem]{Proposition}
\newtheorem{Def}[theorem]{Definition}

\newtheorem{Remark}[theorem]{Remark}
\newtheorem{Problem}[theorem]{Problem}

\title{Traces of Besov and Lizorkin--Triebel spaces to thick subsets of $\mathbb{R}^n$}
\author{Aleksei Y. Chikalov}
\address{Steklov Mathematical Institute of Russian Academy of Sciences}
\email{chikalov.a@phystech.edu}
\begin{document}
\allowdisplaybreaks
\subjclass[2010]{46E35}
\keywords{Besov spaces, traces, extension operators}
\begin{abstract}
Let $d\in [0, n]$ and let $E\subset \mathbb{R}^n$ be a closed $d$-thick set. Given  $p\in [1, \infty]$, $q\in (0, \infty]$, and $s\in (\frac{n-d}{p}, 1)$, we provide an intrinsic description of the trace-space of the Besov space $B^s_{p, q}(\mathbb{R}^n)$ to $E$. If, in addition, $p<\infty$, we give an intrinsic description of the trace-space of the Lizorkin--Triebel space $F^s_{p, q}(\mathbb{R}^n)$ to $E$.
\end{abstract}
\maketitle
\tableofcontents
\section{Introduction}
\noindent
The trace problem -- that is, the problem of the sharp intrinsic descriptions of traces of function spaces to closed subsets of $\mathbb{R}^n$ -- is a classical and still largely open problem in analysis. Intrinsic characterizations are known for a number of geometrically regular classes of sets, most notably for Ahlfors--David regular sets. In the present paper, we pass to the substantially broader class of closed $d$-thick sets and give intrinsic descriptions of the traces of Besov and Lizorkin--Triebel spaces to such sets.
\par
In order to formulate the trace problem, we first recall several basic facts. There are several approaches to defining Besov and Lizorkin--Triebel spaces used in modern analysis. General background and further results can be found in \cite{soto_liz_tr, AKZ, kosk_Yang_Zhou, Sawano, soto_bes, trieb} and the references therein. For our purposes, it will be convenient to use the definition in terms of fractional Haj\l{}asz gradients (see, e.g., \cite{AKZ,kosk_Yang_Zhou}). More specifically, we work with the Haj\l{}asz--Besov spaces  $N^s_{p,q}(\mathbb R^n)$ and the Haj\l{}asz--Lizorkin--Triebel spaces $M^s_{p,q}(\mathbb R^n)$, where $s\in(0,1)$, $p, q\in (0,\infty]$; in the Lizorkin--Triebel case we additionally assume that $p<\infty$ (see Section~\ref{section.preliminaries} for details). We emphasize that, for the full range of parameters specified above, the corresponding oscillation-based definitions yield the same spaces (see \cite{AKZ}). Accordingly, throughout the paper we refer to these spaces simply as Besov and Lizorkin--Triebel spaces and use the usual notation $B^s_{p, q}(\mathbb{R}^n)$ and $F^s_{p, q}(\mathbb{R}^n)$. We write $A^s_{p,q}(\mathbb R^n)$, $A\in\{B,F\}$, for either of these spaces. Furthermore, if $p>\frac{n}{n+s}$ in the Besov case, and $p>\frac{n}{n+s}$, $q>\frac{n}{n+s}$ in the Lizorkin--Triebel case, then these spaces also coincide with the corresponding spaces defined by the Fourier-analytic approach (see \cite{kosk_Yang_Zhou}).
\par
In the range of parameters considered in the present paper, let \(p\in[1,\infty]\) and \(q\in(0,\infty]\), with \(p<\infty\) in the Lizorkin--Triebel case. It is well known that elements of $A^s_{p,q}(\mathbb R^n)$ admit natural pointwise representatives. If $s>\frac np$, then every $f\in A^s_{p,q}(\mathbb R^n)$ admits a continuous representative (see, e.g., \cite{trieb}); in particular, this representative is uniquely determined pointwise. If $s\le \frac np$, then $f$ admits a representative $\bar{f}$ which has the Lebesgue point property at every point outside a set $N_f\subset\mathbb R^n$ satisfying $\dim_H N_f\le n-ps$,
where $\dim_H$ denotes the Hausdorff dimension (see, e.g., \cite{saks}). We refer to such representatives as \emph{sharp representatives}. The interested reader can find a discussion of closely related questions concerning capacities and quasicontinuous representatives in \cite{Karak,Netrusov,Netrusov_capacity,Nuutinen} and the references therein.
In the latter case, although a sharp representative need not be unique pointwise, any two sharp representatives of the same element agree outside a set of Hausdorff dimension at most $n-ps$. Consequently, if $d\in [0, n]$ and $s>\frac{n-d}{p}$, then $n-ps<d$, and hence the exceptional set is $\mathcal{H}^d$-negligible. Thus, for every $E\subset\mathbb R^n$, the restriction of a sharp representative to $E$ is well defined modulo $\mathcal H^d$-a.e. equality. In the case $s>\frac np$, the same conclusion follows directly from the uniqueness of the continuous representative. We denote the resulting equivalence class by $f\big|^d_E$ and refer to it as the \emph{$d$-trace of $f$ to $E$}.
\par
In the present work, we use a slightly more flexible framework. Let $E\subset \mathbb{R}^n$ be a nonempty closed set equipped with a locally finite Borel regular measure $\fm$ satisfying $\operatorname{supp}\fm = E$. Assume moreover that $\fm$ is absolutely continuous with respect to $\mathcal{H}^d$ for some $d\in [0, n]$ such that $s>\frac{n-d}{p}$. Then the restriction to $E$ of any sharp representative of $f\in A^s_{p, q}(\mathbb{R}^n)$ is well defined up to $\fm$-a.e. equality. We call the resulting $\fm$-equivalence class the \emph{$\fm$-trace of $f$ to $E$} and denote it by $f\big|^{\fm}_E$.
\par
Having at our disposal the definition of $\fm$-trace we introduce the corresponding trace-space. Let $A\in \{B, F\}$, $s\in (0, 1)$, $p\in [1, \infty]$, $q\in (0, \infty]$, and $d\in [0, n]$ be such that $s>\frac{n-d}{p}$. In the Lizorkin--Triebel case, we additionally assume that $p<\infty$. Given $E\subset \mathbb{R}^n$ and $\fm$ as above, we define the trace-space by
\begin{equation}
    A^s_{p, q}(\mathbb{R}^n)\big|_E^{\fm} := \{f\big|^{\fm}_E: f\in A^s_{p, q}(\mathbb{R}^n)\},
\end{equation}
and equip it with the natural quotient-space quasi-norm
\begin{equation}
    \|\phi\|_{A^s_{p, q}(\mathbb{R}^n) \big|_E^{\fm}} := \inf\{\|f\|_{A^s_{p, q}(\mathbb{R}^n)}: \phi = f\big|^{\fm}_E\}.
\end{equation}
Furthermore, we define the trace operator by
\begin{equation}
    \operatorname{Tr}\big|^{\fm}_E: A^s_{p, q}(\mathbb{R}^n) \to A^s_{p, q}(\mathbb{R}^n)\big|_E^{\fm}
, \quad f \mapsto f\big|^{\fm}_E.
\end{equation}
We recall that $L_0(\fm)$ denotes the space of all $\fm$-equivalence classes of Borel functions $\phi:E\to\mathbb{R}$.
The trace problem considered in this paper can now be formulated as follows.
\begin{Problem}
\label{Pr.fm_trace_problem}
    Let $A\in \{B, F\}$, $s\in (0, 1)$, $p\in [1, \infty]$, $q\in (0, \infty]$, and $d\in [0, n]$ be such that $s>\frac{n-d}{p}$. In the Lizorkin--Triebel case, assume additionally that $p<\infty$. Let $E\subset \mathbb{R}^n$ be a closed set endowed with a Borel regular locally finite measure $\fm$ satisfying $\fm\ll \mathcal{H}^d$.
    \begin{enumerate}
        \item Given $\phi \in L_0(\fm)$, find necessary and sufficient conditions for $\phi \in A^s_{p, q}(\mathbb{R}^n)\big|^{\fm}_E$.
        \item Find an intrinsic quasi-norm on $A^s_{p, q}(\mathbb{R}^n)\big|^{\fm}_E$ equivalent to its quotient-space quasi-norm.
        \item Determine whether there exists a bounded operator, called an extension operator,
        \begin{equation}
            \operatorname{Ext}_{\fm}: A^s_{p, q}(\mathbb{R}^n)\big|_E^{\fm} \to A^s_{p, q}(\mathbb{R}^n),
        \end{equation}
        which is a right inverse to the trace operator, that is,
        \begin{equation}
            \operatorname{Tr}\big|^{\fm}_E \circ \operatorname{Ext}_{\fm} = \operatorname{Id} \qquad \text{on } A^s_{p, q}(\mathbb{R}^n)\big|_E^{\fm},
        \end{equation}
        and, in particular, whether such an operator can be chosen linear.
    \end{enumerate}
\end{Problem}
In the present paper, we address this problem for the class of $d$-thick sets. Our main emphasis is on the Banach range $p, q\ge 1$ (with $p<\infty$ in the Lizorkin--Triebel case). We also include the range $0<q<1$, since the methods developed below extend naturally to this quasi-Banach setting.
\par
\textbf{Previously known results.} 
We briefly discuss known results related to Problem~\ref{Pr.fm_trace_problem} and several adjacent questions.
\par
The trace problem for Besov spaces goes back to the pioneering work of Besov \cite{Bes_or}. In particular, he studied traces of \(B^s_{p,q}(\mathbb R^n)\) to \(\mathbb R^d\subset\mathbb R^n\), where \(p,q\in[1,\infty]\) and \(s>\frac{n-d}{p}\). The corresponding classical trace results for Lizorkin--Triebel spaces \(F^s_{p,q}(\mathbb R^n)\), in the range \(p\in(1,\infty)\), \(q\in[1,\infty)\), were obtained by Kalyabin \cite{Kalyabin}. These results are now classical; detailed accounts and further background can be found, for example, in \cite{Sawano,trieb}.
\par
An important subsequent direction concerns traces to Ahlfors--David regular sets. We recall that a closed set $E \subset \mathbb{R}^n$ is said to be Ahlfors--David $d$-regular, $d\in [0, n]$, if there are constants $C_1(E), C_2(E)>0$ such that (we use the symbol $Q_r(x)$ to denote the corresponding closed ball in the $\ell_{\infty}$-norm)
\begin{equation}
    C_1(E) r^d \le \mathcal{H}^d(Q_r(x)\cap E) \le C_2(E) r^d \qquad \text{for all } (x, r) \in E\times (0, 1].
\end{equation}
A substantial literature is devoted to traces to Ahlfors--David regular sets in various settings
(see, for example, \cite{ Ihnat, Jon, saks} and the
references therein). In particular, let \(X=(X,\rho,\mu)\) be an Ahlfors--David \(Q\)-regular metric measure space, and let \(E\subset X\) be Ahlfors--David \(d\)-regular. In \cite{saks}, the trace-space of \(B^s_{p,q}(X)\) to \(E\) was characterized for $s\in (0, 1)$, $\max\{\frac{Q}{d+s}, \frac{Q-d}{s}\}<p<\infty$, and $q\in (0, \infty]$. Moreover, under the additional assumption that \(E\) is porous and \(q>\frac{Q}{Q+s}\), corresponding trace results for \(F^s_{p,q}(X)\) were obtained in the same work.
\par
Ahlfors--David regularity, however, imposes a uniform dimensional structure on \(E\) and therefore excludes many natural irregular and mixed-dimensional sets. Classical methods developed for the Ahlfors--David regular setting do not directly apply, for instance, even to unions of Ahlfors--David regular pieces of different dimensions. This motivates passing to the substantially broader class of \(d\)-thick sets. We recall that $E\subset \mathbb{R}^n$ is called $d$-thick if there exists $C_d(E)>0$ such that
\begin{equation}
    \mathcal{H}^d_{\infty}(Q_r(x)\cap E) \ge C_d(E) r^d \qquad \text{for all } (x, r) \in E \times (0, 1].
\end{equation}
A number of examples of \(d\)-thick sets can be found in \cite{tyul,tyulenev_thick_sets_R_n}. In particular, every Ahlfors--David \(d\)-regular set is \(d\)-thick. More generally, if \(E_j\subset\mathbb R^n\), \(1\le j\le N\), are Ahlfors--David \(d_j\)-regular sets, then \(\bigcup_{j=1}^N E_j\) is \(d_*\)-thick, where \(d_*:=\min_j d_j\). Furthermore, every nondegenerate path-connected subset of \(\mathbb R^n\) is \(1\)-thick, while every nonempty subset of \(\mathbb R^n\) is \(0\)-thick.
The notion of $d$-thickness in extension problems for function spaces
goes back to the work of Rychkov \cite{Rychkov}, who studied linear
extension operators from $d$-thick open subsets of
$\mathbb{R}^n$. Related extension problems for Haj\l{}asz--Besov and
Haj\l{}asz--Lizorkin--Triebel spaces on subsets of metric measure
spaces were studied in \cite{Heikkinen}, where the existence of
extension operators is closely related to a measure density condition. Traces of Besov, Lizorkin--Triebel, and Sobolev spaces to $n$-thick sets were studied by Shvartsman in \cite{shvartsman_regular}.
\par
In the range $s>\frac{n}{p}$, where elements of $A^s_{p, q}(\mathbb{R}^n)$ admit continuous
representatives, traces of Besov spaces to arbitrary closed subsets of
$\mathbb{R}^n$ were characterized by Jonsson \cite{Jonsson}. A different intrinsic characterization in this range was obtained by Shvartsman \cite{shvartsman}. Subsequently, Jonsson \cite{Jonsson2009} developed an approach based on atomic decompositions that applies to traces of both Besov and Lizorkin--Triebel spaces on closed sets.
\par
Finally, we emphasize the works of Vodop'yanov and Tyulenev
\cite{tyulenev_thick_sets_R_n} and Tyulenev \cite{tyul}, where traces
of first-order Sobolev spaces to $d$-thick subsets of $\mathbb{R}^n$
and to their substitutes in metric measure spaces,
respectively, were studied. The methods developed in these works constitute an important starting point for the techniques used throughout the present paper.
\par
The \emph{aim of the present paper} is to address Problem~\ref{Pr.fm_trace_problem} for the class of closed \(d\)-thick sets. More precisely, we consider measures $\fm$ arising from strongly \(d\)-regular sequences associated with such sets (see Section~\ref{section.preliminaries} for a detailed discussion). In the Ahlfors--David regular setting, our results recover the corresponding classical trace theorems (see Section~\ref{section.Examples}). We emphasize that our results also cover the non-continuous range $ \frac{n-d}{p}<s\le\frac np$,
which was not treated by the previously known results in the present geometric generality.
\par
\textbf{Main results}
In order to formulate the main results of this paper, we first
introduce the necessary notation and technical tools.
\par
First, we recall the notions of a $d$-regular
sequence of measures. Given $d\in [0, n]$, assume that $E \subset \mathbb{R}^n$ is a closed $d$-thick set. For brevity, we write $\theta:= n-d$ for the corresponding codimension.
A sequence of Borel regular locally finite measures $\{\fm_k\}_{k=0}^{\infty}$ is called $d$-regular on $E$ if there
exist constants $C_1, C_2, C_3>0$ such that the following
conditions hold:
\begin{enumerate}
    \item for each $k\in \mathbb{N}_0$, $\operatorname{supp}\fm_k=E$;
    \item for each $k \in \mathbb{N}_0$, $\fm_k(Q_r(x))\le C_1r^d$ for all $(x, r) \in \mathbb{R}^n\times (0, 2^{-k}]$;
    \item for each $k \in \mathbb{N}_0$, $\fm_k(Q_r(x))\ge C_2r^d$ for all $(x, r) \in E\times [2^{-k}, 1]$;
    \item for each $k\in\mathbb{N}_0$, $\fm_k = \gamma_k\fm_0$, where $\gamma_k \in L_{\infty}(\fm_0)$, and furthermore, for every $j\in \mathbb{N}_0$
    \begin{equation}
        \frac{2^{(d-n)j}}{C_3}\le \frac{\gamma_{k}(x)}{\gamma_{k+j}(x)} \le C_3 \qquad \text{for $\fm_0$-a.e. } x\in E.
    \end{equation}
\end{enumerate}
Furthermore, a $d$-regular sequence $\{\fm_k\}_{k=0}^{\infty}$ is called strongly $d$-regular if, for each Borel set $G\subset E$,
\begin{equation}
    \limsup_{k\to \infty} \frac{\fm_k(Q_{2^{-k}}(x)\cap G)}{\fm_k(Q_{2^{-k}}(x))}>0 \qquad \text{for $\fm_0$-a.e. } x\in G.
\end{equation}
It is well known (see \cite[Theorem~1.3]{tyul}) that every closed $d$-thick set admits a strongly $d$-regular sequence of measures.
\par
Next we introduce Besov and Lizorkin--Triebel type functionals on a $d$-thick set, $d\in [0, n]$ as follows. Given $\sigma \in (0, \infty)$, a locally finite Borel regular measure $\fm$, and a Borel set $G\subset \mathbb{R}^n$ with $0<\fm(G) <\infty$, $\mathcal{E}_{\fm_k, \sigma}(f, G)$ denotes the best approximation by constants of $f\in L_{\sigma}(G, \fm)$, that is,
\begin{equation}
    \mathcal{E}_{\fm, \sigma}(f, G) := \inf_{c\in \mathbb{R}}\Bigl(\frac{1}{\fm(G)}\int\limits_{G}|f(x)-c|^{\sigma}d\fm(x) \Bigr)^{1/\sigma}.
\end{equation}
Furthermore, for each $(x, r) \in \mathbb{R}^n\times(0, \infty)$, we set
\begin{equation}
    \widetilde{\mathcal{E}}_{\fm, \sigma}(f, Q_r(x)) := \begin{cases}
        \mathcal{E}_{\fm, \sigma}(f, Q_{2r}(x)), \quad \text{if } Q_r(x) \cap \operatorname{supp}\fm \neq \emptyset, \\
        0, \quad \text{otherwise}.
    \end{cases}
\end{equation}
Given a $d$-regular sequence of measures $\{\fm_k\}_{k=0}^{\infty}$ on $E$, $p, q \in (0, \infty]$, $s\in (0, 1)$, and $\sigma \in (0, \infty)$, we set, for each $\phi \in L_{\sigma}^{\loc}(\{\fm_k\}):= \cap_{k=0}^{\infty}L_{\sigma}^{\loc}(\fm_k)$,
\begin{equation}
    \|\phi\|_{\mathfrak{b}^{s-\theta/p}_{p, q, \sigma}(E)} := \|\{2^{ks}\|\widetilde{\mathcal{E}}_{\fm_k, \sigma}(\phi, Q_{2^{-k}}(\cdot))\|_{L_p(\mathbb{R}^n)}\}_{k=0}^{\infty}\|_{\ell_q}.
\end{equation}
If additionally $p<\infty$, we put
\begin{equation}
    \|\phi\|_{\mathfrak{f}^{s-\theta/p}_{p, q, \sigma}(E)} := \|\|\{2^{ks}\widetilde{\mathcal{E}}_{\fm_k, \sigma}(\phi, Q_{2^{-k}}(\cdot))\}_{k=0}^{\infty}\|_{\ell_q}\|_{L_p(\mathbb{R}^n)}.
\end{equation}
Finally, for $\mathfrak{A} \in \{\mathfrak{B}, \mathfrak{F}\}$, we put
\begin{equation}
    \|\phi\|_{\mathfrak{A}^{s-\theta/p}_{p, q, \sigma}(E)} := \|\phi\|_{L_p(\fm_0)} + \|\phi\|_{\mathfrak{a}^{s-\theta/p}_{p, q, \sigma}(E)},
\end{equation}
where $\mathfrak a=\mathfrak b$ if $\mathfrak A=\mathfrak B$,
and $\mathfrak a=\mathfrak f$ if $\mathfrak A=\mathfrak F$.
\par
The \emph{first main result} of this paper reads as follows.
\begin{Th}
    \label{Th.main_stated_Besov}
    Let $E\subset \mathbb{R}^n$ be a $d$-thick closed set, $d\in [0, n]$, and let $\{\fm_k\}_{k=0}^{\infty}$ be a strongly $d$-regular sequence of measures on $E$. Assume that $p\in [1, \infty]$, $q\in (0, \infty]$, $s\in (\frac{n-d}{p}, 1)$, and $\sigma \in (0, \infty)$ such that $\sigma\le p$. Given $\phi \in L_{\sigma}^{\loc}(\{\fm_k\})$, the following conditions are equivalent:
    \begin{enumerate}
        \item $\phi \in B^s_{p, q}(\mathbb{R}^n)\big|_E^{\fm_0}$;
        \item $\|\phi\|_{\mathfrak{B}^{s-\theta/p}_{p, q, \sigma}(E)}<\infty$.
    \end{enumerate}
    Furthermore, the following equivalence holds
    \begin{equation}
        \label{eq.main_stated_Besov}
        \|\phi\|_{B^s_{p, q}(\mathbb{R}^n)\big|_E^{\fm_0}}\approx \|\phi\|_{\mathfrak{B}^{s-\theta/p}_{p, q, \sigma}(E)}
    \end{equation}
    with implicit constants independent of $\phi$. 
    Finally, there exists a bounded linear extension operator
    \begin{equation}
        \label{eq.main_Besov_stated_extension}
        \Ext_{\fm_0}: B^s_{p, q}(\mathbb{R}^n)\big|_E^{\fm_0} \to B^s_{p, q}(\mathbb{R}^n),
    \end{equation}
    which is a right inverse to the trace operator, i.e., $ \Tr\big|_E^{\fm_0} \circ \Ext_{\fm_0} = \operatorname{Id}$ on $B^s_{p, q}(\mathbb{R}^n)\big|_E^{\fm_0}$.
\end{Th}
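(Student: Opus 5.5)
The plan is to prove the two inequalities in \eqref{eq.main_stated_Besov} separately: a direct trace estimate (restriction is bounded) and an inverse estimate via an explicit linear Whitney-type extension. This follows the scheme of \cite{tyul,tyulenev_thick_sets_R_n}, with Sobolev gradients replaced by fractional Haj\l{}asz gradients. Since $\mathcal{E}_{\fm,\sigma}$ is monotone in $\sigma$ (Hölder), it suffices to prove the trace estimate for $\sigma=p$ (or $\sigma=1$ when $p\ge 1$ is large, with care) and the extension estimate for arbitrarily small $\sigma$. Equivalently, in the lower bound we may take the largest admissible $\sigma$, and in the upper bound the smallest.

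\textbf{Trace estimate.} Let $f\in B^s_{p,q}(\mathbb{R}^n)$ with a fractional Haj\l{}asz gradient $\{g_k\}$, so that $|f(x)-f(y)|\le |x-y|^s(g_k(x)+g_k(y))$ for $2^{-k-1}\le|x-y|<2^{-k}$ outside a null set. I would first show that this pointwise inequality survives, for the sharp representative, $\fm_0$-a.e. on $E$ with the $g_k$ replaced by suitable maximal-type functions. This uses $s>\theta/p$ and $\fm_k\ll\mathcal{H}^d$, via a telescoping of averages over dyadic cubes shrinking to Lebesgue points. Then, for $x$ with $Q_{2^{-k}}(x)\cap E\ne\emptyset$, I estimate $\widetilde{\mathcal{E}}_{\fm_k,\sigma}(f,Q_{2^{-k}}(x))$ by the $\fm_k$-average over $Q_{2^{1-k}}(x)$ of $|f-f_{Q}|$, where $f_Q$ is the Lebesgue average over the cube. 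The telescoping sum $\sum_{j\ge k}2^{-js}(\text{averages of }g_j)$ is controlled through condition (2) of $d$-regularity, $\fm_k(Q_r)\le C_1r^d$ for $r\le 2^{-k}$, together with condition (4), which compares $\fm_k$ with $\fm_j$ up to the factor $2^{\theta(j-k)}$. This yields a bound by $\sum_{j\ge k}2^{(j-k)(\theta/p-s)}\cdot(\ldots)$. Integrating in $x\in\mathbb{R}^n$ and using a Hardy-type inequality for $\ell_q(L_p)$ with the geometric factor $2^{-(j-k)(s-\theta/p)}$ gives $\|f|_E\|_{\mathfrak{b}}\lesssim\|f\|_{B^s_{p,q}}$. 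The term $\|f\|_{L_p(\fm_0)}$ is handled by the same telescoping at scale $k=0$.

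\textbf{Extension.} I would use a Whitney decomposition $\{Q_i\}$ of $\mathbb{R}^n\setminus E$ with a partition of unity $\{\varphi_i\}$. For $Q_i$ with side $\approx 2^{-k_i}\le 1$, pick $\widetilde Q_i$ centered at a nearest point of $E$ with comparable size and set $\operatorname{Ext}\phi=\sum_i\varphi_i\,\fint_{\widetilde Q_i}\phi\,d\fm_{k_i}$. For large cubes, use additionally a cutoff, with averages at level $k=0$. To keep linearity with small $\sigma<1$, I would still use linear averages and later dominate them by $\mathcal{E}_{\fm,\sigma}$-type quantities. This goes through the standard trick of bounding $|c_Q-c_{Q'}|$ by best approximations, and linear averages by best-constant estimates using thickness (doubling-like behaviour from conditions (2)–(3)). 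Then I construct an explicit Haj\l{}asz gradient. For $x$ off $E$ with $\operatorname{dist}(x,E)\approx 2^{-j}$, set $g_k(x)\approx 2^{ks}\widetilde{\mathcal{E}}_{\fm_{\max(j,k)},\sigma}(\phi,Q_{c2^{-\min(j,k)}}(x))$ plus tails. On $E$ itself (a set possibly of positive Lebesgue measure if $d=n$) I would use a maximal sum over $j\ge k$ of $2^{js}\widetilde{\mathcal E}_{\fm_j,\sigma}$. The $L_p$ and $\ell_q$ summation then again relies on comparing different levels via condition (4) and on the Hardy inequality.

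\textbf{Main obstacle.} The main obstacle is the identity $\Tr|^{\fm_0}_E\circ\operatorname{Ext}=\operatorname{Id}$, that is, showing that the sharp representative of $\operatorname{Ext}\phi$ equals $\phi$ $\fm_0$-a.e. on $E$. Near $x\in E$, the Lebesgue averages of $\operatorname{Ext}\phi$ over $Q_{2^{-k}}(x)$ are mixtures of $\fm_k$-averages of $\phi$ near $x$. Convergence to $\phi(x)$ requires a Lebesgue differentiation theorem for the sequence $\{\fm_k\}$, and this is exactly where strong $d$-regularity is used. I would argue by contradiction: if the averages failed to converge on a set $G$ of positive $\fm_0$-measure, the finiteness of the $\mathfrak{b}$-norm would force $\mathcal{E}_{\fm_k,\sigma}(\phi,Q_{2^{1-k}}(x))\to0$ $\fm_0$-a.e. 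This combines with the density condition $\limsup_k\fm_k(Q_{2^{-k}}(x)\cap G)/\fm_k(Q_{2^{-k}}(x))>0$ applied to level sets $G=\{|\phi-\alpha|<\varepsilon\}$ to identify the limit of the averages with $\phi(x)$.
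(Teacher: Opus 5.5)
The genuine gap is the upper bound for $\sigma<1$. Your restriction estimate is fine. You sum over scales by Minkowski and apply Hardy's inequality with decay $2^{-(j-k)(s-\theta/p)}$. The paper instead passes to an $\varepsilon$-nonincreasing gradient (Lemma~\ref{Lm.monotonic_fractional_gradient}) to reduce to a single average $\bigl(\fint_{10Q_k(x)}h_{k-3}^r\,dy\bigr)^{1/r}$, which it needs anyway for the Lizorkin--Triebel case. You still need such a monotonization, or a pair-counting estimate, because pairs of points in $Q_j(y)$ involve all $g_i$ with $i\ge j-1$, not only $g_j$.

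For $\sigma<1$, linear $\fm_k$-averages cannot be dominated by $\mathcal{E}_{\fm,\sigma}$ on a comparable cube. Suppose $\fm\lfloor_Q$ is nonatomic (e.g.\ $d>0$). Choose $A\subset Q$ with $\fm(A)=\delta\,\fm(Q)$ and put $\phi:=\delta^{-1}\chi_A$. Then $\fint_Q\phi\,d\fm=1$ and $\bigl(\fint_Q|\phi-1|^\sigma d\fm\bigr)^{1/\sigma}\ge 2^{-1/\sigma}$ for $\delta\le\frac12$, while $\mathcal{E}_{\fm,\sigma}(\phi,Q)\le\delta^{1/\sigma-1}\to0$. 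This is exactly the Day-theorem obstruction discussed after Theorem~\ref{Th.main_stated_LT}. Since this was your only route to $\|\phi\|_{B^s_{p,q}(\mathbb{R}^n)\big|^{\fm_0}_E}\lesssim\|\phi\|_{\mathfrak{B}^{s-\theta/p}_{p,q,\sigma}(E)}$ for small $\sigma$, that inequality remains unproven there. So does the boundedness of your linear operator with respect to the $\sigma$-functional.

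The paper avoids this as follows. It builds $\operatorname{Ext}_{\mathcal{T}}$ from a possibly nonlinear family of $\lambda$-almost best approximating operators. Medians $\operatorname{med}_{\fm_k}(\phi,Q_k(x))$ qualify for every $\sigma>0$ (Lemma~\ref{Lm.average_median_approximation_property}), so the median extension gives the upper bound for every $\sigma\le p$. Combined with the restriction estimate for $\sigma=1\le p$, this yields $\|\phi\|_{\mathfrak{B}^{s-\theta/p}_{p,q,1}(E)}\lesssim\|\phi\|_{\mathfrak{B}^{s-\theta/p}_{p,q,\sigma}(E)}$. Only then is the linear extension built from $\fm_k$-averages, which are almost best in $L_1$, bounded with respect to the $\sigma$-functional. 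A direct proof would have to be genuinely multiscale, summing $\mathcal{E}_{\fm_j,\sigma}$ over $j\ge k$ at Lebesgue points.

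Your argument for $\Tr\big|^{\fm_0}_E\circ\operatorname{Ext}=\operatorname{Id}$ also does not close. From $\mathcal{E}_{\fm_k,\sigma}(\phi,Q_k(x))\to0$ and a density condition that is only a $\limsup$, you can at best conclude that $\phi(x)$ is a cluster point of the approximating constants $c_k(x)$. The $\limsup$ only gives information along a subsequence depending on $x$ and $G$, so nothing rules out oscillation of $c_k(x)$, and no contradiction arises.

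The missing step is summability in $k$. Set $E_k(x):=\mathcal{E}_{\fm_k,\sigma}(\phi,Q_k(x))$. Then $E_k(x)\lesssim\widetilde{\mathcal{E}}_{\fm_k,\sigma}(\phi,Q_k(y))$ for $y\in Q_k(x)$, so \ref{cond:M1} gives $\|E_k\|_{L_p(\fm_0)}\lesssim 2^{k\theta/p}\|\widetilde{\mathcal{E}}_{\fm_k,\sigma}(\phi,Q_k(\cdot))\|_{L_p(\mathbb{R}^n)}$. Since $s>\theta/p$, this gives $\sum_k E_k\in L_p(\fm_0)$. Hence the medians are Cauchy $\fm_0$-a.e., and their limit $\bar\phi(x)$ satisfies $\fint_{Q_k(x)}|\phi-\bar\phi(x)|^\sigma\,d\fm_k\to0$ (Lemma~\ref{Lm.Lebesgue_points_on_E}). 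Only now does strong regularity identify $\bar\phi=\phi$ $\fm_0$-a.e.; your level-set argument over rational $\alpha$ works here as well as the Lusin argument of Theorem~\ref{Th.good_representative_on_thick_set}.

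This Lebesgue-point theorem is needed already for boundedness, not just for the right-inverse property. $\mathcal{L}^n(E)$ may be positive even when $d<n$. The Haj\l{}asz inequality for pairs with a point in $E$ then requires $\phi(x)=\lim_k c_k(x)$ at $\mathcal{L}^n$-a.e.\ $x\in E$, which follows via $\mathcal{L}^n\lfloor_E\lesssim\fm_0$.

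For the averages of $\operatorname{Ext}\phi$ over $Q_r(x)$, you must also control the coefficients of all Whitney cubes meeting $Q_r(x)$. The paper shows $\fint_{Q_k(x)}\sum_{j\ge k-10}\widetilde{\mathcal{E}}_{\fm_j,\sigma}(\phi,Q_j(y))\,dy\to0$ for $\fm_0$-a.e.\ $x$ by Borel--Cantelli, again using $s>\theta/p$.

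Finally, a smaller slip: for $\operatorname{dist}(x,E)\approx 2^{-j}$ and $k>j$, your gradient must carry the factor $2^{j-k}$ coming from $\|\nabla\varphi_i\|_{\infty}\lesssim 2^{j}$ (compare $g^{\sigma,II}_k$). Without it, the $\ell_q$-sum over $k>j$ diverges.
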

The \emph{second main result} reads as follows.
\begin{Th}
    \label{Th.main_stated_LT}
    Let $E\subset \mathbb{R}^n$ be a $d$-thick closed set, $d\in [0, n]$, and let $\{\fm_k\}_{k=0}^{\infty}$ be a strongly $d$-regular sequence of measures on $E$. Assume that $p\in [1, \infty)$, $q\in (0, \infty]$, $s\in (\frac{n-d}{p}, 1)$, and $\sigma \in (0, \infty)$ such that $\sigma< p$ and $\sigma \le q$. Given $\phi \in L_{\sigma}^{\loc}(\{\fm_k\})$, the following conditions are equivalent:
    \begin{enumerate}
        \item $\phi \in F^s_{p, q}(\mathbb{R}^n)\big|_E^{\fm_0}$;
        \item $\|\phi\|_{\mathfrak{F}^{s-\theta/p}_{p, q, \sigma}(E)}<\infty$.
    \end{enumerate}
    Furthermore, the following equivalence holds
    \begin{equation}
        \label{eq.main_stated_LT}
        \|\phi\|_{F^s_{p, q}(\mathbb{R}^n)\big|_E^{\fm_0}}\approx \|\phi\|_{\mathfrak{F}^{s-\theta/p}_{p, q, \sigma}(E)}
    \end{equation}
    with implicit constants independent of $\phi$. 
    Finally, there exists a bounded extension operator
    \begin{equation}
        \label{eq.main_LT_stated_extension}
        \Ext_{\fm_0}: F^s_{p, q}(\mathbb{R}^n)\big|_E^{\fm_0} \to F^s_{p, q}(\mathbb{R}^n),
    \end{equation}
    which is a right inverse to the trace operator, i.e., $ \Tr\big|_E^{\fm_0} \circ \Ext_{\fm_0} = \operatorname{Id}$ on $F^s_{p, q}(\mathbb{R}^n)\big|_E^{\fm_0}$. If $p>1$ and $q\ge1$, the extension operator can be chosen to be bounded and linear.
\end{Th}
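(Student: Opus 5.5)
We prove Theorem~\ref{Th.main_stated_LT} in two halves: a trace estimate, which gives the inequality ``$\gtrsim$'' in \eqref{eq.main_stated_LT}, and an extension construction, which gives ``$\lesssim$'' and the operator $\Ext_{\fm_0}$. Throughout we work with the Haj\l{}asz description $M^s_{p,q}(\mathbb R^n)$. Then $f\in F^s_{p,q}$ if and only if $f\in L_p$ and $f$ has a fractional $s$-gradient $\vec g=\{g_k\}$ with $\|\|\vec g\|_{\ell_q}\|_{L_p}<\infty$, where
\[
|f(x)-f(y)|\le |x-y|^s\,(g_k(x)+g_k(y))
\quad\text{for } 2^{-k-1}\le|x-y|<2^{-k}.
\]

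\textbf{Trace estimate.} Let $f\in F^s_{p,q}(\mathbb R^n)$ and let $\bar f$ be a sharp representative, so that $\phi=\bar f|_E$ $\fm_0$-a.e. The $L_p(\fm_0)$ part follows from a standard embedding. By property (3) and $d$-thickness, averages of $|\bar f|^p$ over $E\cap Q_{2^{-k}}$ are controlled by Lebesgue averages plus a telescoping sum of oscillations; summing against $s>\theta/p$ gives $\|\phi\|_{L_p(\fm_0)}\lesssim\|f\|_{F^s_{p,q}}$. For the seminorm, fix $x$ with $Q_{2^{-k}}(x)\cap E\ne\emptyset$. We bound $\widetilde{\mathcal E}_{\fm_k,\sigma}(\phi,Q_{2^{-k}}(x))$ by comparing $\bar f(y)$, for $y\in E\cap Q_{2^{1-k}}(x)$, with the Lebesgue average $f_{Q_{2^{-k}}(x)}$. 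The difference $\bar f(y)-f_{Q_{2^{-k}}(x)}$ telescopes along the cubes $Q_{2^{-j}}(y)$, $j\ge k$. Each term is bounded by $2^{-js}$ times a local average of $g_{j}$ (and neighbouring $g_{j\pm1}$) over $Q_{2^{-j}}(y)$. Integrating in $d\fm_k(y)$, property (2) together with the relation $\fm_j=\gamma_j\fm_0$ and the ratio bound in (4) converts each $\fm_k$-average into at most $2^{(j-k)\theta}$ times an $\fm_j$-type average. This costs a factor $2^{(j-k)\theta/\sigma}$ against the gain $2^{-(j-k)s}$.

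\textbf{Main obstacle.} The difficulty is that this gain is only summable when $s>\theta/\sigma$, which need not hold when $\sigma<p$. To avoid it, we first estimate pointwise by the Hardy--Littlewood maximal function,
\[
\widetilde{\mathcal E}_{\fm_k,\sigma}\lesssim \sum_{j\ge k}2^{-(j-k)s}\bigl(M(g_j^{\tau})\bigr)^{1/\tau}(x)\cdot 2^{-ks}
\]
for some $\tau<\min(p,q)$, and apply the Fefferman--Stein vector-valued maximal inequality in $L_p(\ell_q)$. The dimensional loss is absorbed by choosing $\tau$ close to $p$, so that $s>\theta/\tau$. The condition $\sigma<p$ (strict) together with $\sigma\le q$ is precisely what leaves room for such a choice. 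We then use the lemma that $\mathcal E_{\fm,\sigma}$ is monotone in $\sigma$ to reduce to $\sigma=\tau$ where necessary. A discrete Hardy inequality in $\ell_q$ then yields $\|\phi\|_{\mathfrak f}\lesssim\|\vec g\|_{L_p(\ell_q)}$. Strong $d$-regularity enters only in identifying $\phi$ with $\lim_k$ of the $\fm_k$-averages $\fm_0$-a.e.

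\textbf{Extension.} We use a Whitney decomposition $\{Q_i\}$ of $\mathbb R^n\setminus E$ with a partition of unity $\{\varphi_i\}$. For each $Q_i$ with side $\ell_i=2^{-k_i}\le1$ we choose a reflected cube $Q_i^*$ of comparable size centred on $E$, and define
\[
\Ext\phi=\sum_i\varphi_i\,c_i ,
\]
where $c_i$ is a near-best $\sigma$-approximating constant of $\phi$ on $Q_{2\ell_i}(x_i^*)$ with respect to $\fm_{k_i}$. For $\ell_i>1$ we take $c_i=0$, or use a smooth cutoff. On $E$ we set $\Ext\phi=\phi$. Since $|E|$ may be positive when $d=n$, we build the gradient directly. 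For $x\notin E$ and the scale $k$, we set $g_k(x)$ to be a sum, over $j$ comparable to $k$ or $j\ge k_i$, of $2^{ks}\widetilde{\mathcal E}_{\fm_j,\sigma}(\phi,Q_{2^{-j}}(x^*))$, transported to $x$ by averaging over $x^*\in Q_{c2^{-j}}(x)$. On $E$ (relevant when $d=n$) the Lebesgue-point property of $\phi$ gives $g_k(x)=\sum_{j\ge k}2^{(k-j)s}2^{js}\widetilde{\mathcal E}_{\fm_j,\sigma}(\phi,Q_{2^{-j}}(x))$. Verifying the Haj\l{}asz inequality is the standard chain argument via adjacent constants $|c_i-c_{i'}|$, using the ratio bound in (4) to compare neighbouring scales. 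The $L_p(\ell_q)$ bound follows from the discrete Hardy inequality, since the averaging over $x^*$ only costs a maximal function. That $\Tr\circ\Ext=\mathrm{Id}$ follows because the $c_i$ converge $\fm_0$-a.e. to $\phi$ by strong $d$-regularity. In general the operator is nonlinear, since a near-best constant for $\sigma<1$ is not linear. When $p>1$ and $q\ge1$ we may take $\sigma=1$ and replace $c_i$ by the linear mean $\fm_{k_i}$-average, which makes the operator linear.
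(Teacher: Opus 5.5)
Your extension construction (Whitney cubes, projected cubes, near-best $\sigma$-constants with respect to $\fm_{k(\kappa)}$, medians in general and $\fm_k$-averages when $\sigma=1$, strong $d$-regularity for $\Tr\circ\Ext=\mathrm{Id}$) is essentially the paper's. Your trace argument is the paper's argument for $q\ge p$ (Theorem~\ref{Th.main_direct_Lizorkin_Triebel}). The gap is that this trace argument does not cover the full parameter range.

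It needs one exponent $\tau$ satisfying two conditions. First, $\tau>\theta/s$, so that the loss $2^{(j-k)\theta/\tau}$ is beaten by $2^{-(j-k)s}$; this loss arises when $A_jg_j^\tau$ is integrated against the $d$-dimensional measure $\fm_k$ over $2Q_k(x)$. Second, $\tau<\min\{p,q\}$, so that Fefferman--Stein applies in $L_{p/\tau}(\ell_{q/\tau})$; the $\ell_1$-valued case $\tau=q$ fails. Since $p>\theta/s$ always, such a $\tau$ exists if and only if $q>\theta/s$. This has nothing to do with $\sigma$, and ``choosing $\tau$ close to $p$'' is impossible once $q<p$. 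For instance, $n=2$, $d=1$, $p=2$, $s=9/10$, $q=\sigma=1$ satisfies every hypothesis, yet you would need $10/9<\tau<1$. Moreover, when $\sigma=q$ (which the theorem allows), reducing via $\mathcal{E}_{\fm,\sigma}\le\mathcal{E}_{\fm,\tau}$ requires $\tau\ge q$, again contradicting $\tau<q$. So as written you obtain the trace inequality only for $q\ge p$ and for $\max\{\sigma,\theta/s\}<q<p$.

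The missing idea is the paper's duality argument for $q<p$ (Theorem~\ref{Th.lizorkin_triebel_direct_q_le_p}). Set $\Phi=\sum_k 2^{ksq}\widetilde{\mathcal{E}}_{\fm_k,\sigma}(\Tr f,Q_k(\cdot))^q$ and $R=p/q>1$, and estimate $\|\Phi\|_{L_R}$ by testing against $0\le H\in L_{R'}$. The pointwise bound of Lemma~\ref{Lm.pointw_est} with $t=q$, plus Jensen (this is where $\sigma\le q$ enters), plus Fubini, moves the $\fm_k$-integration onto $H$. This produces the densities $\mathcal{D}_{k,m}(z)=2^{-k\theta}\fm_k(Q_{k+m}(z))/\mathcal{L}^n(Q_{k+m}(z))$. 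Pointwise one only has $\mathcal{D}_{k,m}\lesssim 2^{m\theta}$, which would again force $s>\theta/q$. The key fact is that $H\mapsto\sup_k\mathcal{D}_{k,m}A_{k-4}|H|$ is bounded on $L_{R'}$ with norm $\lesssim (m+4)^{1/R'}2^{m\theta q/p}$ (Proposition~\ref{Prop.T_m_operator_bound}). This is proved via a local weak-$(1,1)$ bound for $\sup_{l\ge k}\mathcal{D}_{l,m}$ and Whitney decompositions of the level sets of $M[H]$. The factor $2^{m\theta q/p}$ is then absorbed by $2^{-m\alpha q}$ for any $\alpha\in(\theta/p,s)$, so only $s>\theta/p$ is used.

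A smaller point concerns your extension bound. ``Averaging over $x^*$'' costs a Hardy--Littlewood maximal function, which is unbounded on $L_1$ and has no $L_p(\ell_q)$ bound for $q\le 1$. Both cases are covered by the theorem for the nonlinear extension. The averaging is unnecessary: for $x\in Q_\kappa^*$ and $3\le j<k(\kappa)$, Lemma~\ref{Lm.local_approximation_outside_E} gives directly $\mathcal{E}_{\fm_j,\sigma}(\phi,Q_j(\widetilde{x}_\kappa))\lesssim\widetilde{\mathcal{E}}_{\fm_{j-3},\sigma}(\phi,Q_{j-3}(x))$. So discrete Hardy inequalities alone suffice.
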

While our method does not produce a bounded linear extension operator in the Lizorkin--Triebel case when \(p=1\) or \(q<1\) (compare also with the range of parameters in \cite[Remark~1.7]{shvartsman_regular}, who used a similar construction), we do not claim that such an operator does not exist. The obstruction concerns the particular construction used in the present paper. Indeed, our extension operator is based on a family of almost-best approximations in \(L_\sigma(Q_{2^{-k}}(x),\fm_k)\), where \(k\in\mathbb N_0\) and \(x\in E\) (see Section~\ref{section.extension}). More precisely, for some \(\lambda>1\) we choose mappings $\mathcal T_k^x: L_\sigma(Q_{2^{-k}}(x),\fm_k)\to\mathbb R$
satisfying
\begin{equation}
    \Bigl( \frac1{\fm_k(Q_{2^{-k}}(x))} \int_{Q_{2^{-k}}(x)} |\phi(y)-\mathcal T_k^x\phi|^\sigma\,d\fm_k(y) \Bigr)^{1/\sigma} \le \lambda\, \mathcal E_{\fm_k,\sigma} (\phi,Q_{2^{-k}}(x)).
\end{equation}
If \(0<\sigma<1\) and \(\fm_k|_{Q_{2^{-k}}(x)}\) is nonatomic, no such mapping can be linear. Indeed, if \(\mathcal T_k^x\) were linear, the preceding inequality would imply that it reproduces constants and is a nonzero continuous linear functional on \(L_\sigma(Q_{2^{-k}}(x),\fm_k)\). This contradicts the classical theorem of Day asserting that the continuous dual of \(L_\sigma(\mu)\), \(0<\sigma<1\), is trivial whenever \(\mu\) is nonatomic. Thus, whenever our construction necessarily requires \(\sigma<1\), it cannot be made linear by choosing linear local almost-best approximations. For \(d>0\), the measures \(\fm_k\) are nonatomic by the upper \(d\)-regularity estimate.
This argument is only an obstruction to the present construction and does not rule out the existence of a bounded linear extension operator.
\par
\textbf{Organization of the paper.} The paper is organized as follows.
\begin{itemize}
    \item In Section~\ref{section.preliminaries} we provide the necessary background and prove several auxiliary results.
    \item In Section~\ref{section.extension} we give a general construction of the extension operator and prove its boundedness.
    \item Section~\ref{section.extension_property} is devoted to the important concept of $\{\fm_k\}$-Lebesgue points and the right inverse property of the extension operator.
    \item In Section~\ref{Section.direct_trace_theorem} we prove the so-called direct trace theorem. More specifically, we prove the boundedness of Besov and Lizorkin--Triebel type functionals on the corresponding trace-spaces.
    \item In Section~\ref{section.proof_of_the_main_results}, we give a proof of the main results, namely Theorem~\ref{Th.main_stated_Besov} and Theorem~\ref{Th.main_stated_LT}.
    \item In Section~\ref{section.Examples} we recover the trace theorems for Ahlfors--David regular sets from our main results.
\end{itemize}
\section{Preliminaries}
\label{section.preliminaries}
The aim of this section is to fix notation and collect a few auxiliary results useful in what follows.
\par
Throughout the paper, $C$ denotes a positive inessential constant whose value may change from line to line. If the dependence on parameters is important, we write $C=C(a,b,c,\ldots)$. We write $A\lesssim B$ if $A\le CB$, and $A\approx B$ if both $A\lesssim B$ and $B\lesssim A$.
\subsection{Geometric analysis background}
Throughout the paper, we fix an integer $n \in \mathbb{N}$. We use Latin letters $x, y, z$ to denote the points in the Euclidean space $\mathbb{R}^n$. Given $x\in\mathbb{R}^n$ and $r>0$, we write $Q_r(x)$ for the closed cube centered at $x$, with sides parallel to the coordinate axes and side length $2r$. When no confusion is possible, we use the notation $Q_k(x):=Q_{2^{-k}}(x)$, for each $k\in\mathbb{Z}$ and $x\in\mathbb{R}^n$. Given $c\in (0,\infty)$ and a cube $Q:=Q_r(x)$, we write $cQ:= Q_{cr}(x)$. A cube $Q$ is called a dyadic cube if there are $k\in\mathbb{Z}$ and $m\in\mathbb{Z}^n$ such that $Q=Q_{k, m}:=\prod_{i=1}^n\left[\frac{m_i}{2^k}, \frac{m_{i}+1}{2^k}\right]$. For convenience, we work with cubes rather than balls. Accordingly, throughout the paper $|x|$ denotes the $\ell_{\infty}$-norm of $x\in\mathbb{R}^n$, i.e., $|x|:=\max\{|x_i|: 1\le i\le n\}$. 
\par
Given a nonempty set $G\subset\mathbb{R}^n$, we set $\operatorname{diam}(G):=\sup\{|x-y|:x, y\in G\}$. Furthermore, given two nonempty sets $G_1, G_2 \subset\mathbb{R}^n$, we define $\operatorname{dist}(G_1, G_2):=\inf\{|x-y|:x\in G_1, y\in G_2\}$. For a set $G\subset\mathbb{R}^n$, we denote its closure and interior by $\overline{G}$ and $\operatorname{int}G$, respectively. Given $\delta>0$ and a nonempty set $G\subset\mathbb{R}^n$, set $U_{\delta}(G):=\{x\in\mathbb{R}^n: \operatorname{dist}(x, G)<\delta\}$. When no confusion is possible, we write $U_k(G):=U_{2^{-k}}(G)$, for each $k\in\mathbb{Z}$.
\par
Throughout the paper, the symbol $\fm$ denotes a \emph{locally finite Borel regular measure} on $\mathbb{R}^n$. Given a Borel set $G\subset \mathbb{R}^n$ and a measure $\fm$ on $\mathbb{R}^n$, we define the restriction of $\fm$ to $G$ by
\begin{equation}
    \label{eq.resriction_of_measure_definition}
    \fm\lfloor_G(F) = \fm(G\cap F) \qquad \text{for all Borel sets } F\subset\mathbb{R}^n.
\end{equation}
Sometimes we use weighted measures. More precisely, given a measure $\fm$, a locally integrable Borel function $\gamma:\mathbb{R}^n\to\mathbb{R}$ is called a \emph{weight} (or \emph{weight function}) if $\gamma(x)>0$ for $\fm$-a.e. $x\in\mathbb{R}^n$. By $\gamma\fm$ we denote the corresponding weighted measure, that is
\begin{equation}
    \gamma\fm(G) = \int\limits_{G}\gamma(x)d\fm(x) \qquad \text{for all Borel sets } G\subset \mathbb{R}^n.
\end{equation}
\par
Given a measure $\fm$ and $p\in (0, \infty]$, by $L_p(\fm)$ we denote the space of $\fm$-equivalence classes of all Borel functions $f:\mathbb{R}^n\to\mathbb{R}$ such that the corresponding quasi-norm is finite:
\begin{equation}
\label{eq.L_p_norm_definition}
\|f\|_{L_p(\fm)}:=\Bigl(\int\limits_{\mathbb{R}^n}|f(x)|^pd\fm(x)\Bigr)^{1/p},
\end{equation}
with the usual modification when $p=\infty$. The corresponding space of locally $p$-integrable functions is denoted by $L_p^{\loc}(\fm)$. For a Borel set $G\subset\mathbb{R}^n$, by $L_p(G, \fm)$ ($L_p^{\loc}(G, \fm)$) we denote $L_p(\fm\lfloor_G)$ ($L_p^{\loc}(\fm\lfloor_G)$). We adopt the standard notation $L_0(\fm)$ for the space of all $\fm$-equivalence classes of Borel functions $f:\mathbb{R}^n \to \mathbb{R}$.
\par
For each Borel set $G\subset\mathbb{R}^n$ with $\fm(G)<\infty$ and each $f\in L_1(G, \fm)$, we put
\begin{equation}
\label{eq.average_definition}
    f_{G, \fm}:=\fint\limits_{G}f(x)d\fm(x):=\begin{cases}
        \frac{1}{\fm(G)}\int\limits_{G}f(x)d\fm(x), \quad \text{if } \fm(G)>0,\\
        0, \quad \text{if } \fm(G)=0.
    \end{cases}
\end{equation}
Furthermore, if $G = Q_r(x)$ we also define the \emph{averaging operator} by
\begin{equation}
\label{eq.average_cube_definition}
    A_{r, \fm}f(x):= \fint\limits_{Q_{r}(x)}f(y)d\fm(y).
\end{equation}
 We shall also use \emph{median values}. More precisely, given a measure $\fm$, a Borel function $f:\mathbb{R}^n\to\mathbb{R}$, and a Borel set $G\subset \mathbb{R}^n$ with $\fm(G)<\infty$, we set
\begin{equation}
    \label{eq.median_value_definition}
    \operatorname{med}_{\fm}(f, G) := \begin{cases}\max\Bigl\{c\in \mathbb{R}: \fm(\{x\in G: f(x)<c\})\le \frac{\fm(G)}{2}\Bigr\}, \quad \text{if } \fm(G)>0,\\
    0, \quad \text{if } \fm(G)=0.
    \end{cases}
\end{equation}
When no ambiguity is possible, we let $A_{k, \fm}:=A_{2^{-k}, \fm}$ for brevity. If $\fm=\mathcal{L}^n$ is the Lebesgue measure, then we suppress it from the notation. More precisely, $f_{G}:=f_{G, \mathcal{L}^n}$, $A_r:=A_{r, \mathcal{L}^n}$, and $\operatorname{med}(f, G):=\operatorname{med}_{\mathcal{L}^n}(f, G)$. The following property of a median is well-known. We provide details for completeness.
\begin{Lm}
    \label{Lm.meadian_average_relation} Given $\sigma \in (0, \infty)$ and a measure $\fm$, for each Borel set $G\subset\mathbb{R}^n$ with $\fm(G)<\infty$ and each $f\in L_{\sigma}(G, \fm)$, the following inequality holds:
    \begin{equation}
        \label{eq.median_average_relation}
        |\operatorname{med}_{\fm}(f, G)-c|^{\sigma} \le 2 \fint\limits_{G}|f(y)-c|^{\sigma}d\fm(y) \qquad \text{for all } c \in \mathbb{R}.
    \end{equation}
    Moreover,
    \begin{equation}
    \label{eq.median_at_lebesgue_points}
        f(x) = \lim_{r\to 0}\operatorname{med}_{\fm}(f, Q_r(x))
    \end{equation}
    at every Lebesgue point $x\in \mathbb{R}^n$ of $f$.
\end{Lm}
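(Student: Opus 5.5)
The plan is to compare the median with $c$ through a Chebyshev-type inequality on half of $G$, and then to pass to the limit at Lebesgue points using the first inequality with $c=f(x)$. We may assume $\fm(G)>0$: otherwise $\operatorname{med}_{\fm}(f,G)=0$, the right-hand side is $0$ by convention, and the statement should be read with that convention (or $\fm(G)>0$ is tacitly required).

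First we record the defining property of the median $m:=\operatorname{med}_{\fm}(f,G)$. The function $c\mapsto \fm(\{f<c\})$ is nondecreasing and left-continuous, so the set $\{c: \fm(\{f<c\})\le \fm(G)/2\}$ is a closed half-line, bounded above because $\fm(\{f<c\})\to\fm(G)$ as $c\to\infty$. Hence the maximum exists and satisfies $\fm(\{f<m\})\le \fm(G)/2$. Moreover, for every $c'>m$ we have $\fm(\{f<c'\})>\fm(G)/2$; letting $c'\downarrow m$ gives $\fm(\{f\le m\})\ge \fm(G)/2$. It follows that both $\fm(\{f\ge m\})$ and $\fm(\{f\le m\})$ are at least $\fm(G)/2$.

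Now fix $c$. If $c\le m$, then on the set $\{f\ge m\}$ we have $|f-c|\ge m-c=|m-c|$. Therefore
\[
\int_G|f-c|^\sigma\,d\fm \ge |m-c|^\sigma\,\fm(\{f\ge m\})\ge \tfrac12\fm(G)|m-c|^\sigma ,
\]
which is exactly the first inequality. The case $c\ge m$ is symmetric, using the set $\{f\le m\}$.

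For the second statement, let $x$ be a Lebesgue point of $f$, interpreted with respect to $\fm$ and exponent $\sigma$, that is, $\fint_{Q_r(x)}|f-f(x)|^\sigma\,d\fm\to0$. Apply the first inequality with $G=Q_r(x)$ and $c=f(x)$: this gives $|\operatorname{med}_{\fm}(f,Q_r(x))-f(x)|^\sigma\le 2\fint_{Q_r(x)}|f-f(x)|^\sigma\,d\fm\to0$.

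The main subtlety is the definition of Lebesgue point. If it is meant in the $L_1$ sense while $\sigma\neq1$, we should simply take $\sigma=1$ in the first inequality, which is valid as long as $f\in L_1^{\loc}$. This works because the median is independent of $\sigma$. The only other delicate step is the inequality $\fm(\{f\le m\})\ge\fm(G)/2$, obtained above by the limiting argument from the maximality of $m$.
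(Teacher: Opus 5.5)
Your proof is correct and follows essentially the same route as the paper. Both arguments derive the Chebyshev-type bound from the two half-mass estimates $\fm(\{f\ge m\})\ge \fm(G)/2$ and $\fm(\{f\le m\})\ge \fm(G)/2$, and both obtain the limit at Lebesgue points by taking $\sigma=1$ and $c=f(x)$. Your one-sided continuity argument for $c\mapsto\fm(\{f<c\})$ spells out the existence of the maximum and the bound $\fm(\{f\le m\})\ge\fm(G)/2$, which the paper only asserts as following from maximality.
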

\begin{proof}
    If $\fm(G) = 0$, then the assertion is trivial. Assume that $0<\fm(G)<\infty$ and $f\in L_{\sigma}(G, \fm)$. If $c\le\operatorname{med}_{\fm}(f, G)$, then estimate
    \begin{equation}
        \fm\bigl(\{x\in G: f(x)\ge \operatorname{med}_{\fm}(f, G)\}\bigr) \ge \frac{\fm(G)}{2}
    \end{equation}
    gives
    \begin{equation}
        \fint\limits_{G}|f(y)-c|^{\sigma}d\fm(y)\ge \frac{1}{2}|\operatorname{med}_{\fm}(f, G) - c|^{\sigma}.
    \end{equation}
    If $c\ge\operatorname{med}_{\fm}(f, G)$, then the same bound follows from 
    \begin{equation}
        \fm\bigl(\{x\in G: f(x)\le \operatorname{med}_{\fm}(f, G)\}\bigr) \ge \frac{\fm(G)}{2}.
    \end{equation}
    The latter estimate follows directly from the maximality property in the definition of the median.
    \par
    Property \eqref{eq.median_at_lebesgue_points} follows from \eqref{eq.median_average_relation}. Indeed, choosing $\sigma =1$, we obtain
    \begin{equation}
        |f(x)-\operatorname{med}_{\fm}(f, Q_r(x))| \le 2\fint\limits_{Q_r(x)}|f(y)-f(x)|d\fm(y).
    \end{equation}
    The last quantity tends to $0$ as $r\to 0$ at every Lebesgue point. The proof is complete.
\end{proof}
\par
Next we define the \emph{local best approximation by constants} in $L_{\sigma}(\fm)$. More precisely, given a measure $\fm$, a Borel set $G\subset\mathbb{R}^n$ with $\fm(G)<\infty$, $\sigma \in (0, \infty)$, and $f\in L_{\sigma}(G, \fm)$, we set
\begin{equation}
\label{eq.local_approximation_definition}
    \mathcal{E}_{\fm, \sigma}(f, G) := \inf_{c \in \mathbb{R}}\Bigl(\fint\limits_{G}|f(x)-c|^{\sigma}d\fm(x)\Bigr)^{1/\sigma}.
\end{equation}
Furthermore, we put
\begin{equation}
\label{eq.modified_local_approximation_definition}
    \widetilde{\mathcal{E}}_{\fm, \sigma}(f, Q_r(x)) := \begin{cases}
        \mathcal{E}_{\fm, \sigma}(f, Q_{2r}(x)), \quad \text{if } Q_r(x)\cap\operatorname{supp}\fm \neq \emptyset,\\
        0, \quad\text{otherwise}.
    \end{cases}
\end{equation}
Occasionally we write $\mathcal{E}_{ \sigma}:=\mathcal{E}_{\mathcal{L}^n, \sigma}$, $\mathcal{E}_{ \fm}:=\mathcal{E}_{\fm, 1}$, and $\mathcal{E}:=\mathcal{E}_{\mathcal{L}^n, 1}$. The same notation applies to $\widetilde{\mathcal{E}}_{\fm, \sigma}$. 
\begin{Def}
\label{Def.almost_best_approximating_constant_definition}
    Let $\lambda> 1$. Given $\sigma\in (0, \infty)$, a measure $\fm$, a Borel set $G\subset \mathbb{R}^n$ with $\fm(G)<\infty$, and $f\in L_{\sigma}(G, \fm)$, a number $c\in \mathbb{R}$ is called a $\lambda$-almost best approximating constant of $f$ in $L_{\sigma}(G, \fm)$ if
    \begin{equation}
        \label{eq.almost_best_approximating_constant_definition}
        \Bigl(\fint\limits_{G}|f(x)-c|^{\sigma}d\fm(x)\Bigr)^{1/\sigma} \le \lambda \mathcal{E}_{\fm, \sigma}(f, G).
    \end{equation}
    We denote the set of all $\lambda$-almost best approximating constants of $f$ in $L_{\sigma}(G, \fm)$ by $\mathfrak{C}^{\lambda}_{\fm, \sigma}(f, G)$.
\end{Def}
In the following lemma we show that integral averages and medians provide almost best approximating constants.
\begin{Lm}
    \label{Lm.average_median_approximation_property}
    Let $\mathfrak{m}$ be a measure and let $G\subset \mathbb{R}^n$ be a Borel set with $\fm(G)<\infty$. 
    \begin{enumerate}
        \item For each $\sigma \in [1, \infty)$, there exists $\lambda=\lambda(\sigma)$ such that for every $f\in L_{\sigma}(G, \fm)$, $f_{G, \fm} \in \mathfrak{C}^{\lambda}_{\fm, \sigma}(f, G)$.
        \item For each $\sigma \in (0, \infty)$, there exists $\lambda=\lambda(\sigma)$ such that for every $f\in L_{\sigma}(G, \fm)$, $\operatorname{med}_{\fm}(f, G) \in \mathfrak{C}^{\lambda}_{\fm, \sigma}(f, G)$.
    \end{enumerate}
\end{Lm}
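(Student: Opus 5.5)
The plan is to compare the candidate constant with an arbitrary $c\in\mathbb{R}$ through the quasi-triangle inequality in $L_\sigma(G,\fm)$, and then take the infimum over $c$. We may assume $0<\fm(G)<\infty$. If $\fm(G)=0$, both sides of \eqref{eq.almost_best_approximating_constant_definition} vanish under the conventions \eqref{eq.average_definition}--\eqref{eq.median_value_definition}, and any constant works. Throughout, write
\begin{equation*}
    \|g\|_\sigma:=\Bigl(\fint_G|g|^\sigma d\fm\Bigr)^{1/\sigma}.
\end{equation*}
This is a norm for $\sigma\ge1$ and satisfies $\|g+h\|_\sigma\le 2^{1/\sigma-1}(\|g\|_\sigma+\|h\|_\sigma)$ for $\sigma<1$.

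For part (1), fix $c\in\mathbb{R}$. By Jensen's inequality (equivalently, Hölder's inequality with respect to the probability measure $\fm\lfloor_G/\fm(G)$) we have
\begin{equation*}
    |f_{G,\fm}-c|=\Bigl|\fint_G(f-c)\,d\fm\Bigr|\le\|f-c\|_\sigma .
\end{equation*}
Minkowski's inequality then gives
\begin{equation*}
    \|f-f_{G,\fm}\|_\sigma\le\|f-c\|_\sigma+|c-f_{G,\fm}|\le 2\|f-c\|_\sigma .
\end{equation*}
Taking the infimum over $c$ yields $\|f-f_{G,\fm}\|_\sigma\le 2\,\mathcal{E}_{\fm,\sigma}(f,G)$. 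This already gives $\lambda=2$ for every $\sigma\ge 1$, which in particular satisfies the requirement $\lambda>1$.

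For part (2), we argue in the same way, but replace Jensen's inequality by Lemma~\ref{Lm.meadian_average_relation}, which gives $|\operatorname{med}_\fm(f,G)-c|\le 2^{1/\sigma}\|f-c\|_\sigma$. Before using it, I would check that $\operatorname{med}_\fm(f,G)$ is a well-defined finite number. The set of admissible $c$ in \eqref{eq.median_value_definition} is nonempty, because $\fm(\{f<c\})\to0$ as $c\to-\infty$. It is bounded above, because $\fm(\{f<c\})\to\fm(G)$ as $c\to\infty$ and $f$ is finite $\fm$-a.e. The maximum is attained by continuity from below of $c\mapsto\fm(\{f<c\})$. With this in hand, the quasi-triangle inequality gives
\begin{equation*}
    \|f-\operatorname{med}_\fm(f,G)\|_\sigma\le C_\sigma\bigl(\|f-c\|_\sigma+|c-\operatorname{med}_\fm(f,G)|\bigr)\le C_\sigma(1+2^{1/\sigma})\|f-c\|_\sigma ,
\end{equation*}
with $C_\sigma=\max\{1,2^{1/\sigma-1}\}$. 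Taking the infimum over $c$ gives $\lambda(\sigma)=C_\sigma(1+2^{1/\sigma})>1$.

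None of these steps is a serious obstacle. The only points needing care are the finiteness and attainment of the median maximum (the measurability and continuity argument above), and using the quasi-triangle constant rather than Minkowski's inequality when $\sigma<1$.
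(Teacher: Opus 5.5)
Your proposal is correct and follows essentially the same route as the paper: compare the candidate constant with an arbitrary $c\in\mathbb{R}$ using Jensen's inequality in part (1) and Lemma~\ref{Lm.meadian_average_relation} in part (2), apply the (quasi-)triangle inequality, and take the infimum over $c$. The differences are cosmetic. You work with the normalized quasi-norm rather than with $\sigma$-th powers and obtain explicit constants (e.g.\ $\lambda=2$ in part (1)), and you also check that the maximum defining $\operatorname{med}_{\fm}(f,G)$ exists and is finite, which the paper takes for granted.
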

\begin{proof}
If $\fm(G) = 0$, the desired assertions are trivial. Thus, we assume $0<\fm(G)<\infty$. We prove the first statement. For each $c\in \mathbb{R}$, the elementary power inequality and Jensen's inequality give
    \begin{equation}
        \fint\limits_{G}\bigl|f(x)-f_{G, \fm}\bigr|^{\sigma}d\fm(x) \lesssim \fint\limits_{G}|f(x)-c|^{\sigma}d\fm(x) + |c-f_{G, \fm}|^{\sigma}\lesssim\fint\limits_{G}|f(x)-c|^{\sigma}d\fm(x).
    \end{equation}
    Taking the infimum over $c\in\mathbb{R}$ yields the desired estimate.
    \par
    For the second assertion, we use Lemma~\ref{Lm.meadian_average_relation}. Then for every $c\in \mathbb{R}$,
     \begin{equation}
     \begin{split}
        \fint\limits_{G}\bigl|f(x)-\operatorname{med}_{\fm}(f, G)\bigr|^{\sigma}d\fm(x) &\lesssim \fint\limits_{G}\bigl|f(x)-c\bigr|^{\sigma}d\fm(x) + |\operatorname{med}_{\fm}(f, G)-c|^{\sigma} \\ &\lesssim \fint\limits_{G}|f(x)-c|^{\sigma}d\fm(x).
    \end{split}
    \end{equation}
    Taking the infimum over $c\in\mathbb{R}$ completes the proof.
\end{proof}
\begin{Def}
\label{Def.approximating_operator}
    Let $\lambda> 1$. Given $\sigma\in (0, \infty)$, a measure $\fm$, and a Borel set $G\subset \mathbb{R}^n$ with $\fm(G)<\infty$, we say that $\mathcal{T}:L_{\sigma}(G, \fm)\to\mathbb{R}$ is a $\lambda$-almost best approximating operator if $\mathcal{T}f \in \mathfrak{C}^{\lambda}_{\fm, \sigma}(f, G)$ for every $f\in L_{\sigma}(G, \fm)$.
\end{Def}
\begin{Remark}
\label{Rm.approximating_operator}
    Lemma~\ref{Lm.average_median_approximation_property} provides two examples of almost best approximating operators: integral averages and medians. Note also that an almost best approximating operator need not be linear (e.g., medians).
\end{Remark}
\par
Finally, we recall the concept of Hausdorff measures and contents. Given $d\ge 0$ and $G\subset\mathbb{R}^n$, we put, for each $\delta \in (0, \infty]$,
\begin{equation}
\label{eq.hausdorff_content_definition}
    \mathcal{H}^d_{\delta}(G) := \inf\left\{\sum_{i}r_i^d: G\subset \bigcup_{i}Q_{r_i}(x_i), \quad 0<r_i<\delta\right\},
\end{equation}
where the infimum is taken over all at most countable coverings of $G$ by cubes $\{Q_{r_i}(x_i)\}$. The $d$-Hausdorff measure of $G$ is defined by
\begin{equation}
\label{eq.hausdorff_measure_definition}
    \mathcal{H}^d(G) := \lim_{\delta\to 0}\mathcal{H}^d_{\delta}(G).
\end{equation}
For $G\subset \mathbb{R}^n$, we define the Hausdorff dimension of $G$ by
\begin{equation}
\label{eq.hausdorff_dimension_definition}
    \operatorname{dim}_{H}G:=\inf\{t\ge 0: \mathcal{H}^t(G)=0\}.
\end{equation}
\subsection{Maximal function}
Here we recall the maximal function, one of the important tools used throughout the paper.
\begin{Def}
    Given $f\in L_1^{\loc}(\mathbb{R}^n)$, the symbol $M[f]$ denotes the Hardy--Littlewood maximal function, that is,
    \begin{equation}
        M[f](x) := \sup_{r>0} \fint\limits_{Q_r(x)}|f(y)|dy \qquad \text{for all } x\in\mathbb{R}^n.
    \end{equation}
\end{Def}
The following property of the maximal function is well-known (see, e.g., \cite{Stein}).
\begin{Th}
\label{Th.maximal_function_boundedness}
    Given $p \in (1, \infty]$, there exists $C = C(p)>0$ such that for each $f\in L_p(\mathbb{R}^n)$,
    \begin{equation}
        \|M[f]\|_{L_p(\mathbb{R}^n)} \le C\|f\|_{L_p(\mathbb{R}^n)}.
    \end{equation}
\end{Th}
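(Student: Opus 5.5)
The statement is the Hardy--Littlewood maximal theorem, and I will prove it in the standard way: an $L_\infty$ bound, a weak-type $(1,1)$ bound from a covering lemma, and Marcinkiewicz-type interpolation done by hand via a truncation argument.

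\emph{The endpoint $p=\infty$.} For every cube $Q_r(x)$ we have $\fint_{Q_r(x)}|f|\,dy\le\|f\|_{L_\infty(\mathbb{R}^n)}$. Hence $\|M[f]\|_{L_\infty(\mathbb{R}^n)}\le\|f\|_{L_\infty(\mathbb{R}^n)}$, so $C(\infty)=1$.

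\emph{Weak type $(1,1)$.} I claim that for $f\in L_1(\mathbb{R}^n)$ and $t>0$,
\begin{equation*}
\mathcal{L}^n(\{M[f]>t\})\le \frac{3^n}{t}\|f\|_{L_1(\mathbb{R}^n)}.
\end{equation*}
\begin{itemize}
\item The set $\{M[f]>t\}$ is open, by lower semicontinuity of $M[f]$. So it suffices to bound $\mathcal{L}^n(K)$ for an arbitrary compact $K\subset\{M[f]>t\}$.
\item Each $x\in K$ has a cube $Q_{r_x}(x)$ with $\int_{Q_{r_x}(x)}|f|>t(2r_x)^n$. The interiors of these cubes cover $K$, so finitely many of them do.
\item By the Vitali covering lemma (greedy selection by decreasing size), we extract pairwise disjoint cubes $Q_i$ such that the enlarged cubes $3Q_i$ cover $K$. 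This uses the $\ell_\infty$ metric: if two cubes intersect, the one of larger radius, tripled, contains the other.
\item Then
\begin{equation*}
\mathcal{L}^n(K)\le 3^n\sum_i\mathcal{L}^n(Q_i)\le \frac{3^n}{t}\sum_i\int_{Q_i}|f|\le\frac{3^n}{t}\|f\|_{L_1(\mathbb{R}^n)}.
\end{equation*}
\end{itemize}

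\emph{Interpolation for $1<p<\infty$.} Fix $t>0$ and split $f=f_1+f_2$ with $f_1:=f\mathbbm{1}_{\{|f|>t/2\}}$.
\begin{itemize}
\item Since $|f_2|\le t/2$, sublinearity of $M$ and the $L_\infty$ bound give $\{M[f]>t\}\subset\{M[f_1]>t/2\}$.
\item The function $f_1$ lies in $L_1(\mathbb{R}^n)$, because $|f|^p\ge (t/2)^{p-1}|f|$ on its support. The weak-type estimate therefore yields
\begin{equation*}
\mathcal{L}^n(\{M[f]>t\})\le\frac{2\cdot 3^n}{t}\int_{\{|f|>t/2\}}|f|\,dy.
\end{equation*}
\item Insert this into the layer-cake formula $\|M[f]\|_{L_p}^p=p\int_0^\infty t^{p-1}\mathcal{L}^n(\{M[f]>t\})\,dt$ and use Tonelli. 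This gives
\begin{equation*}
\|M[f]\|_{L_p}^p\le 2\cdot3^n p\int|f(y)|\int_0^{2|f(y)|}t^{p-2}\,dt\,dy=\frac{2^p3^np}{p-1}\|f\|_{L_p}^p.
\end{equation*}
\end{itemize}

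The only nontrivial step is the covering lemma behind the weak-type bound. Measurability of $M[f]$ also needs the remark that averages over closed cubes are approximable by averages over open cubes; alternatively one can simply work with the open set $\{x:\exists r,\ \fint_{Q_r(x)}|f|>t\}$, which is exactly the set used above. As the paper does, one could instead just cite \cite{Stein}.
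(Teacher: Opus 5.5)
Your proof is correct. It is the standard argument: the trivial $L_\infty$ bound, weak type $(1,1)$ via the Vitali covering lemma for $\ell_\infty$-cubes, and Marcinkiewicz interpolation by truncating at height $t/2$, and your constant $\frac{2^p3^np}{p-1}$ is computed correctly. The paper gives no proof of its own and simply cites \cite{Stein}, where essentially this argument appears. Your closing worry about closed versus open cubes is unnecessary: lower semicontinuity follows directly for closed cubes, because $Q_{r+|x-x'|}(x')\supset Q_r(x)$.
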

We also use the generalization of the preceding estimate to the setting of vector-valued functions, that is, the famous Fefferman--Stein inequality (see, e.g., \cite{Fefferman_Stein}).
\begin{Th}
    \label{Th.Fefferman_Stein_inequality}
    Given $p\in (1, \infty)$ and $q \in (1, \infty]$, there exists $C = C(p, q)$ such that for each sequence of Borel functions $\{f_k\}_{k=1}^{\infty}$,
    \begin{equation}
        \Bigl\| \Bigl(\sum_{k=1}^{\infty} (M[f_k](\cdot))^q \Bigr)^{1/q} \Bigr\|_{L_p(\mathbb{R}^n)} \le C  \Bigl\| \Bigl(\sum_{k=1}^{\infty} (f_k(\cdot))^q \Bigr)^{1/q} \Bigr\|_{L_p(\mathbb{R}^n)}
    \end{equation}
    with the usual modification when $q = \infty$.
\end{Th}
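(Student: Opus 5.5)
The plan is to reduce everything to the scalar estimate in Theorem~\ref{Th.maximal_function_boundedness}, together with a weighted inequality, duality, and a vector-valued Calder\'on--Zygmund decomposition plus interpolation. Since $M[f]=M[|f|]$, we may assume throughout that all $f_k\ge 0$.

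\emph{Case $q=\infty$.} For each $k$ we have $M[f_k]\le M[\sup_j f_j]$ pointwise. Hence $\sup_k M[f_k]\le M[\sup_j f_j]$, and Theorem~\ref{Th.maximal_function_boundedness} gives the claim for all $p\in(1,\infty)$.

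\emph{Case $q=p$.} By Tonelli's theorem,
\begin{equation*}
\Bigl\|\Bigl(\sum_k M[f_k]^p\Bigr)^{1/p}\Bigr\|_{L_p}^p=\sum_k\|M[f_k]\|_{L_p}^p .
\end{equation*}
Each term is controlled by Theorem~\ref{Th.maximal_function_boundedness}, and summing back gives the result.

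\emph{Case $1<q<p<\infty$.} The key tool is the Fefferman--Stein weighted inequality
\begin{equation*}
\int_{\mathbb R^n}M[f]^q\,w\,dx\le C\int_{\mathbb R^n}|f|^q\,M[w]\,dx,\qquad w\ge 0 .
\end{equation*}
We prove it by interpolation. The $L_\infty$ endpoint is trivial. The weak-$(1,1)$ endpoint $w(\{M[f]>t\})\le \frac{C}{t}\int|f|\,M[w]\,dx$ follows from the Vitali covering lemma: on a selected cube $Q$ we use $w(5Q)\le |5Q|\inf_{Q}M[w]$ together with $\int_Q|f|>t|Q|$. Then Marcinkiewicz interpolation applies with respect to the measures $w\,dx$ and $M[w]\,dx$.

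Set $r:=(p/q)'$ and take $g\ge 0$ with $\|g\|_{L_r}\le 1$. Then
\begin{equation*}
\int\sum_k M[f_k]^q\,g\le C\int\sum_k f_k^q\,M[g]\le C\Bigl\|\sum_k f_k^q\Bigr\|_{L_{p/q}}\|M[g]\|_{L_r}.
\end{equation*}
Theorem~\ref{Th.maximal_function_boundedness} bounds $\|M[g]\|_{L_r}$, since $r>1$. Taking the supremum over such $g$ finishes this case.

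\emph{Case $1<p<q<\infty$.} We view $\vec f=\{f_k\}$ as $\ell_q$-valued and show that $\vec{M}\vec f:=\{M[f_k]\}$ is of weak type $(1,1)$ from $L_1(\ell_q)$ to $L_{1,\infty}(\ell_q)$. To do this, fix $t>0$ and take the Calder\'on--Zygmund decomposition of the scalar function $\|\vec f\|_{\ell_q}$ at height $t$. This yields disjoint dyadic cubes $Q_j$ and the splitting $\vec f=\vec g+\vec b$. Here $\vec g$ equals $\vec f$ off $\bigcup_j Q_j$ and the averages $\{(f_k)_{Q_j}\}_k$ on each $Q_j$. The good part satisfies $\|\vec g\|_{\ell_q}\le Ct$ by Minkowski's inequality, so it is handled by the already established $L_q(\ell_q)$ bound and Chebyshev's inequality. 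For the bad part, outside $\bigcup_j 2Q_j$ one has the pointwise bound
\begin{equation*}
M[b_k](x)\lesssim M\Bigl[\sum_j (f_k)_{Q_j}\chi_{Q_j}\Bigr](x)\quad\text{(up to a fixed dilation)},
\end{equation*}
because any cube centred at $x$ that meets $Q_j$ essentially contains it. So $\vec{M}\vec b$ is again dominated by a maximal operator applied to a function whose $\ell_q$-norm is bounded by $Ct$, and we reuse the $L_q(\ell_q)$ estimate; the exceptional set $\bigcup_j 2Q_j$ has measure $\lesssim t^{-1}\|\vec f\|_{L_1(\ell_q)}$. Finally, Marcinkiewicz interpolation between weak $(1,1)$ and strong $(q,q)$ (valid for Banach-valued sublinear operators) gives every $p\in(1,q)$.

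I expect this last case to be the main obstacle: it needs the careful treatment of the bad part away from the exceptional cubes, and the vector-valued interpolation must be justified. An alternative for this range is to obtain it by duality from the case $p>q$, using the adjoint linearization of the maximal operator, but I would try the Calder\'on--Zygmund route first.
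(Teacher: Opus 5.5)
Your proof is correct. The paper gives no argument for this statement; it imports it as a black box from \cite{Fefferman_Stein}. Your four cases are the classical Fefferman--Stein argument:
\begin{itemize}
\item the trivial cases $q=\infty$ and $q=p$;
\item for $q<p$, the weighted bound $\int M[f]^q w\,dx\le C\int|f|^qM[w]\,dx$ combined with $L_{p/q}$-duality;
\item for $p<q$, a weak-$(1,1)$ estimate in $L_1(\ell_q)$ from a Calder\'on--Zygmund decomposition of $\|\vec f\|_{\ell_q}$, followed by vector-valued Marcinkiewicz interpolation.
\end{itemize}
The only real difference is that your version makes the paper self-contained modulo Theorem~\ref{Th.maximal_function_boundedness}, while the paper saves a page by citing.

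The step you flagged as the main obstacle works as you describe, and it can be shortened. Write $Q_j=Q_{\rho_j}(c_j)$ and suppose $x\notin\bigcup_i 2Q_i$ and $Q_r(x)\cap Q_j\neq\emptyset$. Then $r>\rho_j$ and $Q_j\subset Q_{3r}(x)$. Since $f_k\ge0$, $f_k=g_k$ off $\bigcup_iQ_i$, and $\int_{Q_j}f_k=\int_{Q_j}g_k$, you get $\int_{Q_r(x)}f_k\le\int_{Q_{3r}(x)}g_k$. Hence $M[f_k](x)\le 3^nM[g_k](x)$ off $\bigcup_i2Q_i$, and the bad part never has to be treated separately. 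Your own bound via $\int_{Q_j}|b_k|\le2\int_{Q_j}f_k$ is also fine; note that this is exactly where the reduction to $f_k\ge0$ is used.

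Two cosmetic points in the weighted lemma:
\begin{itemize}
\item For the centred maximal function used in the paper, $w(5Q)\le|5Q|\inf_QM[w]$ holds only up to a dimensional factor. For $y\in Q=Q_\rho(c)$ one only has $5Q\subset Q_{6\rho}(y)$.
\item The Vitali selection needs bounded radii. The easiest way to guarantee this is to first prove the inequality for bounded compactly supported $f$, then let $\min\{|f|,N\}\chi_{Q_N(0)}\uparrow|f|$ and pass to the limit by monotone convergence.
\end{itemize}
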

\subsection{Besov and Lizorkin--Triebel spaces} There are several equivalent approaches to the definition of Besov and Lizorkin--Triebel spaces. For our purposes, it is convenient to use a definition based on fractional gradients.
\begin{Def}
    Let $s\in (0, 1)$ and let $f:\mathbb{R}^n\to\mathbb{R}$ be a Borel function. A sequence of nonnegative Borel functions $\vec{g} :=\{g_k\}_{k\in\mathbb{Z}}$ is called a fractional $s$-Haj\l{}asz gradient of $f$ if there exists an $\mathcal{L}^n$-negligible set $N\subset\mathbb{R}^n$ such that for each $k\in\mathbb{Z}$ and all $x, y\in\mathbb{R}^n\setminus N$ satisfying $2^{-k-1}\le |x-y|<2^{-k}$
    \begin{equation}
    \label{eq.fractional_gradient_definition}
        |f(x)-f(y)|\le |x-y|^s(g_k(x)+g_k(y)).
    \end{equation}
    We denote the set of all fractional $s$-Haj\l{}asz gradients of $f$ by $\mathbb{D}^s(f)$.
\end{Def}
In what follows, for $p, q\in (0, \infty]$ and a sequence $\vec{g}$ of Borel functions, we set
\begin{equation}
    \|\vec{g}\|_{\ell_q(L_p(\mathbb{R}^n))}:=\|\{\|g_k\|_{L_p(\mathbb{R}^n)}\}_{k}\|_{\ell_q}, \qquad \|\vec{g}\|_{L_p(\mathbb{R}^n, \ell_q)}:=\|\|\{g_k\}_{k}\|_{\ell_q}\|_{L_p(\mathbb{R}^n)}.
\end{equation}
\begin{Def}
    Given $s\in (0, 1)$, $p, q \in (0, \infty]$, the Besov space $B^s_{p, q}(\mathbb{R}^n)$ consists of all $f\in L_p(\mathbb{R}^n)$ such that
    \begin{equation}
    \label{eq.besov_seminorm_definition}
        \|f\|_{b^s_{p, q}(\mathbb{R}^n)}:=\inf_{\vec{g}\in \mathbb{D}^s(f)} \|\vec{g}\|_{\ell_q(L_p(\mathbb{R}^n))} <\infty.
    \end{equation}
    We equip this space with the quasi-norm
    \begin{equation}
    \label{eq.besov_norm_definition}
        \|f\|_{B^s_{p, q}(\mathbb{R}^n)}:=\|f\|_{L_p(\mathbb{R}^n)} + \|f\|_{b^s_{p, q}(\mathbb{R}^n)}, \qquad f\in B^s_{p, q}(\mathbb{R}^n)
    \end{equation}
\end{Def}
\begin{Def}
    Given $s\in (0, 1)$, $p\in (0, \infty)$, and $q \in (0, \infty]$, the Lizorkin--Triebel space $F^s_{p, q}(\mathbb{R}^n)$ consists of all $f\in L_p(\mathbb{R}^n)$ such that
    \begin{equation}
    \label{eq.lizorkin_triebel_seminorm_definition}
        \|f\|_{f^s_{p, q}(\mathbb{R}^n)}:=\inf_{\vec{g}\in \mathbb{D}^s(f)}\|\vec{g}\|_{L_p(\mathbb{R}^n, \ell_q)}.
    \end{equation}
    We equip this space with the quasi-norm
    \begin{equation}
    \label{eq.lizorkin_triebel_norm_definition}
        \|f\|_{F^s_{p, q}(\mathbb{R}^n)}:=\|f\|_{L_p(\mathbb{R}^n)} + \|f\|_{f^s_{p, q}(\mathbb{R}^n)}, \qquad f\in F^s_{p, q}(\mathbb{R}^n)
    \end{equation}
\end{Def}
We refer the interested reader to \cite{AKZ, kosk_Yang_Zhou} and the references therein for a detailed discussion of fractional gradients. In particular, by \cite[Theorem~1.2 and Theorem~3.1]{AKZ}, the definitions of Besov and Lizorkin--Triebel spaces introduced above are equivalent to the corresponding oscillation-based definitions, with equivalent quasi-norms, for the whole range of $p, q$, and $s$. If additionally $p\in (\frac{n}{n+s}, \infty)$ in the Besov case and $p\in (\frac{n}{n+s}, \infty)$, $q\in (\frac{n}{n+s}, \infty]$ in the Lizorkin--Triebel case, then the Fourier-analytical approach gives the same spaces. Note also that the Besov and Lizorkin--Triebel spaces introduced above are usually called Haj\l{}asz--Besov and Haj\l{}asz--Lizorkin--Triebel spaces and denoted by $N^s_{p, q}(\mathbb{R}^n)$ and $M^s_{p, q}(\mathbb{R}^n)$, respectively. Since these spaces are linearly isomorphic to the corresponding oscillation-based spaces, we will keep the notation $B^s_{p, q}(\mathbb{R}^n)$ and $F^s_{p, q}(\mathbb{R}^n)$.
\begin{Remark}
\label{Rm.fractional_gradient=_truncation}
Let $s\in (0, 1)$, $p, q\in (0, \infty]$. Since we work with inhomogeneous Besov and Lizorkin--Triebel spaces, we may truncate the fractional gradients under consideration. More precisely, assume that $k_0\in \mathbb{Z}$ is fixed. Given $f\in L_p(\mathbb{R}^n)$, we say that a sequence of nonnegative Borel functions $\vec{h}:=\{h_k\}_{k>k_0}$ is a $k_0$-truncated fractional $s$-Haj\l{}asz gradient of $f$ if there exists an $\mathcal{L}^n$-negligible set $N\subset \mathbb{R}^n$ such that \eqref{eq.fractional_gradient_definition} holds for every $k> k_0$ and all $x, y\in\mathbb{R}^n\setminus N$ satisfying $2^{-k-1}\le |x-y|<2^{-k}$. We denote the set of all $k_0$-truncated fractional $s$-Haj\l{}asz gradients of $f$ by $\mathbb{D}_{k_0}^s(f)$. Given $\vec{h} \in \mathbb{D}_{k_0}^s(f)$, we define the fractional gradient $\vec{g}\in \mathbb{D}^s(f)$ by setting $g_k:=2^{(k+1)s}|f|$ for each $k\le k_0$ and $g_k:=h_k$ for each $k>k_0$. Then
    \begin{equation}
         \|\vec{g}\|_{\ell_q(L_p(\mathbb{R}^n))} \approx \|\vec{h}\|_{\ell_q(L_p(\mathbb{R}^n))}+\|f\|_{L_p(\mathbb{R}^n)}
    \end{equation}
    with implicit constants depending only on $s$, $q$, and $k_0$.
    Conversely, every $\vec{g} \in \mathbb{D}^s(f)$ gives a truncated gradient by restriction. Consequently, for each $f\in B^s_{p, q}(\mathbb{R}^n)$
    \begin{equation}
        \|f\|_{B^s_{p, q}(\mathbb{R}^n)}\approx \|f\|_{L_p(\mathbb{R}^n)} + \inf_{\vec{h}\in \mathbb{D}^s_{k_0}(f)}\|\vec{h}\|_{\ell_q(L_p(\mathbb{R}^n))}.
    \end{equation}
    \par
    If $p<\infty$, an analogous equivalence holds for Lizorkin--Triebel spaces, with $\ell_q(L_p(\mathbb{R}^n))$ replaced by $L_p(\mathbb{R}^n, \ell_q)$.
\end{Remark}
\par
We shall repeatedly use the following discrete Hardy inequality. We refer the interested reader to \cite{kuf} for the general form of Hardy's inequality and related topics.
\begin{Th}
    \label{Th.Hardy_inequality}
    Let $q\in(0,\infty]$, $\sigma\in(0,q]$, and $\alpha>0$.
    Given a sequence of real numbers $\{a_k\}_{k\in\mathbb Z}$, put
    \begin{equation}
        b_k^+:=\|\{a_j\}_{j\ge k}\|_{\ell_\sigma},
        \qquad
        b_k^-:=\|\{a_j\}_{j\le k}\|_{\ell_\sigma}.
    \end{equation}
    Then
    \begin{equation}
        \label{eq.Hardy_inequality}
        \|\{2^{k\alpha}b_k^+\}_{k\in\mathbb{Z}}\|_{\ell_q}
        \le C\|\{2^{k\alpha}a_k\}_{k\in\mathbb{Z}}\|_{\ell_q},
    \end{equation}
    and
    \begin{equation}
        \label{eq.dual_Hardy_inequality}
        \|\{2^{-k\alpha}b_k^-\}_{k\in\mathbb{Z}}\|_{\ell_q}
        \le C\|\{2^{-k\alpha}a_k\}_{k\in\mathbb{Z}}\|_{\ell_q}.
    \end{equation}
\end{Th}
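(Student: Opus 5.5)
The plan is to reduce everything to the case $q=\infty$ or $\sigma=q$ by elementary $\ell_\sigma$ manipulations, and then treat the general case by splitting $a_k$ into dyadic-weighted blocks. Without loss of generality $a_k\ge 0$. I prove only \eqref{eq.Hardy_inequality}. The dual inequality \eqref{eq.dual_Hardy_inequality} then follows by the reflection $k\mapsto -k$: this turns $b_k^-$ into $b_{-k}^+$ built from the reflected sequence $\{a_{-j}\}$, and turns $2^{-k\alpha}$ into $2^{k\alpha}$.

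\textbf{Case $q<\infty$.} Since $\sigma\le q$, set $r:=q/\sigma\ge 1$ and $c_j:=a_j^\sigma$. The claim becomes
\begin{equation*}
\sum_k 2^{k\alpha q}\Bigl(\sum_{j\ge k}c_j\Bigr)^{r}\le C\sum_k 2^{k\alpha q}c_j^{r}\Big|_{j=k}.
\end{equation*}
Fix $\varepsilon\in(0,\alpha\sigma)$ and write $c_j=2^{-j\varepsilon}\cdot 2^{j\varepsilon}c_j$. If $r>1$, apply H\"older with exponents $r$ and $r'$:
\begin{equation*}
\Bigl(\sum_{j\ge k}c_j\Bigr)^r\le\Bigl(\sum_{j\ge k}2^{-j\varepsilon r'}\Bigr)^{r/r'}\sum_{j\ge k}2^{j\varepsilon r}c_j^r\lesssim 2^{-k\varepsilon r}\sum_{j\ge k}2^{j\varepsilon r}c_j^r.
\end{equation*}
If $r=1$, the same bound holds trivially with $\varepsilon=0$. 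Multiplying by $2^{k\alpha q}$, summing over $k$, and interchanging the order of summation gives
\begin{equation*}
\sum_j 2^{j\varepsilon r}c_j^r\sum_{k\le j}2^{k(\alpha q-\varepsilon r)}.
\end{equation*}
Since $\varepsilon r<\alpha\sigma r=\alpha q$, the exponent $\alpha q-\varepsilon r$ is positive, so the inner geometric sum is $\lesssim 2^{j(\alpha q-\varepsilon r)}$. This yields $\sum_j 2^{j\alpha q}a_j^q$, with $C=C(\alpha,q,\sigma)$.

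\textbf{Case $q=\infty$.} Let $K:=\sup_j 2^{j\alpha}a_j$. Then
\begin{equation*}
(b_k^+)^\sigma\le K^\sigma\sum_{j\ge k}2^{-j\alpha\sigma}\lesssim K^\sigma 2^{-k\alpha\sigma},
\end{equation*}
which is the claim for finite $\sigma$. When $\sigma=q=\infty$, $b_k^+=\sup_{j\ge k}a_j$ and the claim is immediate since $2^{k\alpha}\le 2^{j\alpha}$ for $j\ge k$.

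Every step is routine. The only point needing care is choosing the auxiliary exponent $\varepsilon$ strictly between $0$ and $\alpha\sigma$ so that both geometric series converge. This is exactly where the hypotheses $\alpha>0$ and $\sigma\le q$ (which give $r\ge1$ and make H\"older available) enter.
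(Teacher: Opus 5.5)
Your proof is correct, and it takes a different route from the paper, which gives no proof of this theorem at all and only refers to the general discrete Hardy inequality in the literature \cite{kuf}. Your argument is self-contained and uses the fact that the weights are geometric. The substitution $c_j=a_j^\sigma$ turns the $\ell_\sigma$-tail into a plain tail sum with outer exponent $r=q/\sigma\ge 1$. H\"older with the auxiliary exponent $\varepsilon\in(0,\alpha\sigma)$, followed by Tonelli and two convergent geometric sums, then closes the estimate with an explicit constant $C(\alpha,q,\sigma)$. The $q=\infty$ cases, including $\sigma=\infty$, are handled correctly, and the reflection $k\mapsto -k$ correctly turns $b_k^-$ into $b_{-k}^+$ for the reflected sequence.

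What your approach buys is a short, explicit proof that covers the whole stated range uniformly, including $q<1$; after the substitution $c_j=a_j^\sigma$, that range is no harder than $q\ge 1$. What the citation buys is generality in the weights, which is not needed here, since only geometric weights occur in the paper.

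Two small remarks. First, your opening sentence announces a reduction to $\sigma=q$ and a splitting into dyadic-weighted blocks, but you never carry either out. Replace it with a description of the H\"older argument you actually give. Second, the hypothesis $\sigma\le q$ is needed only for your method, not for the result. For $\sigma>q$ one has $\|\{a_j\}_{j\ge k}\|_{\ell_\sigma}\le\|\{a_j\}_{j\ge k}\|_{\ell_q}$ because $\ell_q\hookrightarrow\ell_\sigma$, so that case follows immediately from the case $\sigma=q$.
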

\begin{Ca}
    \label{Ca.geometric_discrete_convolution}
    Let $q\in(0,\infty]$ and $\alpha>0$. Then for every
    sequence of nonnegative numbers $\{a_k\}_{k\in\mathbb Z}$,
    \begin{equation}
    \label{eq.geometric_discrete_convolution1}
        \Bigl\|
        \Bigl\{
        \sum_{j=k}^{\infty}2^{-(j-k)\alpha}a_j
        \Bigr\}_{k\in\mathbb Z}
        \Bigr\|_{\ell_q}
        \lesssim
        \|\{a_k\}_{k\in \mathbb{Z}}\|_{\ell_q},
    \end{equation}
    and
    \begin{equation}
    \label{eq.geometric_discrete_convolution2}
        \Bigl\|
        \Bigl\{
        \sum_{j=-\infty}^{k}2^{-(k-j)\alpha}a_j
        \Bigr\}_{k\in\mathbb Z}
        \Bigr\|_{\ell_q}
        \lesssim
        \|\{a_k\}_{k\in\mathbb{Z}}\|_{\ell_q}.
    \end{equation}
\end{Ca}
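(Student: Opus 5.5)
The plan is to derive Corollary~\ref{Ca.geometric_discrete_convolution} from the Hardy inequality, Theorem~\ref{Th.Hardy_inequality}, by a trivial rescaling. Nothing depends on $\sigma$ beyond $\sigma\in(0,q]$, so I choose $\sigma:=\min\{q,1\}$ in order to control the $\ell_1$-type sums.

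For \eqref{eq.geometric_discrete_convolution1}, fix $\beta\in(0,\alpha)$ (say $\beta=\alpha/2$) and put $c_j:=2^{-j\alpha}a_j$. Then
\begin{equation*}
\sum_{j\ge k}2^{-(j-k)\alpha}a_j = 2^{k\alpha}\sum_{j\ge k}c_j .
\end{equation*}
This form suggests using \eqref{eq.Hardy_inequality} with weight exponent $\alpha$ and the sequence $\{c_j\}$. The difficulty is that $b_k^+$ is an $\ell_\sigma$-norm, not an $\ell_1$-sum, so I distinguish two cases.

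If $q\ge1$, take $\sigma=1$. Since $a_j\ge0$, we have $b_k^+=\sum_{j\ge k}c_j$ exactly, and \eqref{eq.Hardy_inequality} gives
\begin{equation*}
\Bigl\|\Bigl\{2^{k\alpha}\sum_{j\ge k}c_j\Bigr\}\Bigr\|_{\ell_q}\lesssim\|\{2^{k\alpha}c_k\}\|_{\ell_q}=\|\{a_k\}\|_{\ell_q},
\end{equation*}
which is the claim.

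If $q<1$, take $\sigma=q$. Then
\begin{equation*}
\sum_{j\ge k}c_j\le\Bigl(\sum_{j\ge k}c_j^q\Bigr)^{1/q}=b_k^+
\end{equation*}
by the embedding $\ell_q\subset\ell_1$, and the same application of \eqref{eq.Hardy_inequality} concludes. Alternatively, one can avoid the theorem altogether here: the $q$-triangle inequality gives
\begin{equation*}
\Bigl(\sum_{j\ge k}2^{-(j-k)\alpha}a_j\Bigr)^q\le\sum_{j\ge k}2^{-(j-k)\alpha q}a_j^q ,
\end{equation*}
and summing over $k$ and using Fubini bounds the left side of \eqref{eq.geometric_discrete_convolution1} by $(1-2^{-\alpha q})^{-1/q}\|\{a_k\}\|_{\ell_q}$. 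This argument is self-contained, so it serves as a sanity check.

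Estimate \eqref{eq.geometric_discrete_convolution2} is symmetric. Set $c_j:=2^{j\alpha}a_j$, so that
\begin{equation*}
\sum_{j\le k}2^{-(k-j)\alpha}a_j=2^{-k\alpha}\sum_{j\le k}c_j .
\end{equation*}
Then apply \eqref{eq.dual_Hardy_inequality} with $b_k^-$, using the same choice of $\sigma$ and the same comparison of $\ell_1$ with $\ell_\sigma$. The weights cancel, since $2^{-k\alpha}c_k=a_k$. One could also substitute $k\mapsto -k$ to reduce to the first estimate.

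The case $q=\infty$ is immediate: the sums are bounded by $\|a\|_{\ell_\infty}\sum_{m\ge0}2^{-m\alpha}$. The main point needing care is the mismatch between the $\ell_1$ sum and the $\ell_\sigma$ quantity in Theorem~\ref{Th.Hardy_inequality}. The choice $\sigma=\min\{q,1\}$ resolves it, using nonnegativity of the $a_j$ together with the embedding $\ell_\sigma\subset\ell_1$ for $\sigma\le1$. The implicit constants depend only on $q$ and $\alpha$.
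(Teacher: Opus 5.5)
Your proof is correct and follows the paper's. For $q\ge1$ the estimate is the Hardy inequality with $\sigma=1$ after the rescaling $c_j=2^{-j\alpha}a_j$ (resp.\ $2^{j\alpha}a_j$). For $q<1$ the paper uses exactly your ``sanity check'' subadditivity-plus-Fubini computation, while your primary route (Hardy with $\sigma=q$ plus $\ell_q\subset\ell_1$) is an equally valid minor variant; the auxiliary $\beta\in(0,\alpha)$ you fix is never used and can be dropped.
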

\begin{proof}
    We prove the first assertion. The second one can be established by the same argument. If $q\ge 1$, then \eqref{eq.geometric_discrete_convolution1} is precisely the Hardy inequality. For $q<1$, we use the subadditivity of the function $t\mapsto t^q$, $t\ge 0$. Then
    \begin{equation}
        \sum_{k\in\mathbb{Z}}\Bigl(\sum_{j=k}^{\infty}2^{-(j-k)\alpha}a_j\Bigr)^q \le \sum_{j\in\mathbb{Z}}a_j^q\sum_{k=-\infty}^{j}2^{-(j-k)\alpha q} \approx\sum_{j\in\mathbb{Z}}a_j^q.
    \end{equation}
\end{proof}
\begin{Remark}
    \label{Rm.geometric_discrete_convolution_half_line}
    The estimates of Corollary~\ref{Ca.geometric_discrete_convolution}
    remain valid, with the same constant, on every discrete half-line
    $\{k\in\mathbb{Z}:k\ge \underline{k}\}$. More precisely,
    \begin{equation}
        \Bigl\|
        \Bigl\{
        \sum_{j=k}^{\infty}2^{-(j-k)\alpha}a_j
        \Bigr\}_{k\ge \underline{k}}
        \Bigr\|_{\ell_q}
        \lesssim
        \|\{a_k\}_{k\ge \underline{k}}\|_{\ell_q},
    \end{equation}
    and
    \begin{equation}
        \Bigl\|
        \Bigl\{
        \sum_{j=\underline{k}}^{k}2^{-(k-j)\alpha}a_j
        \Bigr\}_{k\ge \underline{k}}
        \Bigr\|_{\ell_q}
        \lesssim
        \|\{a_k\}_{k\ge\underline{k}}\|_{\ell_q}.
    \end{equation}
    Indeed, it suffices to extend the sequence $\{a_k\}_{k\ge \underline{k}}$
    by zero to all $k\in\mathbb{Z}$ and apply
    Corollary~\ref{Ca.geometric_discrete_convolution}.
\end{Remark}
\par
In Section~\ref{Section.direct_trace_theorem} we shall use fractional gradients satisfying an additional almost-monotonicity property. More precisely, we say that a sequence of Borel functions $\{g_k\}_{k\in \mathbb{Z}}$ is \emph{$\varepsilon$-nonincreasing} if $g_j(x) \le 2^{(j-k)\varepsilon}g_k(x)$ for every $x\in \mathbb{R}^n$ and all $j\ge k$.
\begin{Lm}
    \label{Lm.monotonic_fractional_gradient} 
    Let $s\in (0, 1)$, $p\in [1, \infty]$, $q \in (0, \infty]$, and let $\varepsilon>0$. Given a Borel function $f:\mathbb{R}^n\to \mathbb{R}$, for each $\vec{g}\in \mathbb{D}^s(f)$ there exists an $\varepsilon$-nonincreasing fractional gradient $\vec{h}\in\mathbb{D}^s(f)$ such that
    \begin{equation}
    \label{eq.monotonic_fractional_gradient_triebel}
        \|\{h_k(x)\}_{k\in\mathbb{Z}}\|_{\ell_q} \le C \|\{g_k(x)\}_{k\in\mathbb{Z}}\|_{\ell_q}
    \end{equation}
    for all $x\in\mathbb{R}^n$.
    Furthermore,
    \begin{equation}
    \label{eq.monotonic_fractional_gradient_besov}
        \|\vec{h}\|_{\ell_q(L_p(\mathbb{R}^n))} \le C \|\vec{g}\|_{\ell_q(L_p(\mathbb{R}^n))},
    \end{equation}
   where $C$ may depend on $\varepsilon, p, q$, but is independent of $f, \vec{g}$, and $\vec{h}$.
\end{Lm}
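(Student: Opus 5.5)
The plan is to define $\vec h$ by a geometric maximal construction,
\begin{equation*}
    h_k(x) := \sup_{i\le k} 2^{(k-i)\varepsilon'} g_i(x)\quad\text{(no)},
\end{equation*}
but first we have to get the direction of the weights right. We need $h_j\le 2^{(j-k)\varepsilon}h_k$ for $j\ge k$, so $h$ may grow at most geometrically as $k$ increases. The natural choice is
\begin{equation*}
    h_k(x) := \sup_{i\in\mathbb{Z}} 2^{-|k-i|\varepsilon} g_i(x),
\end{equation*}
or more economically $h_k:=\sup_{i\ge k}2^{-(i-k)\varepsilon}g_i + \sup_{i<k}2^{-(k-i)\varepsilon}g_i$. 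The two-sided version is cleanest. We have $h_k\ge g_k$ (take $i=k$), so $\vec h$ is again a fractional $s$-Haj\l{}asz gradient of $f$, with the same exceptional set $N$: for $2^{-k-1}\le|x-y|<2^{-k}$ we get $|f(x)-f(y)|\le|x-y|^s(g_k(x)+g_k(y))\le |x-y|^s(h_k(x)+h_k(y))$. Each $h_k$ is Borel as a countable supremum of Borel functions.

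Next comes $\varepsilon$-monotonicity. For $j\ge k$ and every $i$, the triangle inequality $|k-i|\le |j-i|+(j-k)$ gives $2^{-|j-i|\varepsilon}\le 2^{(j-k)\varepsilon}2^{-|k-i|\varepsilon}$. Taking suprema over $i$ yields $h_j(x)\le 2^{(j-k)\varepsilon}h_k(x)$ pointwise, as required.

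For the norm estimates, we bound the supremum by a sum. If $q\ge1$ we use $h_k\le \sum_i 2^{-|k-i|\varepsilon}g_i$; if $q<1$ we use $h_k^q\le\sum_i 2^{-|k-i|\varepsilon q}g_i^q$. Either way, the pointwise bound \eqref{eq.monotonic_fractional_gradient_triebel} follows from Corollary~\ref{Ca.geometric_discrete_convolution}. Split the sum into $i\ge k$ and $i\le k$, apply \eqref{eq.geometric_discrete_convolution1} and \eqref{eq.geometric_discrete_convolution2} with $\alpha=\varepsilon$ to the sequence $\{g_i(x)\}_i$ at each fixed $x$, and use the quasi-triangle inequality in $\ell_q$. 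For $q=\infty$ the bound is immediate.

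For \eqref{eq.monotonic_fractional_gradient_besov}, first take $L_p$ norms. Since $p\ge1$, Minkowski's inequality gives
\begin{equation*}
    \|h_k\|_{L_p}\le\Bigl\|\sum_i2^{-|k-i|\varepsilon}g_i\Bigr\|_{L_p}\le\sum_i 2^{-|k-i|\varepsilon}\|g_i\|_{L_p}.
\end{equation*}
Applying Corollary~\ref{Ca.geometric_discrete_convolution} to $a_i:=\|g_i\|_{L_p}$ then gives the $\ell_q$ bound for every $q\in(0,\infty]$. This is precisely where the hypothesis $p\ge1$ enters. The only delicate point is this Besov step: the pointwise estimate alone does not control $\ell_q(L_p)$, so one must pass from the supremum to the sum before taking $L_p$ norms. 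Once that is done, everything else is a direct application of the discrete Hardy-type inequality. The constants depend only on $\varepsilon$ and $q$ (and on $p$ only formally).
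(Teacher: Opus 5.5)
Your proof is correct and is essentially the paper's argument: the paper takes the one-sided sum $h_k=\sum_{j\ge k}2^{-(j-k)\varepsilon}g_j$, while your two-sided supremum $\sup_{i}2^{-|k-i|\varepsilon}g_i$ is dominated by the corresponding two-sided sum, so both the pointwise bound and the $\ell_q(L_p)$ bound reduce to the same Minkowski step plus Corollary~\ref{Ca.geometric_discrete_convolution}. The coarser-scale terms $i<k$ are not needed for $\varepsilon$-monotonicity, since the one-sided $\sup_{i\ge k}2^{-(i-k)\varepsilon}g_i$ already satisfies it, and the aborted first display marked ``(no)'' should be deleted.
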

\begin{proof}
    Given $\vec{g} \in \mathbb{D}^s$, put $h_k:=2^{k\varepsilon}\sum_{j=k}^{\infty}2^{-j\varepsilon}g_j$, $k\in \mathbb{Z}$.
    Then for every $j\ge k$,
    \begin{equation}
        h_j = 2^{(j-k)\varepsilon}2^{k\varepsilon}\sum_{i=j}^{\infty}2^{-i\varepsilon}g_i \le  2^{(j-k)\varepsilon}h_k.
    \end{equation}
    Since $g_k(x)\le h_k(x)$ for all $(x, k)\in\mathbb{R}^n\times \mathbb{Z}$, we have $\vec{h}\in\mathbb{D}^s(f)$. Moreover, Corollary~\ref{Ca.geometric_discrete_convolution} gives
    \begin{equation}
        \|\{h_k(x)\}_{k\in\mathbb{Z}}\|_{\ell_q} = \Bigl\|\Bigl\{\sum_{j=k}^{\infty}2^{-(j-k)\varepsilon}g_j(x)\Bigr\}_{k\in\mathbb{Z}}\Bigr\|_{\ell_q} \lesssim \|\{g_k(x)\}_{k\in\mathbb{Z}}\|_{\ell_q}
    \end{equation}
    for all $x\in \mathbb{R}^n$. The second estimate can be obtained by the same argument combined with Minkowski's inequality. Indeed, for each $k\in \mathbb{Z}$,
    \begin{equation}
        \|h_k\|_{L_p(\mathbb{R}^n)}\le 2^{k\varepsilon}\sum_{j=k}^{\infty}2^{-j\varepsilon}\|g_j\|_{L_p(\mathbb{R}^n)}.
    \end{equation}
    Consequently, Corollary~\ref{Ca.geometric_discrete_convolution} gives \eqref{eq.monotonic_fractional_gradient_besov}.
\end{proof}
\subsection{Thick sets and regular sequences of measures}
Following \cite{tyul, tyulenev_thick_sets_R_n}, we recall the notion of a $d$-thick set.
\begin{Def}
    Given $d\in [0, n]$, a set $E \subset \mathbb{R}^n$ is said to be $d$-thick if there exists a constant $C_d(E)>0$ such that
    \begin{equation}
    \label{eq.d_thick_set_definition}
        \mathcal{H}^d_{\infty}(E\cap Q_r(x)) \ge C_d(E)r^d \qquad \text{for all } (x, r) \in E\times (0, 1].
    \end{equation}
    We write $\theta:=n-d$ for the corresponding codimension.
\end{Def}
Recall also the following standard notation.
\begin{Def}
    Given $d\in [0, n]$, a closed set $E\subset\mathbb{R}^n$ is called Ahlfors--David $d$-regular if there exist $C_1, C_2>0$ such that
    \begin{equation}
        \label{eq.regular_sets_definition}
        C_1r^d \le \mathcal{H}^d(Q_r(x)\cap E)\le C_2r^d \qquad \text{for all } (x, r) \in E \times (0, 1].
    \end{equation}
\end{Def}
The class of $d$-thick sets is fairly broad (see \cite{tyul, tyulenev_thick_sets_R_n}). First, every Ahlfors--David $d$-regular set is $d$-thick. Moreover, if $E = \bigcup_{j=1}^N E_j$ with $E_j$ being Ahlfors--David $d_j$-regular, then $E$ is $d := \min_j d_j$-thick. There are also more general examples: every path-connected $E\subset \mathbb{R}^n$ is $1$-thick and every $(\varepsilon, \delta)$-domain $\Omega \subset \mathbb{R}^n$ is $n$-thick. For a more detailed discussion of different examples, see Section~\ref{section.Examples}.
\begin{Def}
    Given $d\in [0, n]$ and a closed $d$-thick set $E \subset \mathbb{R}^n$, a sequence of Borel measures $\{\mathfrak{m}_k\}_{k=0}^{\infty}$ is called $d$-regular if $\operatorname{supp}\mathfrak{m}_k = E$ for every $k\in \mathbb{N}_0$, and there exist constants $C_1, C_2, C_3>0$ such that the following conditions hold for every $k\in \mathbb{N}_0$:
    \begin{conditions}{\textbf{M}}
    \item\label{cond:M1} $\fm_k(Q_r(x))\le C_1r^d$ for all $(x, r) \in \mathbb{R}^n\times (0, 2^{-k}]$;
    \item\label{cond:M2} $\fm_k(Q_r(x))\ge C_2r^d$ for all $(x, r) \in E\times[2^{-k}, 1]$;
    \item\label{cond:M3} $\fm_k = \gamma_k\fm_0$, where $\gamma_k\in L_{\infty}(\fm_0)$, and, for each $j\in \mathbb{N}_0$,
    \begin{equation}
    \label{eq.weights_of_regular_measures_definition}
        \frac{2^{(d-n)j}}{C_3}\gamma_{k+j}(x)\le \gamma_k(x)\le C_3\gamma_{k+j}(x) \qquad \text{for $\fm_0$-a.e. } x\in E.
    \end{equation}
\end{conditions}
\end{Def}
An important result on thick sets asserts that every closed $d$-thick set $E\subset\mathbb{R}^n$ admits a $d$-regular sequence of measures (see \cite{tyul, tyulenev_thick_sets_R_n}). 
\par
Throughout the rest of this subsection, we assume that $d\in [0, n]$, $E\subset\mathbb{R}^n$ is a closed $d$-thick set, and $\{\fm_k\}_{k=0}^{\infty}$ is a $d$-regular sequence of measures on $E$. We use the following notation. For each $p\in (0, \infty]$, we set $L_p(\{\fm_k\}):=\cap_{k=0}^{\infty}L_p(\fm_k)$,  $L^{\loc}_p(\{\fm_k\}):=\cap_{k=0}^{\infty}L^{\loc}_p(\fm_k)$.
\par
The following lemmas record various properties of a $d$-regular sequence of measures $\{\fm_k\}$. The first important result is a relaxed version of the doubling property proved in \cite[Theorem~5.2]{tyul}.
\begin{Lm}
    \label{Lm.relaxed_doubling_property} 
    For each $c\ge 1$ there exist constants $C_1, C_2>0$ such that for each $k\in \mathbb{N}_0$,
    \begin{equation}
    \label{eq.relaxed_doubling_property}
        C_1\fm_k(Q_{k}(x)) \le \fm_k\left(\frac{1}{c}Q_{k}(x)\right) \le \fm_k(cQ_k(x)) \le C_2\fm_k(Q_k(x)) \quad \text{for all } x\in E.
    \end{equation}
\end{Lm}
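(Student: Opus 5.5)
The middle inequality $\fm_k(\frac1c Q_k(x))\le \fm_k(cQ_k(x))$ is trivial by monotonicity, since $\frac1cQ_k(x)\subset cQ_k(x)$ for $c\ge1$. So we need two things: a lower bound for $\fm_k(\frac1cQ_k(x))$ and an upper bound for $\fm_k(cQ_k(x))$, both in terms of $\fm_k(Q_k(x))$. The plan is to show all three quantities are comparable to $2^{-kd}$, using \ref{cond:M1}, \ref{cond:M2} and \ref{cond:M3}. We will assume, as is implicit in the statement, that $c$ is fixed and that the constants may depend on $c$, $C_1,C_2,C_3$, $n$ and $d$.

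\emph{Step 1 (two-sided bounds at scale $2^{-k}$).} For $x\in E$ and $r=2^{-k}\in(0,1]$, conditions \ref{cond:M1} and \ref{cond:M2} give together
\begin{equation*}
C_2 2^{-kd}\le \fm_k(Q_k(x))\le C_1 2^{-kd}.
\end{equation*}

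\emph{Step 2 (upper bound for $cQ_k(x)$).} Choose $j\ge0$ with $2^{j}\ge c$, so $cQ_k(x)\subset Q_{k-j}(x)$. Cover $Q_{k-j}(x)$ by at most $N=N(n,j)$ cubes $Q_{2^{-k}}(y_i)$. Applying \ref{cond:M1} (valid for every centre in $\mathbb{R}^n$) to each of them gives $\fm_k(cQ_k(x))\le N C_1 2^{-kd}$. By Step 1 this is $\lesssim \fm_k(Q_k(x))$. Note that this step uses no condition on $k-j\ge0$.

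\emph{Step 3 (lower bound for $\frac1cQ_k(x)$), the main obstacle.} Here \ref{cond:M2} cannot be applied directly, because the radius $2^{-k}/c$ lies below the threshold $2^{-k}$. The plan is to pass to a finer measure via \ref{cond:M3}. Take $j$ with $2^{j}\ge c$, so $Q_{k+j}(x)\subset\frac1cQ_k(x)$.
\begin{itemize}
\item Condition \ref{cond:M2} for $\fm_{k+j}$ at radius $2^{-k-j}$ gives $\fm_{k+j}(Q_{k+j}(x))\ge C_2 2^{-(k+j)d}$.
\item The left inequality in \eqref{eq.weights_of_regular_measures_definition} gives $\gamma_k\ge C_3^{-1}2^{(d-n)j}\gamma_{k+j}$ $\fm_0$-a.e., hence
\begin{equation*}
\fm_k\bigl(\tfrac1cQ_k(x)\bigr)\ge \fm_k(Q_{k+j}(x))\ge C_3^{-1}2^{-\theta j}\fm_{k+j}(Q_{k+j}(x))\ge C_3^{-1}C_2 2^{-\theta j-(k+j)d}.
\end{equation*}
\end{itemize}
Since $j$ depends only on $c$, this is $\gtrsim 2^{-kd}\gtrsim\fm_k(Q_k(x))$ by Step 1. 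Integrating the pointwise weight inequality over the cube is the only delicate point, and it is legitimate because $\fm_k=\gamma_k\fm_0$.

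Combining Steps 1–3 yields \eqref{eq.relaxed_doubling_property}, with constants depending only on $c$ and the regularity constants.
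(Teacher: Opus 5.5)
Your proof is correct. Step~1 is exactly \ref{cond:M1}--\ref{cond:M2} at $r=2^{-k}$, which is admissible for both conditions since $k\ge0$. In Step~2, subdividing $Q_{2^{j-k}}(x)$ into $2^{jn}$ cubes of radius $2^{-k}$ and applying \ref{cond:M1}, which holds for arbitrary centres, gives $\fm_k(cQ_k(x))\le 2^{jn}\frac{C_1}{C_2}\fm_k(Q_k(x))$. Here $C_1,C_2,C_3$ are the constants of \ref{cond:M1}--\ref{cond:M3}. In Step~3, integrating the left inequality of \ref{cond:M3} against $\fm_0$ over $Q_{k+j}(x)$ and then applying \ref{cond:M2} to $\fm_{k+j}$ gives $\fm_k(\frac1cQ_k(x))\ge \frac{C_2}{C_1C_3}2^{-nj}\fm_k(Q_k(x))$. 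This is legitimate because $\operatorname{supp}\fm_0=E$, so the $\fm_0$-a.e.\ inequality on $E$ suffices.

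The paper gives no argument of its own for this lemma; it quotes \cite[Theorem~5.2]{tyul}, which is stated in the metric-measure setting of that work. Your route is a self-contained verification from the axioms in a few lines, with explicit constants depending only on $n$, $j=\lceil\log_2 c\rceil$ and the regularity constants. It also shows that the upper bound needs only \ref{cond:M1}--\ref{cond:M2}. The compatibility condition \ref{cond:M3} enters only in the lower bound, where \ref{cond:M2} is unavailable below scale $2^{-k}$. Step~2 relies on the Euclidean structure, namely the trivial covering of a cube by $2^{jn}$ subcubes; in a general metric measure space this would need a covering argument instead. The citation buys brevity and generality, while your argument makes the lemma transparently a direct consequence of \ref{cond:M1}--\ref{cond:M3} in $\mathbb{R}^n$.
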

\begin{Ca}
    \label{Ca.relaxed_doubling_property}
    Given $c\ge 1$, there exists $C>0$ such that,
    for each $k\in\mathbb{N}_0$,
    \begin{equation}
        \label{eq.corollary_relaxed_doubling_property}
        \fm_k(Q_k(x))
        \le \fm_k(cQ_k(y))
        \le C\fm_k(Q_k(x))
    \end{equation}
    whenever $x\in E$ and $|x-y|\le (c-1)2^{-k}$.
\end{Ca}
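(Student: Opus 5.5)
The plan is to derive Corollary~\ref{Ca.relaxed_doubling_property} directly from Lemma~\ref{Lm.relaxed_doubling_property} by comparing cubes centred at $x$ and at $y$.

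\textbf{Lower bound.} Since $|x-y|\le (c-1)2^{-k}$ and $Q_k(x)$ has half-side $2^{-k}$, every $z\in Q_k(x)$ satisfies
\begin{equation*}
|z-y|\le |z-x|+|x-y|\le 2^{-k}+(c-1)2^{-k}=c\,2^{-k}.
\end{equation*}
Hence $Q_k(x)\subset cQ_k(y)$, and monotonicity of $\fm_k$ gives $\fm_k(Q_k(x))\le \fm_k(cQ_k(y))$. This step needs no property of the measure beyond monotonicity, and it does not require $y\in E$.

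\textbf{Upper bound.} For $z\in cQ_k(y)$ we have
\begin{equation*}
|z-x|\le |z-y|+|y-x|\le c\,2^{-k}+(c-1)2^{-k}=(2c-1)2^{-k}.
\end{equation*}
So $cQ_k(y)\subset (2c-1)Q_k(x)$. Because the centre $x$ lies in $E$, Lemma~\ref{Lm.relaxed_doubling_property} applies with the constant $2c-1\ge 1$ in place of $c$. Its rightmost inequality gives
\begin{equation*}
\fm_k\bigl((2c-1)Q_k(x)\bigr)\le C_2(2c-1)\,\fm_k(Q_k(x)).
\end{equation*}
Combining this with the inclusion yields $\fm_k(cQ_k(y))\le C\fm_k(Q_k(x))$ with $C:=C_2(2c-1)$. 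This constant depends only on $c$ and on the constants of the $d$-regular sequence, and in particular not on $k$, $x$, or $y$.

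\textbf{Main point to check.} The only thing requiring care is that the relaxed doubling property is stated only for centres in $E$ and at the fixed scale $2^{-k}$ tied to the measure $\fm_k$. This is why the enlarged cube $(2c-1)Q_k(x)$ must be centred at $x$ rather than at $y$. Apart from that, the argument is just the two inclusions above.
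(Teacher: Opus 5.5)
Your proof is correct and is essentially the paper's own argument. The paper likewise uses the chain of inclusions $Q_k(x)\subset cQ_k(y)\subset(2c-1)Q_k(x)$, then monotonicity of $\fm_k$ and Lemma~\ref{Lm.relaxed_doubling_property} applied at the centre $x\in E$ with $2c-1$ in place of $c$.
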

\begin{proof}
    Since $|x-y|\le (c-1)2^{-k}$, we have
    \begin{equation}
        Q_k(x)\subset cQ_k(y)\subset(2c-1)Q_k(x).
    \end{equation}
    Indeed, these inclusions follow from
    \begin{equation}
        2^{-k}+|x-y|\le c2^{-k},
        \qquad
        c2^{-k}+|x-y|\le(2c-1)2^{-k}.
    \end{equation}
    Therefore, by the monotonicity of $\fm_k$ and
    Lemma~\ref{Lm.relaxed_doubling_property},
    \begin{equation}
        \fm_k(Q_k(x))
        \le \fm_k(cQ_k(y))
        \le \fm_k((2c-1)Q_k(x))
        \lesssim \fm_k(Q_k(x)).
    \end{equation}
\end{proof}
A similar estimate, used repeatedly throughout the paper, reads as follows.
\begin{Ca}
    \label{Ca.M1_inflated}
    Given $c\ge 1$, there exists $C>0$ such that,
    for each $k\in\mathbb{N}_0$,
    \begin{equation}
        \label{eq.M1_inflated}
        \fm_k(cQ_k(x)) \le C2^{-kd} \qquad \text{for all } x\in\mathbb{R}^n.
    \end{equation}
\end{Ca}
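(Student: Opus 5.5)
The statement to prove is Corollary~\ref{Ca.M1_inflated}: for fixed $c\ge1$, $\fm_k(cQ_k(x))\le C2^{-kd}$ for all $x\in\mathbb{R}^n$ and $k\in\mathbb{N}_0$. The plan is to cover the inflated cube $cQ_k(x)$ by a bounded number of cubes of radius $2^{-k}$, which is exactly the scale at which condition~\ref{cond:M1} applies. Condition~\ref{cond:M1} then bounds the $\fm_k$-measure of each small cube by $C_12^{-kd}$.

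\textbf{Step 1 (covering).} Let $N:=\lceil c\rceil$. The cube $cQ_k(x)=Q_{c2^{-k}}(x)$ has side length $2c2^{-k}\le 2N2^{-k}$. Split it into $N^n$ congruent closed subcubes, each of side at most $2\cdot 2^{-k}$. Each subcube is therefore contained in a cube $Q_{2^{-k}}(z_i)$ centered at its own center $z_i$. This gives
\begin{equation*}
cQ_k(x)\subset\bigcup_{i=1}^{N^n}Q_{2^{-k}}(z_i),
\end{equation*}
and the number of cubes depends only on $c$ and $n$.

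\textbf{Step 2 (apply \ref{cond:M1}).} Condition~\ref{cond:M1} holds for all centers in $\mathbb{R}^n$, not only for centers on $E$, and $r=2^{-k}$ is an admissible radius. Hence $\fm_k(Q_{2^{-k}}(z_i))\le C_12^{-kd}$ for every $i$. By subadditivity,
\begin{equation*}
\fm_k(cQ_k(x))\le N^nC_12^{-kd},
\end{equation*}
so the claim holds with $C:=\lceil c\rceil^nC_1$.

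There is no real obstacle. The only point to check is that \ref{cond:M1} is stated for arbitrary centers $x\in\mathbb{R}^n$, which it is, so no doubling argument is needed. An alternative route would reduce to centers on $E$ and use Lemma~\ref{Lm.relaxed_doubling_property}, but the direct covering is simpler.
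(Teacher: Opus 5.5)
Your proof is correct, but it takes a different route from the paper.

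The paper first disposes of the case $cQ_k(x)\cap E=\emptyset$, where $\fm_k(cQ_k(x))=0$ since $\operatorname{supp}\fm_k=E$. It then picks $y\in cQ_k(x)\cap E$, uses the inclusion $cQ_k(x)\subset 2cQ_k(y)$, and invokes the relaxed doubling property of Lemma~\ref{Lm.relaxed_doubling_property} at the point $y\in E$ to get $\fm_k(2cQ_k(y))\lesssim\fm_k(Q_k(y))$. Finally it bounds $\fm_k(Q_k(y))$ by $C_12^{-kd}$ via \ref{cond:M1}.

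You instead cover $cQ_k(x)$ by $\lceil c\rceil^n$ cubes of radius $2^{-k}$ and apply \ref{cond:M1} to each. This is legitimate precisely because \ref{cond:M1} is stated for arbitrary centers in $\mathbb{R}^n$. Your version uses only \ref{cond:M1} and subadditivity, needs no case distinction, and gives the explicit constant $\lceil c\rceil^nC_1$. The paper's version is a one-line reduction once Lemma~\ref{Lm.relaxed_doubling_property} is available. However, that lemma is imported from \cite{tyul} and draws on more of the structure of $\{\fm_k\}$ than the statement requires.

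In fact the dependence can be reversed. For $x\in E$, your bound combined with \ref{cond:M2} at $r=2^{-k}$ gives $\fm_k(cQ_k(x))\le \lceil c\rceil^n C_1C_2^{-1}\,\fm_k(Q_k(x))$, where $C_1,C_2$ are the constants of \ref{cond:M1} and \ref{cond:M2}. That is exactly the upper inequality of Lemma~\ref{Lm.relaxed_doubling_property}, so your covering argument is the more primitive of the two.
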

\begin{proof}
    We assume that $cQ_{k}(x)\cap E\neq\emptyset$ since otherwise the assertion is obvious. Choose $y \in cQ_{k}(x)\cap E$; then $cQ_{k}(x) \subset 2cQ_{k}(y)$. Hence, using Lemma~\ref{Lm.relaxed_doubling_property} and property~\ref{cond:M1}, we obtain
    \begin{equation}
        \fm_k(cQ_k(x)) \le \fm_k(2cQ_{k}(y)) \lesssim \fm_k(Q_k(y)) \lesssim 2^{-kd}.
    \end{equation}
\end{proof}
Next, we record several properties related to the local best approximation by constants in $L_{\sigma}(\fm_k)$.
\begin{Lm}
    \label{Lm.local_approximation_outside_E}
    Let $\sigma\in(0,\infty)$, $c\ge1$, and
    $\underline{k}\in\mathbb{N}_0$. Then there exists a constant
    $C>0$ such that for each
    $\phi\in L_{\sigma}^{\loc}(\{\fm_k\})$,
    $k\ge\underline{k}$, $x\in E$, and
$y\in\mathbb R^n$ satisfying $|x-y|\le(c-1)2^{-k}$,
the following estimates hold:
    \begin{enumerate}
        \item 
        \begin{equation}
            \label{eq.local_approximation_outside_E_ordinary}
            \mathcal{E}_{\fm_k,\sigma}(\phi,Q_k(x))
            \le
            C\mathcal{E}_{\fm_{k-\underline{k}},\sigma}
            (\phi,cQ_k(y));
        \end{equation}
        \item 
        \begin{equation}
            \label{eq.local_approximation_outside_E}
            \mathcal{E}_{\fm_k,\sigma}(\phi,Q_k(x))
            \le
            C\widetilde{\mathcal{E}}_{\fm_{k-\underline{k}},\sigma}
            (\phi,cQ_k(y));
        \end{equation}
        \item 
        \begin{equation}
            \label{eq.local_approximation_outside_E_modified}
            \widetilde{\mathcal{E}}_{\fm_k,\sigma}(\phi,Q_k(x))
            \le
            C\widetilde{\mathcal{E}}_{\fm_{k-\underline{k}},\sigma}
            (\phi,cQ_k(y));
        \end{equation}
    \end{enumerate}
\end{Lm}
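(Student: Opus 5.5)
The plan is to compare the two best approximations by passing through a common constant, and to control the change of both the measure and the domain with the doubling-type estimates and the weight comparison \ref{cond:M3}.

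\textbf{Geometry.} Fix $k\ge\underline{k}$, $x\in E$ and $y$ with $|x-y|\le (c-1)2^{-k}$. As in Corollary~\ref{Ca.relaxed_doubling_property}, we have $Q_k(x)\subset cQ_k(y)\subset (2c-1)Q_k(x)$.

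\textbf{Proof of (1).} Put $j:=\underline{k}$ and $k':=k-j$. By \ref{cond:M3}, $\gamma_k\le C_3 2^{\theta j}\gamma_{k'}$ $\fm_0$-a.e., so $\fm_k\le C\fm_{k'}$ as measures, with $C$ depending on $\underline{k}$. For any constant $a\in\mathbb{R}$ this gives
\begin{equation*}
\int\limits_{Q_k(x)}|\phi-a|^\sigma d\fm_k\le C\int\limits_{cQ_k(y)}|\phi-a|^\sigma d\fm_{k'}.
\end{equation*}
It remains to compare the normalizations, that is, to prove $\fm_{k'}(cQ_k(y))\lesssim \fm_k(Q_k(x))$.
\begin{itemize}
\item For the upper bound, Corollary~\ref{Ca.M1_inflated} applied at level $k'$, with $cQ_k(y)\subset c2^{\underline{k}}Q_{k'}(y)$ and a constant depending on $c2^{\underline{k}}$, gives $\fm_{k'}(cQ_k(y))\lesssim 2^{-k'd}\approx 2^{-kd}$.
\item For the lower bound, \ref{cond:M2} gives $\fm_k(Q_k(x))\ge C_2 2^{-kd}$ when $2^{-k}\le1$, i.e. for all $k\ge0$.
\end{itemize}
Dividing by these and taking the infimum over $a$ yields \eqref{eq.local_approximation_outside_E_ordinary}.

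\textbf{Proof of (2).} The $\widetilde{\mathcal E}$ on the right is $\mathcal{E}_{\fm_{k'},\sigma}(\phi,2cQ_k(y))$ provided $cQ_k(y)\cap E\ne\emptyset$. This holds because $x\in Q_k(x)\subset cQ_k(y)$ and $x\in E$. Then apply (1) with $2c$ in place of $c$, which is allowed since $|x-y|\le(c-1)2^{-k}\le(2c-1)2^{-k}$.

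\textbf{Proof of (3).} If $Q_k(x)\cap E=\emptyset$ the left side is zero; in fact this case cannot occur since $x\in E$. So the left side equals $\mathcal{E}_{\fm_k,\sigma}(\phi,Q_{2^{1-k}}(x))$, which is $\mathcal{E}_{\fm_k,\sigma}(\phi,Q_{k-1}(x))$. We redo the argument of (1) on this doubled cube:
\begin{itemize}
\item $Q_{2^{1-k}}(x)\subset 2cQ_k(y)$, since $2\cdot2^{-k}+(c-1)2^{-k}\le 2c2^{-k}$.
\item $\fm_k\lesssim\fm_{k'}$ as measures.
\item $\fm_{k'}(2cQ_k(y))\lesssim 2^{-kd}\lesssim \fm_k(Q_k(x))\le\fm_k(Q_{2^{1-k}}(x))$.
\end{itemize}
As in (2), the right side is $\mathcal{E}_{\fm_{k'},\sigma}(\phi,2cQ_k(y))$ because $x\in cQ_k(y)\cap E$. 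This gives \eqref{eq.local_approximation_outside_E_modified}.

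\textbf{Main obstacle.} The delicate point is the normalization comparison, because $\fm_k(Q_k(x))$ must be bounded below uniformly. This relies on \ref{cond:M2} at exactly the scale $2^{-k}\in[2^{-k},1]$, so it is essential that $x\in E$ and $k\ge0$. The enlarged cube measured at the coarser level $k'$ needs the upper bound of Corollary~\ref{Ca.M1_inflated} with a constant depending on $c$ and $\underline{k}$. All constants depend only on $c$, $\underline{k}$, $\sigma$, $d$ and $C_1,C_2,C_3$.
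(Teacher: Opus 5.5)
Your proof is correct and follows essentially the paper's route: you nest $Q_k(x)$ (or $2Q_k(x)$) inside the enlarged cube around $y$, use \ref{cond:M3} to pass from $\fm_k$ to $\fm_{k-\underline{k}}$, and then compare the normalizing masses. The differences are cosmetic. You take the infimum over all constants instead of fixing the median on $2cQ_k(y)$. You compare the normalizations through the absolute bounds of Corollary~\ref{Ca.M1_inflated} and \ref{cond:M2}, rather than through Corollary~\ref{Ca.relaxed_doubling_property} and the reverse inequality in \ref{cond:M3}. You also write out (1) and (3) explicitly, which the paper leaves as ``the same argument''.
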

\begin{proof}
We prove the second assertion. The other estimates can be obtained by the same argument. Assume that $|x-y|\le(c-1)2^{-k}$. Then $x\in cQ_k(y)$, and hence $cQ_k(y)\cap E\neq\emptyset$. Therefore,
    \begin{equation}
        \widetilde{\mathcal{E}}_{\fm_{k-\underline{k}},\sigma}
        (\phi,cQ_k(y))
        =
        \mathcal{E}_{\fm_{k-\underline{k}},\sigma}
        (\phi,2cQ_k(y)).
    \end{equation}
    Moreover, Corollary~\ref{Ca.relaxed_doubling_property},
    applied with $2c$ in place of $c$, gives $\fm_k(Q_k(x))\approx\fm_k(2cQ_k(y))$. Set $m:=\operatorname{med}_{\fm_{k-\underline{k}}}(\phi,2cQ_k(y))$.
    Using the inclusion $Q_k(x)\subset 2cQ_k(y)$, property~\ref{cond:M3}, the preceding comparison of measures, and Lemma~\ref{Lm.average_median_approximation_property}, we obtain
    \begin{equation}
    \begin{split}
        \mathcal{E}_{\fm_k,\sigma}(\phi,Q_k(x))^\sigma&\le
        \fint\limits_{Q_k(x)}|\phi(z)-m|^\sigma d\fm_k(z)\lesssim
        \fint\limits_{2cQ_k(y)}
        |\phi(z)-m|^\sigma d\fm_{k-\underline{k}}(z)\\&\lesssim
        \mathcal{E}_{\fm_{k-\underline{k}},\sigma}
        (\phi,2cQ_k(y))^\sigma=
        \widetilde{\mathcal{E}}_{\fm_{k-\underline{k}},\sigma}
        (\phi,cQ_k(y))^\sigma.
    \end{split}
    \end{equation}
\end{proof}
\begin{Remark}
    \label{Rm.two_local_approximations_relation}
    In particular, by \eqref{eq.local_approximation_outside_E} in Lemma~\ref{Lm.local_approximation_outside_E}, applied with $c=1$, $\underline{k}=0$, and $y=x\in E$, we obtain,
    for $\sigma\in(0,\infty)$,
    \begin{equation}
        \label{eq.two_local_approximations_relation}
        \mathcal{E}_{\fm_k,\sigma}(\phi,Q_k(x))
        \lesssim
        \widetilde{\mathcal{E}}_{\fm_k,\sigma}(\phi,Q_k(x))
        \qquad
        \text{for all }(x,k)\in E\times\mathbb{N}_0.
    \end{equation}
\end{Remark}
\begin{Lm}
    \label{Lm.local_approximation_sequence_of_measures}
      Let $\lambda>1$, $\sigma \in (0, \infty)$. Assume that $\{\mathcal{T}_k^x\}_{k\in \mathbb{N}_0, x\in E}$ is a family of $\lambda$-almost best approximating operators on $L_{\sigma}(Q_k(x), \fm_k)$, i.e., $\mathcal{T}_k^x:L_{\sigma}(Q_k(x), \fm_k) \to \mathbb{R}$. Then there exist constants $C_1, C_2$ such that for each $\phi \in L_{\sigma}^{\loc}(\{\fm_k\})$, the following properties hold:
    \begin{enumerate}
        \item for each $k \in \mathbb{N}_0$ and $x\in E$
         \begin{equation}
         \label{eq.local_approximation_nested_cubes}
         \bigl|\mathcal{T}^x_k\phi-\mathcal{T}^x_{k+1}\phi\bigr| \le C_1 \mathcal{E}_{\fm_k, \sigma}(\phi, Q_k(x));
         \end{equation}
         \item for each $k \in \mathbb{N}$ and $x, y\in E$ satisfying $|x-y|\le 2^{-k}$
     \begin{equation}
         \label{eq.local_approximation_adjacent_cubes}
         \bigl|\mathcal{T}^x_k\phi-\mathcal{T}^y_k\phi\bigr| \le C_2 \mathcal{E}_{\fm_{k-1}, \sigma}(\phi, Q_{k-1}(x));
     \end{equation}
    \end{enumerate}
\end{Lm}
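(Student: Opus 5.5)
Both estimates follow one pattern: bound the difference of the two constants by an integral average, over a common set of $\fm_j$-measure comparable to the relevant cubes, of $|\phi-\mathcal T\phi|^\sigma$, split by the quasi-triangle inequality, and absorb each piece into a local best approximation via the almost-best property, \ref{cond:M3}, and the relaxed doubling of Lemma~\ref{Lm.relaxed_doubling_property}. Write $a:=\mathcal T^x_k\phi$, $b:=\mathcal T^x_{k+1}\phi$ in part (1) and $a:=\mathcal T^x_k\phi$, $b:=\mathcal T^y_k\phi$ in part (2). Since $|a-b|^\sigma\le C(|\phi(z)-a|^\sigma+|\phi(z)-b|^\sigma)$ pointwise, averaging over a set $G$ gives
\begin{equation*}
|a-b|^\sigma\lesssim \fint\limits_{G}|\phi-a|^\sigma d\mu+\fint\limits_{G}|\phi-b|^\sigma d\mu
\end{equation*}
for any measure $\mu$ with $0<\mu(G)<\infty$.

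For part (1) I take $G=Q_{k+1}(x)$ and $\mu=\fm_{k+1}$. The second average is at most $\lambda^\sigma\mathcal E_{\fm_{k+1},\sigma}(\phi,Q_{k+1}(x))^\sigma$. By \ref{cond:M3} with $j=1$, $\gamma_{k+1}\approx\gamma_k$ up to constants depending on $C_3,d,n$, so $\fm_{k+1}$ and $\fm_k$ are comparable on Borel sets. Lemma~\ref{Lm.relaxed_doubling_property} gives $\fm_k(Q_{k+1}(x))\approx\fm_k(Q_k(x))$. Hence the first average is bounded by $C\fint_{Q_k(x)}|\phi-a|^\sigma d\fm_k\le C\lambda^\sigma\mathcal E_{\fm_k,\sigma}(\phi,Q_k(x))^\sigma$. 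The second term is handled by estimate \eqref{eq.local_approximation_outside_E_ordinary} of Lemma~\ref{Lm.local_approximation_outside_E}, applied at level $k+1$ with $\underline k=1$, $c=2$, $y=x$ (so that $2Q_{k+1}(x)=Q_k(x)$). This gives $\mathcal E_{\fm_{k+1},\sigma}(\phi,Q_{k+1}(x))\lesssim\mathcal E_{\fm_k,\sigma}(\phi,Q_k(x))$, which proves \eqref{eq.local_approximation_nested_cubes}.

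For part (2), with $|x-y|\le 2^{-k}$, I do not average over $Q_k(x)\cap Q_k(y)$, since that intersection may have small measure. Instead I compare each constant to a common reference $m:=\operatorname{med}_{\fm_{k-1}}(\phi,Q_{k-1}(x))$; note $Q_k(y)\subset Q_{k-1}(x)$. Then
\begin{equation*}
|\mathcal T^y_k\phi-m|^\sigma\lesssim\fint\limits_{Q_k(y)}|\phi-\mathcal T^y_k\phi|^\sigma d\fm_k+\fint\limits_{Q_k(y)}|\phi-m|^\sigma d\fm_k .
\end{equation*}
The first term is $\lesssim\mathcal E_{\fm_k,\sigma}(\phi,Q_k(y))^\sigma$. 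For the second, use \ref{cond:M3} to pass from $\fm_k$ to $\fm_{k-1}$, enlarge the domain to $Q_{k-1}(x)$, and use $\fm_{k-1}(Q_{k-1}(x))\lesssim\fm_k(Q_k(y))$. The last comparison follows from Corollary~\ref{Ca.relaxed_doubling_property}, since $Q_{k-1}(x)\subset 3Q_k(y)$ and $y\in E$, together with \ref{cond:M3}. Lemma~\ref{Lm.average_median_approximation_property} for the median then gives a bound by $\mathcal E_{\fm_{k-1},\sigma}(\phi,Q_{k-1}(x))^\sigma$. Next, $\mathcal E_{\fm_k,\sigma}(\phi,Q_k(y))\lesssim\mathcal E_{\fm_{k-1},\sigma}(\phi,Q_{k-1}(x))$ by \eqref{eq.local_approximation_outside_E_ordinary} with $\underline k=1$, $c=2$ and the point $x$ in the role of $y$ there, since $|y-x|\le 2^{-k}=(c-1)2^{-k}$ and $2Q_k(x)=Q_{k-1}(x)$. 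The same argument with $x$ in place of $y$ bounds $|\mathcal T^x_k\phi-m|$, and the triangle inequality gives \eqref{eq.local_approximation_adjacent_cubes}.

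The main obstacle is keeping the measure comparisons uniform in $k$. They rely only on \ref{cond:M3} at a single step, $j=1$, and on relaxed doubling for cubes centered on $E$; both $x$ and $y$ lie in $E$, so these always apply. The degenerate case $\mathcal E=0$ is handled automatically by the inequalities above.
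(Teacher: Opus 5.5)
Your proof is correct and follows essentially the paper's argument. You average the pointwise quasi-triangle inequality over a cube centered on $E$, compare measures via \ref{cond:M3} and relaxed doubling, and close part (1) with \eqref{eq.local_approximation_outside_E_ordinary} at $c=2$, $\underline{k}=1$; the only cosmetic difference is in part (2), where the paper passes through $\mathcal{T}^x_{k-1}\phi$ (using part (1) on the $x$ side), whereas you use $\operatorname{med}_{\fm_{k-1}}(\phi,Q_{k-1}(x))$ as the common reference for both sides.
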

\begin{proof}
    Fix $k\in \mathbb{N}_0$ and $x\in E$. For every $z\in E$, we have
    \begin{equation}
    \label{eq.local_approximation_sequence_of_measures1}
         \bigl|\mathcal{T}^x_k\phi-\mathcal{T}^x_{k+1}\phi\bigr|^{\sigma} \lesssim  \bigl|\phi(z)-\mathcal{T}^x_k\phi\bigr|^{\sigma} +  \bigl|\phi(z)-\mathcal{T}^x_{k+1}\phi\bigr|^{\sigma}.
    \end{equation}
    Properties~\ref{cond:M1}--\ref{cond:M2} of $\{\fm_k\}$ give $\fm_{k+1}(Q_{k+1}(x))\approx \fm_{k}(Q_{k}(x))$. Therefore, averaging \eqref{eq.local_approximation_sequence_of_measures1} over $z \in Q_{k+1}(x)$ with respect to $\fm_{k+1}$ and using the inclusion $Q_{k+1}(x)\subset Q_{k}(x)$ and property~\ref{cond:M3} of $\{\fm_k\}$, we obtain
    \begin{equation}
    \begin{split}
        \bigl|\mathcal{T}^x_k\phi-\mathcal{T}^x_{k+1}\phi\bigr| \lesssim \Bigl(\fint\limits_{Q_{k+1}(x)}\bigl|\phi(z)-\mathcal{T}^x_{k+1}\phi\bigr|^{\sigma}d\fm_{k+1}(z) \Bigr)^{1/\sigma} +\\ \Bigl(\fint\limits_{Q_{k}(x)}\bigl|\phi(z)-\mathcal{T}^x_{k}\phi\bigr|^{\sigma}d\fm_{k}(z) \Bigr)^{1/\sigma} \lesssim \mathcal{E}_{\fm_{k+1}, \sigma}(\phi, Q_{k+1}(x))+\mathcal{E}_{\fm_{k}, \sigma}(\phi, Q_{k}(x))
    \end{split}
    \end{equation}
    Consequently, estimate \eqref{eq.local_approximation_outside_E_ordinary} in Lemma~\ref{Lm.local_approximation_outside_E}, applied with $c=2$,  $\underline{k}=1$, and $x=y$, gives \eqref{eq.local_approximation_nested_cubes}.
    \par
    For the second assertion, we fix $k\in\mathbb{N}$ and $x, y \in E$ satisfying $|x-y|\le2^{-k}$. We clearly have
    \begin{equation}
        |\mathcal{T}^x_k\phi - \mathcal{T}^y_{k}\phi|\le |\mathcal{T}^x_{k}\phi - \mathcal{T}^x_{k-1}\phi| + |\mathcal{T}^x_{k-1}\phi - \mathcal{T}^y_k\phi|.
    \end{equation}
    By \eqref{eq.local_approximation_nested_cubes}, the first term is bounded by $\mathcal{E}_{\fm_{k-1}, \sigma}(\phi, Q_{k-1}(x))$. For the second term, we obviously have
    \begin{equation}
        |\mathcal{T}^x_{k-1}\phi - \mathcal{T}^y_k\phi|\le |\phi(z)-\mathcal{T}^x_{k-1}\phi |+ |\phi(z) - \mathcal{T}^y_k\phi|
    \end{equation}
    for all $z\in Q_{k}(y)$. Averaging the latter estimate over $z\in Q_k(y)$ and using the inclusion $Q_{k}(y)\subset Q_{k-1}(x)$ and properties of $\{\fm_k\}$ as above yield \eqref{eq.local_approximation_adjacent_cubes}.
\end{proof}
\par
Following \cite{tyul}, we introduce the notion of a Lebesgue point associated with a sequence of measures. In what follows, for $r \in (0, 1]$, we let $k(r)\in \mathbb{N}_0$ be the unique integer such that $2^{-k(r)-1}<r\le 2^{-k(r)}$.
\begin{Def}
    Given $\phi\in L_{\sigma}^{\loc}(\{\fm_k\})$, $\sigma\in (0, \infty)$, we say that $x\in E$ is a $(\{\fm_k\}, \sigma)$-Lebesgue point of $\phi$ if
    \begin{equation}
        \label{eq.lebesgue_point_sequence_of_mesuares_definition}
        \lim_{r\to 0} \fint\limits_{Q_{r}(x)}|\phi(x)-\phi(y)|^{\sigma}d\fm_{k(r)}(y) = 0.
    \end{equation}
    The set of all $(\{\fm_k\}, \sigma)$-Lebesgue points of $\phi$ is denoted by $\mathfrak{L}_{\{\fm_k\}, \sigma}(\phi)$.
\end{Def}
\begin{Remark}
    \label{Rm.lebesgue_points_sequence_of_measures} Let $\sigma \in (0, \infty)$ and let $\phi \in L_{\sigma}^{\loc}(\{\fm_k\})$. It follows from Lemma~\ref{Lm.relaxed_doubling_property} that a point $x\in E$ is a $(\{\fm_k\}, \sigma)$-Lebesgue point of $\phi$ if and only if
    \begin{equation}
        \lim_{k\to\infty} \fint\limits_{Q_k(x)} |\phi(x) - \phi(y)|^{\sigma}d\fm_k(y) = 0.
    \end{equation}
\end{Remark}
The next lemma describes the behavior of almost best approximating constants at $(\{\fm_k\}, \sigma)$-Lebesgue points.
\begin{Lm}
    \label{Lm.approximation_at_Lebesgue_points}
   Let $\lambda>1$, $\sigma \in (0, \infty)$. Assume that $\{\mathcal{T}_k^x\}_{k\in \mathbb{N}_0, x\in E}$ is a family of $\lambda$-almost best approximating operators on $L_{\sigma}(Q_k(x), \fm_k)$. Then for each $\phi \in L_{\sigma}^{\loc}(\{\fm_k\})$ and $x\in \mathfrak{L}_{\{\fm_k\}, \sigma}(\phi)$,
    \begin{equation}
        \label{eq.approximation_at_Lebesgue_points}
        \lim_{k\to \infty}\mathcal{T}_k^x\phi = \phi(x).
    \end{equation}
\end{Lm}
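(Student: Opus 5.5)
The plan is a direct comparison of $\mathcal{T}_k^x\phi$ with the constant $\phi(x)$ on the cube $Q_k(x)$, using the almost-best approximation property and the quasi-triangle inequality for $|\cdot|^\sigma$. Fix $\phi\in L_\sigma^{\loc}(\{\fm_k\})$ and $x\in\mathfrak{L}_{\{\fm_k\},\sigma}(\phi)$. Since $x\in E=\operatorname{supp}\fm_k$, we have $\fm_k(Q_k(x))>0$, and by \ref{cond:M1} it is also finite, so all averages below are well defined.

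For every $z\in Q_k(x)$ we use the elementary inequality
\begin{equation*}
|\mathcal{T}_k^x\phi-\phi(x)|^\sigma\le C(\sigma)\bigl(|\phi(z)-\mathcal{T}_k^x\phi|^\sigma+|\phi(z)-\phi(x)|^\sigma\bigr),
\end{equation*}
where $C(\sigma)=\max\{1,2^{\sigma-1}\}$. Averaging over $z\in Q_k(x)$ with respect to $\fm_k$ gives
\begin{equation*}
|\mathcal{T}_k^x\phi-\phi(x)|^\sigma\lesssim \fint\limits_{Q_k(x)}|\phi(z)-\mathcal{T}_k^x\phi|^\sigma\,d\fm_k(z)+\fint\limits_{Q_k(x)}|\phi(z)-\phi(x)|^\sigma\,d\fm_k(z).
\end{equation*}

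The first average is at most $\lambda^\sigma\mathcal{E}_{\fm_k,\sigma}(\phi,Q_k(x))^\sigma$, because $\mathcal{T}_k^x$ is a $\lambda$-almost best approximating operator. Testing the infimum in the definition of $\mathcal{E}_{\fm_k,\sigma}$ with the constant $c=\phi(x)$ shows that this is in turn at most $\lambda^\sigma$ times the second average. Hence
\begin{equation*}
|\mathcal{T}_k^x\phi-\phi(x)|^\sigma\lesssim(1+\lambda^\sigma)\fint\limits_{Q_k(x)}|\phi(z)-\phi(x)|^\sigma\,d\fm_k(z).
\end{equation*}

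By Remark~\ref{Rm.lebesgue_points_sequence_of_measures}, the right-hand side tends to zero as $k\to\infty$, since $x$ is a $(\{\fm_k\},\sigma)$-Lebesgue point. This gives \eqref{eq.approximation_at_Lebesgue_points}.

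There is no real obstacle. The only point needing care is to use the discrete form of the Lebesgue point condition, with cubes $Q_k(x)$ and measures $\fm_k$, which is exactly what Remark~\ref{Rm.lebesgue_points_sequence_of_measures} provides; if one prefers to argue from the original definition, take $r=2^{-k}$, so that $k(r)=k$.
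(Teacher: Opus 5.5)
Your proof is correct and is essentially the paper's argument: apply the quasi-triangle inequality for $|\cdot|^\sigma$, average over $Q_k(x)$ with respect to $\fm_k$, and bound the almost-best term by the Lebesgue-point average by testing with $c=\phi(x)$. Your observation that $k(2^{-k})=k$ also shows that the discrete Lebesgue-point condition you need follows directly from the definition along $r=2^{-k}$, so the doubling argument behind Remark~\ref{Rm.lebesgue_points_sequence_of_measures} is not needed for this direction.
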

\begin{proof}
Fix $\phi\in L_{\sigma}^{\loc}(\{\fm_k\})$ and $x\in E$.
    For every $k\in \mathbb{N}_0$ and $y\in Q_k(x)$, we clearly have
    \begin{equation}
        |\phi(x) - \mathcal{T}_k^x\phi|^{\sigma} \lesssim |\phi(y)-\mathcal{T}_k^x\phi|^{\sigma} + |\phi(y)-\phi(x)|^{\sigma}.
    \end{equation}
    Averaging the latter estimate over $y\in Q_k(x)$ with respect to $\fm_k$, we obtain
    \begin{equation}
    \begin{split}
        |\phi(x)- \mathcal{T}_k^x\phi| &\lesssim \mathcal{E}_{\fm_k, \sigma}(\phi, Q_k(x)) + \Bigl(\fint\limits_{Q_k(x)}|\phi(y) - \phi(x)|^{\sigma}d\fm_k(y)\Bigr)^{1/\sigma}\\&\lesssim \Bigl(\fint\limits_{Q_k(x)}|\phi(y) - \phi(x)|^{\sigma}d\fm_k(y)\Bigr)^{1/\sigma}
    \end{split}
    \end{equation}
    The last quantity tends to $0$ as $k\to \infty$ whenever $x\in \mathfrak{L}_{\{\fm_k\}, \sigma}(\phi)$. The proof is complete.
\end{proof}
\subsection{Traces}
Here we recall the notions of traces and trace-spaces. To this end, we define \emph{sharp representatives} of functions in Besov and Lizorkin--Triebel spaces.
\begin{Th}
\label{Th.sharp_representatives}
     Let $A\in \{B, F\}$. Let $s\in (0, 1)$, $p\in [1, \infty]$, and $ q\in (0, \infty]$. If $A=F$, assume also that $p<\infty$. For each $f\in A^s_{p, q}(\mathbb{R}^n)$, we set
    \begin{equation}
        \bar{f}(x):=\limsup_{r\to 0} \fint\limits_{Q_r(x)}f(y)dy.
    \end{equation}
    The following assertions hold:
    \begin{enumerate}
        \item if $s>\frac{n}{p}$, then $\bar{f}\in C(\mathbb{R}^n)$;
        \item if $p \le \frac{n}{s}$, then there exists $N_f\subset\mathbb{R}^n$ such that $\operatorname{dim}_H N_f \le n-ps$ and
    \begin{equation}
        \lim_{r\to0}\fint\limits_{Q_r(x)}|\bar{f}(x)-\bar{f}(y)|dy=0 \qquad \text{for all } x\in \mathbb{R}^n\setminus N_f.
    \end{equation}
    \end{enumerate}
\end{Th}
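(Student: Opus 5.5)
The plan is to reduce both assertions to the Hajłasz-type pointwise inequality supplied by the fractional gradient, and then to pass from the gradient to maximal functions. Fix $f\in A^s_{p,q}(\mathbb{R}^n)$ and $\vec g\in\mathbb{D}^s(f)$. By Lemma~\ref{Lm.monotonic_fractional_gradient} we may assume that $\vec g$ is $\varepsilon$-nonincreasing with $\varepsilon<s$. Outside the null set $N$, for $|x-y|<2^{-k}$ we then get a telescoping-free estimate
\[
|f(x)-f(y)|\lesssim 2^{-ks}\bigl(G_k(x)+G_k(y)\bigr),\qquad G_k:=\sum_{j\ge k}2^{-(j-k)s}g_j .
\]
This works because $|x-y|$ falls into a single dyadic annulus $j\ge k$, and $2^{-js}g_j\le 2^{-ks}2^{-(j-k)s}g_j$. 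Integrating $y$ over $Q_r(x)$ with $r\approx 2^{-k}$ gives
\[
\fint_{Q_r(x)}|f-f_{Q_r(x)}|\lesssim 2^{-ks}\,\fint_{Q_{2r}(x)}G_k .
\]
From this and Hölder's inequality with exponent $p\ge 1$ we get, for every cube $Q$ of side $2^{-k}$,
\[
|f_{Q}-f_{Q'}|\lesssim 2^{-ks}\,2^{kn/p}\|G_k\|_{L_p(2Q)}
\]
whenever $Q'\subset Q$ is the child cube.

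\textbf{Part (1), $s>n/p$.} For $x,y$ with $|x-y|\approx 2^{-m}$ I chain the averages $f_{Q_{2^{-k}}(x)}$ for $k\ge m$, together with the analogous chain at $y$ and the two cubes at level $m$. This gives
\[
|\bar f(x)-\bar f(y)|\lesssim \sum_{k\ge m}2^{-k(s-n/p)}\|G_k\|_{L_p}\le C\,2^{-m(s-n/p)}\sup_k\|G_k\|_{L_p}.
\]
Here $\|G_k\|_{L_p}\lesssim\|\vec g\|_{\ell_\infty(L_p)}$, which is finite for both $B$ and $F$ since $\ell_q\subset\ell_\infty$ and $L_p(\ell_q)\subset\ell_\infty(L_p)$. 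The same series shows that the averages converge at every point, hence uniformly, so the $\limsup$ is a genuine limit and $\bar f$ is Hölder continuous of order $s-n/p$. For $p=\infty$ the argument is identical with $n/p=0$.

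\textbf{Part (2), $p\le n/s$.} I consider the fractional maximal-type function
\[
\mathcal M f(x):=\sup_{k}2^{ks}\fint_{Q_{2^{-k}}(x)}G_k .
\]
I also use the capacity-style sets $N_\alpha:=\{x:\limsup_{k}2^{-k\alpha}\,2^{k n}\!\int_{Q_{2^{-k}}(x)}G_k^p>0\}$ with $\alpha$ slightly larger than $n-ps$. The key step is a covering lemma: if $\nu$ is a finite measure on $\mathbb{R}^n$, e.g.\ $\nu=\sum_k 2^{k\delta}G_k^p\,dy$ for a small $\delta>0$, then the set where $\limsup_{r\to0}r^{-t}\nu(Q_r(x))=\infty$ has $\mathcal H^t=0$. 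This is a standard Vitali argument. To make $\nu$ finite, note $\sum_k 2^{k\delta p}\|G_k\|_{L_p}^p\le\infty$ is not automatic when $q>p$. I therefore use the tail: since $\|G_k\|_{L_p}$ is bounded, $2^{-k\delta}$-weighted sums are finite, and I absorb the loss into $t=n-ps+\delta'$. Outside the exceptional set $N_f$ of points where $2^{k(n-ps-\delta)}\int_{Q_{2^{-k}}(x)}G_k^p$ is unbounded, Hölder gives
\[
2^{-ks}\fint_{Q_{2^{-k}}(x)}G_k\lesssim 2^{-k\delta/p}\to 0,
\]
with a summable geometric decay. So the averages $f_{Q_{2^{-k}}(x)}$ are Cauchy, their limit equals $\bar f(x)$, and
\[
\fint_{Q_r(x)}|\bar f(x)-\bar f|\le\sum_{j\ge k}2^{-js}\fint G_j\to 0.
\]
Letting $\delta\downarrow0$ along a sequence and taking the union of the exceptional sets yields $\dim_H N_f\le n-ps$.

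The main obstacle is step (2): producing a finite measure whose upper $t$-density controls the oscillations for all $t>n-ps$. This must hold uniformly in $q$, including $q=\infty$ and the Lizorkin--Triebel norm, and must also cover the borderline $p=n/s$, where $n-ps=0$. The Lizorkin–Triebel case itself causes no issue, via the embedding $F\subset B_{p,\infty}$. The borderline is delicate: $\dim_H\le 0$ requires only $\mathcal H^t(N_f)=0$ for every $t>0$, which the argument with $\delta>0$ still provides.
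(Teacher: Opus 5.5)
Your strategy is sound, but two steps of part (2) are wrong as written and need correcting.

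\textbf{The sign in the exceptional set.} If $2^{k(n-ps-\delta)}\int_{Q_{2^{-k}}(x)}G_k^p$ is bounded, H\"older only yields $2^{-ks}\fint_{Q_{2^{-k}}(x)}G_k\lesssim 2^{+k\delta/p}$, which does not decay. The consistent bookkeeping is as follows. The measure $\nu_\delta:=\sum_{k\ge0}2^{-k\delta}G_k^p\,dy$ is finite because $\sup_k\|G_k\|_{L_p(\mathbb{R}^n)}<\infty$. By the Vitali argument, the set $N_\delta:=\{x:\limsup_{r\to0}r^{-(n-ps+2\delta)}\nu_\delta(Q_r(x))=\infty\}$ is $\mathcal{H}^{n-ps+2\delta}$-null. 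For $x\notin N_\delta$ and all $k\ge0$,
\[
\int_{Q_{2^{-k}}(x)}G_k^p\le 2^{k\delta}\,\nu_\delta\bigl(Q_{2^{-k}}(x)\bigr)\le C_x\,2^{-k(n-ps+\delta)},
\qquad
2^{-ks}\fint_{Q_{2^{-k}}(x)}G_k\lesssim C_x^{1/p}\,2^{-k\delta/p}.
\]

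\textbf{Intersection, not union.} The last step must take $N_f:=\bigcap_i N_{\delta_i}$. A point outside $N_f$ lies outside some $N_{\delta_i}$, and that is all the chaining of averages needs. Since $N_f\subset N_{\delta_i}$, we get $\mathcal{H}^{n-ps+2\delta_i}(N_f)=0$ for every $i$, hence $\dim_H N_f\le n-ps$; this also covers $p=\frac ns$. A union would only give $\dim_H\le n-ps+2\max_i\delta_i$.

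The following are never used and should be removed: the objects $\mathcal{M}f$ and $N_\alpha$, the first choice of $\nu$ with weights $2^{k\delta}$, and the passage to an $\varepsilon$-nonincreasing gradient.

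\textbf{Comparison with the paper.} With these fixes your proof is complete, and it takes a genuinely different route. The paper gives no argument of its own. Part (1) is quoted from the embedding $A^s_{p,q}(\mathbb{R}^n)\subset C(\mathbb{R}^n)$ in \cite[Section~2.8.3]{trieb}. Part (2) is quoted from \cite[Lemma~3.1]{saks}, with the restriction $q>\frac{n}{n+s}$ removed in the $F$-case by monotonicity in $q$.

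Your argument is self-contained and uses only $\sup_k\|g_k\|_{L_p(\mathbb{R}^n)}<\infty$, i.e.\ membership in the Haj\l{}asz space with $q=\infty$. So $B$, $F$ and all $q\in(0,\infty]$ are handled at once, with no identification with Fourier-analytic spaces and no monotonicity argument. It also gives an explicit H\"older modulus of order $s-\frac np$ in (1). It shows that the $\limsup$ defining $\bar f$ is a genuine limit off $N_f$; for non-dyadic $r$, compare $Q_r(x)$ with $Q_{2^{-k}}(x)$ where $2^{-k-1}<r\le 2^{-k}$.

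The paper's route is shorter and rests on results already established in the literature, the second in the more general metric-measure setting.
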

\begin{proof}
    The first assertion follows from the inclusion $A^s_{p, q}(\mathbb{R}^n)\subset C(\mathbb{R}^n)$ (see, for example, \cite[Section 2.8.3]{trieb}). The second assertion can be found in \cite[Lemma 3.1]{saks}. Notice that in the Lizorkin--Triebel case the restriction $q>\frac{n}{n+s}$ appearing there can be removed by the monotonicity of $F^s_{p, q}(\mathbb{R}^n)$ with respect to $q$.
\end{proof}
\par
By the Lebesgue differentiation theorem, $\bar{f}=f$ $\mathcal{L}^n$-almost everywhere, and hence $f$ and $\bar{f}$ define the same element of the corresponding Besov or Lizorkin--Triebel space. We refer to $\bar{f}$ as a \emph{sharp representative} of $f$. The interested reader can find a discussion of closely related questions concerning capacity and quasicontinuity in \cite{Karak, Netrusov, Netrusov_capacity, Nuutinen}.
\par
Having at our disposal Theorem~\ref{Th.sharp_representatives}, we introduce the concept of \emph{traces} and \emph{trace-spaces}. First, we remark that each function in $A^s_{p, q}(\mathbb{R}^n)$, $A\in \{B, F\}$, has a well-defined restriction to a $d$-thick set $E$, where $s>\frac{n-d}{p}$. More precisely, if $\{\fm_k\}_{k=0}^{\infty}$ is a $d$-regular sequence of measures, then condition~\ref{cond:M1} guarantees that $\fm_0 \ll \mathcal{H}^d$. Hence, by Theorem~\ref{Th.sharp_representatives}, for each $f\in A^s_{p, q}(\mathbb{R}^n)$, the pointwise restriction $\bar{f}\big|_E$ is well-defined $\fm_0$-almost everywhere. In what follows, $f\big|_{E}^{\fm_0}$ denotes the $\fm_0$-equivalence class of $\bar{f}\big|_E$. Equivalently, a Borel function $\phi:E\to\mathbb{R}$ is a trace of $f\in A^s_{p, q}(\mathbb{R}^n)$ if
\begin{equation}
\label{eq.trace_definition}
    \lim_{r\to 0}\fint\limits_{Q_r(x)}|f(y)-\phi(x)|dy = 0, \qquad \text{for $\fm_0$-a.e. } x\in E.
\end{equation}
In this case, the $\fm_0$-equivalence class of $\phi$ is equal to $f\big|_{E}^{\fm_0}$. The latter definition will also be used for locally integrable functions. That is, a Borel function $\phi:E \to \mathbb{R}$ is called a trace of $f\in L_1^{\loc}(\mathbb{R}^n)$ if \eqref{eq.trace_definition} holds.
\begin{Def}
\label{Def.trace_spaces}
    Let $E\subset \mathbb{R}^n$ be a closed $d$-thick set, $d\in [0, n]$, equipped with a $d$-regular sequence of measures $\{\fm_k\}_{k=0}^{\infty}$. Assume that $s\in(0,1)$, $p\in[1,\infty]$, $q\in(0,\infty]$, and $s>\frac{n-d}{p}$. Let $A\in \{B, F\}$, with the additional assumption $p<\infty$ when $A=F$. We define the trace-space $A^s_{p, q}(\mathbb{R}^n)\big|_E^{\fm_0}$ as the linear space of all $f\big|_E^{\fm_0}$, where $f\in A^s_{p, q}(\mathbb{R}^n)$. We equip this space with the quotient-space quasi-norm, i.e.,
    \begin{equation}
        \|\phi\|_{A^s_{p, q}(\mathbb{R}^n)\big|^{\fm_0}_E} := \inf \Bigl\{\|f\|_{A^s_{p, q}(\mathbb{R}^n)}:f\in A^s_{p, q}(\mathbb{R}^n), \phi=f\big|_E^{\fm_0} \Bigr\}, \qquad \phi \in A^s_{p, q}(\mathbb{R}^n)\big|_E^{\fm_0}.
    \end{equation}
    Furthermore, we define the trace operator by
    \begin{equation}
        \Tr\big|^{\fm_0}_E : A^s_{p, q}(\mathbb{R}^n)\to A^s_{p, q}(\mathbb{R}^n)\big|_E^{\fm_0}, \qquad f\mapsto f\big|_E^{\fm_0}.
    \end{equation}
\end{Def}
Finally, we introduce functionals of Besov and Lizorkin--Triebel type on a closed $d$-thick set $E$.
\begin{Def}
    Let $E\subset \mathbb{R}^n$ be a closed $d$-thick set, $d\in [0, n]$, equipped with a $d$-regular sequence of measures $\{\fm_k\}_{k=0}^{\infty}$.
    Given $p, q\in (0, \infty]$, $s\in (\frac{\theta}{p}, 1)$, and $\sigma \in (0, \infty)$, we set, for each $\phi\in L_{\sigma}^{\loc}(\{\fm_k\})$,
\begin{equation}
\label{eq.besov_seminorm_on_thick_set}
    \|\phi\|_{\mathfrak{b}^{s-\theta/p}_{p, q, \sigma}(E)}:= \|\{2^{ks}\widetilde{\mathcal{E}}_{\fm_k, \sigma}(\phi, Q_k(\cdot))\}_{k=0}^{\infty}\|_{\ell_q(L_p(\mathbb{R}^n))}.
\end{equation}
Furthermore, we set
\begin{equation}
\label{eq.besov_norm_on_thick_set}
    \|\phi\|_{\mathfrak{B}^{s-\theta/p}_{p, q, \sigma}(E)}:=\|\phi\|_{L_p(\fm_0)}+\|\phi\|_{\mathfrak{b}^{s-\theta/p}_{p, q, \sigma}(E)}.
\end{equation}
\end{Def}
\begin{Def}
    Let $E\subset \mathbb{R}^n$ be a closed $d$-thick set, $d\in [0, n]$, equipped with a $d$-regular sequence of measures $\{\fm_k\}_{k=0}^{\infty}$.
    Given $p\in (0, \infty)$, $q\in (0, \infty]$, $s\in (\frac{\theta}{p}, 1)$, and $\sigma \in (0, \infty)$, we set, for each $\phi\in L_{\sigma}^{\loc}(\{\fm_k\})$,
\begin{equation}
\label{eq.lizorkin_triebel_seminorm_on_thick_set}
    \|\phi\|_{\mathfrak{f}^{s-\theta/p}_{p, q, \sigma}(E)}:= \|\{2^{ks}\widetilde{\mathcal{E}}_{\fm_k, \sigma}(\phi, Q_k(\cdot))\}_{k=0}^{\infty}\|_{L_p(\mathbb{R}^n, \ell_q)}.
\end{equation}
Furthermore, we set
\begin{equation}
\label{eq.lizorkin_triebel_norm_on_thick_set}
    \|\phi\|_{\mathfrak{F}^{s-\theta/p}_{p, q, \sigma}(E)}:=\|\phi\|_{L_p(\fm_0)}+\|\phi\|_{\mathfrak{f}^{s-\theta/p}_{p, q, \sigma}(E)}.
\end{equation}
\end{Def}
\begin{Remark}
    It follows from Theorem~\ref{Th.main_stated_Besov} and Theorem~\ref{Th.main_stated_LT} that different admissible values of $\sigma$ give equivalent Besov and Lizorkin--Triebel functionals on a $d$-thick set. More precisely, different values of $0<\sigma\le p$ give equivalent Besov-type functionals and different values of $0<\sigma< \min\{p, q\}$ give equivalent Lizorkin--Triebel functionals.
\end{Remark}
In Section~\ref{section.Examples}, we show that these functionals recover the classical trace spaces on Ahlfors--David regular sets. The following elementary estimate will be useful throughout the paper.
\begin{Lm}
    \label{Lm.besov_and_lizorkin_triebel_rapid_convergence}
     Let $E\subset \mathbb{R}^n$ be a closed $d$-thick set, $d\in [0, n]$, equipped with a $d$-regular sequence of measures $\{\fm_k\}_{k=0}^{\infty}$. Given $s\in (0, 1)$, $p, q\in (0, \infty]$, $\alpha \in (-\infty, s)$, and $\sigma \in (0, \infty)$, there exists $C$ such that for each $\phi \in L_{\sigma}^{\loc}(\{\fm_k\})$ and each $k\in \mathbb{N}_0$
     \begin{equation}
         \label{eq.besov_rapid_convergence}
         \|\{2^{j\alpha}\widetilde{\mathcal{E}}_{\fm_j, \sigma}(\phi, Q_j(\cdot))\}_{j\ge k}\|_{\ell_1(L_p(\mathbb{R}^n))} \le C  2^{-k(s-\alpha)}\|\phi\|_{\mathfrak{b}^{s-\theta/p}_{p, q, \sigma}(E)}.
     \end{equation}
     Furthermore, if $p<\infty$, then
     \begin{equation}
         \label{eq.lizorkin_triebel_rapid_convergence}
          \|\{2^{j\alpha}\widetilde{\mathcal{E}}_{\fm_j, \sigma}(\phi, Q_j(\cdot))\}_{j\ge k}\|_{\ell_1(L_p(\mathbb{R}^n))} \le C  2^{-k(s-\alpha)}\|\phi\|_{\mathfrak{f}^{s-\theta/p}_{p, q, \sigma}(E)}.
     \end{equation}
\end{Lm}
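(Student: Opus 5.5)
The estimate is a pure sequence-space statement, so I will prove it from the definitions using elementary weighted $\ell_q\to\ell_1$ estimates. No geometry of $E$ is needed. Fix $\phi$ and $k$, and abbreviate $a_j:=\|\widetilde{\mathcal{E}}_{\fm_j,\sigma}(\phi,Q_j(\cdot))\|_{L_p(\mathbb{R}^n)}$ in the Besov case. The left-hand side of \eqref{eq.besov_rapid_convergence} is then
\begin{equation*}
\sum_{j\ge k}2^{j\alpha}a_j=\sum_{j\ge k}2^{-j(s-\alpha)}\bigl(2^{js}a_j\bigr).
\end{equation*}
I split into the cases $q\le 1$ and $q>1$.

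If $q\le1$, I bound each term by $2^{-k(s-\alpha)}2^{js}a_j$, since $j\ge k$ and $s-\alpha>0$. The embedding $\ell_q\subset\ell_1$ gives $\sum_{j\ge k}2^{js}a_j\le\|\{2^{js}a_j\}_{j\ge k}\|_{\ell_q}$. If $q>1$ (including $q=\infty$), I apply Hölder's inequality with exponents $q,q'$:
\begin{equation*}
\sum_{j\ge k}2^{-j(s-\alpha)}2^{js}a_j\le\|\{2^{-j(s-\alpha)}\}_{j\ge k}\|_{\ell_{q'}}\|\{2^{js}a_j\}_{j\ge k}\|_{\ell_q}\le C(s-\alpha,q)\,2^{-k(s-\alpha)}\|\phi\|_{\mathfrak{b}^{s-\theta/p}_{p,q,\sigma}(E)}.
\end{equation*}
The geometric tail is summable because $\alpha<s$. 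This proves \eqref{eq.besov_rapid_convergence}. The constant depends only on $s,\alpha,q$.

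For \eqref{eq.lizorkin_triebel_rapid_convergence} with $p<\infty$, I reduce to the Besov case. Set $F(x):=\|\{2^{js}\widetilde{\mathcal{E}}_{\fm_j,\sigma}(\phi,Q_j(x))\}_{j}\|_{\ell_q}$. For every fixed $j$ we have the pointwise bound $2^{js}\widetilde{\mathcal{E}}_{\fm_j,\sigma}(\phi,Q_j(x))\le F(x)$. Hence $2^{js}a_j\le\|F\|_{L_p(\mathbb{R}^n)}=\|\phi\|_{\mathfrak{f}^{s-\theta/p}_{p,q,\sigma}(E)}$, so $\|\phi\|_{\mathfrak{b}^{s-\theta/p}_{p,\infty,\sigma}(E)}\le\|\phi\|_{\mathfrak{f}^{s-\theta/p}_{p,q,\sigma}(E)}$. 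I then apply the Besov estimate with $q=\infty$, which is valid for every $p$ and every $\alpha<s$.

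There is no real obstacle. The only point needing care is to use the weaker $\ell_\infty$ control in the Lizorkin--Triebel reduction, so that the argument works for every $q\in(0,\infty]$ and every $p\in(0,\infty)$. The quasi-norm setting ($p<1$) causes no issue, because the inner $L_p$ norms are never summed inside an $L_p$ quasi-norm.
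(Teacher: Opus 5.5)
Your proof is correct. The Besov part coincides with the paper's argument: H\"older with exponent $q'$ when $q>1$, and the embedding $\ell_q\subset\ell_1$ when $q\le 1$.

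For the Lizorkin--Triebel estimate you take a different and simpler route. Write $\Phi_j:=\widetilde{\mathcal{E}}_{\fm_j,\sigma}(\phi,Q_j(\cdot))$.

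The paper sets $r:=\max\{p,q\}$ and first proves
\[
\|\{2^{js}\Phi_j\}_{j\ge k}\|_{\ell_r(L_p(\mathbb{R}^n))}\le\|\{2^{js}\Phi_j\}_{j\ge k}\|_{L_p(\mathbb{R}^n,\ell_q)} .
\]
This requires a case split. For $q\le p$ it uses the identification $\ell_p(L_p)=L_p(\ell_p)$ together with $\ell_q\hookrightarrow\ell_p$. For $q>p$ it uses Minkowski's integral inequality. The paper then repeats the H\"older/embedding step with $r$ in place of $q$.

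You use only the trivial pointwise bound of each term $2^{js}\Phi_j(x)$ by the $\ell_q$-norm. This gives domination by the $\ell_\infty(L_p)$ quantity. You then use the fact that the weights $2^{-j(s-\alpha)}$ decay geometrically, so $\ell_\infty$ control already suffices.

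Your route has three advantages. It needs no case distinction and no Minkowski inequality. It gives the constant $(1-2^{-(s-\alpha)})^{-1}$, independent of $p$ and $q$. The same observation, via $\ell_q\hookrightarrow\ell_\infty$, would even make the H\"older step in the Besov case unnecessary.

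The paper's route does yield a stronger intermediate fact: the $\mathfrak{f}_{p,q}$-functional dominates the $\ell_{\max\{p,q\}}(L_p)$ Besov-type functional. That extra strength is not used anywhere in this lemma.
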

\begin{proof}
    First assume that $q>1$. Since $\beta:=s-\alpha>0$, H\"older's inequality gives
    \begin{equation}
        \|\{2^{j\alpha}\widetilde{\mathcal{E}}_{\fm_j, \sigma}(\phi, Q_j(\cdot))\}_{j\ge k}\|_{\ell_1(L_p(\mathbb{R}^n))} \le \|\{2^{-\beta j}\}_{j\ge k}\|_{l_{q'}} \|\phi\|_{\mathfrak{b}^{s-\theta/p}_{p, q, \sigma}(E)} \approx 2^{-\beta k}\|\phi\|_{\mathfrak{b}^{s-\theta/p}_{p, q, \sigma}(E)}.
    \end{equation}
    If $q\le 1$, the latter estimate follows immediately from the continuous inclusion $\ell_q\subset \ell_1$.
    \par
    For the Lizorkin--Triebel case, we set $r:=\max\{p, q\}$. We claim that 
    \begin{equation}
    \label{eq.Lebesgue_points_on_E4}
        \|\{2^{js}\widetilde{\mathcal{E}}_{\fm_j, \sigma}(\phi, Q_j(\cdot))\}_{j\ge k}\|_{l_r(L_p(\mathbb{R}^n))} \le \|\{2^{js}\widetilde{\mathcal{E}}_{\fm_j, \sigma}(\phi, Q_j(\cdot))\}_{j\ge k}\|_{L_p(\mathbb{R}^n, \ell_q)}.
    \end{equation}
    Indeed, for $q\le p$, this estimate follows from the equality of norms $\ell_p(L_p(\mathbb{R}^n)) = L_p(\mathbb{R}^n, \ell_p)$ and the continuous inclusion $\ell_q\hookrightarrow \ell_p$, whereas for $q>p$, it follows from Minkowski's inequality for integrals. Arguing as in the Besov case, we obtain, for $r>1$,
    \begin{equation}
    \begin{split}
    &\|\{2^{j\alpha}\widetilde{\mathcal{E}}_{\fm_j, \sigma}(\phi, Q_j(\cdot))\}_{j\ge k}\|_{\ell_1(L_p(\mathbb{R}^n))} \lesssim \\\|\{2^{-\beta j}\}_{j\ge k}\|_{l_{r'}}& \|\{2^{js}\widetilde{\mathcal{E}}_{\fm_j, \sigma}(\phi, Q_j(\cdot))\}_{j\ge k}\|_{l_r(L_p(\mathbb{R}^n))}  \lesssim 2^{-\beta k}\|\phi\|_{\mathfrak{f}^{s-\theta/p}_{p, q, \sigma}(E)}.
    \end{split}
    \end{equation}
    If $r\le 1$, the latter estimate follows from the inclusion $\ell_r \hookrightarrow \ell_1$ and \eqref{eq.Lebesgue_points_on_E4}.
\end{proof}
\subsection{Whitney decomposition}
Another technical tool used extensively throughout the paper is the Whitney decomposition. For brevity, we set $Q^*:=\frac{9}{8}Q$ for each cube $Q\subset\mathbb{R}^n$.
\begin{Th}
\label{Th.Whitney_decomposition}
    For every nonempty closed set $E\subset\mathbb{R}^n$, there exists a family of dyadic cubes $W_{E}=\{Q_{\kappa}\}_{\kappa\in I} = \{Q_{r_{\kappa}}(x_{\kappa})\}_{\kappa\in I}$ such that
    \begin{conditions}{\textbf{W}}
        \item\label{cond:W1} $\mathbb{R}^n\setminus E = \bigcup_{\kappa \in I}Q_{\kappa}$;
        \item\label{cond:W2} for each $\kappa \in I$
        \begin{equation}
            \label{eq.whitney_decomposition_distance_property}
            \operatorname{diam}(Q_{\kappa}) \le \operatorname{dist}(Q_{\kappa}, E) \le 4 \operatorname{diam}(Q_{\kappa});
        \end{equation}
        \item\label{cond:W3} for each $\kappa_1, \kappa_2 \in I$, whenever $Q_{\kappa_1}^*\cap Q_{\kappa_2}^*\neq \emptyset$,
        \begin{equation}
            \label{eq.whitney_decomposition_diameters_of_adjacent_cubes}
            \frac{1}{4}\operatorname{diam}(Q_{\kappa_1})\le \operatorname{diam}(Q_{\kappa_2})\le 4\operatorname{diam}(Q_{\kappa_1});
        \end{equation}
        \item\label{cond:W4} there exists $C=C(n)>0$ such that for each $\kappa\in I$
        \begin{equation}
            \#\left\{\kappa'\in I: Q_{\kappa'}^*\cap Q_{\kappa}^*\neq \emptyset\right\}\le C(n);
        \end{equation}
        \item\label{cond:W5} for every distinct $\kappa_1, \kappa_2 \in I$, $\operatorname{int}Q_{\kappa_1}\cap\operatorname{int}Q_{\kappa_2}=\emptyset$.
    \end{conditions}
\end{Th}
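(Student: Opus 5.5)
The plan is the classical dyadic construction of Stein, with the constants adjusted to the $\ell_\infty$-metric. For a dyadic cube $Q=Q_{k,m}$ with side length $\ell(Q)=2^{-k}$ we have $\operatorname{diam}(Q)=\ell(Q)$ in the $\ell_\infty$-norm. Moreover, $Q^*=\frac98 Q$ extends $Q$ by $\ell(Q)/16$ in each coordinate direction.

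\emph{Construction.} For $k\in\mathbb{Z}$ set $\Omega_k:=\{x: 2^{-k+1}<\operatorname{dist}(x,E)\le 2^{-k+2}\}$. Since $E$ is closed and nonempty, $\mathbb{R}^n\setminus E=\bigcup_k\Omega_k$. Let $\mathcal F_0$ be the family of all dyadic cubes $Q$ of side $2^{-k}$ with $Q\cap\Omega_k\neq\emptyset$, $k\in\mathbb{Z}$. If $x\in Q\cap\Omega_k$, then
\begin{equation*}
\operatorname{dist}(Q,E)\ge \operatorname{dist}(x,E)-\ell(Q)>\ell(Q),
\qquad
\operatorname{dist}(Q,E)\le\operatorname{dist}(x,E)\le 4\ell(Q),
\end{equation*}
so every cube in $\mathcal F_0$ satisfies \ref{cond:W2}. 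In particular $\mathcal F_0$ covers $\mathbb{R}^n\setminus E$ and misses $E$.

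Every $Q\in\mathcal F_0$ is contained in a maximal cube of $\mathcal F_0$. Indeed, dyadic cubes containing a fixed point form a chain, and any cube $Q'\in\mathcal F_0$ containing $x$ satisfies $\ell(Q')\le\operatorname{dist}(Q',E)\le\operatorname{dist}(x,E)$, so their sizes are bounded. Let $W_E$ be the family of maximal cubes. Then \ref{cond:W1} and \ref{cond:W2} are inherited from $\mathcal F_0$. Property \ref{cond:W5} follows because two dyadic cubes either have disjoint interiors or are nested, and nesting is excluded by maximality.

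\emph{Property \ref{cond:W3}.} Let $Q_1^*\cap Q_2^*\neq\emptyset$ with $\ell_1:=\ell(Q_1)\ge\ell_2:=\ell(Q_2)$. Then $\operatorname{dist}(Q_1,Q_2)\le(\ell_1+\ell_2)/16$, and hence
\begin{equation*}
\ell_1\le\operatorname{dist}(Q_1,E)\le \operatorname{dist}(Q_2,E)+\ell_2+\frac{\ell_1+\ell_2}{16}\le 5\ell_2+\frac{\ell_1+\ell_2}{16}.
\end{equation*}
This gives $\ell_1\le\frac{81}{15}\ell_2<8\ell_2$. Since $\ell_1/\ell_2$ is a power of $2$, we conclude $\ell_1\le4\ell_2$.

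\emph{Property \ref{cond:W4}.} Fix $\kappa$ and let $\ell:=\ell(Q_\kappa)$. Each neighbour $Q_{\kappa'}$ (with $Q_{\kappa'}^*\cap Q_\kappa^*\neq\emptyset$) has side length between $\ell/4$ and $4\ell$ by \ref{cond:W3}. Every such neighbour is contained in the cube $CQ_\kappa$ for an absolute constant $C$ (one can take $C=11$). By \ref{cond:W5} the neighbours have pairwise disjoint interiors, and each has volume at least $(\ell/4)^n$. Comparing volumes with $|CQ_\kappa|$ bounds their number by a constant $C(n)$.

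The only point requiring care is the numerical bookkeeping in \ref{cond:W2} and \ref{cond:W3}, namely that the chosen layers $\Omega_k$ yield exactly the constants $1$ and $4$ and the factor $\frac98$. The dyadic rounding in \ref{cond:W3} (any ratio below $8$ forces a ratio of at most $4$) is what makes these constants work.
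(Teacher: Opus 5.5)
Your argument is correct. It is the classical dyadic Whitney construction of Stein, which the paper states without proof and takes for granted. Your layers $2^{-k+1}<\operatorname{dist}(x,E)\le 2^{-k+2}$ are the $\ell_\infty$-version of Stein's, and the bookkeeping for the $\frac98$-enlargement checks out: in (W3) the ratio of side lengths is at most $\frac{81}{15}<8$, hence at most $4$, and (W4) follows with $C=11$. One phrasing to tighten in the maximality step: closed dyadic cubes through a boundary point need not be nested, so argue via the dyadic ancestors of $Q$ (equivalently, cubes containing a point of $\operatorname{int}Q$). Those lying in $\mathcal F_0$ have side lengths bounded by $\operatorname{dist}(Q,E)$.
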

We next recall a smooth partition of unity subordinate to a Whitney decomposition.
\begin{Prop}
    \label{Prop_whitney_decomposition_of_unity}
    Let $E\subset\mathbb{R}^n$ be a nonempty closed set, and let $W_E=\{Q_{\kappa}\}_{\kappa \in I}$ be a Whitney decomposition from Theorem~\ref{Th.Whitney_decomposition}. There exists a family of smooth functions $\{\psi_{\kappa}\}_{\kappa\in I}\subset C^{\infty}(\mathbb{R}^n)$ such that
    \begin{conditions}{\textbf{F}}
        \item\label{cond:F1} for each $\kappa \in I$ and $x\in \mathbb{R}^n$, $0\le \psi_{\kappa}(x)\le \chi_{Q^*_{\kappa}}(x)$;
        \item\label{cond:F2} $\sum_{\kappa\in I}\psi_{\kappa}(x)=1$ for all $x\in\mathbb{R}^n\setminus E$;
        \item\label{cond:F3} for each $\kappa\in I$, $\|\nabla \psi_{\kappa}\|_{L_{\infty}(\mathbb{R}^n)} \le C(n) \left(\operatorname{diam}(Q_{\kappa})\right)^{-1}$.
    \end{conditions}
\end{Prop}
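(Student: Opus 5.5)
The construction is the standard one: build the $\psi_\kappa$ from a single bump function and then normalize. Fix $\eta\in C^\infty(\mathbb{R}^n)$ with $0\le\eta\le1$, $\eta\equiv1$ on $Q_1(0)$ and $\operatorname{supp}\eta\subset Q_{9/8}(0)$. For every $\kappa\in I$ set
\begin{equation*}
\eta_\kappa(x):=\eta\bigl((x-x_\kappa)/r_\kappa\bigr).
\end{equation*}
Then $\eta_\kappa\equiv1$ on $Q_\kappa$ and $\operatorname{supp}\eta_\kappa\subset Q_\kappa^*$. Differentiating gives $|\nabla\eta_\kappa|\le C(n)r_\kappa^{-1}$, and $r_\kappa\approx\operatorname{diam}(Q_\kappa)$ because we use the $\ell_\infty$-norm. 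Define
\begin{equation*}
S(x):=\sum_{\kappa\in I}\eta_\kappa(x),\qquad \psi_\kappa:=\eta_\kappa/S \ \text{ on } \mathbb{R}^n\setminus E,\qquad \psi_\kappa:=0 \ \text{ on } E.
\end{equation*}

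The next step is to check that $S$ is well behaved on $\mathbb{R}^n\setminus E$. Fix $x\notin E$. By \ref{cond:W1}, $x$ lies in some $Q_{\kappa_0}$, so $S(x)\ge\eta_{\kappa_0}(x)=1$. By \ref{cond:W4}, only boundedly many of the $Q_\kappa^*$ meet $Q^*_{\kappa_0}$, and hence only boundedly many $\eta_\kappa$ are nonzero near $x$. The same holds on a neighbourhood of $x$, because $\mathbb{R}^n\setminus E$ is open. Therefore the sum defining $S$ is locally finite, $S$ is smooth on the complement of $E$, and $1\le S\le C(n)$ there. With this in hand, \ref{cond:F1} and \ref{cond:F2} follow directly from the definition of $\psi_\kappa$.

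For the gradient bound \ref{cond:F3}, take $x\in Q_\kappa^*$ and expand
\begin{equation*}
\nabla\psi_\kappa=\frac{\nabla\eta_\kappa}{S}-\frac{\eta_\kappa\nabla S}{S^2}.
\end{equation*}
The term $\nabla S(x)$ involves only those $\kappa'$ with $x\in Q_{\kappa'}^*$. For each such $\kappa'$ we have $Q_{\kappa'}^*\cap Q_\kappa^*\neq\emptyset$, so \ref{cond:W3} gives $r_{\kappa'}\approx r_\kappa$. Combining this with the bounded count from \ref{cond:W4} yields $|\nabla S(x)|\lesssim r_\kappa^{-1}$. Since $S\ge1$, we obtain $|\nabla\psi_\kappa|\lesssim r_\kappa^{-1}\approx\operatorname{diam}(Q_\kappa)^{-1}$.

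The step that needs the most care is global smoothness of $\psi_\kappa$ on all of $\mathbb{R}^n$, including across $E$. This reduces to showing $Q_\kappa^*\cap E=\emptyset$. By \ref{cond:W2}, $\operatorname{dist}(Q_\kappa,E)\ge\operatorname{diam}(Q_\kappa)$. The enlargement $Q_\kappa^*$ only moves points a distance at most $\tfrac18 r_\kappa$, which is much smaller than $\operatorname{diam}(Q_\kappa)$, so $Q_\kappa^*$ stays away from $E$. Consequently $\psi_\kappa$ is compactly supported inside the open set $\mathbb{R}^n\setminus E$. Being smooth there and vanishing identically near $E$, it is smooth on all of $\mathbb{R}^n$.
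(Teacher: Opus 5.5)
Your proof is correct: normalizing the dilated bumps $\eta_\kappa$ by their locally finite sum $S$ is the standard construction, and it is exactly the known result the paper recalls without proof. One justification should be tightened. Local finiteness does not follow from openness of $\mathbb{R}^n\setminus E$; it holds because $Q_{\kappa_0}\subset\operatorname{int}Q^*_{\kappa_0}$, so $\operatorname{int}Q^*_{\kappa_0}$ is a neighbourhood of $x$ met only by the boundedly many $Q^*_\kappa$ allowed by \ref{cond:W4}.
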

Let $E$ be a nonempty closed set. Given $x\in\mathbb{R}^n\setminus E$, we say that $\widetilde{x}$ is a \emph{metric projection} of $x$ to $E$ if $\operatorname{dist}(x, E) = |x-\widetilde{x}|$. Such a projection need not be unique, but the particular choice will be immaterial. Furthermore, given a Whitney decomposition $W_{E} = \{Q_{\kappa}\}_{\kappa\in I}$, we associate with each $Q_{\kappa}$ the projected cube $\widetilde{Q}_{\kappa} := Q_{r_{\kappa}}(\widetilde{x}_{\kappa})$. Finally, for each $\kappa\in I$, we set $k(\kappa):=-\log_{2}r_{\kappa}\in\mathbb{Z}$.
For $x\in\mathbb{R}^n\setminus E$, we define the set of adjacent indexes by
\begin{equation}
    \label{eq.a_notation}
    a(x) := \left\{\kappa \in I: x \in Q_{\kappa}^*\right\}.
\end{equation}
We record a few simple estimates that will be useful in Section~\ref{section.extension}. 
\begin{Lm}
    \label{Lm.distance_estimates}
    Let $E\subset\mathbb{R}^n$ be a nonempty closed set, and let $W_E=\{Q_{\kappa}\}_{\kappa \in I}$ be a Whitney decomposition from Theorem~\ref{Th.Whitney_decomposition}.
    \begin{enumerate}
        \item For each $\kappa\in I$,
        \begin{equation}
        \label{eq.distance_to_projection}
            |x_{\kappa}-\widetilde{x}_{\kappa}| \le 9\cdot2^{-k(\kappa)};
        \end{equation}
        \item For each $x_1, x_2 \in \mathbb{R}^n\setminus E$ and $\kappa_i\in a(x_i)$, $i \in \{1, 2\}$,
        \begin{equation}
            \label{eq.distance_between_adjacent_projections}
            |\widetilde{x}_{\kappa_1}- \widetilde{x}_{\kappa_2}| \le 21\max\{2^{-k(\kappa_1)}, 2^{-k(\kappa_2)}\} + |x_1-x_2|;
        \end{equation}
        \item For each $k\in\mathbb{Z}$, $x\in\mathbb{R}^n\setminus E$ satisfying $\operatorname{dist}(x, E)<2^{-k}$, and $\kappa \in a(x)$, 
        \begin{equation}
            \label{eq.upper_diameter_estimate}
           2^{-k(\kappa)}<\frac{8}{15}2^{-k};
        \end{equation}
        \item For each $k\in\mathbb{Z}$, $x\in\mathbb{R}^n\setminus E$ satisfying $\operatorname{dist}(x, E)\ge 2^{-k}$, and $\kappa \in a(x)$,
        \begin{equation}
            \label{eq.lower_diameter_estimate}
            2^{-k(\kappa)}\ge \frac{4}{41}2^{-k}.
        \end{equation}
    \end{enumerate}
\end{Lm}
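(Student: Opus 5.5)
The proof is a direct computation from \ref{cond:W2}. Write $r_\kappa=2^{-k(\kappa)}$, so that $Q_\kappa=Q_{r_\kappa}(x_\kappa)$ has side length $2r_\kappa$. Since we use the $\ell_\infty$-norm, $\operatorname{diam}(Q_\kappa)=2r_\kappa$, and \ref{cond:W2} reads
\begin{equation*}
2r_\kappa\le \operatorname{dist}(Q_\kappa,E)\le 8r_\kappa .
\end{equation*}
The enlarged cube $Q_\kappa^*=Q_{9r_\kappa/8}(x_\kappa)$ has diameter $\tfrac94 r_\kappa$. Every point of $Q_\kappa^*$ lies within $\tfrac98 r_\kappa$ of $x_\kappa$ and within $\tfrac18 r_\kappa$ of $Q_\kappa$. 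All four items follow from the triangle inequality for $\operatorname{dist}(\cdot,E)$ together with these two facts.

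For \eqref{eq.distance_to_projection}, pick $y\in Q_\kappa$ with $\operatorname{dist}(y,E)$ arbitrarily close to $\operatorname{dist}(Q_\kappa,E)\le 8r_\kappa$. Then
\begin{equation*}
|x_\kappa-\widetilde x_\kappa|=\operatorname{dist}(x_\kappa,E)\le |x_\kappa-y|+\operatorname{dist}(y,E)\le r_\kappa+8r_\kappa=9r_\kappa .
\end{equation*}
For \eqref{eq.distance_between_adjacent_projections}, use $|x_i-x_{\kappa_i}|\le\tfrac98 r_{\kappa_i}$ (because $x_i\in Q^*_{\kappa_i}$) and insert the chain $\widetilde x_{\kappa_1}\to x_{\kappa_1}\to x_1\to x_2\to x_{\kappa_2}\to\widetilde x_{\kappa_2}$. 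This gives
\begin{equation*}
|\widetilde x_{\kappa_1}-\widetilde x_{\kappa_2}|\le \bigl(9+\tfrac98\bigr)(r_{\kappa_1}+r_{\kappa_2})+|x_1-x_2|\le \tfrac{81}{4}\max\{r_{\kappa_1},r_{\kappa_2}\}+|x_1-x_2|,
\end{equation*}
and $\tfrac{81}{4}<21$.

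For \eqref{eq.upper_diameter_estimate}, take $x\in Q_\kappa^*$. Since $x$ is within $\tfrac18 r_\kappa$ of $Q_\kappa$,
\begin{equation*}
\operatorname{dist}(x,E)\ge \operatorname{dist}(Q_\kappa,E)-\tfrac18 r_\kappa\ge 2r_\kappa-\tfrac18 r_\kappa=\tfrac{15}{8}r_\kappa .
\end{equation*}
Combining this with $\operatorname{dist}(x,E)<2^{-k}$ yields $r_\kappa<\tfrac{8}{15}2^{-k}$.

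For \eqref{eq.lower_diameter_estimate}, choose $y\in Q_\kappa$ with $\operatorname{dist}(y,E)$ close to $\operatorname{dist}(Q_\kappa,E)\le 8r_\kappa$. Since $x,y\in Q_\kappa^*$, we have $|x-y|\le \tfrac94 r_\kappa$, so
\begin{equation*}
2^{-k}\le \operatorname{dist}(x,E)\le \tfrac94 r_\kappa+8r_\kappa=\tfrac{41}{4}r_\kappa,
\end{equation*}
which is the claim.

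There is no genuine obstacle here. The only point needing care is bookkeeping of the constants: one must use the $\ell_\infty$-diameter convention $\operatorname{diam}(Q_r(x))=2r$ consistently, and note that $Q^*$ is $\tfrac98$ times the cube, not a fixed additive enlargement.
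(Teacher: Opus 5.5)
Your proposal is correct and follows the paper's proof essentially line by line. It uses \ref{cond:W2} with $\operatorname{diam}(Q_\kappa)=2\cdot 2^{-k(\kappa)}$, the same chain $\widetilde x_{\kappa_1}\to x_{\kappa_1}\to x_1\to x_2\to x_{\kappa_2}\to\widetilde x_{\kappa_2}$ for item (2), and the same constants $\tfrac{15}{8}$ and $\tfrac{41}{4}$ in items (3) and (4).
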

\begin{proof}
    To prove \eqref{eq.distance_to_projection}, we use property~\ref{cond:W2} of the Whitney decomposition. Then
    \begin{equation}
        |x_{\kappa} - \widetilde{x}_{\kappa}|\le \operatorname{dist}(Q_{\kappa}, E) + \frac{1}{2}\operatorname{diam}(Q_{\kappa}) \le \frac{9}{2}\operatorname{diam}(Q_{\kappa}) = 9\cdot2^{-k(\kappa)}.
    \end{equation}
    \par
    For \eqref{eq.distance_between_adjacent_projections}, we apply \eqref{eq.distance_to_projection} and take into account that $x_i\in Q_{\kappa_i}^* = \frac{9}{8}Q_{\kappa_i}$, $i\in \{1, 2\}$. Then
    \begin{equation}
    \begin{split}
        |\widetilde{x}_{\kappa_1}- \widetilde{x}_{\kappa_2}| &\le |\widetilde{x}_{\kappa_1}-x_{\kappa_1}|+|x_{\kappa_1}-x_1|+|x_1-x_2|+|x_2-x_{\kappa_2}|+|x_{\kappa_2}-\widetilde{x}_{\kappa_2}|\\&\le \left(9+\frac{9}{8}\right)2^{-k(\kappa_1)} + |x_1-x_2|+\left(9+\frac{9}{8}\right)2^{-k(\kappa_2)}\\&\le 21\max\{2^{-k(\kappa_1)}, 2^{-k(\kappa_2)}\} + |x_1-x_2|.
    \end{split}
    \end{equation}
    \par
    Finally, the upper estimate  \eqref{eq.upper_diameter_estimate} follows from property~\ref{cond:W2}. Indeed,
    \begin{equation}
        2^{-k}>\operatorname{dist}(x, E) \ge \operatorname{dist}(Q_{\kappa}, E) - \frac{1}{16}\operatorname{diam}(Q_{\kappa}) \ge \frac{15}{8}2^{-k(\kappa)}.
    \end{equation}
    On the other hand, by the property~\ref{cond:W2} of the Whitney decomposition, we have
    \begin{equation}
        4\operatorname{diam}(Q_{\kappa})\ge \operatorname{dist}(Q_{\kappa}, E) \ge \operatorname{dist}(x, E)-\frac{9}{8}\operatorname{diam}(Q_{\kappa}).
    \end{equation}
    If $\operatorname{dist}(x, E)\ge 2^{-k}$, then the preceding inequality yields
    \begin{equation}
        \frac{41}{8}\operatorname{diam}(Q_{\kappa}) = \frac{41}{4}2^{-k(\kappa)}\ge \operatorname{dist}(x, E)\ge2^{-k}
    \end{equation}
    Thus, the lower estimate \eqref{eq.lower_diameter_estimate} follows immediately.
\end{proof}
Finally, we recall the existence of a useful covering of $E$ related to the Whitney decomposition. Given $k\in \mathbb{Z}$, it is clear that the family $\{\widetilde{Q}_{\kappa}\}_{k(\kappa) = k}$ has uniformly bounded multiplicity. We want a stronger version of this family that has bounded multiplicity over all $\kappa$ (see \cite[Theorem~C]{tyulenev_thick_sets_R_n} or \cite[Theorem~2.4]{shvartsman_regular}).
\begin{Th}
    \label{Th.substitute_of_projections}
    Assume that $E \subset \mathbb{R}^n$ is Ahlfors--David $n$-regular and $W_{E} = \{Q_{\kappa}\}_{\kappa \in I}$ is the corresponding Whitney decomposition. Then there exist a family $\{U_{\kappa}\}_{\kappa \in I: r_{\kappa}\le 1}$ and constants $c_1, c_2>0$ with the following properties:
    \begin{enumerate}
        \item $U_{\kappa} \subset (10 Q_{\kappa}\cap E)$;
        \item $\mathcal{L}^n(Q_{\kappa}) \le c_1\mathcal{L}^n(U_{\kappa})$;
        \item $\sum\limits_{\kappa \in I: r_{\kappa}\le 1} \chi_{U_{\kappa}}(x)\le c_2$ for all $x\in E$.
    \end{enumerate}
\end{Th}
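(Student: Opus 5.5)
The plan is to reduce the statement to the classical equivalence between Carleson families and sparse families of dyadic cubes. The measure in that equivalence will be $\mu:=\mathcal{L}^n\lfloor_E$. Throughout, fix $\kappa\in I$ with $r_\kappa\le 1$ and write $r:=r_\kappa$.

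\textbf{Step 1: a good dyadic cube for each $\kappa$.} By \eqref{eq.distance_to_projection}, $|x_\kappa-\widetilde{x}_\kappa|\le 9r$, so $\widetilde{Q}_\kappa=Q_r(\widetilde{x}_\kappa)\subset 10Q_\kappa$. Since $\widetilde{x}_\kappa\in E$ and $r\le 1$, Ahlfors--David $n$-regularity \eqref{eq.regular_sets_definition} gives $\mathcal{L}^n(\widetilde{Q}_\kappa\cap E)\ge C_1r^n$. Fix an integer $m=m(n)$ and split $\widetilde{Q}_\kappa$ into dyadic cubes of side $2^{-m}r$ plus boundary slabs. The slabs have measure at most a small fraction of $r^n$ once $m$ is large, and the number of pieces is $\lesssim 2^{nm}$. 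Hence one dyadic cube $D_\kappa\subset\widetilde{Q}_\kappa$ satisfies
\begin{equation*}
\mu(D_\kappa)\ge \eta\,\mathcal{L}^n(D_\kappa)\approx \eta\,\mathcal{L}^n(Q_\kappa),
\end{equation*}
with $\eta=\eta(n,C_1)>0$. Moreover, a given dyadic cube $D$ equals $D_\kappa$ for at most $N(n)$ indices $\kappa$. Indeed, such $\kappa$ have $r_\kappa\approx\operatorname{diam}D$ and $Q_\kappa$ lies within distance $\lesssim r_\kappa$ of $D$, and these Whitney cubes have disjoint interiors by \ref{cond:W5}. The family $\mathcal{S}:=\{D_\kappa\}$ may therefore be treated as a set of cubes with multiplicity at most $N$.

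\textbf{Step 2: the Carleson packing estimate.} For $R\in\mathcal{S}$ I will show
\begin{equation*}
\sum_{\kappa:\,D_\kappa\subset R}\mu(D_\kappa)\le\sum_{\kappa:\,D_\kappa\subset R}\mathcal{L}^n(D_\kappa)\lesssim\sum_{\kappa:\,D_\kappa\subset R}\mathcal{L}^n(Q_\kappa)\le \mathcal{L}^n(CR)\lesssim \mu(R).
\end{equation*}
The third inequality holds because $D_\kappa\subset R$ forces $r_\kappa\lesssim\operatorname{diam}R$ and $Q_\kappa\subset 11Q_\kappa\subset CR$, while the $Q_\kappa$ have pairwise disjoint interiors. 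The last inequality follows from $\mu(R)\ge\eta\mathcal{L}^n(R)$, which holds because $R\in\mathcal{S}$. It is important that the Carleson condition is only tested on cubes of $\mathcal{S}$ itself, where the lower density of $E$ is guaranteed.

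\textbf{Step 3: sparse selection.} For a family of dyadic cubes that is Carleson with respect to $\mu$, tested on cubes of the family, I will apply the standard Carleson$\Rightarrow$sparse construction (Lerner--Nazarov type). Process the cubes from the bottom up, or through a stopping-time argument over generations. For each cube $D_\kappa$ choose a subset $F_\kappa\subset D_\kappa\cap E$ with $\mu(F_\kappa)\ge\mu(D_\kappa)/(2\Lambda)$, where $\Lambda$ is the Carleson constant, such that the sets $F_\kappa$ are pairwise disjoint. Multiplicity $N$ only changes constants. Setting $U_\kappa:=F_\kappa$ gives all three properties:
\begin{enumerate}
\item $U_\kappa\subset D_\kappa\cap E\subset 10Q_\kappa\cap E$;
\item $\mathcal{L}^n(Q_\kappa)\lesssim\mathcal{L}^n(D_\kappa)\lesssim\mu(D_\kappa)\lesssim\mathcal{L}^n(U_\kappa)$;
\item the overlap $\sum_\kappa\chi_{U_\kappa}$ is bounded, in fact by $1$.
\end{enumerate}

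\textbf{Main obstacle.} The main obstacle is Step 3, and specifically making the sparse selection rigorous for an infinite family when $\mu$ is only a restriction of Lebesgue measure. If implementing it through the Lerner--Nazarov duality proves awkward, I would instead run a stopping argument directly. Set $F_\kappa:=D_\kappa\cap E\setminus\bigcup\{D_{\kappa'}:D_{\kappa'}\subsetneq D_\kappa,\ \operatorname{side}(D_{\kappa'})\le 2^{-M}\operatorname{side}(D_\kappa)\}$. Choose $M$ so that the Carleson bound controls the removed mass: the deeper generations form a geometric-type sum, so the removed $\mu$-mass is at most $\tfrac12\mu(D_\kappa)$. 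Then check that every point lies in $F_\kappa$ for at most $M\cdot N$ indices.
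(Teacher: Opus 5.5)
The paper does not prove this statement. It imports it from \cite[Theorem~C]{tyulenev_thick_sets_R_n} and \cite[Theorem~2.4]{shvartsman_regular}, so your Carleson/sparse argument has to stand on its own. Steps~1 and~2 are fine. Note only that $m$, and hence $N$, depends on $C_1$ as well as $n$, while $\eta$ can be taken independent of $m$.

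The gap is Step~3, which carries the whole content of the theorem. Your primary route is only asserted. The bottom-up Lerner--Nazarov selection needs a bottom layer, and $\mathcal S$ has none because the Whitney cubes accumulate at $E$; you flag this but leave it unresolved. Your fallback rests on the claim that deleting the sub-cubes of dyadic depth at least $M$ costs at most $\frac12\mu(D_\kappa)$, because ``the Carleson bound'' makes the deep generations ``a geometric-type sum''. This does not follow. A Carleson bound only gives $\sum_{D_{\kappa'}\subsetneq D_\kappa}\mu(D_{\kappa'})\le(\Lambda-1)\mu(D_\kappa)$ and says nothing about how that mass is spread over depths. For instance, a cube $R$ together with all its dyadic subcubes of generation $M$ is a $2$-Carleson family for every $M$, yet the deep cubes cover $R$ entirely.

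The estimate you need is nevertheless true here, but the reason is geometric rather than Carleson. Suppose $D_{\kappa'}\subsetneq D_\kappa$ and $\operatorname{side}(D_{\kappa'})\le 2^{-M}\operatorname{side}(D_\kappa)$. Then $r_{\kappa'}\le 2^{m-M}\operatorname{side}(D_\kappa)$. Since $D_{\kappa'}\subset 10Q_{\kappa'}$, the cube $Q_{\kappa'}$ lies within $11r_{\kappa'}$ of $D_\kappa$. For $M\ge m+5$, these non-overlapping Whitney cubes therefore all sit in a cube of measure at most $2^n\mathcal{L}^n(D_\kappa)$. Because $\mathcal{L}^n(D_{\kappa'})=2^{-(m+1)n}\mathcal{L}^n(Q_{\kappa'})$, the removed mass is at most $2^{-mn}\mathcal{L}^n(D_\kappa)$. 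This is at most $\frac12\eta\,\mathcal{L}^n(D_\kappa)\le\frac12\mu(D_\kappa)$ once $m$ is large, precisely because $\eta$ does not depend on $m$. This smallness factor $2^{-(m+1)n}$ is exactly what you discarded in Step~2.

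With this correction your sets $F_\kappa$ work, with overlap at most $2^nMN$. The factor $2^n$ handles points on boundaries of closed dyadic cubes, since (3) must hold at every $x\in E$.

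A purely Carleson-based repair also exists: stratify $\mathcal S$ by relative height inside $\mathcal S$ modulo $L$, not by dyadic depth. The unions of cubes of relative height $t$ below $R$ decrease in $t$. Hence the same-residue cubes strictly inside $R$ carry mass at most $\frac{\Lambda-1}{L}\mu(R)$. Deleting only those leaves $\mu(F_R)\ge\frac12\mu(R)$ when $L\ge 2(\Lambda-1)$, with overlap $\lesssim LN$.
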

\section{Extension operator}
\label{section.extension}
Throughout this section we fix:
\begin{conditions}{\textbf{D}.3.}
    \item\label{cond:D31} a parameter $d\in [0, n]$ and a closed $d$-thick set $E\subset \mathbb{R}^n$;
    \item\label{cond:D32} a $d$-regular sequence of measures $\{\fm_k\}_{k=0}^{\infty}$ on $E$;
    \item\label{cond:D33} parameters $p\in [1, \infty]$, $q\in (0, \infty]$, and $s\in (\frac{n-d}{p}, 1)$.
\end{conditions}
\par
The aim of this section is to construct the extension operator and to prove its boundedness. To this end, we fix a Whitney decomposition $W_{E} = \{Q_{\kappa}\}_{\kappa\in I}$ obtained by applying Theorem~\ref{Th.Whitney_decomposition} to $E$. We are interested in the truncated version of the Whitney decomposition. More precisely, we put $\mathcal{J}:=\{\kappa\in I: r_{\kappa}\le 1\}$.
To treat the average and median constructions simultaneously, we introduce an \emph{extension operator associated with a family of almost best approximating operators}. 
\begin{Def}
    Let $\lambda>1$, $\sigma \in (0, \infty)$, and let $\mathcal{T} = \{\mathcal{T}_{k}^x\}_{k\in \mathbb{N}_0, x\in E}$ be a family of $\lambda$-almost best approximating operators such that $\mathcal{T}_{k}^x:L_{\sigma}(Q_k(x), \fm_k) \to \mathbb{R}$.
    Given $\phi\in L_{\sigma}^{\loc}(\{\fm_k\})$, we define $\Ext_{\mathcal{T}}\phi:\mathbb{R}^n\to\mathbb{R}$ as
    \begin{equation}
    \label{eq.extension_operator_definition}
        \Ext_{\mathcal{T}}\phi(x) := \chi_{E}(x)\phi(x)+ \sum_{\kappa\in \mathcal{J}}\psi_{\kappa}(x)\phi_{\kappa}^{\mathcal{T}}, \qquad x\in\mathbb{R}^n,
    \end{equation}
    where
    \begin{equation}
    \label{eq.extension_coefficients_definition}
    \phi_{\kappa}^{\mathcal{T}} = \mathcal{T}_{k(\kappa)}^{\widetilde{x}_{\kappa}}\phi \qquad \text{for all } \kappa \in \mathcal{J}.
    \end{equation}
\end{Def}
\begin{Remark}
    An extension operator associated with $\mathcal{T}$ need not be linear. However, if $\mathcal{T}_k^x$ is linear for each $k\in\mathbb{N}_0$ and $x\in E$, then $\Ext_{\mathcal{T}}$ is linear. We are mainly interested in two particular choices of the family $\mathcal{T}$. By Lemma~\ref{Lm.average_median_approximation_property}, for $\sigma\in [1, \infty)$ and for a suitable $\lambda=\lambda(\sigma)$, we can take $\mathcal{T}^x_k\phi := A_{k, \fm_k}\phi(x)$. Furthermore, for $\sigma \in (0, \infty)$ and for a suitable $\lambda = \lambda(\sigma)$, we can also take $\mathcal{T}^x_k\phi:= \operatorname{med}_{\fm_k}(\phi, Q_k(x))$.
\end{Remark}
\begin{Remark}
    \label{Rm.extension_support}
    Assume that $\kappa \in \mathcal{J}$ and $x\in Q_{\kappa}^*$. Then, by \eqref{eq.distance_to_projection},
    \begin{equation}
        \operatorname{dist}(x, E) \le |x-x_{\kappa}|+|x_{\kappa}-\widetilde{x}_{\kappa}| < 11r_{\kappa}.
    \end{equation}
    Since $r_\kappa\le1$ for $\kappa\in\mathcal{J}$, it follows that $\operatorname{supp}\Ext_{\mathcal{T}}\phi \subset U_{-4}(E)$.
\end{Remark}
\par
Throughout the rest of this section, we fix $\sigma \in (0, \infty)$, $\lambda>1$, and a family $\mathcal{T}=\{\mathcal{T}_k^x\}_{k\in\mathbb{N}_0, x\in E}$ of $\lambda$-almost best approximating operators. Furthermore, we fix $k_0\ge 10$, say $k_0:=10$. For $k\in\mathbb{Z}$, we set
\begin{equation}
    V_k(E):=U_k(E)\setminus U_{k+1}(E), \quad \widehat{V}_k(E) := V_{k-1}(E)\cup V_k(E)\cup V_{k+1}(E).
\end{equation}
Given $\phi \in L_{\sigma}^{\loc}(\{\fm_k\})$, we define, for each $k>k_0$
\begin{equation}
\label{eq.extension_fractional_gradient_1_part}
    g_k^{\sigma, I}(x):= 2^{ks}\chi_{U_{k-1}(E)}(x) \sum_{j=k-10}^{\infty}\widetilde{\mathcal{E}}_{\fm_j, \sigma}(\phi, Q_j(x)),
\end{equation}
\begin{equation}
\label{eq.extension_fractional_gradient_2_part}
    g_k^{\sigma, II}(x):= 2^{ks}\sum_{l=k_0}^{k-1}2^{l-k}\chi_{\widehat{V}_l(E)}(x) \sum_{j=l-10}^{\infty}\widetilde{\mathcal{E}}_{\fm_j, \sigma}(\phi, Q_j(x)),
\end{equation}
\begin{equation}
\label{eq.extension_fractional_gradient_3_part}
    g_k^{\sigma, III}(x):=2^{k(s-1)}\chi_{U_{-5}(E)}(x)\Bigl(A_{-10, \fm_0}|\phi|^{\sigma}(x)\Bigr)^{1/\sigma}.
\end{equation}
Our goal is to establish that, for a sufficiently large constant $C>0$, 
\begin{equation}
\label{eq.extension_fractional_gradient_statement}
    \{C(g_k^{\sigma, I} + g_k^{\sigma, II}+g_k^{\sigma, III})\}_{k>k_0} \in \mathbb{D}_{k_0}^s(\Ext_{\mathcal{T}} \phi).
\end{equation}
To this end, for each $k>k_0$, we need to provide a pointwise estimate for $|\Ext_{\mathcal{T}}\phi(x_1)-\Ext_{\mathcal{T}}\phi(x_2)|$ holding for $\mathcal{L}^n$-a.e. pair $(x_1, x_2)$ whenever $2^{-k-1}\le |x_1-x_2|<2^{-k}$. The argument depends on the relative positions of $x_1$, $x_2$, and $E$. After interchanging $x_1$ and $x_2$, if necessary, we are left with the following five cases.
\begin{enumerate}
    \item $x_1, x_2\in E$;
    \item $x_1\in E$, $x_2\notin E$;
    \item $x_1, x_2\notin E$, and at least one of them belongs to $ U_k(E)$;
    \item $x_1, x_2 \notin U_k(E)$, and at least one of them belongs to $U_{k_0}(E)$;
    \item $x_1, x_2 \in \mathbb{R}^n\setminus U_{k_0}(E)$.
\end{enumerate}
We present the proofs of all cases as lemmas below.
\subsection{Pointwise estimate for extension}
\begin{Lm}
    \label{Lm.extension_fractional_gradient_points_in_E}
    Let $\phi \in L_\sigma^{\loc}(\{\fm_k\})$ be such that $\mathcal{L}^n(E\setminus \mathfrak{L}_{\{\fm_k\}, \sigma}(\phi)) = 0$. Then there exists an $\mathcal{L}^n$-negligible set $N_1 \subset E$ such that, for each $k > k_0$ and all $x_1, x_2 \in E\setminus N_1$ satisfying $2^{-k-1}\le |x_1-x_2|<2^{-k}$, the following inequality holds:
    \begin{equation}
    \label{eq.extension_fractional_gradient_points_in_E}
        |\Ext_{\mathcal{T}} \phi(x_1) - \Ext_{\mathcal{T}}\phi(x_2)|\le C_1|x_1-x_2|^s\left(g_k^{\sigma, I}(x_1)+g_k^{\sigma, I}(x_2)\right),
    \end{equation}
    where $C_1$ is independent of $x_1, x_2$, $\phi$, and $k$.
\end{Lm}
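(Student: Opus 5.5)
We take $N_1 := E\setminus \mathfrak{L}_{\{\fm_k\},\sigma}(\phi)$, which is $\mathcal{L}^n$-negligible by hypothesis. For $x\in E$ we have $\Ext_{\mathcal{T}}\phi(x)=\phi(x)$, since $\psi_\kappa$ vanishes on $E$ by \ref{cond:F1} and $Q_\kappa^*\cap E=\emptyset$. So we must estimate $|\phi(x_1)-\phi(x_2)|$ for $x_1,x_2\in E\setminus N_1$ with $2^{-k-1}\le|x_1-x_2|<2^{-k}$, $k>k_0$. The idea is to telescope along the almost best approximating constants. Lemma~\ref{Lm.approximation_at_Lebesgue_points} gives $\phi(x_i)=\lim_{j\to\infty}\mathcal{T}^{x_i}_j\phi$, so that
\begin{equation*}
|\phi(x_1)-\phi(x_2)|\le \sum_{i=1}^2\sum_{j=k}^{\infty}|\mathcal{T}^{x_i}_{j}\phi-\mathcal{T}^{x_i}_{j+1}\phi| + |\mathcal{T}^{x_1}_k\phi-\mathcal{T}^{x_2}_k\phi|.
\end{equation*}

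We bound the pieces as follows. By \eqref{eq.local_approximation_nested_cubes}, each nested difference is at most $C\mathcal{E}_{\fm_j,\sigma}(\phi,Q_j(x_i))$. By Remark~\ref{Rm.two_local_approximations_relation}, this is in turn at most $C\widetilde{\mathcal{E}}_{\fm_j,\sigma}(\phi,Q_j(x_i))$, because $x_i\in E$. For the cross term, $|x_1-x_2|<2^{-k}$ and $k\ge 1$, so \eqref{eq.local_approximation_adjacent_cubes} gives the bound $C\mathcal{E}_{\fm_{k-1},\sigma}(\phi,Q_{k-1}(x_1))$, hence $C\widetilde{\mathcal{E}}_{\fm_{k-1},\sigma}(\phi,Q_{k-1}(x_1))$. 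Since $k-1\ge k-10$, every term is one of the summands $\widetilde{\mathcal{E}}_{\fm_j,\sigma}(\phi,Q_j(x_i))$, $j\ge k-10$, in the definition \eqref{eq.extension_fractional_gradient_1_part}. Moreover $x_i\in E\subset U_{k-1}(E)$, so $\chi_{U_{k-1}(E)}(x_i)=1$. This yields
\begin{equation*}
|\phi(x_1)-\phi(x_2)|\lesssim \sum_{i=1}^2\sum_{j=k-10}^{\infty}\widetilde{\mathcal{E}}_{\fm_j,\sigma}(\phi,Q_j(x_i)) = 2^{-ks}\bigl(g_k^{\sigma,I}(x_1)+g_k^{\sigma,I}(x_2)\bigr).
\end{equation*}
Finally, $2^{-ks}\le 2^s|x_1-x_2|^s$, which gives \eqref{eq.extension_fractional_gradient_points_in_E}. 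The constants depend only on those in Lemma~\ref{Lm.local_approximation_sequence_of_measures} and Remark~\ref{Rm.two_local_approximations_relation}, so they do not depend on $k$, $\phi$, $x_1$ or $x_2$.

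The main point to watch is the telescoping step, which needs convergence of $\mathcal{T}^{x_i}_j\phi$ to $\phi(x_i)$ at every point outside $N_1$. This is exactly why the hypothesis on $(\{\fm_k\},\sigma)$-Lebesgue points is imposed. Note that the telescoping identity holds pointwise whether or not the series converges: if the right-hand side is infinite, the inequality is trivial. The other thing to verify is that the index shift from the adjacent-cube estimate, $k-1$, lies within the window $j\ge k-10$ of $g^{\sigma,I}_k$, which it does.
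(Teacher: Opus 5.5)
Your proposal is correct and follows the paper's proof: take $N_1:=E\setminus\mathfrak{L}_{\{\fm_k\},\sigma}(\phi)$, telescope along $\mathcal{T}^{x_i}_j\phi$, bound the nested and adjacent differences by Lemma~\ref{Lm.local_approximation_sequence_of_measures}, pass to $\widetilde{\mathcal{E}}$ via Remark~\ref{Rm.two_local_approximations_relation}, and finish with $2^{-ks}\lesssim|x_1-x_2|^s$. You also spell out two points the paper leaves implicit, namely $\Ext_{\mathcal{T}}\phi=\phi$ on $E$ (since $Q_\kappa^*\cap E=\emptyset$) and that the index $k-1$ falls within the window $j\ge k-10$ of $g_k^{\sigma,I}$ with $\chi_{U_{k-1}(E)}(x_i)=1$.
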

\begin{proof}
By the definition of the extension operators $\Ext_{\mathcal{T}}\phi(x) = \phi(x)$ for all $x\in E$. Set $N_1:=E\setminus \mathfrak{L}_{\{\fm_k\}, \sigma}(\phi)$. Then for each $x \in  E\setminus N_1$, Lemma~\ref{Lm.approximation_at_Lebesgue_points} gives
    \begin{equation}
        \label{eq.extension_fractional_gradient_points_in_E1}
         \phi(x) = \lim_{j\to\infty}\mathcal{T}^x_j\phi.
    \end{equation}
    Fix $k>k_0$ and $x_1, x_2 \in E\setminus N_1$ such that $2^{-k-1}\le |x_1-x_2|<2^{-k}$. Using \eqref{eq.extension_fractional_gradient_points_in_E1} and the standard telescoping series, we obtain
    \begin{equation}
        \label{eq.telescoping_series_estimation_in_E}
        \begin{split}
        |\phi(x_1)-\phi(x_2)|\le \bigl|\mathcal{T}^{x_1}_k\phi-\mathcal{T}^{x_2}_k\phi\bigr|+\sum_{j=k}^{\infty} \bigl|\mathcal{T}^{x_1}_{j+1}\phi-\mathcal{T}^{x_1}_j\phi\bigr|+\sum_{j=k}^{\infty} \bigl|\mathcal{T}^{x_2}_{j+1}\phi-\mathcal{T}^{x_2}_j\phi\bigr|
        \end{split}
    \end{equation}
    Since $|x_1-x_2|<2^{-k}$, Lemma~\ref{Lm.local_approximation_sequence_of_measures} gives
    \begin{equation}
        |\phi(x_1)-\phi(x_2)| \lesssim \sum_{j=k-1}^{\infty}\mathcal{E}_{\fm_j, \sigma}(\phi, Q_j(x_1)) + \sum_{j=k-1}^{\infty}\mathcal{E}_{\fm_j, \sigma}(\phi, Q_j(x_2)).
    \end{equation}
    It remains to notice that $|x_1-x_2|\ge 2^{-k-1}$ yields $2^{-ks}\lesssim|x_1-x_2|^s$. Combining this observation with
   Remark~\ref{Rm.two_local_approximations_relation}, we obtain \eqref{eq.extension_fractional_gradient_points_in_E}. 
\end{proof}
\begin{Lm}
     \label{Lm.extension_fractional_gradient_points_mixed} 
     Let $\phi \in L_{\sigma}^{\loc}(\{\fm_k\})$ be such that $\mathcal{L}^n(E\setminus \mathfrak{L}_{\{\fm_k\}, \sigma}(\phi)) = 0$. Then there exists an $\mathcal{L}^n$-negligible set $N_2 \subset E$ such that, for each $k > k_0$ and all $x_1\in E\setminus N_2$, $x_2 \in \mathbb{R}^n\setminus E$ satisfying $2^{-k-1}\le |x_1-x_2|<2^{-k}$, the following inequality holds:
     \begin{equation}
    \label{eq.extension_fractional_gradient_points_mixed}
        |\Ext_{\mathcal{T}} \phi(x_1) - \Ext_{\mathcal{T}}\phi(x_2)|\le C_2|x_1-x_2|^s\left(g_k^{\sigma, I}(x_1)+g_k^{\sigma, I}(x_2)\right),
    \end{equation}
    where $C_2$ is independent of $x_1, x_2$, $\phi$, and $k$.
\end{Lm}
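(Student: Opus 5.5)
We take $N_2:=N_1=E\setminus \mathfrak{L}_{\{\fm_k\},\sigma}(\phi)$. For $x_1\in E\setminus N_2$ we have $\Ext_{\mathcal{T}}\phi(x_1)=\phi(x_1)=\lim_{j\to\infty}\mathcal{T}^{x_1}_j\phi$ by Lemma~\ref{Lm.approximation_at_Lebesgue_points}. Since $x_2\notin E$ and $\sum_\kappa\psi_\kappa(x_2)=1$ by \ref{cond:F2}, we can write
\begin{equation*}
\Ext_{\mathcal{T}}\phi(x_2)-\phi(x_1)=\sum_{\kappa\in a(x_2)}\psi_\kappa(x_2)\bigl(\phi^{\mathcal{T}}_\kappa-\phi(x_1)\bigr).
\end{equation*}
Here we first check that every $\kappa\in a(x_2)$ belongs to $\mathcal{J}$. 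Indeed, $\operatorname{dist}(x_2,E)\le|x_1-x_2|<2^{-k}$, and \eqref{eq.upper_diameter_estimate} gives $2^{-k(\kappa)}<\frac{8}{15}2^{-k}\le 1$. Thus it suffices to bound $|\phi^{\mathcal{T}}_\kappa-\phi(x_1)|$ uniformly for $\kappa\in a(x_2)$, because the $\psi_\kappa$ are nonnegative and sum to $1$.

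Fix $\kappa\in a(x_2)$ and set $j_\kappa:=k(\kappa)$. By \eqref{eq.upper_diameter_estimate}, $j_\kappa\ge k$. For a lower bound on $j_\kappa$ in terms of $k$, note that $\operatorname{dist}(x_2,E)\ge \frac{15}{8}2^{-j_\kappa}$ only gives an upper bound for $2^{-j_\kappa}$. The lower bound is not needed if we telescope from scale $j_\kappa$ up to scale $k$ along $x_1$. By \eqref{eq.distance_between_adjacent_projections}-type reasoning, $|\widetilde{x}_\kappa-x_1|\le |\widetilde x_\kappa - x_\kappa|+|x_\kappa-x_2|+|x_2-x_1|\le 11\cdot 2^{-j_\kappa}+2^{-k}\le C 2^{-k}$. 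We then decompose
\begin{equation*}
|\mathcal{T}^{\widetilde x_\kappa}_{j_\kappa}\phi-\phi(x_1)|\le |\mathcal{T}^{\widetilde x_\kappa}_{j_\kappa}\phi-\mathcal{T}^{x_1}_{j_\kappa}\phi|+\sum_{j\ge j_\kappa}|\mathcal{T}^{x_1}_{j+1}\phi-\mathcal{T}^{x_1}_{j}\phi|.
\end{equation*}
The series is bounded by $\sum_{j\ge k}\mathcal{E}_{\fm_j,\sigma}(\phi,Q_j(x_1))$ using \eqref{eq.local_approximation_nested_cubes}.

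The first term is the main obstacle. The points $\widetilde x_\kappa$ and $x_1$ may be at distance comparable to $2^{-k}$ while $j_\kappa$ may be much larger than $k$, so Lemma~\ref{Lm.local_approximation_sequence_of_measures}(2) does not apply directly at scale $j_\kappa$. We handle this by noting that $|\widetilde x_\kappa - x_1|\le 12\cdot 2^{-j_\kappa}+|x_1-x_2|$ and $|x_1 - x_2|\ge \operatorname{dist}(x_2,E)\ge\frac{15}{8}2^{-j_\kappa}$, so in fact $|\widetilde x_\kappa-x_1|\le C|x_1-x_2|<C2^{-k}$. We therefore telescope $\mathcal{T}^{\widetilde x_\kappa}$ upward from $j_\kappa$ to $k-5$ via \eqref{eq.local_approximation_nested_cubes} at $\widetilde x_\kappa$. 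At scale $k-5$, we compare $\mathcal{T}^{\widetilde x_\kappa}_{k-5}\phi$ with $\mathcal{T}^{x_1}_{k-5}\phi$ by averaging over $Q_{k-5}(\widetilde x_\kappa)\subset Q_{k-6}(x_1)$, as in the proof of \eqref{eq.local_approximation_adjacent_cubes}, using \ref{cond:M3} and the relaxed doubling of Lemma~\ref{Lm.relaxed_doubling_property}. The upward terms $\mathcal{E}_{\fm_j,\sigma}(\phi,Q_j(\widetilde x_\kappa))$ for $k-5\le j\le j_\kappa$ need not be controlled at $\widetilde x_\kappa$ itself. By Lemma~\ref{Lm.local_approximation_outside_E}(2), applied with $x=\widetilde x_\kappa$, $y=x_2$ and $\underline k$ fixed (possible since $|\widetilde x_\kappa-x_2|\le 11\cdot 2^{-j_\kappa}$), we get $\mathcal{E}_{\fm_j,\sigma}(\phi,Q_j(\widetilde x_\kappa))\lesssim \widetilde{\mathcal{E}}_{\fm_{j-\underline k},\sigma}(\phi,cQ_j(x_2))$, which is in turn dominated by $\widetilde{\mathcal{E}}_{\fm_{j'},\sigma}(\phi,Q_{j'}(x_2))$ with $j'=j-\underline{k}'\ge k-10$ after shifting indices. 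The downward comparison at $x_1$ likewise gives terms $\widetilde{\mathcal{E}}_{\fm_j,\sigma}(\phi,Q_j(x_1))$, $j\ge k-10$, via Remark~\ref{Rm.two_local_approximations_relation} and Lemma~\ref{Lm.local_approximation_outside_E}.

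Collecting these estimates, we obtain $|\Ext_{\mathcal T}\phi(x_1)-\Ext_{\mathcal T}\phi(x_2)|\lesssim\sum_{j\ge k-10}\bigl(\widetilde{\mathcal{E}}_{\fm_j,\sigma}(\phi,Q_j(x_1))+\widetilde{\mathcal{E}}_{\fm_j,\sigma}(\phi,Q_j(x_2))\bigr)$. Both $x_1,x_2\in U_{k-1}(E)$, so this sum equals $2^{-ks}\bigl(g^{\sigma,I}_k(x_1)+g^{\sigma,I}_k(x_2)\bigr)$. Since $2^{-ks}\lesssim|x_1-x_2|^s$, this proves \eqref{eq.extension_fractional_gradient_points_mixed}. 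The index bookkeeping ensuring that all shifted indices stay $\ge k-10$ and $\ge0$ is routine, because $k>k_0=10$.
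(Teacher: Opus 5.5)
Your proof is correct and follows essentially the same route as the paper. Both arguments telescope along $x_1$ and along $\widetilde{x}_\kappa$ to a common coarse scale, compare the two approximating constants there via Lemma~\ref{Lm.local_approximation_sequence_of_measures}, and transfer the $\widetilde{x}_\kappa$-terms to $x_2$ via Lemma~\ref{Lm.local_approximation_outside_E}. The differences are cosmetic: you compare at scale $k-5$ instead of $k-3$, and you pass through the redundant intermediate constant $\mathcal{T}^{x_1}_{j_\kappa}\phi$. Choosing $c=2^{\underline{k}}$ (e.g.\ $c=16$, $\underline{k}=4$) in the transfer step makes your index shift exact.
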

\begin{proof}
 Set $N_2:=E\setminus \mathfrak{L}_{\{\fm_k\}, \sigma}(\phi)$ as in the previous lemma. Fix $k> k_0$, $x_1\in E\setminus N_2$, and $x_2\in \mathbb{R}^n\setminus E$ such that $2^{-k-1}\le|x_1-x_2|<2^{-k}$. Since $x_1 \in E$ and $|x_1-x_2|<2^{-k}$, it follows that $x_2\in U_k(E)$. Therefore, for each $\kappa \in a(x_2)$, estimate \eqref{eq.upper_diameter_estimate} gives $k<k(\kappa)$. Since $k>k_0$, we then have $\sum_{\kappa \in \mathcal{J}}\psi_{\kappa}(x_2)=1$. Consequently,
    \begin{equation}
    \label{eq.extension_fractional_gradient_mixed0}
        |\Ext_{\mathcal{T}} \phi(x_1) - \Ext_{\mathcal{T}}\phi(x_2)| \le \sum_{\kappa \in a(x_2)}\psi_{\kappa}(x_2)\Bigl|\phi(x_1)-\phi_{\kappa}^{\mathcal{T}}\Bigr|.
    \end{equation}
    \par
    We estimate $\Bigl|\phi(x_1)-\phi_{\kappa}^{\mathcal{T}}\Bigr|$ for all $\kappa \in a(x_2)$. To this end, we observe that estimate \eqref{eq.distance_to_projection} implies
    \begin{equation}
        |x_1-\widetilde{x}_{\kappa}| \le |x_1-x_2|+|x_2-x_{\kappa}| +|x_{\kappa}-\widetilde{x}_{\kappa}|<2^{-k}+\frac{9}{8}2^{-k(\kappa)} + 9\cdot 2^{-k(\kappa)} < 2^{-k+3}.
    \end{equation}
    Since $x_1 \in \mathfrak{L}_{\{\fm_k\}, \sigma}(\phi)$, Lemma~\ref{Lm.approximation_at_Lebesgue_points} gives
    \begin{equation}
        \phi(x_1)=\mathcal{T}^{x_1}_{k-3}\phi+\sum_{j=k-3}^{\infty}\Bigl(\mathcal{T}^{x_1}_{j+1}\phi -\mathcal{T}^{x_1}_j\phi\Bigr).
    \end{equation}
    Consequently, by Lemma~\ref{Lm.local_approximation_sequence_of_measures}, we obtain
    \begin{equation}
    \label{eq.extension_fractional_gradient_mixed1}
    \begin{split}
        |\phi(x_1)-\phi_{\kappa}^{\mathcal{T}}| \lesssim |\mathcal{T}^{x_1}_{k-3}\phi - \phi_{\kappa}^{\mathcal{T}}| + \sum_{j=k-3}^{\infty} \mathcal{E}_{\fm_j, \sigma}(\phi, Q_j(x_1)).
    \end{split}
    \end{equation}
    \par
    Next, we expand $\phi_{\kappa}^{\mathcal{T}}$ by telescoping. More precisely, we clearly have
    \begin{equation}
        \phi_{\kappa}^{\mathcal{T}} = \mathcal{T}^{\widetilde{x}_{\kappa}}_{k-3}\phi+ \sum_{j=k-3}^{k(\kappa)-1}\Bigl(\mathcal{T}^{\widetilde{x}_{\kappa}}_{j+1}\phi-\mathcal{T}^{\widetilde{x}_{\kappa}}_j\phi\Bigr)
    \end{equation}
    Since $|x_1 - \widetilde{x}_{\kappa}|<2^{-k+3}$, Lemma~\ref{Lm.local_approximation_sequence_of_measures} then yields
    \begin{equation}
    \begin{split}
        |\mathcal{T}^{x_1}_{k-3}\phi - \phi_{\kappa}^{\mathcal{T}}|  \lesssim |\mathcal{T}^{x_1}_{k-3}\phi-\mathcal{T}^{\widetilde{x}_{\kappa}}_{k-3}\phi| + \sum_{j=k-3}^{k(\kappa)-1}\mathcal{E}_{\fm_j, \sigma}(\phi, Q_j(\widetilde{x}_{\kappa}))\lesssim \sum_{j=k-4}^{k(\kappa)-1}\mathcal{E}_{\fm_j, \sigma}(\phi, Q_j(\widetilde{x}_{\kappa})).
    \end{split}
    \end{equation}
    For each $j \in \{k-4, \ldots k(\kappa)-1\}$, we have $|x_2-\widetilde{x}_{\kappa}|\le 7\cdot 2^{-j}$. Indeed, since $x_2\in Q_{\kappa}^*$, we obtain, by \eqref{eq.distance_to_projection},
    \begin{equation}
        |\widetilde{x}_{\kappa}-x_2| \le |\widetilde{x}_{\kappa} - x_{\kappa}|+|x_{\kappa}-x_2|\le 11\cdot2^{-k(\kappa)} \le 7\cdot2^{-j}.
    \end{equation}
    Consequently, applying \eqref{eq.local_approximation_outside_E} in Lemma~\ref{Lm.local_approximation_outside_E} with $c=8$ and $\underline{k}=3$, we get
    \begin{equation}
    \label{eq.extension_fractional_gradient_mixed2}
        \sum_{j=k-4}^{k(\kappa)-1}\mathcal{E}_{\fm_j, \sigma}(\phi, Q_j(\widetilde{x}_{\kappa}))  \lesssim\sum_{j=k-7}^{\infty}\widetilde{\mathcal{E}}_{\fm_j, \sigma}(\phi, Q_j(x_2)).
    \end{equation}
    \par
    Combining \eqref{eq.extension_fractional_gradient_mixed1}-\eqref{eq.extension_fractional_gradient_mixed2} and Remark~\ref{Rm.two_local_approximations_relation} yields
    \begin{equation}
    \begin{split}
    |\phi(x_1)-\phi_{\kappa}^{\mathcal{T}}|&\le\sum_{j=k-3}^{\infty}\widetilde{\mathcal{E}}_{\fm_j, \sigma}(\phi, Q_j(x_1))+\sum_{j=k-7}^{\infty}\widetilde{\mathcal{E}}_{\fm_j, \sigma}(\phi, Q_j(x_2))\\& \lesssim 2^{-ks} (g_k^{\sigma, I}(x_1)+g_k^{\sigma, I}(x_2)).
    \end{split}
    \end{equation}
    By \eqref{eq.extension_fractional_gradient_mixed0} and property~\ref{cond:F2} of $\{\psi_{\kappa}\}$, we then have
    \begin{equation}
        |\Ext_{\mathcal{T}}\phi(x_1)-\Ext_{\mathcal{T}}\phi(x_2)|\lesssim 2^{-ks} (g_k^{\sigma, I}(x_1)+g_k^{\sigma, I}(x_2))
    \end{equation}
    Since $|x_1-x_2|\ge 2^{-k-1}$ yields $2^{-ks}\lesssim|x_1-x_2|^s$, we obtain \eqref{eq.extension_fractional_gradient_points_mixed}. The proof is complete.
\end{proof}
\begin{Lm}
     \label{Lm.extension_fractional_gradient_close_to_E}
     Let $\phi \in L_{\sigma}^{\loc}(\{\fm_k\})$. Then for each $k > k_0$ and all $x_1, x_2 \in \mathbb{R}^n\setminus E$ satisfying $2^{-k-1}\le |x_1-x_2|<2^{-k}$ and such that either $x_1 \in U_{k}(E)$ or $x_2\in U_{k}(E)$, the following inequality holds:
     \begin{equation}
     \label{eq.extension_fractional_gradient_points_close_to_E}
         |\Ext_{\mathcal{T}} \phi(x_1) - \Ext_{\mathcal{T}}\phi(x_2)|\le C_3|x_1-x_2|^s\left(g_k^{\sigma, I}(x_1)+g_k^{\sigma, I}(x_2)\right),
    \end{equation}
    where $C_3$ is independent of $x_1, x_2$, $\phi$, and $k$.
\end{Lm}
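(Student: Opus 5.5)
Without loss of generality $x_1\in U_k(E)$. Since $|x_1-x_2|<2^{-k}$, we get $\operatorname{dist}(x_2,E)<2^{-k+1}$, so $x_2\in U_{k-1}(E)$. By \eqref{eq.upper_diameter_estimate}, every $\kappa\in a(x_1)$ satisfies $2^{-k(\kappa)}<\frac{8}{15}2^{-k}$, and every $\kappa'\in a(x_2)$ satisfies $2^{-k(\kappa')}<\frac{8}{15}2^{-k+1}$. Hence $k(\kappa),k(\kappa')\ge k>k_0$, all such indices lie in $\mathcal{J}$, and $\sum_{\kappa\in\mathcal J}\psi_\kappa(x_i)=1$ for $i=1,2$ by property~\ref{cond:F2}. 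As in \eqref{eq.extension_fractional_gradient_mixed0}, this gives
\begin{equation*}
|\Ext_{\mathcal T}\phi(x_1)-\Ext_{\mathcal T}\phi(x_2)|\le\sum_{\kappa\in a(x_1)}\sum_{\kappa'\in a(x_2)}\psi_\kappa(x_1)\psi_{\kappa'}(x_2)\,|\phi^{\mathcal T}_\kappa-\phi^{\mathcal T}_{\kappa'}|,
\end{equation*}
so it suffices to bound $|\phi^{\mathcal T}_\kappa-\phi^{\mathcal T}_{\kappa'}|$ for each such pair.

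By \eqref{eq.distance_between_adjacent_projections}, $|\widetilde x_\kappa-\widetilde x_{\kappa'}|\le 21\cdot 2^{-k+1}+2^{-k}<2^{-k+6}$. I telescope both coefficients down to the common level $k-6$. Using \eqref{eq.local_approximation_nested_cubes} and \eqref{eq.local_approximation_adjacent_cubes} (the latter applied at level $k-6$ with the projected centres, or through one more intermediate step if the constant $2^{6}$ is not matched exactly), we obtain
\begin{equation*}
|\phi^{\mathcal T}_\kappa-\phi^{\mathcal T}_{\kappa'}|\lesssim \sum_{j=k-7}^{k(\kappa)-1}\mathcal E_{\fm_j,\sigma}(\phi,Q_j(\widetilde x_\kappa))+\sum_{j=k-6}^{k(\kappa')-1}\mathcal E_{\fm_j,\sigma}(\phi,Q_j(\widetilde x_{\kappa'})).
\end{equation*}
Here the level $k-7\ge k_0-7\ge 0$ is admissible because $k_0=10$.

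Next I transfer each term from the projected points back to $x_i$. For $j\le k(\kappa)-1$ we have $|x_1-\widetilde x_\kappa|\le 11\cdot 2^{-k(\kappa)}\le 6\cdot 2^{-j}$, exactly as in the previous lemma. Estimate \eqref{eq.local_approximation_outside_E} with $c=8$ and $\underline k=3$ then gives $\mathcal E_{\fm_j,\sigma}(\phi,Q_j(\widetilde x_\kappa))\lesssim\widetilde{\mathcal E}_{\fm_{j-3},\sigma}(\phi,Q_{j-3}(x_1))$. Here $8Q_j(x_1)=Q_{j-3}(x_1)$, and this step needs $j\ge 3$, which holds. Summing over $j$, the index $j-3$ starts at $k-10$, which is exactly the lower limit in $g^{\sigma,I}_k$. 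The $\kappa'$ sum is treated the same way with $x_2$.

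Finally, $x_1,x_2\in U_{k-1}(E)$, so the indicator in $g^{\sigma,I}_k$ equals $1$ at both points, and $2^{-ks}\lesssim|x_1-x_2|^s$. Combined with $\sum\psi_\kappa\psi_{\kappa'}\le1$, this yields \eqref{eq.extension_fractional_gradient_points_close_to_E}. The main obstacle is index bookkeeping, namely making every shifted level stay $\ge k-10$ and $\ge0$; the slack in the choice $k_0=10$ provides exactly this. No Lebesgue point hypothesis is needed, since only the finitely many coefficients $\phi^{\mathcal T}_\kappa$ are involved.
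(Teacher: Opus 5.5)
Your proposal is correct and follows the paper's proof essentially step by step. You use the same reduction to $|\phi^{\mathcal{T}}_{\kappa}-\phi^{\mathcal{T}}_{\kappa'}|$ via property~\ref{cond:F2}, telescope both coefficients to a common coarse level using Lemma~\ref{Lm.local_approximation_sequence_of_measures}, and transfer from the projected centres back to $x_i$ via \eqref{eq.local_approximation_outside_E} with $c=8$, $\underline{k}=3$. The differences are only in bookkeeping: you telescope to level $k-6$ and land at $k-10$, whereas the paper uses $22\cdot 2^{-k}<2^{-k+5}$, telescopes to $k-5$ and lands at $k-9$. Your hedge about a possible extra intermediate step is unnecessary, since $|\widetilde x_\kappa-\widetilde x_{\kappa'}|<2^{-(k-6)}$ already fits \eqref{eq.local_approximation_adjacent_cubes} at level $k-6$.
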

\begin{proof}
Fix $k> k_0$ and $x_1, x_2\in \mathbb{R}^n\setminus E$ such that $2^{-k-1}\le|x_1-x_2|<2^{-k}$. By symmetry, we may assume that $x_1\in U_{k}(E)$, and hence $x_2\in U_{k-1}(E)$. Therefore, for each $\kappa_i\in a(x_i)$, $i\in\{1, 2\}$, by \eqref{eq.upper_diameter_estimate}, we have $k<k(\kappa_1)$ and $k\le k(\kappa_2)$. Since $k>k_0$, we then have $\sum_{\kappa \in \mathcal{J}}\psi_{\kappa}(x_1) = \sum_{\kappa\in\mathcal{J}}\psi_{\kappa}(x_2) = 1$. Consequently, 
    \begin{equation}
    \label{eq.extension_fractional_gradient_points_close_to_E0}
        |\Ext_{\mathcal{T}} \phi(x_1) - \Ext_{\mathcal{T}}\phi(x_2)|\le \sum_{\kappa_1 \in a(x_1)}\sum_{\kappa_2 \in a(x_2)}\psi_{\kappa_1}(x_1)\psi_{\kappa_2}(x_2)\Bigl|\phi_{\kappa_1}^{\mathcal{T}}-\phi_{\kappa_2}^{\mathcal{T}}\Bigr|.
    \end{equation}
    \par
    Fix $\kappa_i\in a(x_i)$, $i\in \{1, 2\}$. We estimate $\Bigl|\phi_{\kappa_1}^{\mathcal{T}}-\phi_{\kappa_2}^{\mathcal{T}}\Bigr|$. Using \eqref{eq.distance_between_adjacent_projections}, we obtain
    \begin{equation}
    \begin{split}
        |\widetilde{x}_{\kappa_1} - \widetilde{x}_{\kappa_2}|< 21\max\{2^{-k(\kappa_1)}, 2^{-k(\kappa_2)}\}+|x_1-x_2|< 22\cdot 2^{-k}<2^{-k+5}. 
        \end{split}
    \end{equation}
    Furthermore, for $i\in \{1, 2\}$, we clearly have
    \begin{equation}
    \phi_{\kappa_i}^{\mathcal{T}}=\mathcal{T}^{\widetilde{x}_{\kappa_i}}_{k-5}\phi + \sum_{j=k-5}^{k(\kappa_i)-1}\Bigl(\mathcal{T}^{\widetilde{x}_{\kappa_i}}_{j+1}\phi-\mathcal{T}^{\widetilde{x}_{\kappa_i}}_j\phi\Bigr).
    \end{equation}
    Since $|\widetilde{x}_{\kappa_1} - \widetilde{x}_{\kappa_2}|<2^{-k+5}$, Lemma~\ref{Lm.local_approximation_sequence_of_measures} gives
    \begin{equation}
    \begin{split}
        |\phi_{\kappa_1}^{\mathcal{T}}-\phi_{\kappa_2}^{\mathcal{T}}|\lesssim \sum_{j=k-6}^{k(\kappa_1)-1}\mathcal{E}_{\fm_j, \sigma}(\phi, Q_j(\widetilde{x}_{\kappa_1})) + \sum_{j=k-6}^{k(\kappa_2)-1}\mathcal{E}_{\fm_j, \sigma}(\phi, Q_j(\widetilde{x}_{\kappa_2}))
    \end{split}
    \end{equation}
    \par
    Take an arbitrary $i\in\{1, 2\}$ and $j \in \{k-6, \ldots, k(\kappa_i)-1\}$. Since $x_i \in Q_{\kappa_i}^*$, we obtain, by \eqref{eq.distance_to_projection},
    \begin{equation}
        |x_i-\widetilde{x}_{\kappa_i}| \le|x_i-x_{\kappa_i}| + |x_{\kappa_i}-\widetilde{x}_{\kappa_i}|<  11\cdot 2^{-k(\kappa_i)}\le 7\cdot2^{-j}
    \end{equation}
    Therefore, estimate \eqref{eq.local_approximation_outside_E} in Lemma~\ref{Lm.local_approximation_outside_E}, applied with $c=8$ and $\underline{k}=3$, gives
    \begin{equation}
         |\phi_{\kappa_1}^{\mathcal{T}}-\phi_{\kappa_2}^{\mathcal{T}}|\lesssim \sum_{i=1}^2\sum_{j=k-9}^{\infty}\widetilde{\mathcal{E}}_{\fm_j, \sigma}(\phi, Q_j(x_i)).
    \end{equation}
    \par
    By \eqref{eq.extension_fractional_gradient_points_close_to_E0} and property~\ref{cond:F2} of $\{\psi_{\kappa}\}$, we then have
    \begin{equation}
    \begin{split}
        |\Ext_{\mathcal{T}} \phi(x_1) - \Ext_{\mathcal{T}}\phi(x_2)| \lesssim \sum_{i=1}^2\sum_{j=k-9}^{\infty}\widetilde{\mathcal{E}}_{\fm_j, \sigma}(\phi, Q_j(x_i)) \lesssim2^{-ks}(g_k^{\sigma, I}(x_1)+g_k^{\sigma, I}(x_2 )).
    \end{split}
    \end{equation}
     Since $|x_1-x_2|\ge 2^{-k-1}$ yields $2^{-ks}\lesssim|x_1-x_2|^s$, we obtain \eqref{eq.extension_fractional_gradient_points_close_to_E}. The proof is complete.
\end{proof}
\begin{Lm}
     \label{Lm.extension_fractional_gradient_far_from_E}
     Let $\phi \in L_{\sigma}^{\loc}(\{\fm_k\})$. Then for each $k > k_0$ and all $x_1, x_2 \in \mathbb{R}^n\setminus U_k(E)$ satisfying $2^{-k-1}\le |x_1-x_2|<2^{-k}$ and such that either $x_1\in U_{k_0}(E)\setminus U_k(E)$ or $x_2\in U_{k_0}(E)\setminus U_k(E)$, the following inequality holds:
     \begin{equation}
    \label{eq.extension_fractional_gradient_points_far_from_E}
        |\Ext_{\mathcal{T}} \phi(x_1) - \Ext_{\mathcal{T}}\phi(x_2)|\le C_4|x_1-x_2|^s\left(g_k^{\sigma, II}(x_1)+g_k^{\sigma, II}(x_2)\right),
    \end{equation}
    where $C_4$ is independent of $x_1, x_2, \phi$, and $k$.
\end{Lm}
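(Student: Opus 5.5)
Both $x_1,x_2$ lie outside $U_k(E)$, and one of them, say $x_1$, lies in $U_{k_0}(E)$. So the Whitney cubes meeting them have size comparable to $\operatorname{dist}(x_i,E)\ge 2^{-k}$. The plan is to exploit smoothness of the partition of unity rather than telescoping all the way to scale $2^{-k}$. Let $l\in\{k_0,\dots,k-1\}$ be the index with $x_1\in V_l(E)$, so $2^{-l-1}\le\operatorname{dist}(x_1,E)<2^{-l}$. Since $|x_1-x_2|<2^{-k}\le 2^{-l-1}$, we have $\operatorname{dist}(x_2,E)\in[2^{-l-2},2^{-l+1})$ approximately, so $x_2\in\widehat V_l(E)$. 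If $x_2\notin U_{k_0}(E)$, the same conclusion holds with $l=k_0$. By \eqref{eq.upper_diameter_estimate} and \eqref{eq.lower_diameter_estimate}, every $\kappa\in a(x_1)\cup a(x_2)$ satisfies $k(\kappa)\in[l-C,l+C]$. In particular these $\kappa$ lie in $\mathcal{J}$ when $l\ge k_0=10$, and both points satisfy $\sum_{\kappa\in\mathcal J}\psi_\kappa=1$.

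Since $\sum_\kappa\psi_\kappa(x_1)=\sum_\kappa\psi_\kappa(x_2)=1$, pick a reference index $\kappa_0\in a(x_1)$ and write
\begin{equation*}
\Ext_{\mathcal T}\phi(x_1)-\Ext_{\mathcal T}\phi(x_2)=\sum_{\kappa\in a(x_1)\cup a(x_2)}\bigl(\psi_\kappa(x_1)-\psi_\kappa(x_2)\bigr)\bigl(\phi^{\mathcal T}_\kappa-\phi^{\mathcal T}_{\kappa_0}\bigr).
\end{equation*}
By \ref{cond:F3} and $\operatorname{diam}Q_\kappa\approx 2^{-l}$, we get $|\psi_\kappa(x_1)-\psi_\kappa(x_2)|\lesssim 2^{l}|x_1-x_2|\lesssim 2^{l-k}$. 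By \ref{cond:W4}, the number of terms is bounded. For the differences of coefficients, use \eqref{eq.distance_between_adjacent_projections}: $|\widetilde x_\kappa-\widetilde x_{\kappa_0}|\lesssim 2^{-l}$, say $<2^{-l+6}$. Telescope both $\mathcal T^{\widetilde x_\kappa}_{k(\kappa)}\phi$ and $\mathcal T^{\widetilde x_{\kappa_0}}_{k(\kappa_0)}\phi$ down to the common level $l-6$ via Lemma~\ref{Lm.local_approximation_sequence_of_measures}, exactly as in the proof of Lemma~\ref{Lm.extension_fractional_gradient_close_to_E}. This gives
\begin{equation*}
|\phi^{\mathcal T}_\kappa-\phi^{\mathcal T}_{\kappa_0}|\lesssim\sum_{\kappa'\in\{\kappa,\kappa_0\}}\sum_{j=l-7}^{k(\kappa')-1}\mathcal E_{\fm_j,\sigma}(\phi,Q_j(\widetilde x_{\kappa'})).
\end{equation*}
Here $|x_i-\widetilde x_{\kappa'}|\le 11\cdot2^{-k(\kappa')}\lesssim 2^{-j}$ for $j\le k(\kappa')-1$, with $x_i$ the point for which $\kappa'\in a(x_i)$. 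Lemma~\ref{Lm.local_approximation_outside_E}, applied with a fixed $c$ and $\underline{k}$, therefore converts this to $\sum_{j\ge l-10}\widetilde{\mathcal E}_{\fm_j,\sigma}(\phi,Q_j(x_i))$. The admissible index shift must be checked; one may need slightly larger $c$ and $\underline k$, but the definition of $g^{\sigma,II}$ allows a loss down to $l-10$.

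Combining, we obtain
\begin{equation*}
|\Ext_{\mathcal T}\phi(x_1)-\Ext_{\mathcal T}\phi(x_2)|\lesssim 2^{l-k}\sum_{i=1}^2\sum_{j\ge l-10}\widetilde{\mathcal E}_{\fm_j,\sigma}(\phi,Q_j(x_i)).
\end{equation*}
Since $x_i\in\widehat V_l(E)$, the right-hand side is at most $2^{-ks}\bigl(g^{\sigma,II}_k(x_1)+g^{\sigma,II}_k(x_2)\bigr)$: it is exactly the $l$-th summand of $g^{\sigma,II}_k$ multiplied by $2^{-ks}$. Finally, $2^{-ks}\lesssim|x_1-x_2|^s$.

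The main obstacle is the bookkeeping of which $\kappa$ fall into $\mathcal J$. If $x_2\notin U_{k_0}(E)$, some $\kappa\in a(x_2)$ may have $r_\kappa>1$, and then $\sum_{\kappa\in\mathcal J}\psi_\kappa(x_2)$ may be less than $1$. We need this not to happen near $x_1\in U_{k_0}(E)$. It follows because $\operatorname{dist}(x_2,E)<2^{-k_0}+2^{-k}$ forces $r_\kappa<1$ by \eqref{eq.upper_diameter_estimate} applied with $k_0-1$. A secondary issue is making the geometric constants fit within the index offset $l-10$.
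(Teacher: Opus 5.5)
Your proof is correct and follows essentially the same route as the paper. Both choose $l$ with $x_1\in V_l(E)$, expand the difference around a reference index $\kappa_0\in a(x_1)$ using the mean value theorem and \ref{cond:F3}, telescope both coefficients down to the common level $l-6$, and identify the resulting bound with the $l$-th summand of $g_k^{\sigma,II}$. The index shift you left to be checked works out exactly with $c=8$ and $\underline{k}=3$ in Lemma~\ref{Lm.local_approximation_outside_E}: for $j\le k(\kappa')-1$ one has $|x_i-\widetilde{x}_{\kappa'}|\le 11\cdot 2^{-k(\kappa')}\le 7\cdot 2^{-j}$ and $8Q_j(x_i)=Q_{j-3}(x_i)$, so $j\ge l-7$ produces only indices $j-3\ge l-10$.
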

\begin{proof}
Fix $k> k_0$ and $x_1,x_2\in \mathbb{R}^n\setminus U_k(E)$ such that $2^{-k-1}\le|x_1-x_2|<2^{-k}$. By symmetry, we may assume that $x_1\in U_{k_0}(E)$. Since $U_{k_0}(E)\setminus U_{k}(E) = \bigcup_{l=k_0}^{k-1}V_l(E)$, we choose $l\in [k_0, k-1]$ such that $x_1\in V_l(E)$. Since $|x_1-x_2|<2^{-k}$ and $l < k$, we have $x_2 \in \widehat{V}_l(E)$. Thus, $\operatorname{dist}(x_i, E)< 2^{-l+1}$, $i\in \{1, 2\}$, and hence, for every $\kappa \in a(x_1)\cup a(x_2)$, estimate \eqref{eq.upper_diameter_estimate} gives $k(\kappa) \ge l\ge k_0$.  Consequently, $\sum_{\kappa\in\mathcal{J}}\psi_{\kappa}(x_1) = \sum_{\kappa\in\mathcal{J}}\psi_{\kappa}(x_2)=1$. Furthermore, $\operatorname{dist}(x_i, E)\ge2^{-l-2}$, $i \in \{1, 2\}$, and therefore, by \eqref{eq.lower_diameter_estimate}, $k(\kappa)\le l+5$ for every $\kappa \in a(x_1)\cup a(x_2)$.
    \par
    Fix $\kappa_0 \in a(x_1)$. Since $k(\kappa)\in \{l, \ldots, l+5\}$ for all $\kappa \in a(x_1)\cup a(x_2)$, by the mean value theorem and property~\ref{cond:F3} of $\{\psi_{\kappa}\}$, we get
    \begin{equation}
    \label{eq.extension_fractional_gradient_points_far_from_E1}
    \begin{split}
        |\Ext_{\mathcal{T}} \phi(x_1) - \Ext_{\mathcal{T}}\phi(x_2)| &= \Bigl|\sum_{\kappa\in\mathcal{J}}(\psi_{\kappa}(x_1)-\psi_{\kappa}(x_2))(\phi_{\kappa}^{\mathcal{T}}-\phi_{\kappa_0}^\mathcal{T})\Bigr| \\ &\lesssim |x_1-x_2|2^{l}\sum_{\kappa \in a(x_1)\cup a(x_2)}|\phi_{\kappa}^\mathcal{T}-\phi_{\kappa_0}^\mathcal{T}|.
    \end{split}
    \end{equation}
    Take an arbitrary $\kappa \in a(x_1)\cup a(x_2)$. Our goal is to estimate $|\phi_{\kappa}^\mathcal{T}-\phi_{\kappa_0}^\mathcal{T}|$. By \eqref{eq.distance_between_adjacent_projections}, we have
    \begin{equation}
        |\widetilde{x}_{\kappa} - \widetilde{x}_{\kappa_0}| \le 21\max\{2^{-k(\kappa)}, 2^{-k(\kappa_0)}\} + |x_1 - x_2| < 43\cdot 2^{-l}<2^{-l+6}.
    \end{equation}
    From the definition of $\phi^{\mathcal{T}}_{\kappa}$, we obtain
    \begin{equation}
        \phi^{\mathcal{T}}_{\kappa} = \mathcal{T}^{\widetilde{x}_{\kappa}}_{l-6}\phi + \sum_{j=l-6}^{k(\kappa)-1}\Bigl(\mathcal{T}^{\widetilde{x}_{\kappa}}_{j+1}\phi-\mathcal{T}^{\widetilde{x}_{\kappa}}_j\phi\Bigr).
    \end{equation}
    The same representation holds for $\phi^{\mathcal{T}}_{\kappa_0}$. Since $|\widetilde{x}_{\kappa}-\widetilde{x}_{\kappa_0}|<2^{-l+6}$, Lemma~\ref{Lm.local_approximation_sequence_of_measures} then gives
    \begin{equation}
    \label{eq.extension_fractional_gradient_points_far_from_E2}
    \begin{split}
        |\phi_{\kappa}^{\mathcal{T}}-\phi_{\kappa_0}^{\mathcal{T}}| \lesssim \sum_{j=l-7}^{k(\kappa)-1}\mathcal{E}_{\fm_j, \sigma}(\phi, Q_j(\widetilde{x}_{\kappa})) + \sum_{j=l-7}^{k(\kappa_0)-1}\mathcal{E}_{\fm_j, \sigma}(\phi, Q_{j}(\widetilde{x}_{\kappa_0})).
    \end{split}
    \end{equation}
     Now, for each $\kappa_i\in a(x_i)$, $i\in \{1, 2\}$, (including $\kappa_i=\kappa_0$) and each $j\in \{l-7, \ldots, k(\kappa_i)-1\}$, by \eqref{eq.distance_to_projection} and $x_i\in Q_{\kappa_i}^*$, we obtain
    \begin{equation}
         |\widetilde{x}_{\kappa_i}-x_i| \le |\widetilde{x}_{\kappa_i} - x_{\kappa_i}|+|x_{\kappa_i}-x_i|<7\cdot 2^{-j}.
    \end{equation}
    Therefore, by \eqref{eq.local_approximation_outside_E} in Lemma~\ref{Lm.local_approximation_outside_E}, applied with $c=8$ and $\underline{k}=3$,
    \begin{equation}
    \label{eq.extension_fractional_gradient_points_far_from_E3}
        \sum_{j=l-7}^{k(\kappa_i)-1}\mathcal{E}_{\fm_j, \sigma}(\phi, Q_j(\widetilde{x}_{\kappa_i})) \lesssim \sum_{j=l-10}^{\infty} \widetilde{\mathcal{E}}_{\fm_j, \sigma}(\phi, Q_{j}(x_i)).
    \end{equation}
    Using \eqref{eq.extension_fractional_gradient_points_far_from_E1}, \eqref{eq.extension_fractional_gradient_points_far_from_E2}, \eqref{eq.extension_fractional_gradient_points_far_from_E3}, and property~\ref{cond:W4} of the Whitney decomposition, we obtain
    \begin{equation}
        |\Ext_{\mathcal{T}} \phi(x_1) - \Ext_{\mathcal{T}}\phi(x_2)|\le 2^{l-k}\sum_{i=1}^2\sum_{j=l-10}^{\infty}\widetilde{\mathcal{E}}_{\fm_j, \sigma}(\phi, Q_j(x_i)).
    \end{equation}
    Since $x_1\in V_l(E)\subset\widehat{V}_l(E)$ and $x_2\in \widehat{V}_l(E)$, the $l$-th term in \eqref{eq.extension_fractional_gradient_2_part} is nonzero. Thus, from $2^{-ks}\lesssim |x_1-x_2|^s$, we obtain \eqref{eq.extension_fractional_gradient_points_far_from_E} completing the proof.
\end{proof}
\begin{Lm}
     \label{Lm.extension_fractional_gradient_very_far_from_E}
     Let $\phi \in L_{\sigma}^{\loc}(\{\fm_k\})$. Then for each $k > k_0$ and all $x_1, x_2 \in \mathbb{R}^n\setminus U_{k_0}(E)$ satisfying $2^{-k-1}\le |x_1-x_2|<2^{-k}$, the following inequality holds:
     \begin{equation}
    \label{eq.extension_fractional_gradient_points_very_far_from_E}
        |\Ext_{\mathcal{T}} \phi(x_1) - \Ext_{\mathcal{T}}\phi(x_2)|\le C_5|x_1-x_2|^s\left(g_k^{\sigma, III}(x_1)+g_k^{\sigma, III}(x_2)\right),
    \end{equation}
    where $C_5$ is independent of $x_1, x_2, \phi$, and $k$.
\end{Lm}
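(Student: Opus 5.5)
Outside $U_{k_0}(E)$ the extension is a finite sum of smooth bumps with bounded coefficients, and the far-away part vanishes. So the plan is to use the mean value theorem together with a crude bound on the coefficients, $|\phi_\kappa^{\mathcal T}|$, in terms of $A_{-10,\fm_0}|\phi|^\sigma$. Fix $k>k_0$ and $x_1,x_2\notin U_{k_0}(E)$ with $2^{-k-1}\le|x_1-x_2|<2^{-k}$.

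\textbf{Step 1: the trivial case.} If both $x_1,x_2\notin U_{-4}(E)$, then Remark~\ref{Rm.extension_support} gives $\Ext_{\mathcal T}\phi(x_1)=\Ext_{\mathcal T}\phi(x_2)=0$, and there is nothing to prove. Otherwise, since $|x_1-x_2|<2^{-k}\le 1$, both points lie in $U_{-5}(E)$, so $\chi_{U_{-5}(E)}(x_i)=1$.

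\textbf{Step 2: mean value estimate.} Neither point lies in $E$, so $\Ext_{\mathcal T}\phi=\sum_{\kappa\in\mathcal J}\psi_\kappa\phi^{\mathcal T}_\kappa$ at both points. The segment $[x_1,x_2]$ stays at distance at least $2^{-k_0}-2^{-k}\ge 2^{-k_0-1}$ from $E$. Hence every $\kappa$ with $Q^*_\kappa$ meeting the segment has $r_\kappa\gtrsim 2^{-k_0}$, by \eqref{eq.lower_diameter_estimate}. By property~\ref{cond:F3}, $|\nabla\psi_\kappa|\lesssim 1$ on the segment. Such cubes also have $r_\kappa\le1$ and, by \eqref{eq.upper_diameter_estimate}, lie within distance $\lesssim1$ of $x_1$. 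The mean value theorem together with property~\ref{cond:W4} then gives
\[
|\Ext_{\mathcal T}\phi(x_1)-\Ext_{\mathcal T}\phi(x_2)|\lesssim |x_1-x_2|\max_{\kappa}|\phi^{\mathcal T}_\kappa|,
\]
where the maximum runs over the boundedly many relevant $\kappa$. Finally, $|x_1-x_2|=|x_1-x_2|^s|x_1-x_2|^{1-s}\lesssim|x_1-x_2|^s2^{k(s-1)}$. It therefore suffices to show $|\phi^{\mathcal T}_\kappa|\lesssim (A_{-10,\fm_0}|\phi|^\sigma(x_1))^{1/\sigma}$ for each relevant $\kappa$.

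\textbf{Step 3: bounding the coefficients (the main point).} Let $m:=k(\kappa)$. Then $m\in[k_0-C,\;\mathrm{const}]$, and $\widetilde x_\kappa\in E$ with $|\widetilde x_\kappa-x_1|\lesssim 1$; more precisely, $Q_m(\widetilde x_\kappa)\subset Q_{2^{10}}(x_1)$ by \eqref{eq.distance_to_projection}, and I will check that the constants fit, which is why $-10$ appears. The almost-best property of $\mathcal T^{\widetilde x_\kappa}_m$, compared with the constant $c=0$, gives
\[
|\phi^{\mathcal T}_\kappa|^\sigma\lesssim \fint_{Q_m(\widetilde x_\kappa)}|\phi-\phi^{\mathcal T}_\kappa|^\sigma d\fm_m+\fint_{Q_m(\widetilde x_\kappa)}|\phi|^\sigma d\fm_m\lesssim \fint_{Q_m(\widetilde x_\kappa)}|\phi|^\sigma d\fm_m .
\]
Since $m$ is bounded by an absolute constant (because $r_\kappa\gtrsim 2^{-k_0}$ with $k_0=10$), property~\ref{cond:M3} lets me pass from $\fm_m$ to $\fm_0$ with $\gamma_m\approx\gamma_0$ up to constants depending only on $k_0$. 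Properties~\ref{cond:M1}--\ref{cond:M2} and Corollary~\ref{Ca.M1_inflated} give $\fm_0(Q_m(\widetilde x_\kappa))\gtrsim 1\gtrsim \fm_0(Q_{2^{10}}(x_1))$. Therefore
\[
\fint_{Q_m(\widetilde x_\kappa)}|\phi|^\sigma d\fm_m\lesssim A_{-10,\fm_0}|\phi|^\sigma(x_1).
\]

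I expect the only delicate point to be the geometric bookkeeping. One must ensure that every relevant $Q_m(\widetilde x_\kappa)$ lies inside $Q_{2^{10}}(x_1)$, and that the lower measure bound in \ref{cond:M2} applies. The latter needs radii in $[2^{-m},1]$, so for $m<0$ I apply \ref{cond:M2} to a subcube of radius $1$ centred at $\widetilde x_\kappa$ instead. Assembling Steps 1--3 yields \eqref{eq.extension_fractional_gradient_points_very_far_from_E}; the bound can even be made symmetric in $x_1,x_2$.
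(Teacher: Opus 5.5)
Your proposal is correct and follows essentially the same route as the paper. Both arguments use the mean value theorem with \ref{cond:F3} and \ref{cond:W4}, then bound each coefficient $\phi_\kappa^{\mathcal T}$ by the $\fm_{k(\kappa)}$-average of $|\phi|^\sigma$ over $\widetilde Q_\kappa$ (comparing $\mathcal T$ with the constant $0$), and finally convert this to $A_{-10,\fm_0}|\phi|^\sigma$ using that $k(\kappa)$ ranges over a fixed finite set, property~\ref{cond:M3}, and comparability of the measures.

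One small correction to your Step 3. Since $\kappa\in\mathcal J$ forces $r_\kappa\le 1$, you have $0\le k(\kappa)\le k_0+C$ rather than ``$[k_0-C,\mathrm{const}]$'', so the case $m<0$ never arises. The lower bound you need, $\fm_m(Q_m(\widetilde x_\kappa))\gtrsim 2^{-md}\gtrsim 1$, is then just \ref{cond:M2} for $\fm_m$, transferred to $\fm_0$ via \ref{cond:M3} if you prefer; \ref{cond:M2} for $\fm_0$ alone only covers radius $1$.
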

\begin{proof}
    Fix $k> k_0$ and $x_1, x_2\in \mathbb{R}^n\setminus U_{k_0}(E)$ such that $2^{-k-1}\le|x_1-x_2|<2^{-k}$. By the mean value theorem and property~\ref{cond:F3} of $\{\psi_{\kappa}\}$,
    \begin{equation}
    \label{eq.extension_fractional_gradient_points_very_far_from_E0}
        |\Ext_{\mathcal{T}} \phi(x_1) - \Ext_{\mathcal{T}}\phi(x_2)|\le |x_1-x_2| \sum_{\kappa \in \mathcal{J}\cap(a(x_1)\cup a(x_2))}2^{k(\kappa)}|\phi_{\kappa}^{\mathcal{T}}|.
    \end{equation}
    For each $\kappa \in \mathcal{J}$ and $y\in \widetilde{Q}_{\kappa}$, we have
    \begin{equation}
        |\phi_{\kappa}^{\mathcal{T}}|^{\sigma} \lesssim |\mathcal{T}_{k(\kappa)}^{\widetilde{x}_{\kappa}}\phi - \phi(y)|^{\sigma}+|\phi(y)|^{\sigma}.
    \end{equation}
     Therefore, averaging the latter estimate over $y\in \widetilde{Q}_{\kappa}$ with respect to $\fm_{k(\kappa)}$, we obtain
    \begin{equation}
    \label{eq.extension_fractional_gradient_points_very_far_from_E1}
        |\phi_{\kappa}^{\mathcal{T}}| \lesssim \mathcal{E}_{{\fm_{k(\kappa)}}, \sigma}(\phi, \widetilde{Q}_{\kappa}) + \Bigl(A_{k(\kappa), \fm_{k(\kappa)}}|\phi|^{\sigma}(\widetilde{x}_{\kappa})\Bigr)^{1/\sigma} \lesssim \Bigl(A_{k(\kappa), \fm_{k(\kappa)}}|\phi|^{\sigma}(\widetilde{x}_{\kappa})\Bigr)^{1/\sigma}.
    \end{equation}
    Choose $l<k_0$ such that $x_1\in V_l(E)$, and hence $x_2\in \widehat{V}_l(E)$. Thus, $\operatorname{dist}(x_i, E)< 2^{-l+1}$, $i\in \{1, 2\}$, and, by \eqref{eq.upper_diameter_estimate}, $k(\kappa) \ge l$ for every $\kappa \in a(x_1)\cup a(x_2)$.  Furthermore, $\operatorname{dist}(x_i, E)\ge 2^{-l-2}$, and therefore, by \eqref{eq.lower_diameter_estimate}, $k(\kappa)\le l+5$ for every $\kappa \in a(x_1)\cup a(x_2)$. By Remark~\ref{Rm.extension_support}, $\Ext_{\mathcal{T}}\phi\equiv0$ outside $U_{-4}(E)$. Therefore, we may assume that $l\in \{-5, \ldots, k_0-1\}$. Since $k_0$ is fixed and $\max\{l, 0\}\le k(\kappa)\le l+5$, it follows that, for each $\kappa \in a(x_1)\cup a(x_2)$, $k(\kappa)$ ranges over a fixed finite set $\{0, \ldots, k_0+4\}$.
    \par
    Take an arbitrary $\kappa_i\in a(x_i)\cap \mathcal{J}$, $i\in\{1, 2\}$. By \eqref{eq.distance_to_projection} and Corollary~\ref{Ca.relaxed_doubling_property}, applied with $c=2^{10+k(\kappa_i)}\le 2^{k_0+15}$, $k=k(\kappa_i)$, $x=\widetilde{x}_{\kappa_i}$ and $y = x_i$, we then have 
    \begin{equation}
    \label{eq.solo_rev1}
        \fm_{k(\kappa_i)}(\widetilde{Q}_{\kappa_i})\approx \fm_{k(\kappa_i)}(Q_{-10}(x_i)).
    \end{equation}
    Furthermore, property~\ref{cond:M3} of $\{\fm_k\}$ allows us to replace $\fm_{k(\kappa_i)}$ in \eqref{eq.extension_fractional_gradient_points_very_far_from_E1} and \eqref{eq.solo_rev1} with $\fm_0$. Consequently,
    \begin{equation}
      |\phi_{\kappa_i}^{\mathcal{T}}| \lesssim \Bigl(A_{-10,\fm_0}|\phi|^{\sigma}(x_i)\Bigr)^{1/\sigma}.
    \end{equation}
    Combining the latter estimate, \eqref{eq.extension_fractional_gradient_points_very_far_from_E0}, property~\ref{cond:W4} of the Whitney decomposition with the fact that $k(\kappa)$: $0\le k(\kappa)<k_0+5$, we obtain
    \begin{equation}
    \begin{split}
        |\Ext_{\mathcal{T}}\phi(x_1)-\Ext_{\mathcal{T}}\phi(x_2)| &\lesssim 2^{-k}\sum_{i=1}^2\Bigl(A_{-10,\fm_0}|\phi|^{\sigma}(x_i)\Bigr)^{1/\sigma}\\&\lesssim|x_1-x_2|^s(g_k^{\sigma,III}(x_1)+g_k^{\sigma,III}(x_2)).
    \end{split}
    \end{equation}
    The proof is complete.
\end{proof}
\subsection{The boundedness of the extension operator}
Next, we estimate the $L_p(\mathbb{R}^n)$-norm of the extension. We recall the notation $\theta = n-d$, \eqref{eq.besov_norm_on_thick_set}, and \eqref{eq.lizorkin_triebel_norm_on_thick_set}.
\begin{Th}
    \label{Th.extension_L_p_estimate}
    Assume that $\sigma\le p$. Then there exists a constant $C>0$ such that for each $\phi \in L_{\sigma}^{\loc}(\{\fm_k\})$
    \begin{equation}
        \label{eq.extension_L_p_estimate}
        \|\Ext_{\mathcal{T}}\phi\|_{L_p(\mathbb{R}^n)}\le C\|\phi\|_{\mathfrak{B}^{s-\theta/p}_{p, q, \sigma}(E)}.
    \end{equation}
    Furthermore, if $p<\infty$ and $\sigma\le \min\{p, q\}$, then
    \begin{equation}
        \label{eq.extension_L_p_estimate_triebel}
        \|\Ext_{\mathcal{T}}\phi\|_{L_p(\mathbb{R}^n)}\le C\|\phi\|_{\mathfrak{F}^{s-\theta/p}_{p, q, \sigma}(E)}.
    \end{equation}
\end{Th}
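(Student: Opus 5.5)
The plan is to bound $\Ext_{\mathcal{T}}\phi$ separately on $E$ and on $\mathbb{R}^n\setminus E$. Off $E$ we prove a pointwise bound by a sum of local approximations plus a local $L_\sigma$-average of $|\phi|$, and then take $L_p$-norms of each piece. On $E$ we compare Lebesgue measure with $\fm_0$.

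\emph{The part on $E$.} Here $\Ext_{\mathcal{T}}\phi=\phi$, so we need $\|\phi\|_{L_p(E,\mathcal{L}^n)}\lesssim\|\phi\|_{L_p(\fm_0)}$.
\begin{itemize}
\item Fix $x\in E$ and $k\in\mathbb{N}_0$. Property~\ref{cond:M2} gives $\fm_k(Q_k(x))\ge C_2 2^{-kd}$.
\item Property~\ref{cond:M3} with the roles $(k,j)\to(0,k)$ gives $\gamma_0\ge C_3^{-1}2^{(d-n)k}\gamma_k$.
\item Combining the two yields $\fm_0(Q_r(x))\gtrsim r^n$ for all $x\in E$ and $r\in(0,1]$.
\item A Vitali covering argument then gives $\mathcal{L}^n\lfloor_E\le C\fm_0$, so $\|\chi_E\phi\|_{L_p(\mathbb{R}^n)}\lesssim\|\phi\|_{L_p(\fm_0)}$ for every $p\in[1,\infty]$.
\end{itemize}
This avoids any Lebesgue-point hypothesis on $\phi$.

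\emph{Pointwise bound off $E$.} By Remark~\ref{Rm.extension_support} it suffices to consider $x\in U_{-4}(E)\setminus E$. For $\kappa\in a(x)\cap\mathcal{J}$ we have $k(\kappa)\ge0$.
\begin{itemize}
\item Telescope $\phi^{\mathcal{T}}_\kappa=\mathcal{T}^{\widetilde{x}_\kappa}_0\phi+\sum_{j=0}^{k(\kappa)-1}\bigl(\mathcal{T}^{\widetilde{x}_\kappa}_{j+1}\phi-\mathcal{T}^{\widetilde{x}_\kappa}_j\phi\bigr)$.
\item Bound each increment by Lemma~\ref{Lm.local_approximation_sequence_of_measures}.
\item Pass from centre $\widetilde{x}_\kappa$ to centre $x$ by Lemma~\ref{Lm.local_approximation_outside_E} with $c=8$, $\underline{k}=3$, exactly as in Lemma~\ref{Lm.extension_fractional_gradient_close_to_E}. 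For the finitely many lowest levels, use Corollary~\ref{Ca.relaxed_doubling_property} with a larger $c$ and \ref{cond:M3}.
\item Bound the first term as in \eqref{eq.extension_fractional_gradient_points_very_far_from_E1} by $(A_{0,\fm_0}|\phi|^\sigma(\widetilde{x}_\kappa))^{1/\sigma}$.
\item Since $|x-\widetilde{x}_\kappa|\le 11\cdot2^{-k(\kappa)}\le 11$, this is at most $C(A_{-10,\fm_0}|\phi|^\sigma(x))^{1/\sigma}$ after enlarging cubes, using \ref{cond:M2} to bound the measure of the big cube from below and Corollary~\ref{Ca.M1_inflated} for the upper comparison.
\end{itemize}
Using \ref{cond:F1}, \ref{cond:F2} and \ref{cond:W4}, we arrive at
\begin{equation*}
|\Ext_{\mathcal{T}}\phi(x)|\lesssim \sum_{j=0}^{\infty}\widetilde{\mathcal{E}}_{\fm_j,\sigma}(\phi,Q_j(x))+\chi_{U_{-4}(E)}(x)\bigl(A_{-10,\fm_0}|\phi|^\sigma(x)\bigr)^{1/\sigma}.
\end{equation*}

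\emph{$L_p$-norms of the two terms.}
\begin{itemize}
\item The first term has $L_p$-norm at most the $\ell_1(L_p)$-norm of $\{\widetilde{\mathcal{E}}_{\fm_j,\sigma}(\phi,Q_j(\cdot))\}_{j\ge0}$. Lemma~\ref{Lm.besov_and_lizorkin_triebel_rapid_convergence} with $\alpha=0<s$ and $k=0$ bounds this by $\|\phi\|_{\mathfrak{b}^{s-\theta/p}_{p,q,\sigma}(E)}$, or by $\|\phi\|_{\mathfrak{f}^{s-\theta/p}_{p,q,\sigma}(E)}$ in the Lizorkin--Triebel case.
\item For the second term with $p<\infty$, since $\sigma\le p$, Hölder's inequality for the normalized measure $\fm_0\lfloor_{Q_{2^{10}}(x)}/\fm_0(Q_{2^{10}}(x))$ gives $(A_{-10,\fm_0}|\phi|^\sigma)^{p/\sigma}\le A_{-10,\fm_0}|\phi|^p$.
\item Integrate over $x\in U_{-4}(E)$ and apply Fubini. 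For $y\in E$ and $|x-y|\le2^{10}$ with $x\in U_{-4}(E)$, the cube $Q_{2^{10}}(x)$ contains a unit cube $Q_0(z)$ centred at some $z\in E$ whose distance from $x$ is at most $2^{4}$ (choose $z\in E$ near $x$). Hence $\fm_0(Q_{2^{10}}(x))\gtrsim1$ by \ref{cond:M2}.
\item The inner integral is then at most $C\,\mathcal{L}^n(Q_{2^{10}}(y))$, and the whole term is $\lesssim\|\phi\|^p_{L_p(\fm_0)}$.
\item The case $p=\infty$ is immediate from $A_{-10,\fm_0}|\phi|^\sigma\le\|\phi\|^\sigma_{L_\infty(\fm_0)}$.
\end{itemize}
Adding the bounds on $E$ and off $E$ gives \eqref{eq.extension_L_p_estimate} and \eqref{eq.extension_L_p_estimate_triebel}.

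\emph{Expected difficulties.} The main obstacle is the set $E$ itself when $\mathcal{L}^n(E)>0$. I expect the measure comparison $\mathcal{L}^n\lfloor_E\lesssim\fm_0$ via \ref{cond:M2}--\ref{cond:M3} to resolve it. The secondary technical point is the bookkeeping of cube enlargements at the coarsest scales, where the constants in Corollary~\ref{Ca.relaxed_doubling_property} must be kept uniform.
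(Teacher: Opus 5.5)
Your proof is correct. On $E$ it is the same as the paper's argument: $\fm_0(Q_r(x))\gtrsim r^n$ from \ref{cond:M2}--\ref{cond:M3}, extended to $\mathcal{L}^n\lfloor_E\lesssim\fm_0$. Off $E$, however, you take a genuinely different route.

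The paper stays at the level of Whitney coefficients. It bounds $\|\Ext_{\mathcal{T}}\phi\|_{L_p(\mathbb{R}^n\setminus E)}^p$ by $\sum_{\kappa\in\mathcal{J}}\mathcal{L}^n(Q_\kappa)|\phi^{\mathcal{T}}_\kappa|^p$ and telescopes at $\widetilde{x}_\kappa$. It then compares $E_{j,\kappa}$ with $\Phi_j(y)$ for $y\in Q_j(\widetilde{x}_\kappa)$, using Lemma~\ref{Lm.local_approximation_outside_E} with $c=2$, $\underline{k}=0$, so no index shift occurs. To turn $\sum_\kappa\mathcal{L}^n(Q_\kappa)E^p_{j,\kappa}$ into $\|\Phi_j\|^p_{L_p(\mathbb{R}^n)}$ it needs Minkowski's inequality in a weighted $\ell_p$ over $\kappa$ together with the packing estimate \eqref{eq.whitney_packing_condition}. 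The case $j=0$ of that estimate handles the $L_p(\fm_0)$ part.

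You instead move everything to the evaluation point $x\in Q^*_\kappa$, exactly as in Lemmas~\ref{Lm.extension_fractional_gradient_points_mixed}--\ref{Lm.extension_fractional_gradient_far_from_E}. This gives a pointwise majorant by $\sum_j\Phi_j(x)$ and $(A_{-10,\fm_0}|\phi|^\sigma(x))^{1/\sigma}$. The $L_p$ bound then follows from Minkowski, Lemma~\ref{Lm.besov_and_lizorkin_triebel_rapid_convergence}, and the same Fubini computation the paper later uses for $g^{\sigma,III}_k$.

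Your version dispenses with the packing lemma and with any summation over Whitney cubes. The price is the level shift $\underline{k}=3$, which forces you to treat $j\in\{0,1,2\}$ separately. You do this correctly by absorbing these terms into the $A_{-10,\fm_0}$ term via Corollary~\ref{Ca.relaxed_doubling_property} and \ref{cond:M3}, as in Lemma~\ref{Lm.extension_fractional_gradient_very_far_from_E}.

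One wording slip concerns the passage from $A_{0,\fm_0}|\phi|^\sigma(\widetilde{x}_\kappa)$ to $A_{-10,\fm_0}|\phi|^\sigma(x)$. There \ref{cond:M2} is needed as a lower bound for the small cube, $\fm_0(Q_0(\widetilde{x}_\kappa))\gtrsim 1$, while Corollary~\ref{Ca.M1_inflated} bounds $\fm_0(Q_{-10}(x))$ from above. The lower bound for the big cube is what you need in the Fubini step, and you supply it correctly there.
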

\begin{proof}
Fix $\phi \in L_{\sigma}^{\loc}(\{\fm_k\})$.
We assume that the right-hand sides of \eqref{eq.extension_L_p_estimate} and \eqref{eq.extension_L_p_estimate_triebel} are finite, since otherwise the assertions are obvious. For brevity, we let
\begin{equation}
    \Phi_j(y):=\widetilde{\mathcal{E}}_{\fm_j, \sigma}(\phi, Q_j(y)), \qquad E_{j, \kappa}:={\mathcal{E}}_{\fm_j, \sigma}(\phi, Q_j(\widetilde{x}_{\kappa})) 
\end{equation}
for all $j\in \mathbb{N}_0$ and $\kappa\in \mathcal{J}$.
We clearly have
\begin{equation}
\label{eq.extension_L_p_estimate0}
     \|\Ext_{\mathcal{T}}\phi\|_{L_p(\mathbb{R}^n)} \approx  \|\Ext_{\mathcal{T}}\phi\|_{L_p(E)} +  \|\Ext_{\mathcal{T}}\phi\|_{L_p(\mathbb{R}^n\setminus E)}.
\end{equation}
We now prove \eqref{eq.extension_L_p_estimate}.
\par
\emph{Step 1.} We start with the first term. To this end, we remark that properties \ref{cond:M2} and \ref{cond:M3} yield (we recall the notation $k(r)$ for $r\in (0, 1]$)
\begin{equation}
    \fm_0(Q_r(x)) \gtrsim 2^{-k(r)\theta} \fm_{k(r)+1}(Q_r(x)) \gtrsim r^n \gtrsim \mathcal{L}^n\lfloor_E(Q_r(x))
\end{equation}
for all $(x, r) \in E \times (0, 1]$. By the standard density argument (see, e.g., \cite[Section~1.6]{Law}) and the Borel regularity of $\fm_0$,
the latter estimate extends to all Borel sets, and hence
\begin{equation}
\label{eq.extension_L_p_estimate1}
    \|\Ext_{\mathcal{T}}\phi\|_{L_p(E)} = \|\phi\|_{L_p(E)}\lesssim \|\phi\|_{L_p(\fm_0)}.
\end{equation}
\par
\emph{Step 2.} Now we estimate the second term in \eqref{eq.extension_L_p_estimate0}. First, assume that $p<\infty$. Then, by properties~\ref{cond:W4} of the Whitney decomposition and~\ref{cond:F1} of $\{\psi_{\kappa}\}$, we have
\begin{equation}
\label{eq.extension_L_p_estimate2}
    \|\Ext_{\mathcal{T}}\phi\|_{L_p({\mathbb{R}^n\setminus E})}^p \lesssim \sum_{\kappa \in \mathcal{J}}\mathcal{L}^n(Q_{\kappa}) |\phi_{\kappa}^{\mathcal{T}}|^p.
\end{equation}
For each $\kappa \in \mathcal{J}$, a telescoping argument, Lemma~\ref{Lm.local_approximation_sequence_of_measures}, and estimate \eqref{eq.extension_fractional_gradient_points_very_far_from_E1}, used in Lemma~\ref{Lm.extension_fractional_gradient_very_far_from_E}, give
\begin{equation}
\label{eq.extension_L_p_estimate3}
\begin{split}
    |\phi_{\kappa}^{\mathcal{T}}| &\le |\mathcal{T}^{\widetilde{x}_{\kappa}}_0\phi| + \sum_{j=0}^{k(\kappa)-1}\Bigl|\mathcal{T}^{\widetilde{x}_{\kappa}}_{j+1}\phi- \mathcal{T}^{\widetilde{x}_{\kappa}}_{j}\phi\Bigr|\\&\lesssim \Bigl(A_{0, \fm_0}|\phi|^{\sigma}(\widetilde{x}_{\kappa})\Bigr)^{1/\sigma} + \sum_{j=0}^{k(\kappa)-1}E_{j, \kappa}.
\end{split}
\end{equation}
\par
First, we consider the second term. For $\kappa \in \mathcal{J}$ and $j\le k(\kappa)-1$, the application of \eqref{eq.local_approximation_outside_E_ordinary} from Lemma~\ref{Lm.local_approximation_outside_E} with $c=2$ and $\underline{k}=0$ yields
\begin{equation}
    \label{eq.extension_L_p_estimate4}
    E_{j, \kappa} \lesssim \mathcal{E}_{\fm_j, \sigma}(\phi, 2Q_j(y)) = \Phi_j(y)
\end{equation}
for all $y\in Q_j(\widetilde{x}_{\kappa})$. 
Averaging over $y\in Q_{j}(\widetilde{x}_{\kappa})$ with respect to $\mathcal{L}^n$, we obtain
\begin{equation}
\label{eq.extension_L_p_estimate5}
    E_{j, \kappa}^p \lesssim \fint\limits_{Q_j(\widetilde{x}_{\kappa})}(\Phi_j(y))^pdy.
\end{equation}
Furthermore, Minkowski's inequality in the weighted $\ell_p$ space over $\kappa$ gives
\begin{equation}
 \label{eq.extension_L_p_estimate6}
\begin{split}
    \Bigl\{\sum_{\kappa\in\mathcal{J}}\mathcal{L}^n(Q_{\kappa}) \Bigl(\sum_{j=0}^{k(\kappa)-1}E_{j, \kappa}\Bigr)^p \Bigr\}^{1/p} &= \Bigl\{\sum_{\kappa\in\mathcal{J}}\mathcal{L}^n(Q_{\kappa}) \Bigl(\sum_{j=0}^{\infty}\chi_{\{j<k(\kappa)\}}E_{j, \kappa}\Bigr)^p \Bigr\}^{1/p}\\ &\le \sum_{j=0}^{\infty}\Bigl(\sum_{\substack{\kappa\in\mathcal{J}:\\ k(\kappa)>j}}\mathcal{L}^n(Q_{\kappa})E_{j, \kappa}^p\Bigr)^{1/p}.
\end{split}
\end{equation}
\par
We claim that, for each $j\in \mathbb{N}_0$ and $y\in \mathbb{R}^n$,
\begin{equation}
    \label{eq.whitney_packing_condition}
    \sum_{\substack{\kappa\in\mathcal{J}:\\ k(\kappa)\ge j}} \mathcal{L}^n(Q_{\kappa})\chi_{Q_j(\widetilde{x}_{\kappa})}(y)\lesssim 2^{-jn}.
\end{equation}
Indeed, if $y\in Q_j(\widetilde{x}_\kappa)$ and $k(\kappa)\ge j$, then the corresponding Whitney cube $Q_{\kappa}$ lies in one fixed enlargement of $Q_j(y)$. By property~\ref{cond:W5} the interiors of $Q_{\kappa}$, $\kappa\in \mathcal{J}$, are disjoint. Hence, we obtain \eqref{eq.whitney_packing_condition}. Combining this estimate with \eqref{eq.extension_L_p_estimate5} gives
\begin{equation}
    \sum_{\substack{\kappa\in\mathcal{J}:\\ k(\kappa)>j}}\mathcal{L}^n(Q_{\kappa})E_{j, \kappa}^p \lesssim 2^{jn}\int\limits_{\mathbb{R}^n}\Phi_j(y)^p\sum_{\substack{\kappa\in\mathcal{J}:\\ k(\kappa)>j}}\mathcal{L}^n(Q_{\kappa})\chi_{Q_j(\widetilde{x}_{\kappa})}(y)dy \lesssim \|\Phi_j\|_{L_p(\mathbb{R}^n)}^p.
\end{equation}
As a result, estimate \eqref{eq.extension_L_p_estimate6} and Lemma~\ref{Lm.besov_and_lizorkin_triebel_rapid_convergence}, applied with $\alpha = 0$, yield
\begin{equation}
 \label{eq.extension_L_p_estimate7}
    \Bigl\{\sum_{\kappa\in\mathcal{J}}\mathcal{L}^n(Q_{\kappa}) \Bigl(\sum_{j=0}^{k(\kappa)-1}E_{j, \kappa}\Bigr)^p \Bigr\}^{1/p} \lesssim \sum_{j=0}^{\infty}\|\Phi_j\|_{L_p(\mathbb{R}^n)} \lesssim \|\phi\|_{\mathfrak{b}^{s-\theta/p}_{p, q, \sigma}(E)}.
\end{equation}
\par
Now we consider the first term in \eqref{eq.extension_L_p_estimate3}.
Using \eqref{eq.whitney_packing_condition} with $j=0$ and Jensen's inequality (we recall that $\sigma\le p$), we get
\begin{equation}
 \label{eq.extension_L_p_estimate8}
    \sum_{\kappa \in \mathcal{J}}\mathcal{L}^n(Q_{\kappa})\Bigl(A_{0, \fm_0}|\phi|^{\sigma}(\widetilde{x}_{\kappa})\Bigr)^{p/\sigma} \lesssim  \sum_{\kappa \in \mathcal{J}}\mathcal{L}^n(Q_{\kappa})\int\limits_{Q_{0}(\widetilde{x}_{\kappa})}|\phi(y)|^pd\fm_0(y)\lesssim \|\phi\|_{L_p(\fm_0)}^p.
\end{equation}
Combining \eqref{eq.extension_L_p_estimate2}--\eqref{eq.extension_L_p_estimate3} and \eqref{eq.extension_L_p_estimate7}--\eqref{eq.extension_L_p_estimate8} yields
\begin{equation}
    \|\Ext_{\mathcal{T}}\phi\|_{L_p(\mathbb{R}^n\setminus E)}\lesssim \|\phi\|_{\mathfrak{B}^{s-\theta/p}_{p, q, \sigma}(E)}.
\end{equation}
\par
For $p=\infty$, the same estimate follows from
\begin{equation}
    |\phi^{\mathcal{T}}_{\kappa}| \lesssim \|\phi\|_{L_{\infty}(\fm_0)} + \sum_{j=0}^{\infty}\|\Phi_j\|_{L_{\infty}(\mathbb{R}^n)}\lesssim\|\phi\|_{\mathfrak{B}^{s}_{\infty, q, \sigma}(E)}.
\end{equation}
Together with \eqref{eq.extension_L_p_estimate1} this gives \eqref{eq.extension_L_p_estimate}.
\par
\emph{Step 3.} Finally, we prove \eqref{eq.extension_L_p_estimate_triebel}. The proof follows the same argument with one modification. We use the Lizorkin--Triebel case of Lemma~\ref{Lm.besov_and_lizorkin_triebel_rapid_convergence}, applied with $\alpha = 0$, at \eqref{eq.extension_L_p_estimate7}. Thus,
\begin{equation}
 \label{eq.extension_L_p_estimate_LT}
    \Bigl\{\sum_{\kappa\in\mathcal{J}}\mathcal{L}^n(Q_{\kappa}) \Bigl(\sum_{j=0}^{k(\kappa)-1}E_{j, \kappa}\Bigr)^p \Bigr\}^{1/p} \lesssim \sum_{j=0}^{\infty}\|\Phi_j\|_{L_p(\mathbb{R}^n)} \lesssim \|\phi\|_{\mathfrak{f}^{s-\theta/p}_{p, q, \sigma}(E)}.
\end{equation}
The remaining estimates follow in exactly the same way. The proof is complete.
\end{proof}

We are ready to prove the main results of this section. The \emph{first main result} reads as follows.
\begin{Th}
    \label{Th.main_inverse_Besov} Assume that $\sigma\le p$. Then there exists a constant $C>0$ such that for each $\phi \in L_{\sigma}^{\loc}(\{\fm_k\})$ with $\mathcal{L}^n(E\setminus\mathfrak{L}_{\{\fm_k\}, \sigma}(\phi)) = 0$,
    \begin{equation}
        \label{eq.main_inverse_Besov}
        \|\Ext_{\mathcal{T}}\phi\|_{B^s_{p, q}(\mathbb{R}^n)} \le C\|\phi\|_{\mathfrak{B}^{s-\theta/p}_{p,q, \sigma}(E)}.
    \end{equation}
\end{Th}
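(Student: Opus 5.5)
By Remark~\ref{Rm.fractional_gradient=_truncation}, it suffices to bound $\|\Ext_{\mathcal{T}}\phi\|_{L_p(\mathbb{R}^n)}$ and to exhibit a $k_0$-truncated fractional gradient of $\Ext_{\mathcal{T}}\phi$ with $\ell_q(L_p)$-norm controlled by $\|\phi\|_{\mathfrak{B}^{s-\theta/p}_{p,q,\sigma}(E)}$. The $L_p$ part is exactly \eqref{eq.extension_L_p_estimate} of Theorem~\ref{Th.extension_L_p_estimate}, since $\sigma\le p$. For the gradient, Lemmas~\ref{Lm.extension_fractional_gradient_points_in_E}--\ref{Lm.extension_fractional_gradient_very_far_from_E} cover all five mutual positions of $x_1,x_2$ relative to $E$. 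Off the $\mathcal{L}^n$-null set $N_1\cup N_2$ (this is where the hypothesis $\mathcal{L}^n(E\setminus\mathfrak{L}_{\{\fm_k\},\sigma}(\phi))=0$ enters), these lemmas give \eqref{eq.extension_fractional_gradient_statement}. It therefore remains to prove, for $\vec g=\{g_k^{\sigma,I}+g_k^{\sigma,II}+g_k^{\sigma,III}\}_{k>k_0}$,
\begin{equation*}
\|\{g_k^{\sigma,I}\}_{k>k_0}\|_{\ell_q(L_p)}+\|\{g_k^{\sigma,II}\}_{k>k_0}\|_{\ell_q(L_p)}+\|\{g_k^{\sigma,III}\}_{k>k_0}\|_{\ell_q(L_p)}\lesssim\|\phi\|_{\mathfrak{B}^{s-\theta/p}_{p,q,\sigma}(E)},
\end{equation*}
using the quasi-triangle inequality in $\ell_q(L_p)$. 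Write $\Phi_j(x):=\widetilde{\mathcal{E}}_{\fm_j,\sigma}(\phi,Q_j(x))$.

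\emph{Term III.} Since $\|g_k^{\sigma,III}\|_{L_p}=2^{k(s-1)}\|\chi_{U_{-5}(E)}(A_{-10,\fm_0}|\phi|^\sigma)^{1/\sigma}\|_{L_p}$ and $s<1$, the $\ell_q$ sum over $k$ is a convergent geometric series. The remaining $L_p$ norm is handled as in \eqref{eq.extension_L_p_estimate8}. By Jensen ($\sigma\le p$) and Fubini, $\int (A_{-10,\fm_0}|\phi|^\sigma)^{p/\sigma}dx\le\int \fint_{Q_{-10}(x)}|\phi|^p\,d\fm_0\,dx$. For $x\in U_{-5}(E)$ we have $\fm_0(Q_{-10}(x))\gtrsim1$ by \ref{cond:M2} applied at a nearby point of $E$ together with Lemma~\ref{Lm.relaxed_doubling_property}. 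Hence this is $\lesssim\|\phi\|_{L_p(\fm_0)}^p$. For $p=\infty$ the bound is immediate.

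\emph{Term I.} By Minkowski's inequality in $L_p$, $\|g_k^{\sigma,I}\|_{L_p}\le 2^{ks}\sum_{j\ge k-10}\|\Phi_j\|_{L_p}=\sum_{j\ge k-10}2^{-(j-k)s}\,2^{js}\|\Phi_j\|_{L_p}$. Here $\Phi_j$ is defined for $j\ge0$, and for $k>k_0=10$ all indices satisfy $j\ge0$. Corollary~\ref{Ca.geometric_discrete_convolution} with $\alpha=s$, in its half-line form from Remark~\ref{Rm.geometric_discrete_convolution_half_line}, gives the bound $\|\{2^{js}\|\Phi_j\|_{L_p}\}\|_{\ell_q}=\|\phi\|_{\mathfrak{b}}$. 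A shift by $10$ only changes constants.

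\emph{Term II.} This is the main obstacle, because two summations must be swapped and the weight $2^{l-k}$ must beat $2^{ks}$. By Minkowski,
\begin{equation*}
\|g_k^{\sigma,II}\|_{L_p}\le 2^{ks}\sum_{l=k_0}^{k-1}2^{l-k}\sum_{j\ge l-10}\|\Phi_j\|_{L_p}.
\end{equation*}
The point is to pull out $2^{ls}$: writing $2^{ks}2^{l-k}=2^{-(k-l)(1-s)}2^{ls}$ and setting $b_l:=2^{ls}\sum_{j\ge l-10}\|\Phi_j\|_{L_p}$, we obtain $\|g_k^{\sigma,II}\|_{L_p}\le\sum_{l\le k-1}2^{-(k-l)(1-s)}b_l$. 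Since $1-s>0$, the second inequality of Corollary~\ref{Ca.geometric_discrete_convolution} (half-line version) gives $\|\{\|g_k^{\sigma,II}\|_{L_p}\}_k\|_{\ell_q}\lesssim\|\{b_l\}_{l\ge k_0}\|_{\ell_q}$. Finally, $\|\{b_l\}\|_{\ell_q}\lesssim\|\phi\|_{\mathfrak{b}}$ by the Term I argument. The characteristic functions $\chi_{\widehat{V}_l(E)}$ are simply dropped, which is harmless in the Besov case; only in the Lizorkin--Triebel analogue would their bounded overlap matter. Collecting the three terms together with Theorem~\ref{Th.extension_L_p_estimate} yields \eqref{eq.main_inverse_Besov}.
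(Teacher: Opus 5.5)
Your proposal is correct and follows the paper's proof essentially step by step. You take the $L_p$ bound from Theorem~\ref{Th.extension_L_p_estimate}, the truncated gradient from Lemmas~\ref{Lm.extension_fractional_gradient_points_in_E}--\ref{Lm.extension_fractional_gradient_very_far_from_E}, and then estimate the three pieces $g_k^{\sigma,I}, g_k^{\sigma,II}, g_k^{\sigma,III}$ in $\ell_q(L_p)$. The only cosmetic difference is in Term II: you factor $2^{ks}2^{l-k}=2^{-(k-l)(1-s)}2^{ls}$ and compose two discrete convolution bounds, which is exactly the route of Step~2 in the paper's Lizorkin--Triebel counterpart, Theorem~\ref{Th.main_inverse_Lizorkin_Triebel}. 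The paper's Besov proof instead interchanges the $l$- and $j$-sums explicitly and then applies Corollary~\ref{Ca.geometric_discrete_convolution} to the two resulting pieces.
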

\begin{proof}
Fix $\phi \in L_{\sigma}^{\loc}(\{\fm_k\})$ such that $\mathcal{L}^n(E\setminus\mathfrak{L}_{\{\fm_k\}, \sigma}(\phi))=0$.
We assume that the right-hand side of \eqref{eq.main_inverse_Besov} is finite, since otherwise the assertion is obvious. First, we estimate the $\ell_q(L_p(\mathbb{R}^n))$-norm of the sequence $\{g_k^{\sigma} := g_k^{\sigma, I}+g_k^{\sigma,II}+g_k^{\sigma, III}\}_{k>k_0}$ constructed in \eqref{eq.extension_fractional_gradient_1_part}--\eqref{eq.extension_fractional_gradient_3_part}. Throughout the proof, put $e_k:=2^{ks}\|\widetilde{\mathcal{E}}_{\fm_k, \sigma}(\phi, Q_k(\cdot))\|_{L_p(\mathbb{R}^n)}$ for brevity.
\par
\emph{Step 1.} We start with the first term. By Minkowski's inequality and Corollary~\ref{Ca.geometric_discrete_convolution},
\begin{equation}
\label{eq.main_inverse_Besov1}
\begin{split}
        \|\{\|g_k^{\sigma, I}\|_{L_p(\mathbb{R}^n)}\}_{k>k_0}\|_{\ell_q}\lesssim\Bigl\|\Bigl\{\sum_{j=k-10}^{\infty}2^{-(j-k)s}e_j\Bigr\}_{k>k_0}\Bigr\|_{\ell_q} \lesssim \|\{e_j\}_{j\ge 0}\|_{\ell_q} = \|\phi\|_{\mathfrak{b}^{s-\theta/p}_{p, q, \sigma}(E)}.
\end{split}
    \end{equation}
    \emph{Step 2.} Now we consider $\{g_k^{\sigma, II}\}_{k>k_0}$. For each $k>k_0$, Minkowski's inequality gives
    \begin{equation}
    \label{eq.main_inverse_Besov2}
        \begin{split}
            2^{-ks}\|g_k^{\sigma, II}\|_{L_p(\mathbb{R}^n)} &\lesssim \sum_{l=k_0}^{k-1}2^{l-k}\sum_{j=l-10}^{\infty}2^{-js}e_j = \sum_{j=k_0-10}^{\infty}2^{-js}e_j\sum_{l=k_0}^{\min\{j+10, k-1\}}2^{l-k}\\&\approx \sum_{j=k_0-10}^{k-11}2^{j-k}2^{-js}e_j+\sum_{j=k-10}^{\infty}2^{-js}e_j.
        \end{split}
    \end{equation}
    Using Corollary~\ref{Ca.geometric_discrete_convolution}, we obtain
    \begin{equation}
    \label{eq.main_inverse_Besov3}
        \Bigl\|\Bigl\{2^{ks}\sum_{j=k-10}^{\infty}2^{-js}e_j\Bigr\}_{k>k_0}\Bigr\|_{\ell_q}\lesssim\|\{e_j\}_{j\ge 0}\|_{\ell_q}= \|\phi\|_{\mathfrak{b}^{s-\theta/p}_{p, q, \sigma}(E)}.
    \end{equation}
    Since for each $k>k_0$,
    \begin{equation}
        2^{ks}\sum_{j=k_0-10}^{k-11}2^{j-k}2^{-js}e_j = \sum_{j=k_0-10}^{k-11}2^{-(k-j)(1-s)}e_j,
    \end{equation}
     Corollary~\ref{Ca.geometric_discrete_convolution} gives
    \begin{equation}
        \Bigl\|\Bigl\{2^{ks}\sum_{j=k_0-10}^{k-11}2^{j-k}2^{-js}e_j\Bigr\}_{k>k_0}\Bigr\|_{\ell_q}\lesssim\|\{e_j\}_{j\ge 0}\|_{\ell_q}= \|\phi\|_{\mathfrak{b}^{s-\theta/p}_{p, q, \sigma}(E)}
    \end{equation}
     Combining the preceding estimates gives
    \begin{equation}
    \label{eq.main_inverse_Besov7}
        \|\{g_k^{\sigma, II}\}_{k>k_0}\|_{\ell_q(L_p(\mathbb{R}^n))}\lesssim\|\phi\|_{\mathfrak{b}^{s-\theta/p}_{p, q, \sigma}(E)}.
    \end{equation}
    \par
    \emph{Step 3.} We next estimate $\{g_k^{\sigma, III}\}_{k>k_0}$. Assume first that $p<\infty$. Since $\sigma\le p$, for each $k>k_0$, Fubini's theorem, Jensen's inequality, properties~\ref{cond:M1}--\ref{cond:M2}, and Corollary~\ref{Ca.relaxed_doubling_property} yield
    \begin{equation}
        \|g_k^{\sigma, III}\|_{L_p(\mathbb{R}^n)} \lesssim 2^{k(s-1)}\Bigl(\int\limits_{U_{-5}(E)} \fint\limits_{Q_{-10}(x)}|\phi(y)|^pd\fm_0(y)dx \Bigr)^{1/p} \lesssim 2^{k(s-1)}\|\phi\|_{L_p(\fm_0)}.
    \end{equation}
    The same estimate for $p=\infty$ is trivial. Therefore,
    \begin{equation}
        \label{eq.main_inverse_Besov8}
        \|\{g_k^{\sigma, III}\}_{k>k_0}\|_{\ell_q(L_p(\mathbb{R}^n))}\lesssim \|\phi\|_{L_p(\fm_0)}\|\{2^{k(s-1)}\}_{k>k_0}\|_{\ell_q}\lesssim\|\phi\|_{L_p(\fm_0)}.
    \end{equation}
    \par
    \emph{Step 4.} Let $C$ be the maximum of the constants from Lemmas~\ref{Lm.extension_fractional_gradient_points_in_E}--\ref{Lm.extension_fractional_gradient_very_far_from_E}. By these lemmas, $\{g_k := C(g_k^{\sigma, I}+g_k^{\sigma, II}+g_k^{\sigma, III})\}_{k>k_0}\in \mathbb{D}_{k_0}^s(\Ext_{\mathcal{T}}\phi)$. Hence, by \eqref{eq.main_inverse_Besov1}, \eqref{eq.main_inverse_Besov7}, and \eqref{eq.main_inverse_Besov8},
    \begin{equation}
    \label{eq.main_inverse_Besov9}
        \inf_{\vec{h}\in \mathbb{D}_{k_0}^s(\Ext_{\mathcal{T}}\phi)}\|\vec{h}\|_{\ell_q(L_p(\mathbb{R}^n))} \le \|\{g_k\}_{k>k_0}\|_{\ell_q(L_p(\mathbb{R}^n))} \lesssim \|\phi\|_{\mathfrak{B}^{s-\theta/p}_{p, q, \sigma}(E)}.
    \end{equation}
    \par
\emph{Step 5.} By Theorem~\ref{Th.extension_L_p_estimate},
\begin{equation}
    \label{eq.main_inverse_Besov15}
    \|\Ext_{\mathcal{T}}\phi\|_{L_p(\mathbb{R}^n)}\lesssim \|\phi\|_{\mathfrak{B}^{s-\theta/p}_{p, q, \sigma}(E)}.
\end{equation}
Using \eqref{eq.main_inverse_Besov9}, \eqref{eq.main_inverse_Besov15}, and Remark~\ref{Rm.fractional_gradient=_truncation}, we obtain
\begin{equation}
    \|\Ext_{\mathcal{T}}\phi\|_{B^s_{p, q}(\mathbb{R}^n)}\lesssim\|\phi\|_{\mathfrak{B}^{s-\theta/p}_{p, q, \sigma}(E)}.
\end{equation}
The proof is complete.
\end{proof}
The \emph{second main result} reads as follows.
\begin{Th}
    \label{Th.main_inverse_Lizorkin_Triebel} Assume that $p<\infty$ and $\sigma\le \min\{p, q\}$. There exists a constant $C>0$ such that for each $\phi \in L_{\sigma}^{\loc}(\{\fm_k\})$ with $\mathcal{L}^n(E\setminus\mathfrak{L}_{\{\fm_k\}, \sigma}(\phi)) = 0$,
    \begin{equation}
        \label{eq.main_inverse_Lizorkin_Triebel}
        \|\Ext_{\mathcal{T}}\phi\|_{F^s_{p, q}(\mathbb{R}^n)} \le C\|\phi\|_{\mathfrak{F}^{s-\theta/p}_{p,q, \sigma}(E)}.
    \end{equation}
\end{Th}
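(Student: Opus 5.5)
The argument will mirror the proof of Theorem~\ref{Th.main_inverse_Besov}. The only change is that the norm $\ell_q(L_p)$ is replaced by $L_p(\ell_q)$. By Lemmas~\ref{Lm.extension_fractional_gradient_points_in_E}--\ref{Lm.extension_fractional_gradient_very_far_from_E}, the sequence $\{C(g_k^{\sigma,I}+g_k^{\sigma,II}+g_k^{\sigma,III})\}_{k>k_0}$ lies in $\mathbb{D}^s_{k_0}(\Ext_{\mathcal{T}}\phi)$. Theorem~\ref{Th.extension_L_p_estimate} (Lizorkin--Triebel part, valid since $\sigma\le\min\{p,q\}$) controls $\|\Ext_{\mathcal{T}}\phi\|_{L_p(\mathbb{R}^n)}$. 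Then Remark~\ref{Rm.fractional_gradient=_truncation} reduces the claim to three pointwise-in-$x$ estimates followed by integration in $L_p$. Throughout, write $\Phi_j(x):=\widetilde{\mathcal{E}}_{\fm_j,\sigma}(\phi,Q_j(x))$.

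\emph{Term I.} For every fixed $x$ we have $g_k^{\sigma,I}(x)\le\sum_{j\ge k-10}2^{-(j-k)s}\,2^{js}\Phi_j(x)$. Corollary~\ref{Ca.geometric_discrete_convolution}, applied pointwise to the sequence $a_j=2^{js}\Phi_j(x)$ (with $\Phi_j:=0$ for $j<0$), gives $\|\{g_k^{\sigma,I}(x)\}_k\|_{\ell_q}\lesssim\|\{2^{js}\Phi_j(x)\}_j\|_{\ell_q}$. Taking $L_p$ norms yields the bound $\|\phi\|_{\mathfrak{f}}$. No maximal function is needed here, because every term is evaluated at the same point $x$.

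\emph{Term II.} I split $g_k^{\sigma,II}(x)$ exactly as in \eqref{eq.main_inverse_Besov2}, but pointwise and keeping the indicators $\chi_{\widehat V_l(E)}(x)$. Dropping the indicators, the double sum is dominated by
\[
\sum_{j=k-10}^{\infty}2^{-(j-k)s}\,2^{js}\Phi_j(x)+\sum_{j<k-10}2^{-(k-j)(1-s)}\,2^{js}\Phi_j(x),
\]
because $\sum_{l\le\min\{j+10,k-1\}}2^{l-k}\lesssim\min\{1,2^{j-k}\}$. Both parts of Corollary~\ref{Ca.geometric_discrete_convolution} (using $s<1$ for the second) then give the pointwise $\ell_q$ bound $\lesssim\|\{2^{js}\Phi_j(x)\}\|_{\ell_q}$, and integrating in $L_p$ finishes this term. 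This step is also purely pointwise, since every $\Phi_j$ is evaluated at the same $x$.

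\emph{Term III.} I would first show that $\|\{2^{k(s-1)}\}_{k>k_0}\|_{\ell_q}\lesssim 1$, which holds because $s<1$. This gives $\|\{g_k^{\sigma,III}(x)\}\|_{\ell_q}\lesssim\chi_{U_{-5}(E)}(x)\bigl(A_{-10,\fm_0}|\phi|^\sigma(x)\bigr)^{1/\sigma}$. Its $L_p$ norm is then bounded by $\|\phi\|_{L_p(\fm_0)}$ exactly as in Step~3 of the Besov proof, via Jensen ($\sigma\le p$), Fubini, and \ref{cond:M1}.

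I expect the main obstacle to be checking that nothing beyond a pointwise $\ell_q$ inequality is required, that is, that no Fefferman--Stein argument (Theorem~\ref{Th.Fefferman_Stein_inequality}) is needed. That seems right here, since the gradient pieces were deliberately built from $\Phi_j$ at the same point $x$. The only place where $\sigma\le q$ enters is the $L_p$-estimate of Theorem~\ref{Th.extension_L_p_estimate}. If a quasi-norm issue arises for $q<1$ in Corollary~\ref{Ca.geometric_discrete_convolution}, the subadditivity trick used in its proof applies unchanged.
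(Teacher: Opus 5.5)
Your proposal is correct and follows the paper's proof essentially step by step. Both use pointwise $\ell_q$ bounds for the three gradient pieces via Corollary~\ref{Ca.geometric_discrete_convolution}, the Jensen--Fubini estimate for $g_k^{\sigma,III}$, and Theorem~\ref{Th.extension_L_p_estimate} for the $L_p$ part. The only cosmetic difference is in Term~II: you interchange the sums pointwise to obtain a single two-sided geometric kernel, as in the Besov proof, whereas the paper factors through $b_l(x)=2^{ls}\sum_{j\ge l-10}\Phi_j(x)$ and applies the corollary twice.
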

\begin{proof} The proof follows the arguments used in Theorem~\ref{Th.main_inverse_Besov}. We provide details for completeness.
Fix $\phi \in L_{\sigma}^{\loc}(\{\fm_k\})$ such that $\mathcal{L}^n(E\setminus\mathfrak{L}_{\{\fm_k\}, \sigma}(\phi))=0$.
We assume that the right-hand side of \eqref{eq.main_inverse_Lizorkin_Triebel} is finite, since otherwise the assertion is obvious. First, we estimate the $L_p(\mathbb{R}^n, \ell_q)$-norm of the sequence $\{g_k^{\sigma} := g_k^{\sigma, I}+g_k^{\sigma,II}+g_k^{\sigma, III}\}_{k>k_0}$ constructed in \eqref{eq.extension_fractional_gradient_1_part}--\eqref{eq.extension_fractional_gradient_3_part}. Throughout the proof, set $\Phi_k:=\widetilde{\mathcal{E}}_{\fm_k, \sigma}(\phi, Q_k(\cdot))$ for brevity.
\par
\emph{Step 1.} We start with the first term. By Corollary~\ref{Ca.geometric_discrete_convolution},
    \begin{equation}
    \label{eq.main_inverse_Lizorkin_Triebel1}
        \|
            \{g_k^{\sigma, I}(x)\}_{k>k_0}
        \|_{\ell_q}
        \lesssim
        \|
            \{2^{js}\Phi_j(x)\}_{j\ge0}
        \|_{\ell_q}
    \end{equation}
    for all $x\in \mathbb{R}^n$. Consequently,
    \begin{equation}
    \label{eq.main_inverse_Lizorkin_Triebel2}
        \|
            \{g_k^{\sigma, I}\}_{k>k_0}
        \|_{L_p(\mathbb{R}^n,\ell_q)}
        \lesssim
        \|\phi\|_{\mathfrak{f}^{s-\theta/p}_{p,q, \sigma}(E)}.
    \end{equation}
    \par
    \emph{Step 2.} Now we consider $\{g_k^{\sigma, II}\}_{k>k_0}$. For each $l \ge k_0$, define $b_l(x) := 2^{ls} \sum_{j=l-10}^{\infty}\Phi_j(x)$. Hence another pointwise application of Corollary~\ref{Ca.geometric_discrete_convolution} gives
    \begin{equation}
    \label{eq.main_inverse_Lizorkin_Triebel3}
        \left\|
            \{b_l(x)\}_{l\ge k_0}
        \right\|_{\ell_q}
        \lesssim
        \left\|
            \{2^{js}\Phi_j(x)\}_{j\ge0}
        \right\|_{\ell_q}
    \end{equation}
    for all $x\in\mathbb{R}^n$.
    On the other hand, the definition of $g_k^{\sigma, II}$ gives
    \begin{equation}
        g_k^{\sigma, II}(x)= \sum_{l=k_0}^{k-1} 2^{-(k-l)(1-s)}\chi_{\widehat{V}_l(E)}(x)b_l(x).
    \end{equation}
    Therefore, applying Corollary~\ref{Ca.geometric_discrete_convolution} once more, we obtain
    \begin{equation}
    \begin{split}
        \left\|
            \{g_k^{\sigma, II}(x)\}_{k>k_0}
        \right\|_{\ell_q}
        \lesssim
        \left\|
            \{
                \chi_{\widehat{V}_l(E)}(x)b_l(x)
            \}_{l\ge k_0}
        \right\|_{\ell_q}
        \\\le
        \left\|
            \{b_l(x)\}_{l\ge k_0}
        \right\|_{\ell_q}
        \lesssim
        \left\|
            \{2^{js}\Phi_j(x)\}_{j\ge0}
        \right\|_{\ell_q}.
    \end{split}
    \end{equation}
    Thus,
    \begin{equation}
    \label{eq.main_inverse_Lizorkin_Triebel4}
        \left\|
            \{g_k^{\sigma, II}\}_{k>k_0}
        \right\|_{L_p(\mathbb{R}^n,\ell_q)}
        \lesssim
        \|\phi\|_{\mathfrak{f}^{s-\theta/p}_{p,q, \sigma}(E)}.
    \end{equation}
    \par
    \emph{Step 3.} We next estimate $\{g_k^{\sigma, III}\}_{k>k_0}$. For each $x\in\mathbb{R}^n$,
    \begin{equation}
    \begin{split}
        \|\{g_k^{\sigma, III}(x)\}_{k>k_0}\|_{\ell_q}& \lesssim \|\{2^{k(s-1)}\}_{k>k_0}\|_{\ell_q} \chi_{U_{-5}(E)}(x) \Bigl(A_{-10, \fm_0}|\phi|^{\sigma}(x)\Bigr)^{1/\sigma}\\& \lesssim \chi_{U_{-5}(E)}(x) \Bigl(A_{-10, \fm_0}|\phi|^{\sigma}(x)\Bigr)^{1/\sigma}.
    \end{split}
    \end{equation}
     Since $\sigma\le p$, for each $k>k_0$, Fubini's theorem, Jensen's inequality, properties~\ref{cond:M1}--\ref{cond:M2}, and Corollary~\ref{Ca.relaxed_doubling_property} yield
    \begin{equation}
    \label{eq.main_inverse_Lizorkin_Triebel5}
        \|\{g_k^{\sigma, III}(x)\}_{k>k_0}\|_{L_p(\mathbb{R}^n,\ell_q)}^p \lesssim \int\limits_{U_{-5}(E)}\int\limits_{Q_{-10}(x)}|\phi(y)|^pd\fm_0(y)dx \lesssim \|\phi\|_{L_p(\fm_0)}^p.
    \end{equation}
    \par
    \emph{Step 4.} Let $C$ be the maximum of the constants from Lemmas~\ref{Lm.extension_fractional_gradient_points_in_E}--\ref{Lm.extension_fractional_gradient_very_far_from_E}. By these lemmas, $\{g_k := C(g_k^{\sigma, I}+g_k^{\sigma, II}+g_k^{\sigma, III})\}_{k>k_0}\in \mathbb{D}_{k_0}^s(\Ext_{\mathcal{T}}\phi)$. Hence, by \eqref{eq.main_inverse_Lizorkin_Triebel2}, \eqref{eq.main_inverse_Lizorkin_Triebel4}, and \eqref{eq.main_inverse_Lizorkin_Triebel5}
    \begin{equation}
    \label{eq.main_inverse_Lizorkin_Triebel6}
        \inf_{\vec{h}\in \mathbb{D}_{k_0}^s(\Ext_{\mathcal{T}}\phi)}\|\vec{h}\|_{L_p(\mathbb{R}^n, \ell_q)} \le \|\{g_k\}_{k>k_0}\|_{L_p(\mathbb{R}^n, \ell_q)} \lesssim \|\phi\|_{\mathfrak{F}^{s-\theta/p}_{p, q, \sigma}(E)}.
    \end{equation}
    \par
\emph{Step 5.} By Theorem~\ref{Th.extension_L_p_estimate},
\begin{equation}
    \label{eq.main_inverse_Lizorkin_Triebel7}
    \|\Ext_{\mathcal{T}}\phi\|_{L_p(\mathbb{R}^n)}\lesssim \|\phi\|_{\mathfrak{F}^{s-\theta/p}_{p, q, \sigma}(E)}.
\end{equation}
Using \eqref{eq.main_inverse_Lizorkin_Triebel6}, \eqref{eq.main_inverse_Lizorkin_Triebel7}, and Remark~\ref{Rm.fractional_gradient=_truncation}, we obtain
\begin{equation}
    \|\Ext_{\mathcal{T}}\phi\|_{F^s_{p, q}(\mathbb{R}^n)}\lesssim\|\phi\|_{\mathfrak{F}^{s-\theta/p}_{p, q, \sigma}(E)}.
\end{equation}
The proof is complete.
\end{proof}
\section{Lebesgue points and extension operator}
\label{section.extension_property}
Throughout this section we fix:
\begin{conditions}{\textbf{D}.4.}
    \item\label{cond:D41} a parameter $d\in [0,n]$ and a closed $d$-thick set
    $E\subset\mathbb{R}^n$;
    \item\label{cond:D42} a $d$-regular sequence of measures
    $\{\fm_k\}_{k=0}^{\infty}$ on $E$;
    \item\label{cond:D43} parameters $p\in [1,\infty]$, $q\in (0,\infty]$, and
    $s\in \left(\frac{n-d}{p},1\right)$.
\end{conditions}
\par
The goal of this section is to establish that functions for which the corresponding Besov or Lizorkin--Triebel functional is finite have $(\{\fm_k\}, \sigma)$-Lebesgue points $\fm_0$-almost everywhere. Furthermore, we establish the right-inverse property of the extension operator. We recall the notation $\theta = n-d$, \eqref{eq.besov_seminorm_on_thick_set}, and \eqref{eq.lizorkin_triebel_seminorm_on_thick_set}.
\begin{Lm}
    \label{Lm.Lebesgue_points_on_E}
    Let $\sigma\in (0, \infty)$ and let $\phi \in L_{\sigma}^{\loc}(\{\fm_k\})$. Assume that either $\sigma \le p$ and $\|\phi\|_{\mathfrak{b}^{s-\theta/p}_{p, q, \sigma}(E)}<\infty$, or $p<\infty$, $\sigma\le \min\{p, q\}$, and $\|\phi\|_{\mathfrak{f}^{s-\theta/p}_{p, q, \sigma}(E)}<\infty$. Then there exist a Borel function $\bar{\phi}:E\to\mathbb{R}$ and a Borel set $E_{\phi} \subset E$ with $\fm_0(E\setminus E_{\phi}) =0$ such that
    \begin{equation}
        \label{eq.Lebesgue_points_on_E}
        \lim_{k\to\infty}\fint\limits_{Q_k(x)}|{\phi}(y)-\bar{\phi}(x)|^{\sigma}d\fm_k(y) = 0 \qquad \text{for all } x\in E_{\phi}.
    \end{equation}
\end{Lm}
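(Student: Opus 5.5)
The natural candidate is $\bar\phi(x):=\limsup_{k\to\infty}\mathcal{T}_k^x\phi$ for a fixed family of $\lambda$-almost best approximating operators, e.g.\ medians $\mathcal{T}_k^x\phi=\operatorname{med}_{\fm_k}(\phi,Q_k(x))$. Borel measurability of $x\mapsto\operatorname{med}_{\fm_k}(\phi,Q_k(x))$ needs a short check, or one can replace it by a countable dyadic discretization. We then show two things: the sequence $\{\mathcal{T}_k^x\phi\}_k$ is Cauchy for $\fm_0$-a.e.\ $x\in E$, and at such points the averages in \eqref{eq.Lebesgue_points_on_E} tend to zero.

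\emph{Step 1 (reduction to a summability condition).} By Lemma~\ref{Lm.local_approximation_sequence_of_measures}, $|\mathcal{T}_k^x\phi-\mathcal{T}_{k+1}^x\phi|\lesssim\mathcal{E}_{\fm_k,\sigma}(\phi,Q_k(x))$. Moreover, for $y\in Q_k(x)$,
\[
|\phi(y)-\bar\phi(x)|^\sigma\lesssim|\phi(y)-\mathcal{T}_k^x\phi|^\sigma+|\mathcal{T}_k^x\phi-\bar\phi(x)|^\sigma .
\]
Hence it suffices to prove
\[
S(x):=\sum_{k\ge0}\mathcal{E}_{\fm_k,\sigma}(\phi,Q_k(x))<\infty\quad\text{for }\fm_0\text{-a.e. }x\in E.
\]
Indeed, the tail sums then vanish, the limit exists and is finite, and the $\fm_k$-averaged term equals $\lambda^\sigma\mathcal{E}^\sigma_{\fm_k,\sigma}(\phi,Q_k(x))\to0$. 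We then take $E_\phi=\{S<\infty\}$, which is Borel since $S$ is Borel.

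\emph{Step 2 (transfer from $\mathcal{L}^n$ to $\fm_0$).} The functional controls $\Phi_j:=\widetilde{\mathcal{E}}_{\fm_j,\sigma}(\phi,Q_j(\cdot))$ in $L_p(\mathbb{R}^n)$, i.e.\ with respect to Lebesgue measure, whereas we need an $\fm_0$-a.e.\ statement. By Lemma~\ref{Lm.local_approximation_outside_E} (with $c=2$), $\mathcal{E}_{\fm_j,\sigma}(\phi,Q_j(x))\lesssim\Phi_{j}(y)$ for all $y\in Q_j(x)$. Averaging in $y$ gives
\[
\mathcal{E}_{\fm_j,\sigma}(\phi,Q_j(x))\lesssim\Bigl(2^{jn}\int_{Q_j(x)}\Phi_j^p\,dy\Bigr)^{1/p}
\]
for $p<\infty$; the case $p=\infty$ is direct. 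Set $F_j(x):=2^{jn}\int_{Q_j(x)}\Phi_j^p\,dy$. By Fubini and \ref{cond:M1}, Corollary~\ref{Ca.M1_inflated},
\[
\int_{\mathbb{R}^n}F_j\,d\fm_j\lesssim 2^{j\theta}\|\Phi_j\|_{L_p}^p .
\]
To pass from $\fm_j$ to $\fm_0$ we use \ref{cond:M3}: it gives $\gamma_j\gtrsim 2^{-j\theta}$ relative to $\gamma_0$ only in one direction, so we instead work on a fixed unit cube and compare with $\fm_0$ via $\fm_0\le C_3\fm_j$. This yields
\[
\int F_j\,d\fm_0\lesssim2^{j\theta}\|\Phi_j\|_{L_p}^p .
\]
Therefore
\[
\Bigl\|\sum_j\mathcal{E}_{\fm_j,\sigma}(\phi,Q_j(\cdot))\Bigr\|_{L_p(\fm_0)}\lesssim\sum_j2^{j\theta/p}\|\Phi_j\|_{L_p(\mathbb{R}^n)}\lesssim\|\phi\|_{\mathfrak{a}^{s-\theta/p}_{p,q,\sigma}(E)},
\]
where the last step is Lemma~\ref{Lm.besov_and_lizorkin_triebel_rapid_convergence} with $\alpha=\theta/p<s$. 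This is exactly where $s>\frac{n-d}{p}$ is used, and it gives $S<\infty$ $\fm_0$-a.e. For locality (when $\fm_0(E)=\infty$) the same bound is applied on $E\cap Q_R(0)$ for every $R$.

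\emph{Main obstacle.} The delicate point is Step 2: getting the correct power $2^{j\theta}$ when moving the Lebesgue-measure bound onto $\fm_0$. This requires using \ref{cond:M1} for $\fm_j$ at scale $2^{-j}$ together with the monotonicity $\gamma_0\lesssim\gamma_j$ from \ref{cond:M3}, and I would verify the direction of that inequality carefully. In the Lizorkin--Triebel case nothing new is needed, since Lemma~\ref{Lm.besov_and_lizorkin_triebel_rapid_convergence} already covers the $\mathfrak{f}$-functional.
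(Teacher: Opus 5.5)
Your proposal is correct and is essentially the paper's proof. Both take the medians $M_k(x)$, get the Cauchy property from Lemma~\ref{Lm.local_approximation_sequence_of_measures}, and derive $\fm_0$-a.e.\ finiteness of $\sum_k\mathcal{E}_{\fm_k,\sigma}(\phi,Q_k(x))$ from $\mathcal{E}_{\fm_k,\sigma}(\phi,Q_k(x))\lesssim\Phi_k(y)$ for $y\in Q_k(x)$, Fubini, Minkowski, and Lemma~\ref{Lm.besov_and_lizorkin_triebel_rapid_convergence} with $\alpha=\theta/p$. The detour through property~\ref{cond:M3} is unnecessary, though $\fm_0\le C_3\fm_j$ is the right direction: property~\ref{cond:M1} for $\fm_0$ at scale $2^{-j}\le 1$ already gives $\fm_0(Q_j(y))\le C_1 2^{-jd}$, which is what the paper uses; also set $\bar\phi:=0$ off $E_\phi$ so it is real-valued, and note the $\fm_k$-average is bounded by, not equal to, $\lambda^\sigma\mathcal{E}^\sigma_{\fm_k,\sigma}(\phi,Q_k(x))$.
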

\begin{proof}
    For brevity we set
    \begin{equation}
        E_k(x):=\mathcal{E}_{\fm_k, \sigma}(\phi, Q_k(x)), \qquad \Phi_k(x) = \widetilde{\mathcal{E}}_{\fm_k, \sigma}(\phi, Q_k(x))
    \end{equation}
    for every $k\in \mathbb{N}_0$ and all $x\in\mathbb{R}^n$.
    \par
    \emph{Step 1.} Consider the function $R(x):= \sum_{k=0}^{\infty}E_k(x)$. We claim that $R(x)<\infty$ for $\fm_0$-a.e. $x\in E$. Fix $k\in \mathbb{N}_0$ and $x\in E$. For each $y\in Q_k(x)$, by \eqref{eq.local_approximation_outside_E_ordinary} in Lemma~\ref{Lm.local_approximation_outside_E}, applied with $c=2$ and $\underline{k}=0$,
    \begin{equation}
    \label{eq.Lebesgue_points_on_E1}
        \mathcal{E}_{\fm_k, \sigma}(\phi, Q_k(x)) \lesssim {\mathcal{E}}_{\fm_k, \sigma}(\phi, 2Q_k(y)) = \Phi_k(y).
    \end{equation}
    The latter equality follows from $x\in Q_k(y)\cap E$, and hence $Q_k(y)\cap E \neq \emptyset$. Therefore, for $p<\infty$, Jensen's inequality and property~\ref{cond:M1} of $\{\fm_k\}$ give
    \begin{equation}
        \label{eq.Lebesgue_points_on_E2}
        \begin{split}
        \|E_k\|_{L_p(\fm_0)}^p&\lesssim \int\limits_{\mathbb{R}^n}\Phi_k(y)^p\int\limits_{Q_k(y)}\frac{d\fm_0(x)}{\mathcal{L}^n(Q_k(x))}dy \\ &\lesssim 2^{k(n-d)}\|\Phi_k\|_{L_p(\mathbb{R}^n)}^p.
        \end{split}
    \end{equation}
    If $p=\infty$ the latter estimate is immediate. Then, by Minkowski's inequality and Lemma~\ref{Lm.besov_and_lizorkin_triebel_rapid_convergence}, applied with $\alpha = \frac{\theta}{p}$,
    \begin{equation}
        \label{eq.Lebesgue_points_on_E3}
        \|R\|_{L_p(\fm_0)} \lesssim \|\{2^{k\frac{\theta}{p}}\|\Phi_k\|_{L_p(\mathbb{R}^n)}\}_{k=0}^{\infty}\|_{\ell_1} \lesssim \|\phi\|_{\mathfrak{b}^{s-\theta/p}_{p, q, \sigma}(E)}.
    \end{equation}
    For the Lizorkin--Triebel case, we apply \eqref{eq.lizorkin_triebel_rapid_convergence} from Lemma~\ref{Lm.besov_and_lizorkin_triebel_rapid_convergence}. Thus,
    \begin{equation}
    \label{eq.Lebesgue_points_on_E5}
       \|R\|_{L_p(\fm_0)} \lesssim  \|\{2^{k\frac{\theta}{p}}\|\Phi_k\|_{L_p(\mathbb{R}^n)}\}_{k=0}^{\infty}\|_{\ell_1}\lesssim \|\phi\|_{\mathfrak{f}^{s-\theta/p}_{p, q, \sigma}(E)}.
    \end{equation}
    As a result, $R(x)<\infty$ for $\fm_0$-a.e. $x\in E$.
    \par
    \emph{Step 2.} Given $k\in\mathbb{N}_0$, set $M_k(x):=\operatorname{med}_{\fm_k}(\phi, Q_k(x))$.
    By Lemma~\ref{Lm.average_median_approximation_property}, there is $\lambda=\lambda(\sigma)$ such that $M_k(x) \in \mathfrak{C}^{\lambda}_{\fm_k, \sigma}(\phi, Q_k(x))$.
    \par
    Let $E_{\phi}:=\{x\in E: R(x)<\infty\}$. We claim that, for each $x\in E_{\phi}$, the sequence $\{M_k(x)\}_{k=0}^{\infty}$ is Cauchy. Indeed, for every $l>k$, by Lemma~\ref{Lm.local_approximation_sequence_of_measures}, we have
    \begin{equation}
        \label{eq.Lebesgue_points_on_E8}
        |M_l(x)-M_k(x)|\le \sum_{j=k}^{l-1}|M_{j+1}(x)-M_j(x)| \lesssim\sum_{j=k}^{\infty}E_j(x).
    \end{equation}
    The last quantity tends to $0$ as $k\to \infty$ at each $x\in E$ provided $R(x)<\infty$. Let $\bar{\phi}(x):=\lim_{k\to\infty}M_k(x)$ for all $x\in E_{\phi}$, and $\bar{\phi}(x) = 0$ for all $x\in E\setminus E_{\phi}$. Since $x\mapsto M_k(x)$ is Borel, it follows that $\bar{\phi}$ is Borel. Furthermore, for each $x\in E_{\phi}$,
    \begin{equation}
        \label{eq.Lebesgue_points_on_E9}
        \begin{split}
        \Bigl(\fint\limits_{Q_k(x)}|\phi(y)-\bar{\phi}(x)|^{\sigma}d\fm_k(y)\Bigr)^{1/\sigma}&\lesssim \Bigl(\fint\limits_{Q_k(x)}|\phi(y)-M_k(x)|^{\sigma}d\fm_k(y)\Bigr)^{1/\sigma}\\ + |M_k(x)-\bar{\phi}(x)| &\lesssim \sum_{j=k}^{\infty}E_j(x)\to 0, \quad \text{as } k\to \infty.
        \end{split}
    \end{equation}
    The proof is complete.
\end{proof}
Next, we prove that $\bar{\phi}$ constructed above coincides $\fm_0$-a.e. with $\phi$, and therefore $\fm_0(E\setminus \mathfrak{L}_{\{\fm_k, \sigma\}}(\phi))=0$. We follow the argument used in the proof of \cite[Theorem~6.10]{tyul}. We recall an additional property of a $d$-regular sequence of measures.
\begin{Def}
    \label{Def.strongly_regular_sequence_of_measure}
    Given $d\in [0, n]$, a $d$-thick closed set $E\subset \mathbb{R}^n$, and a $d$-regular sequence of measures $\{\fm_k\}_{k\in\mathbb{N}_0}$ on $E$, we say that $\{\fm_k\}_{k\in\mathbb{N}_0}$ is strongly $d$-regular if, for each Borel set $G\subset E$,
    \begin{equation}
        \label{eq.strongly_regular_sequence_of_measures}
        \limsup_{k\to\infty}\frac{\fm_k(Q_k(x)\cap G)}{\fm_k(Q_k(x))}>0 \qquad \text{for $\fm_0$-a.e. } x\in G.
    \end{equation}
\end{Def}
By \cite[Theorem~1.3]{tyul} every closed $d$-thick set admits a strongly $d$-regular sequence of measures. Throughout the rest of this section, we assume that $\{\fm_k\}$ fixed earlier is strongly $d$-regular. We are ready to prove the \emph{first main result} of this section.
\begin{Th}
    \label{Th.good_representative_on_thick_set}
     Let $\sigma\in (0, \infty)$ and let $\phi \in L_{\sigma}^{\loc}(\{\fm_k\})$. Assume that either $\sigma \le p$ and $\|\phi\|_{\mathfrak{b}^{s-\theta/p}_{p, q, \sigma}(E)}<\infty$, or $p<\infty$, $\sigma\le \min\{p, q\}$, and $\|\phi\|_{\mathfrak{f}^{s-\theta/p}_{p, q, \sigma}(E)}<\infty$. Then 
     \begin{equation}
         \label{eq.good_representative_on_thick_set}
         \fm_0(E\setminus \mathfrak{L}_{\{\fm_k\}, \sigma}(\phi)) = 0.
     \end{equation}
\end{Th}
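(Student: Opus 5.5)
By Lemma~\ref{Lm.Lebesgue_points_on_E} there are a Borel function $\bar{\phi}$ and a Borel set $E_{\phi}\subset E$ with $\fm_0(E\setminus E_{\phi})=0$ such that \eqref{eq.Lebesgue_points_on_E} holds at every $x\in E_{\phi}$. By Remark~\ref{Rm.lebesgue_points_sequence_of_measures}, every point $x\in E_{\phi}$ with $\bar{\phi}(x)=\phi(x)$ is a $(\{\fm_k\},\sigma)$-Lebesgue point of $\phi$. So it suffices to show that $\bar{\phi}=\phi$ $\fm_0$-a.e. on $E$; this is where strong $d$-regularity is used, following \cite[Theorem~6.10]{tyul}.

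We argue by contradiction. Suppose $\fm_0(\{x\in E_{\phi}:\bar\phi(x)\neq\phi(x)\})>0$. Writing this set as a countable union over $\varepsilon=1/m$ and rational $a$, we find $\varepsilon>0$ and $a\in\mathbb{Q}$ for which the Borel set
\begin{equation*}
G:=\{x\in E_{\phi}: |\bar\phi(x)-a|<\varepsilon,\ |\phi(x)-a|\ge 3\varepsilon\}
\end{equation*}
has $\fm_0(G)>0$. (The roles of $\phi$ and $\bar\phi$ can be swapped, but this version suffices.) On $G$ we have $|\phi(y)-\bar\phi(x)|\ge\varepsilon$ whenever $x,y\in G$, because $|\phi(y)-a|\ge 3\varepsilon$ and $|\bar\phi(x)-a|<\varepsilon$.

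Now apply \eqref{eq.strongly_regular_sequence_of_measures} to $G$. For $\fm_0$-a.e. $x\in G$,
\begin{equation*}
\limsup_{k\to\infty}\frac{\fm_k(Q_k(x)\cap G)}{\fm_k(Q_k(x))}>0 .
\end{equation*}
Fix such an $x$. Then
\begin{equation*}
\fint\limits_{Q_k(x)}|\phi(y)-\bar\phi(x)|^{\sigma}d\fm_k(y)\ \ge\ \varepsilon^{\sigma}\,\frac{\fm_k(Q_k(x)\cap G)}{\fm_k(Q_k(x))}.
\end{equation*}
The limsup of the right-hand side is positive. But the left-hand side tends to $0$ because $x\in E_{\phi}$. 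This is a contradiction.

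One subtlety is that $\phi$ is only an $\fm_0$-equivalence class. I would fix a Borel representative at the outset. Since every $\fm_k=\gamma_k\fm_0$ with $\gamma_k>0$, all the averages are independent of the choice of representative, and the sets above are Borel. The main point to check is therefore that $G$ is Borel and has positive $\fm_0$ measure, and this follows from the countable decomposition. Once that holds, the conclusion \eqref{eq.good_representative_on_thick_set} follows directly.
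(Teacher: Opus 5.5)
Your argument is correct, but it uses strong $d$-regularity differently from the paper.

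\emph{The paper's route.} It first restricts to a Borel piece $F\subset E_{\phi}$ of finite $\fm_0$-measure. Lusin's theorem then gives a closed $G\subset F$ with $\fm_0(F\setminus G)<\delta$ and $\phi|_G$ continuous. At a point $x\in G$ where the $\limsup$ in \eqref{eq.strongly_regular_sequence_of_measures} is positive, this density is combined with Chebyshev's inequality to find $y_j\in Q_{k_j}(x)\cap G$ with $|\phi(y_j)-\bar\phi(x)|<\eta$. Continuity of $\phi|_G$ then yields $|\phi(x)-\bar\phi(x)|\le\eta$. Finally it lets $\eta\to0$ and $\delta\to0$, and exhausts $E_\phi$ by the sets $E_\phi\cap Q_{-m}(0)$.

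\emph{Your route.} You confine the discrepancy to a Borel set $G$ of positive measure on which $\phi$ and $\bar\phi$ are uniformly separated. Your countable cover works: if $t:=|\phi(x)-\bar\phi(x)|>0$, take any $\varepsilon=1/m<t/4$ and rational $a$ with $|\bar\phi(x)-a|<\varepsilon$; then $|\phi(x)-a|>t-\varepsilon>3\varepsilon$. You then apply strong regularity directly to this $G$. The resulting lower bound on the averages contradicts the convergence \eqref{eq.Lebesgue_points_on_E} from Lemma~\ref{Lm.Lebesgue_points_on_E}.

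\emph{Comparison.} Your version needs no Lusin theorem, no topological input (closed sets, continuity), no finite-measure exhaustion, and neither of the two limiting procedures in $\eta$ and $\delta$. It uses only Borel measurability of $\phi$, $\bar\phi$ and $E_\phi$. The paper's version is a direct, non-contradiction identification of $\phi(x)$ with $\bar\phi(x)$ at each good point of a Lusin set. Both rest on the same two ingredients: strong regularity applied to an auxiliary Borel set, and the convergence in Lemma~\ref{Lm.Lebesgue_points_on_E}.

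A minor remark: for the averages to be independent of the chosen representative you only need $\fm_k\ll\fm_0$. Positivity of $\gamma_k$ is not needed for that.
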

\begin{proof}
    Let $\bar{\phi}$ and $E_{\phi}$ be given by Lemma~\ref{Lm.Lebesgue_points_on_E}. By \eqref{eq.Lebesgue_points_on_E} in Lemma~\ref{Lm.Lebesgue_points_on_E} and Remark~\ref{Rm.lebesgue_points_sequence_of_measures}, it suffices to prove that $\phi(x) = \bar{\phi}(x)$ for $\fm_0$-a.e. $x\in E_{\phi}$.
    \par
    \emph{Step 1.}
    Fix $\delta>0$ and a Borel set $F\subset E_{\phi}$ with $\fm_0(F)<\infty$. By Lusin's theorem, there exists a closed set $G\subset F$ such that $\fm_0(F\setminus G)<\delta$ and $\phi\big|_G$ is continuous.
    Fix $x\in G$ such that \eqref{eq.strongly_regular_sequence_of_measures} holds at $x$. Choose $c_x$ such that
    \begin{equation}
        \label{eq.good_representative_on_thick_set1}
        0<c_x<\limsup_{k\to\infty}\frac{\fm_k(Q_k(x)\cap G)}{\fm_k(Q_k(x))}.
    \end{equation}
    Then there exists an increasing sequence $\{k_j\}_{j=1}^{\infty}\subset\mathbb{N}$ such that
    \begin{equation}
    \label{eq.good_representative_on_thick_set2}
        \frac{\fm_{k_j}(Q_{k_j}(x)\cap G)}{\fm_{k_j}(Q_{k_j}(x))} >c_x
    \end{equation}
    for all $j\in\mathbb{N}$. Given $k\in \mathbb{N}_0$ and $\eta>0$, set
    \begin{equation}
    \label{eq.good_representative_on_thick_set3}
        G^{\eta}_k(x):=\{y\in Q_k(x):|\phi(y)-\bar{\phi}(x)|<\eta\}.
    \end{equation}
    Then Chebyshev's inequality gives
    \begin{equation}
    \label{eq.good_representative_on_thick_set4}
        \frac{\fm_k(Q_k(x)\setminus G_k^{\eta}(x))}{\fm_k(Q_k(x))} \le \frac{1}{\eta^{\sigma}}\fint\limits_{Q_k(x)}|\phi(y)-\bar{\phi}(x)|^{\sigma}d\fm_k(y).
    \end{equation}
    The last quantity tends to $0$ as $k\to\infty$ whenever $x\in E_{\phi}$. In particular, for all sufficiently large $j\in \mathbb{N}$,
    \begin{equation}
        \label{eq.good_representative_on_thick_set5}
        \frac{\fm_{k_j}(Q_{k_j}(x)\setminus G_{k_j}^{\eta}(x))}{\fm_{k_j}(Q_{k_j}(x))}<c_x.
    \end{equation}
    Consequently, using \eqref{eq.good_representative_on_thick_set2}, we obtain $Q_{k_j}(x)\cap G\cap G^{\eta}_{k_j}(x)\neq\emptyset$. Hence, we can choose $y_j \in Q_{k_j}(x)\cap G$ such that $|\phi(y_j)-\bar{\phi}(x)|<\eta$. Since $k_j\to\infty$, it follows that $y_j\to x$. By the continuity of $\phi\big|_G$, we obtain $\phi(y_j)\to \phi(x)$, and therefore
    \begin{equation}
    \label{eq.good_representative_on_thick_set6}
        |\phi(x)-\bar{\phi}(x)|\le \eta.
    \end{equation}
    Since $\eta>0$ is arbitrary, we get $\phi(x) = \bar{\phi}(x)$ for $\fm_0$-a.e. $x\in G$. Consequently,
    \begin{equation}
    \label{eq.good_representative_on_thick_set7}
        \fm_0(F \setminus \{x\in F: \phi(x) = \bar{\phi}(x)\}) <\delta.
    \end{equation}
    Letting $\delta \to 0$ gives $\phi(x) = \bar{\phi}(x)$ for $\fm_0$-a.e. $x\in F$.
    \par
    \emph{Step 2.} Given $m\in \mathbb{N}$, we put $F_m = E_{\phi}\cap Q_{-m}(0)$. Applying Step~1 to each $F_m$, we obtain $\phi(x) = \bar{\phi}(x)$ for $\fm_0$-a.e. $x\in F_m$. Since $E_{\phi}=\bigcup_{m=1}^{\infty}F_m$, it follows that $\phi(x)=\bar{\phi}(x)$ for $\fm_0$-a.e. $x\in E_{\phi}$. The proof is complete.
\end{proof}
The \emph{second main result} of this section reads as follows.
\begin{Th}
    \label{Th.extension_property}
    Let $\sigma\in (0, \infty)$ and let $\phi\in L_{\sigma}^{\loc}(\{\fm_k\})$. Assume that either $\sigma \le p$ and $\|\phi\|_{\mathfrak{b}^{s-\theta/p}_{p, q, \sigma}(E)}<\infty$, or $p<\infty$, $\sigma\le \min\{p, q\}$, and $\|\phi\|_{\mathfrak{f}^{s-\theta/p}_{p, q, \sigma}(E)}<\infty$. Let $\mathcal{T}=\{\mathcal{T}_k^x\}_{k\in\mathbb{N}_0, x\in E}$ be a family of $\lambda$-almost best approximating operators on $L_{\sigma}(Q_{k}(x), \fm_k)$, $\lambda>1$. Then
    \begin{equation}
        \label{eq.extension_property}
        \lim_{r\to0}\fint\limits_{Q_r(x)}|\Ext_{\mathcal{T}}\phi(y)-\phi(x)|dy = 0 \qquad \text{for $\fm_0$-a.e. } x\in E.
    \end{equation}
     In particular, $\Ext_{\mathcal{T}}\phi\big|_E^{\fm_0}=\phi$.
\end{Th}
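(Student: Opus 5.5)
The plan is to show that for $\fm_0$-a.e. $x\in E$, namely for $x\in E_\phi\cap\mathfrak{L}_{\{\fm_k\},\sigma}(\phi)$ (a set of full $\fm_0$-measure by Lemma~\ref{Lm.Lebesgue_points_on_E} and Theorem~\ref{Th.good_representative_on_thick_set}), one has
\begin{equation*}
\fint\limits_{Q_r(x)}|\Ext_{\mathcal{T}}\phi(y)-\phi(x)|\,dy\lesssim \sum_{j\ge k(r)-C}E_j(x) + (\text{a term controlling } E\cap Q_r(x)),
\end{equation*}
where $E_j(x)=\mathcal{E}_{\fm_j,\sigma}(\phi,Q_j(x))$, so that the right-hand side tends to $0$ because the tail of the convergent series $R(x)=\sum_j E_j(x)<\infty$ vanishes. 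I would split $Q_r(x)=(Q_r(x)\setminus E)\cup(Q_r(x)\cap E)$, with $r=2^{-k}$ small.

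\emph{Points off $E$.} For $y\in Q_r(x)\setminus E$ and $\kappa\in a(y)$, \eqref{eq.upper_diameter_estimate} gives $k(\kappa)>k-C$, and, as in the proof of Lemma~\ref{Lm.extension_fractional_gradient_points_mixed}, $|x-\widetilde{x}_\kappa|<2^{-k+3}$. By the partition of unity \ref{cond:F2}, $|\Ext_{\mathcal{T}}\phi(y)-\phi(x)|\le\max_{\kappa\in a(y)}|\phi_\kappa^{\mathcal{T}}-\phi(x)|$. I would bound this by telescoping exactly as in Lemma~\ref{Lm.extension_fractional_gradient_points_mixed}, but anchoring at $x$ rather than at $y$. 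Using Lemma~\ref{Lm.approximation_at_Lebesgue_points} at $x$ and Lemma~\ref{Lm.local_approximation_sequence_of_measures}, we get $|\phi(x)-\mathcal{T}^x_{k-3}\phi|\lesssim\sum_{j\ge k-3}E_j(x)$. For the projected chain, $\sum_{j=k-4}^{k(\kappa)-1}\mathcal{E}_{\fm_j,\sigma}(\phi,Q_j(\widetilde{x}_\kappa))$, I must compare with cubes centered at $x$ rather than at $y$. This is the delicate point: $\widetilde{x}_\kappa$ is at distance up to $2^{-k+3}$ from $x$, while $j$ runs up to $k(\kappa)$, which may be much larger than $k$. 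So the terms with $2^{-j}\ll|x-\widetilde{x}_\kappa|$ cannot be compared with $E_j(x)$.

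\emph{Handling the deep scales.} For $j$ with $2^{-j}\gtrsim|x-\widetilde{x}_\kappa|$, Lemma~\ref{Lm.local_approximation_outside_E} gives $\mathcal{E}_{\fm_j,\sigma}(\phi,Q_j(\widetilde{x}_\kappa))\lesssim E_{j-C}(x)$. For the remaining scales I would instead use the pointwise bound in terms of $\Phi_j(z)$ for $z$ near $y$, as in \eqref{eq.extension_fractional_gradient_mixed2}: the contribution is $\lesssim\sum_{j\ge k-C}\Phi_j(y)$, taken only over $j\ge k(\operatorname{dist}(y,E))$. I then average over $y\in Q_r(x)$ with respect to $\mathcal{L}^n$. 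This yields $r^{-n}\int_{Q_{2r}(x)}\sum_{j\ge k-C}\Phi_j\,dy$. The task is to show that this tends to zero for $\fm_0$-a.e. $x$, and I expect this to be the main obstacle. My first attempt would be to bound it by $\sum_{j\ge k-C}2^{kn}\int_{Q_{2r}(x)}\Phi_j$ and integrate in $x$ against $\fm_0$ with weights $2^{-k\theta}$. By Fubini, \ref{cond:M1} and \ref{cond:M3} (as in \eqref{eq.Lebesgue_points_on_E2}), this reduces to $\sum_k\|\cdot\|_{L_1(\fm_0)}$-type bounds controlled by $\sum_j 2^{j\theta/p}\|\Phi_j\|_{L_p}$ on bounded sets (Hölder locally), which is finite by Lemma~\ref{Lm.besov_and_lizorkin_triebel_rapid_convergence} with $\alpha=\theta/p$. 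Summability in $k$ then forces the $k$-th term to tend to zero $\fm_0$-a.e. Alternatively, the deep scales can be handled by a Whitney-cube counting argument via \eqref{eq.whitney_packing_condition}.

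\emph{Points on $E$.} If $\mathcal{L}^n(E)>0$, then $\Ext_{\mathcal{T}}\phi=\phi$ on $E$, and the term $r^{-n}\int_{Q_r(x)\cap E}|\phi(y)-\phi(x)|\,dy$ is controlled by $\fint_{Q_k(x)}|\phi-\phi(x)|\,d\fm_k$ up to Hölder when $\sigma\ge1$, using $\mathcal{L}^n\lfloor_E\lesssim 2^{-k\theta}\fm_k$ on $Q_k(x)$ (Step~1 of Theorem~\ref{Th.extension_L_p_estimate}). When $\sigma<1$, I would instead apply the telescoping bound of Lemma~\ref{Lm.extension_fractional_gradient_points_in_E} for $\mathcal{L}^n$-a.e. $y\in E$ and reuse the same averaging argument. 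Combining the two parts, the average tends to $0$. Since Theorem~\ref{Th.good_representative_on_thick_set} identifies $\phi$ with $\bar\phi$ $\fm_0$-a.e., this gives the claimed limit and hence $\Ext_{\mathcal{T}}\phi\big|_E^{\fm_0}=\phi$.
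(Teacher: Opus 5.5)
Your proposal is correct and is essentially the paper's proof. You take the pointwise bound $|\Ext_{\mathcal{T}}\phi(y)-\phi(x)|\lesssim\sum_{j\ge k-C}E_j(x)+\sum_{j\ge k-C}\Phi_j(y)$ for a.e.\ $y\in Q_r(x)$ from the proofs of Lemmas~\ref{Lm.extension_fractional_gradient_points_in_E} and~\ref{Lm.extension_fractional_gradient_points_mixed}. There the whole projected chain is already controlled by the $\Phi_j(y)$, because $|y-\widetilde{x}_\kappa|\le 11\cdot 2^{-k(\kappa)}$, so your split into near and deep scales is unnecessary. You then average over $Q_r(x)$ and show that $A_k(x):=\fint_{Q_k(x)}\sum_{j\ge k-10}\Phi_j(y)\,dy\to0$ $\fm_0$-a.e. via $\|A_k\|_{L_p(\fm_0)}\lesssim 2^{k\theta/p}\sum_{j\ge k-10}\|\Phi_j\|_{L_p(\mathbb{R}^n)}$ (Jensen before Fubini) and Lemma~\ref{Lm.besov_and_lizorkin_triebel_rapid_convergence}.

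The remaining differences are cosmetic:
\begin{itemize}
\item You sum localized $L_1(\fm_0)$-norms over $k$ instead of using Chebyshev and Borel--Cantelli. For $\theta=0$ the resulting weight is $j$ rather than $2^{j\theta/p}$, which is still handled by the same lemma with a small $\alpha>0$.
\item For $y\in E$ with $\sigma\ge1$, you use the Lebesgue-point property directly instead of telescoping.
\end{itemize}
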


\begin{proof}
    By Theorem~\ref{Th.good_representative_on_thick_set}, $ \fm_0(E\setminus\mathfrak{L}_{\{\fm_k\}, \sigma}(\phi))=0$.
    Moreover, in the proof of Theorem~\ref{Th.extension_L_p_estimate} we established that $\mathcal{L}^n\lfloor_E\lesssim\fm_0$. Hence, $\mathcal{L}^n(E\setminus\mathfrak{L}_{\{\fm_k\}, \sigma}(\phi))=0$.
    For brevity, we put
    \begin{equation}
        \label{eq.extension_property2}
        E_j(x):=\mathcal{E}_{\fm_j,\sigma}(\phi,Q_j(x)),
        \qquad
        \Phi_j(x):=\widetilde{\mathcal{E}}_{\fm_j,\sigma}(\phi,Q_j(x))
    \end{equation}
    for every $j\in \mathbb{N}_0$ and $x\in\mathbb{R}^n$.
    By the proof of Lemma~\ref{Lm.Lebesgue_points_on_E}, $\sum_{j=0}^{\infty}E_j(x)<\infty$ for $\fm_0$-a.e. $x\in E$.
     Let $k_0:=10$ as in Section~\ref{section.extension}, and, for each $k>k_0$, set
    \begin{equation}
        \label{eq.extension_property4}
        T_k(y):=\sum_{j=k-10}^{\infty}\Phi_j(y),
        \qquad
        A_k(x):=\fint\limits_{Q_k(x)}T_k(y)dy.
    \end{equation}
    \par
    \emph{Step 1.}
    We claim that $A_k(x)\to0$ as $k\to \infty$ for $\fm_0$-a.e. $x\in E$.
    Assume first that $p<\infty$. By Jensen's inequality, Fubini's theorem, and property~\ref{cond:M1}, we obtain
    \begin{equation}
        \label{eq.extension_property6}
        \begin{split}
        \|A_k\|_{L_p(\fm_0)}^p
        &\le
        \int\limits_E
        \fint\limits_{Q_k(x)}T_k(y)^pdy\,d\fm_0(x)
        \\
        &=
        \frac{1}{\mathcal{L}^n(Q_k(0))}
        \int\limits_{\mathbb{R}^n}
        T_k(y)^p\fm_0(Q_k(y))dy
        \lesssim
        2^{k\theta}
        \|T_k\|_{L_p(\mathbb{R}^n)}^p.
        \end{split}
    \end{equation}
    Consequently,
    \begin{equation}
        \label{eq.extension_property7}
        \|A_k\|_{L_p(\fm_0)}
        \lesssim
        2^{k\frac{\theta}{p}}\|T_k\|_{L_p(\mathbb{R}^n)}.
    \end{equation}
    Set $\alpha:=s-\frac{\theta}{p}$.
    Lemma~\ref{Lm.besov_and_lizorkin_triebel_rapid_convergence} then gives
    \begin{equation}
        \label{eq.extension_property9}
        \begin{split}
        \|A_k\|_{L_p(\fm_0)}
        \lesssim
        2^{k\theta/p}
        \sum_{j=k-10}^{\infty}
        \|\Phi_j\|_{L_p(\mathbb{R}^n)}\lesssim
        2^{-k\alpha}
        \|\phi\|_{\mathfrak{b}^{s-\theta/p}_{p,q,\sigma}(E)}.
        \end{split}
    \end{equation}
    For the Lizorkin--Triebel case, the same lemma yields
    \begin{equation}
        \label{eq.extension_property12}
        \begin{split}
        \|A_k\|_{L_p(\fm_0)}
        \lesssim
        2^{k\theta/p}
        \sum_{j=k-10}^{\infty}
        \|\Phi_j\|_{L_p(\mathbb{R}^n)}\lesssim
        2^{-k\alpha}
        \|\phi\|_{\mathfrak{f}^{s-\theta/p}_{p,q,\sigma}(E)}.
        \end{split}
    \end{equation}
    In either case, it follows from \eqref{eq.extension_property9} and \eqref{eq.extension_property12} that
    \begin{equation}
        \label{eq.extension_property13}
        \sum_{k=k_0+1}^{\infty}
        \|A_k\|_{L_p(\fm_0)}^p<\infty.
    \end{equation}
    Hence, by Chebyshev's inequality and the Borel--Cantelli lemma, $A_k(x)\to 0$ as $k\to \infty$ for $\fm_0$-a.e. $x\in E$.
    \par
    If $p=\infty$, only the Besov case is considered. In this case,
    \begin{equation}
        \label{eq.extension_property14}
        \begin{split}
        \|A_k\|_{L_{\infty}(\fm_0)}\le
        \|T_k\|_{L_{\infty}(\mathbb{R}^n)}
        \le
        \sum_{j=k-10}^{\infty}
        \|\Phi_j\|_{L_{\infty}(\mathbb{R}^n)}\lesssim
        2^{-ks}
        \|\phi\|_{\mathfrak{b}^{s}_{\infty,q,\sigma}(E)},
        \end{split}
    \end{equation}
    and hence $A_k(x)\to 0$ as $k\to \infty$ for every $x\in E$.
    \par
    \emph{Step 2.}
    Fix $x\in \mathfrak{L}_{\{\fm_k\}, \sigma}(\phi)$ such that $\sum_{j=0}^{\infty}E_j(x)<\infty$ and $A_j(x)\to 0$ as $j\to \infty$. Let $r>0$ be sufficiently small. Put $k:=k(r)$, so that $2^{-k-1}<r\le 2^{-k(r)}$. We may assume that $k>k_0+10$.
    Fix $y\in Q_r(x)$, $y\neq x$, and let $l=l(x,y)$ be the integer such that $2^{-l-1}\le |x-y|<2^{-l}$. Since $|x-y|<2^{-k}$, we have $l\ge k$.
    \par
    If $y\in E$, then the argument in the proof of Lemma~\ref{Lm.extension_fractional_gradient_points_in_E}, up to the application of Remark~\ref{Rm.two_local_approximations_relation}, gives
    \begin{equation}
        \label{eq.extension_property18}
        |\Ext_{\mathcal{T}}\phi(y)-\phi(x)|
        \lesssim
        \sum_{j=l-10}^{\infty}E_j(x)
        +
        \sum_{j=l-10}^{\infty}E_j(y).
    \end{equation}
    Since $E_j(y)\lesssim\Phi_j(y)$ by Remark~\ref{Rm.two_local_approximations_relation}, it follows that
    \begin{equation}
        \label{eq.extension_property19}
        |\Ext_{\mathcal{T}}\phi(y)-\phi(x)|
        \lesssim
        \sum_{j=k-10}^{\infty}E_j(x)
        +
        T_k(y).
    \end{equation}
    If $y\in\mathbb{R}^n\setminus E$, then the argument in the proof of Lemma~\ref{Lm.extension_fractional_gradient_points_mixed} gives the same estimate. In view of $\mathcal{L}^n(E\setminus \mathfrak{L}_{\{\fm_k\}, \sigma}(\phi))=0$, estimate \eqref{eq.extension_property19} therefore holds for $\mathcal{L}^n$-a.e. $y\in Q_r(x)$.
    \par
    Averaging \eqref{eq.extension_property19} over $Q_r(x)$, we obtain
    \begin{equation}
        \label{eq.extension_property20}
        \fint\limits_{Q_r(x)}
        |\Ext_{\mathcal{T}}\phi(y)-\phi(x)|dy
        \lesssim
        \sum_{j=k-10}^{\infty}E_j(x)
        +
        \fint\limits_{Q_r(x)}T_k(y)dy.
    \end{equation}
    The first term tends to $0$ as $r\to 0$, since $\sum_{j=0}^{\infty}E_j(x)<\infty$. Furthermore,
    \begin{equation}
        \label{eq.extension_property23}
        \fint\limits_{Q_r(x)}T_k(y)dy
        \lesssim\fint\limits_{Q_k(x)}T_k(y)dy
        = A_k(x)
        \to0 \qquad \text{as } r\to 0.
    \end{equation}
    Combining the preceding estimates, we obtain \eqref{eq.extension_property}. The proof is complete.
\end{proof}
\section{The direct trace theorem}
\label{Section.direct_trace_theorem}
Throughout this section we fix:
\begin{conditions}{\textbf{D}.5.}
    \item\label{cond:D51} a parameter $d\in [0,n]$ and a closed $d$-thick set
    $E\subset\mathbb{R}^n$;
    \item\label{cond:D52} a $d$-regular sequence of measures
    $\{\fm_k\}_{k=0}^{\infty}$ on $E$;
    \item\label{cond:D53} parameters $p\in [1,\infty]$, $q\in (0,\infty]$, and
    $s\in \left(\frac{n-d}{p},1\right)$.
\end{conditions}
Since $E$ and $\{\fm_k\}$ are fixed, we will write $\Tr := \Tr \big|_E^{\fm_0}$ throughout this section.
\par
The goal of this section is to establish the boundedness of Besov and Lizorkin--Triebel type functionals on the corresponding trace spaces. Our arguments rely on two complementary types of estimates: local pointwise estimates and a duality argument. The former covers the case of Besov spaces for the whole range of parameters and the case of Lizorkin--Triebel spaces for $q\ge p$. The case of Lizorkin--Triebel spaces for $q<p$ requires the duality argument.
\subsection{Local pointwise estimates}
\begin{Lm}
\label{Lm.pointw_est}
    Given $c\ge 1$, $\varepsilon\in(0,s)$, and $t\in (0, \infty)$, set $\underline{k}:=[\log_2{c}]+2$. Then there exists $C>0$ such that,
    for every Borel function $f:\mathbb{R}^n\to\mathbb{R}$, every
    $\varepsilon$-nonincreasing fractional gradient
    $\vec{g}\in\mathbb{D}^s(f)$, $k\in\mathbb{N}_0$, $x\in \mathbb{R}^n$, and every Lebesgue point $y\in cQ_k(x)$ of $f$,
    \begin{equation}
    \label{eq.pointw_est}
        |f(y)-\operatorname{med}(f, Q_k(x))| \le C \sum_{j=k-\underline{k}}^{\infty} 2^{-js} \Bigl(A_{j}g_j^t (y) \Bigr)^{1/t}.
    \end{equation}
\end{Lm}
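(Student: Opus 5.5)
We will use the standard telescoping argument with medians over a chain of shrinking cubes centred at the Lebesgue point $y$. The Haj\l{}asz-type inequality will control median oscillations, and the $\varepsilon$-monotonicity of $\vec g$ will let us compare gradients at neighbouring scales.

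\emph{Step 1: oscillation of a median on one cube.} For a cube $Q$ of side length comparable to $2^{-j}$, we claim
\begin{equation*}
\inf_{c}\operatorname{med}(|f-c|^t,Q)\lesssim 2^{-jst}\fint_{Q}g_{j'}^t
\end{equation*}
for a fixed shift $j'=j+O(1)$. Quantitatively, we will show that the set $\{z\in Q: |f(z)-m|>C2^{-js}\lambda\}$, with $m$ a suitable constant, has small measure once $\lambda^t$ exceeds a large multiple of $\fint_Q g_{j'}^t$.

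To do this, fix a point $z_0\in Q$ where $g_{j'}(z_0)^t\le 4\fint_Q g_{j'}^t$; by Chebyshev, most points of $Q$ qualify. Compare each $z\in Q$ with $z_0$. The difficulty is that the fractional gradient inequality only applies to pairs with $|z-z_0|\in[2^{-j'-1},2^{-j'})$, so pairs at a different distance must be joined through an intermediate point $w$ at an admissible distance from both. We will choose the scales so that this annulus condition is met, and the freedom to choose $z_0$ and $w$ among "good" points is what makes the argument work without any $L^1$ hypothesis.

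\emph{Step 2: median bounds.} The outcome of Step 1 is that $|f(z)-f(w)|\le 2^{-j's}(g_{j'}(z)+g_{j'}(w))$ holds for a set of pairs of positive proportion. Taking medians, and using the fact that medians commute with monotone operations, gives
\begin{equation*}
|\operatorname{med}(f,Q)-\operatorname{med}(f,Q')|\lesssim 2^{-js}\Bigl(\fint g_{j'}^t\Bigr)^{1/t}
\end{equation*}
for overlapping comparable cubes $Q,Q'$. Working with medians rather than averages is essential here, since $t$ may be smaller than $1$.

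\emph{Step 3: telescoping.} Set $Q^{(j)}:=Q_j(y)$ for $j\ge k-\underline{k}$. Since $y\in cQ_k(x)$, the cube $Q_k(x)$ is contained in $Q_{k-\underline{k}}(y)$ and has comparable size. Step 2 then gives
\begin{equation*}
|\operatorname{med}(f,Q_k(x))-\operatorname{med}(f,Q^{(k-\underline k)})|\lesssim 2^{-ks}\bigl(A_{k-\underline k}g^t_{k-\underline{k}}(y)\bigr)^{1/t},
\end{equation*}
where the shift in the gradient index is absorbed by $\varepsilon$-monotonicity. For the general terms, the passage from $g_{j+O(1)}$ to $g_j$ costs a factor $2^{O(\varepsilon)}$. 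The relation $A_{j+1}\le 2^nA_j$ handles the change of cube. Summing the consecutive median differences from $Q^{(j)}$ to $Q^{(j+1)}$ yields the right-hand side of \eqref{eq.pointw_est}.

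Finally, since $y$ is a Lebesgue point of $f$, \eqref{eq.median_at_lebesgue_points} gives $\operatorname{med}(f,Q^{(j)})\to f(y)$ as $j\to\infty$, which closes the estimate.

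The main obstacle is Step 1: the gradient inequality controls only pairs at a fixed dyadic distance, so the median comparison has to be organised around good intermediate points chosen at admissible distances.
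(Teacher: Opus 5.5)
Your skeleton matches the paper's: telescope the medians $m_j:=\operatorname{med}(f,Q_j(y))$, close with \eqref{eq.median_at_lebesgue_points} at the Lebesgue point $y$, and add one comparison involving $Q_k(x)$.

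The gap is in Steps~1--2, the part you yourself call the main obstacle. You describe a chaining scheme but do not carry it out, and as described it does not work.
\begin{itemize}
\item Two-step chains at a single scale need not have a uniformly large set of admissible intermediate points. For $n=1$ and $|z-z_0|$ slightly below $2^{-j'-1}$, the admissible $w$ form a set of measure comparable to $2^{-j'-1}-|z-z_0|$, so Chebyshev cannot produce a good $w$.
\item The intermediate points leave the cube over which you average $g_{j'}^t$, which clashes with the fixed value of $\underline k$.
\item A set of pairs of merely positive proportion does not control a median. You need the exceptional set to have less than half the measure.
\end{itemize}
Step~3 also has a direction problem. Comparing $Q_k(x)$ with $Q_{k-\underline k}(y)$ involves pairs at distance close to $2\cdot 2^{-(k-\underline k)}$, which naturally produces $g_{k-\underline k-1}$. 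But $\varepsilon$-monotonicity only bounds $g_i$ by $g_j$ for $i\ge j$, so it cannot move you to the finer index $k-\underline k$.

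The missing idea is that, under the lemma's hypotheses, the annulus restriction is no obstacle at all. Removing it is precisely what the $\varepsilon$-nonincreasing assumption with $\varepsilon<s$ is for; you use it only for bounded index shifts. Off a null set of pairs, $|z_1-z_2|\le 2^{-j}$ forces $2^{-i-1}\le|z_1-z_2|<2^{-i}$ for some $i\ge j$. Since $g_i\le 2^{(i-j)\varepsilon}g_j$,
\begin{equation*}
|f(z_1)-f(z_2)|\le 2^{-is}\bigl(g_i(z_1)+g_i(z_2)\bigr)\le 2^{-js}2^{-(i-j)(s-\varepsilon)}\bigl(g_j(z_1)+g_j(z_2)\bigr)\le 2^{-js}\bigl(g_j(z_1)+g_j(z_2)\bigr).
\end{equation*}
Combine this ball-type inequality with Lemma~\ref{Lm.meadian_average_relation}, taking $c=f(z_2)$ and averaging in $z_2$. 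This gives at once
\begin{equation*}
|m_{j+1}-m_j|^t\lesssim\fint_{Q_j(y)}\fint_{Q_{j+1}(y)}|f(z_1)-f(z_2)|^t\,dz_1\,dz_2\lesssim 2^{-(j-1)st}A_{j-1}g_{j-1}^t(y).
\end{equation*}
For the extra comparison, compare $Q_k(x)$ with $Q_k(y)$ rather than with $Q_{k-\underline k}(y)$. This involves only pairs at distance at most $(2+c)2^{-k}\le 2^{-(k-\underline k)}$, with $Q_k(x)\subset Q_{k-\underline k}(y)$. It produces the term with $g_{k-\underline k}$ and explains the choice $\underline k=[\log_2 c]+2$.

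No good points or chains are needed. A chaining argument might be completed with enough case analysis over scales, but as written your proposal does not prove the lemma.
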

\begin{proof}
For each $z\in \mathbb{R}^n$, define
\begin{equation}
\label{eq.pointw_est1}
    G_j(z) = \|\{2^{-is}g_i(z)\}_{i\ge j}\|_{\ell_1}.
\end{equation}
Since $\vec{g}$ is $\varepsilon$-nonincreasing, it follows that
\begin{equation}
\label{eq.pointw_est2}
    G_j(z) \le 2^{-j\varepsilon}\|\{2^{-i(s-\varepsilon)}g_j(z)\}_{i\ge j}\|_{\ell_1} \lesssim 2^{-js}g_j(z)
\end{equation}
for all $z\in\mathbb{R}^n$.
The definition of fractional gradient then gives, for $\mathcal{L}^{2n}$-a.e. $(z_1, z_2)\in \mathbb{R}^{2n}$ such that $|z_1-z_2|\le 2^{-j}$,
\begin{equation}
\label{eq.pointw_est3}
    |f(z_1)-f(z_2)| \le \bigl(G_j(z_1)+G_j(z_2) \bigr) \lesssim 2^{-js}(g_j(z_1)+g_j(z_2)).
\end{equation}
\par
We now assume that $y\in \mathbb{R}^n$ is a Lebesgue point of $f$ and let $m_j:= \operatorname{med}(f, Q_j(y))$ for brevity. Then, by \eqref{eq.median_at_lebesgue_points} in Lemma~\ref{Lm.meadian_average_relation}, applied with $\fm=\mathcal{L}^n$, we obtain
    \begin{equation}
    \label{eq.pointw_est4}
        f(y) = m_k + \sum_{j=k}^{\infty}\bigl(m_{j+1} - m_j \bigr).
    \end{equation}
    For each $j\ge k$, Lemma~\ref{Lm.meadian_average_relation} and estimate \eqref{eq.pointw_est3} give
    \begin{equation}
        \label{eq.pointw_est5}
        |m_{j+1}-m_j|^t \lesssim \fint\limits_{Q_j(y)}\fint\limits_{Q_{j+1}(y)}|f(z_1)-f(z_2)|^tdz_1dz_2 \lesssim 2^{-(j-1)s t}\fint\limits_{Q_{j-1}(y)}g_{j-1}(z)^tdz.
    \end{equation}
    Furthermore, if $y\in cQ_k(x)$, then $|z_1-z_2|\le (2+c)2^{-k} \le 2^{-k+\underline{k}}$ for all $z_1\in Q_k(x)$ and $z_2\in Q_k(y)$. Therefore, it follows from Lemma~\ref{Lm.meadian_average_relation}, estimate \eqref{eq.pointw_est3}, and the inclusions $Q_{k}(x) \subset Q_{k-\underline{k}}(y)$ and $Q_k(y) \subset Q_{k-\underline{k}}(y)$ that
    \begin{equation}
    \label{eq.pointw_est6}
    \begin{split}
        |m_k - \operatorname{med}(f, Q_k(x))|^t &\lesssim \fint\limits_{Q_k(y)}\fint\limits_{Q_{k}(x)}|f(z_1)-f(z_2)|^tdz_1dz_2 \\& \lesssim 2^{-(k-\underline{k})st} \fint\limits_{Q_{k-\underline{k}}(y)}g_{k-\underline{k}}(z)^tdz.
    \end{split}
    \end{equation}
    Combining \eqref{eq.pointw_est4}, \eqref{eq.pointw_est5}, and \eqref{eq.pointw_est6} gives \eqref{eq.pointw_est}, completing the proof.
\end{proof}

We are now ready to prove the \emph{key pointwise estimate} for
$\widetilde{\mathcal{E}}_{\fm_k, \sigma}(\Tr f,Q_k(x))$.

\begin{Prop}
\label{Prop.pointwise_estimation_direct}
    Let $\sigma \in (0, \infty)$ and let $\varepsilon\in(0,s)$. Assume that $r\in (0, \infty)$ such that $\sigma\le r$ and $s-\varepsilon>\frac{n-d}{r}$. Then there exists $C>0$ such that, for each Borel function $f$ that has a trace to $E$ in the sense of \eqref{eq.trace_definition}, each $\varepsilon$-nonincreasing $\vec{g} \in \mathbb{D}^s(f)$, $k\in \mathbb{N}_0$, and $x\in\mathbb{R}^n$,
    \begin{equation}
    \label{eq.pointwise_estimation_direct}
        \widetilde{\mathcal{E}}_{\fm_k, \sigma}(\Tr f,Q_k(x))
        \le
        C2^{-ks}
        \Bigl(
            \fint\limits_{10Q_k(x)}
            \bigl(g_{k-3}(y)\bigr)^r\,dy
        \Bigr)^{1/r}.
    \end{equation}
\end{Prop}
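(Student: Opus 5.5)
The plan is to dominate the local approximation error by the deviation of the trace from a single fixed constant, the Lebesgue median $\operatorname{med}(f,Q_k(x))$, and then to estimate that deviation pointwise on $E$ through Lemma~\ref{Lm.pointw_est}. If $Q_k(x)\cap E=\emptyset$, the left-hand side of \eqref{eq.pointwise_estimation_direct} vanishes and there is nothing to prove. Otherwise $\widetilde{\mathcal{E}}_{\fm_k,\sigma}(\Tr f,Q_k(x))=\mathcal{E}_{\fm_k,\sigma}(\Tr f,2Q_k(x))$. Testing the infimum with $c:=\operatorname{med}(f,Q_k(x))$ gives
\begin{equation*}
\widetilde{\mathcal{E}}_{\fm_k,\sigma}(\Tr f,Q_k(x))^\sigma\le \fint\limits_{2Q_k(x)}|\Tr f(y)-\operatorname{med}(f,Q_k(x))|^\sigma\,d\fm_k(y).
\end{equation*}
Since $\fm_k=\gamma_k\fm_0\ll\fm_0$, it suffices to control the integrand at $\fm_0$-a.e.\ $y\in E$. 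At every $y$ where \eqref{eq.trace_definition} holds, $y$ is a Lebesgue point of $f$ with value $\Tr f(y)$. This is precisely the property used in the proof of Lemma~\ref{Lm.pointw_est}, namely that the medians converge via \eqref{eq.median_at_lebesgue_points}. So Lemma~\ref{Lm.pointw_est} applies with $c=2$, hence $\underline{k}=3$, and with exponent $t:=r$:
\begin{equation*}
|\Tr f(y)-\operatorname{med}(f,Q_k(x))|\lesssim \sum_{j\ge k-3}2^{-js}\bigl(A_j g_j^r(y)\bigr)^{1/r}.
\end{equation*}

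Next I reduce every gradient to the single function $g_{k-3}$. By $\varepsilon$-nonincreasingness, $g_j\le 2^{(j-k+3)\varepsilon}g_{k-3}$ for $j\ge k-3$. To handle the sum inside the $\sigma$-th power, I use $\ell_\sigma\hookrightarrow\ell_1$ if $\sigma\le1$ and Minkowski if $\sigma>1$. For each $j$ this leaves the quantity
\begin{equation*}
I_j:=\fint_{2Q_k(x)}\bigl(A_j g_{k-3}^r\bigr)^{\sigma/r}\,d\fm_k .
\end{equation*}
Because $\sigma\le r$, Jensen's inequality gives $I_j\le\bigl(\fint_{2Q_k(x)}A_j g_{k-3}^r\,d\fm_k\bigr)^{\sigma/r}$. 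I then expand $A_j$ and apply Fubini:
\begin{equation*}
\int\limits_{2Q_k(x)}A_jg^r\,d\fm_k=2^{jn-n}\int\limits_{10Q_k(x)}g(z)^r\,\fm_k\bigl(Q_j(z)\cap 2Q_k(x)\bigr)\,dz .
\end{equation*}
Here the inclusion $Q_j(y)\subset 10Q_k(x)$ holds because $j\ge k-3$.

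Two measure estimates close this step. From above, Corollary~\ref{Ca.M1_inflated} (applied at level $k$, with the constant depending on $2^{3}$ when $k-3\le j<k$) combined with \ref{cond:M1} for $j\ge k$ yields $\fm_k(Q_j(z))\lesssim 2^{-jd}$. From below, I pick $x'\in Q_k(x)\cap E$, note $Q_k(x')\subset 2Q_k(x)$, and apply \ref{cond:M2} to get $\fm_k(2Q_k(x))\gtrsim 2^{-kd}$. Together these give
\begin{equation*}
\fint_{2Q_k(x)}A_jg_{k-3}^r\,d\fm_k\lesssim 2^{(j-k)\theta}\fint_{10Q_k(x)}g_{k-3}^r\,dz .
\end{equation*}
Consequently the $j$-th term is bounded by $2^{-ks}\,2^{-(j-k)(s-\varepsilon-\theta/r)}\bigl(\fint_{10Q_k(x)}g_{k-3}^r\bigr)^{1/r}$. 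Summing the geometric series, which converges because $s-\varepsilon>\theta/r$, yields \eqref{eq.pointwise_estimation_direct}.

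The delicate points are the two measure bounds in the previous step. The upper bound for $\fm_k(Q_j(z))$ when $j<k$ must come from Corollary~\ref{Ca.M1_inflated}, with only finitely many inflation factors involved. The lower bound uses that $Q_k(x)$ meets $E$, and for small $k$ requires $2^{-k}\le1$, which holds since $k\in\mathbb{N}_0$. Beyond these, I must make sure that trace points are legitimate inputs for Lemma~\ref{Lm.pointw_est}. This requires checking that its proof uses only the convergence of medians at $y$, which follows from \eqref{eq.trace_definition} together with Lemma~\ref{Lm.meadian_average_relation}.
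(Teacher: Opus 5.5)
Your proposal is correct and follows essentially the same argument as the paper. You test the infimum with $\operatorname{med}(f,Q_k(x))$ and apply Lemma~\ref{Lm.pointw_est} with $c=2$ and $t=r$ at $\fm_0$-a.e.\ trace point. You then reduce to $g_{k-3}$ by $\varepsilon$-monotonicity and close with Jensen ($\sigma\le r$), Fubini via $Q_j(y)\subset 10Q_k(x)$, the bounds $\fm_k(Q_j(z))\lesssim 2^{-jd}$ and $\fm_k(2Q_k(x))\gtrsim 2^{-kd}$, and the geometric series from $s-\varepsilon>\theta/r$. The only difference is cosmetic: you apply Minkowski or $\sigma$-subadditivity first (splitting on $\sigma\le 1$ versus $\sigma>1$) and Jensen second, whereas the paper applies Jensen first and then Minkowski or subadditivity at exponent $r$ (splitting on $r\ge1$ versus $r<1$).
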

\begin{proof}
    Fix $k\in\mathbb{N}_0$ and $x\in\mathbb{R}^n$, and put
    $Q:=Q_k(x)$. If $Q\cap\operatorname{supp}\fm_k=\emptyset$, then
    \eqref{eq.pointwise_estimation_direct} follows directly from the
    definition of the modified local approximation. We may therefore
    assume that $Q\cap\operatorname{supp}\fm_k\neq\emptyset$. By the definition of the trace \eqref{eq.trace_definition}, for $\fm_0$-a.e. $y\in E$
\begin{equation}
    \label{eq.pointwise_estimation_direct0}
    \lim_{r\to 0}\fint\limits_{Q_r(y)}|f(z)-\Tr f(y)|dz = 0.
\end{equation}
Redefining $f$ on $E$, if necessary, by $f:=\Tr f$, we may therefore regard such points as Lebesgue points of $f$. Moreover, since $\vec{g}$ is $\varepsilon$-nonincreasing,
    $g_j\le 2^{(j-k+3)\varepsilon}g_{k-3}$ for every $j\ge k-3$. Consequently, Lemma~\ref{Lm.pointw_est}, applied with $c=2$ and $t=r$, gives
    \begin{equation}
    \label{eq.pointwise_estimation_direct1}
    \begin{split}
        \widetilde{\mathcal{E}}_{\fm_k, \sigma}(\Tr f, Q)^{\sigma}& \le \fint\limits_{2Q}|\Tr f(y)-\operatorname{med}(f, Q_k(x))|^{\sigma}d\fm_k(y)\\& \lesssim 2^{-k\varepsilon}\fint\limits_{2Q}\Bigl(\sum_{j=k-3}^{\infty}2^{-j(s-\varepsilon)}\bigl(A_{j}g_{k-3}^r(y) \bigr)^{1/r} \Bigr)^{\sigma}d\fm_k(y).
    \end{split}
    \end{equation}
    Since $r\ge \sigma$, it follows by Jensen's inequality and the preceding estimate that
    \begin{equation}
        \label{eq.pointwise_estimation_direct2}
        \widetilde{\mathcal{E}}_{\fm_k, \sigma}(\Tr f, Q) \lesssim 2^{-k\varepsilon}\Bigl(\fint\limits_{2Q}\Bigl(\sum_{j=k-3}^{\infty}2^{-j(s-\varepsilon)}\bigl(A_{j}g_{k-3}^r(y) \bigr)^{1/r} \Bigr)^{r}d\fm_k(y)\Bigr)^{1/r}.
    \end{equation}
    \par
    First, assume that $r\ge 1$. Then Minkowski's inequality gives
    \begin{equation}
        \label{eq.pointwise_estimation_direct3}
        \widetilde{\mathcal{E}}_{\fm_k, \sigma}(\Tr f, Q) \lesssim 2^{-k\varepsilon}\sum_{j=k-3}^{\infty}2^{-j(s-\varepsilon)}\Bigl(\fint\limits_{2Q}A_{j}g_{k-3}^r(y) d\fm_k(y)\Bigr)^{1/r}.
    \end{equation}
    By Lemma~\ref{Lm.relaxed_doubling_property} and properties~\ref{cond:M1}--\ref{cond:M2} of $\{\fm_k\}$, we obtain $\fm_k(2Q)\approx 2^{-kd}$. If $y\in2Q$ and $j\ge k-3$, then $Q_j(y)\subset10Q$. Therefore, property~\ref{cond:M1}, Lemma~\ref{Lm.relaxed_doubling_property}, and  Fubini's theorem yield, for each $j\ge k-3$
    \begin{equation}
        \label{eq.pointwise_estimation_direct4}
        \int\limits_{2Q}A_{j}g_{k-3}^r(y) d\fm_k(y) \lesssim 2^{j(n-d)} \int\limits_{10Q}g_{k-3}^r(z)dz.
    \end{equation}
    Since $s-\varepsilon > \frac{n-d}{r}$, it follows by \eqref{eq.pointwise_estimation_direct3} and \eqref{eq.pointwise_estimation_direct4} that
    \begin{equation}
        \label{eq.pointwise_estimation_direct5}
        \widetilde{\mathcal{E}}_{\fm_k, \sigma}(\Tr f, Q) \lesssim 2^{-k\varepsilon+k\frac{d}{r}-k(s-\varepsilon-\frac{n-d}{r})}\Bigl( \int\limits_{10Q}g_{k-3}^r(z)dz\Bigr)^{1/r} \lesssim 2^{-ks}\Bigl( \fint\limits_{10Q}g_{k-3}^r(z)dz\Bigr)^{1/r}.
    \end{equation}
    \par
    If $r< 1$, then, by \eqref{eq.pointwise_estimation_direct2} and the subadditivity, we obtain
    \begin{equation}
        \label{eq.pointwise_estimation_direct6}
        \widetilde{\mathcal{E}}_{\fm_k, \sigma}(\Tr f, Q) \lesssim 2^{-k\varepsilon}\Bigl(\sum_{j=k-3}^{\infty}2^{-j(s-\varepsilon)r}\fint\limits_{2Q}A_{j}g_{k-3}^r(y) d\fm_k(y)\Bigr)^{1/r}.
    \end{equation}
    By Fubini's theorem and properties of $\{\fm_k\}$, we still have \eqref{eq.pointwise_estimation_direct4}. As a result, we obtain \eqref{eq.pointwise_estimation_direct}.
    The proof is complete.
\end{proof}
We are now ready to prove the \emph{first main result} of this section. We recall the notation $\theta = n-d$ and \eqref{eq.besov_norm_on_thick_set}.
\begin{Th}
\label{Th.main_direct_Besov}
Let $\sigma \in (0, \infty)$ be such that $\sigma\le p$.
    There exists a constant $C>0$ such that for every
    $f\in B^s_{p,q}(\mathbb{R}^n)$,
    \begin{equation}
    \label{eq.direct_estimate_main}
        \|\Tr f\|_{\mathfrak{B}^{s-\theta/p}_{p,q, \sigma}(E)}
        \le
        C\|f\|_{B^s_{p,q}(\mathbb{R}^n)}.
    \end{equation}
\end{Th}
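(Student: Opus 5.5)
The plan is to handle the seminorm and the $L_p(\fm_0)$ part of $\|\Tr f\|_{\mathfrak{B}^{s-\theta/p}_{p,q,\sigma}(E)}$ separately. For the seminorm, Proposition~\ref{Prop.pointwise_estimation_direct} is combined with the $\varepsilon$-nonincreasing gradients from Lemma~\ref{Lm.monotonic_fractional_gradient}. Fix $f\in B^s_{p,q}(\mathbb{R}^n)$ and take its sharp representative $\bar f$, so that $f$ has a trace in the sense of \eqref{eq.trace_definition}. Since $s>\frac{\theta}{p}$, choose $\varepsilon\in(0,s)$ with $s-\varepsilon>\frac{\theta}{p}$, and set $r:=p$ when $p<\infty$. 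When $p=\infty$ we have $\theta=0$, and we take any finite $r\ge\sigma$ with $s-\varepsilon>0$. In both cases $\sigma\le r$. Given $\vec g\in\mathbb{D}^s(f)$ with $\|\vec g\|_{\ell_q(L_p)}\le 2\|f\|_{b^s_{p,q}}$, Lemma~\ref{Lm.monotonic_fractional_gradient} replaces it by an $\varepsilon$-nonincreasing $\vec h\in\mathbb{D}^s(f)$ with comparable $\ell_q(L_p)$ norm.

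Proposition~\ref{Prop.pointwise_estimation_direct} then gives, for every $k\ge 0$ and $x\in\mathbb{R}^n$,
\begin{equation*}
2^{ks}\widetilde{\mathcal E}_{\fm_k,\sigma}(\Tr f,Q_k(x))\lesssim \Bigl(\fint_{10Q_k(x)}h_{k-3}^r\,dy\Bigr)^{1/r}.
\end{equation*}
For $p<\infty$ and $r=p$, take $L_p$ norms in $x$ and use Fubini: $\int\fint_{10Q_k(x)}h_{k-3}^p\,dy\,dx=\|h_{k-3}\|_{L_p}^p$. This yields $2^{ks}\|\widetilde{\mathcal E}_{\fm_k,\sigma}(\Tr f,Q_k(\cdot))\|_{L_p}\lesssim\|h_{k-3}\|_{L_p}$. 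For $p=\infty$ the right-hand side is trivially bounded by $\|h_{k-3}\|_{L_\infty}$. Taking the $\ell_q$ norm over $k\ge0$, where the shift by $3$ is harmless, gives $\|\Tr f\|_{\mathfrak b}\lesssim\|\vec h\|_{\ell_q(L_p)}\lesssim\|f\|_{B^s_{p,q}}$. No maximal function is needed here, because Fubini with an average of fixed scale suffices.

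For the $L_p(\fm_0)$ part, the plan is to bound $|\Tr f(y)|\le|\Tr f(y)-\operatorname{med}(f,Q_0(y))|+|\operatorname{med}(f,Q_0(y))|$ for $\fm_0$-a.e.\ $y\in E$. The second term is controlled by $\fint_{Q_0(y)}|f|$ through Lemma~\ref{Lm.meadian_average_relation}. Its $L_p(\fm_0)$ norm is at most $\|f\|_{L_p}$ by Jensen, Fubini, and the uniform local bound $\fm_0(Q_0(z))\lesssim1$ from Corollary~\ref{Ca.M1_inflated}. For the first term, apply Lemma~\ref{Lm.pointw_est} with $k=0$, $c=1$, $t=r$ (using $y$ as a Lebesgue point of $f$ after redefining $f$ on $E$, as in the proof of Proposition~\ref{Prop.pointwise_estimation_direct}). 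This gives the bound $\sum_{j\ge-2}2^{-js}(A_jh_j^r(y))^{1/r}$. By Minkowski's inequality, for $r=p\ge1$, this reduces to estimating $\|(A_jh_j^p)^{1/p}\|_{L_p(\fm_0)}$. By Fubini and \ref{cond:M1}--\ref{cond:M3}, this quantity is $\lesssim 2^{j\theta/p}\|h_j\|_{L_p}$; the only subtlety is passing from $\fm_0$ to $\fm_j$, which costs no loss since $\fm_0(Q_j(z))\lesssim 2^{-jd}$. The factor $\gamma$ behaves well here: by \ref{cond:M3} with $k=0$ we have $\gamma_0\le C_3\gamma_j$, so $\fm_0\lesssim\fm_j$ and \ref{cond:M1} for $\fm_j$ applies. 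The resulting series $\sum_j2^{-j(s-\theta/p)}\|h_j\|_{L_p}$ is bounded by $\|\vec h\|_{\ell_q(L_p)}$ via Hölder, or via $\ell_q\subset\ell_1$ when $q\le1$, exactly as in Lemma~\ref{Lm.besov_and_lizorkin_triebel_rapid_convergence}. The negative indices $j\in\{-2,-1\}$ are finitely many and harmless. The case $p=\infty$ is direct.

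The step requiring the most care is this $L_p(\fm_0)$ estimate, in particular justifying the comparison $\fm_0\lesssim\fm_j$ and making sure the exponent $s-\theta/p>0$ yields summability. The seminorm estimate is essentially immediate from Proposition~\ref{Prop.pointwise_estimation_direct}.
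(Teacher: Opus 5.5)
Your argument follows the paper's proof essentially step for step. For the seminorm you use Proposition~\ref{Prop.pointwise_estimation_direct} plus Fubini. For the $L_p(\fm_0)$ part you use the median telescoping of Lemma~\ref{Lm.pointw_est} at scale $Q_0$, then Fubini, \ref{cond:M1}, and the rapid-convergence summation. Taking $r=p$ instead of some $r\in[\sigma,p]$ only removes the Jensen step.

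One claim is false, however, and needs correcting. For $p=\infty$ the hypothesis $s>\theta/p$ reads $s>0$ and imposes nothing on $\theta=n-d$, so $\theta$ need not vanish. Proposition~\ref{Prop.pointwise_estimation_direct} still requires $s-\varepsilon>\theta/r$, so ``any finite $r\ge\sigma$'' is not admissible when $\theta>0$. You must take a finite $r\ge\sigma$ large enough that $\theta/r<s-\varepsilon$; this is what the paper's choice of $r$ amounts to when $p=\infty$. With that $r$, the bound $\bigl(\fint_{10Q_k(x)}h_{k-3}^r\,dy\bigr)^{1/r}\le\|h_{k-3}\|_{L_\infty(\mathbb{R}^n)}$ finishes that case as you say. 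The $L_\infty(\fm_0)$ estimate is unaffected, since Lemma~\ref{Lm.pointw_est} needs no relation between $t$ and $\theta$.

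Finally, the step you single out as delicate is immediate. Your detour $\fm_0\lesssim\fm_j$ through \ref{cond:M3} is correct but unnecessary, because \ref{cond:M1} with $k=0$ already gives $\fm_0(Q_r(z))\le C_1r^d$ for every $r\in(0,1]$.
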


\begin{proof}
    Fix $f\in B^s_{p,q}(\mathbb{R}^n)$ and choose
    $\vec{g}\in\mathbb{D}^s(f)$ such that
    \begin{equation}
    \label{eq.choice_of_fractional_gradient_direct}
        \|\vec{g}\|_{\ell_q(L_p(\mathbb{R}^n))}
        \le
        2\|f\|_{B^s_{p,q}(\mathbb{R}^n)}.
    \end{equation}
    Choose $\varepsilon \in (0, s-\frac{\theta}{p})$.
    By Lemma~\ref{Lm.monotonic_fractional_gradient}, there exists an
    $\varepsilon$-nonincreasing fractional gradient
    $\vec{h}\in\mathbb{D}^s(f)$ such that
    \begin{equation}
    \label{eq.monotonic_gradient_norm_direct}
        \|\vec{h}\|_{\ell_q(L_p(\mathbb{R}^n))}
        \lesssim
        \|\vec{g}\|_{\ell_q(L_p(\mathbb{R}^n))}
        \lesssim
        \|f\|_{B^s_{p,q}(\mathbb{R}^n)}.
    \end{equation}
    \par
    Since $s-\varepsilon >\frac{\theta}{p}$, there is $r\in (0, \infty)$ such that $\sigma \le r\le p$ and $s-\varepsilon > \frac{\theta}{r}$. Therefore,
    by Proposition~\ref{Prop.pointwise_estimation_direct}, for every $k\in\mathbb{N}_0$ and
    $x\in\mathbb{R}^n$, we obtain
    \begin{equation}
    \label{eq.direct_seminorm_pointwise_bound}
        2^{ks}
        \widetilde{\mathcal{E}}_{\fm_k, \sigma}
        \bigl(\Tr f,Q_k(x)\bigr)
        \lesssim
        \Bigl(
            \fint\limits_{10Q_k(x)}
            \bigl(h_{k-3}(y)\bigr)^r\,dy
        \Bigr)^{1/r}.
    \end{equation}
    Assume first that $p<\infty$. Since $r\le p$, Jensen's inequality and Fubini's theorem then imply
    \begin{equation}
    \label{eq.direct_seminorm_Lp_bound}
        \begin{aligned}
        \left\|
            2^{ks}
            \widetilde{\mathcal{E}}_{\fm_k, \sigma}
            \bigl(\Tr f,Q_k(\cdot)\bigr)
        \right\|_{L_p(\mathbb{R}^n)}^p\lesssim
        \int\limits_{\mathbb{R}^n}
        \fint\limits_{10Q_k(x)}
        \bigl(h_{k-3}(y)\bigr)^p\,dy\,dx
        \lesssim
        \|h_{k-3}\|_{L_p(\mathbb{R}^n)}^p.
        \end{aligned}
    \end{equation}
    For $p=\infty$, the latter estimate is immediate. Taking the $\ell_q$ quasi-norm with respect to $k\in\mathbb{N}_0$ and
    using the definition of the Besov functional~\eqref{eq.besov_seminorm_on_thick_set} on $E$, we obtain
    \begin{equation}
    \label{eq.direct_estimate1}
        \|\Tr f\|_{\mathfrak{b}^{s-\theta/p}_{p,q, \sigma}(E)}
        \lesssim
        \|\vec{h}\|_{\ell_q(L_p(\mathbb{R}^n))}
        \lesssim
        \|f\|_{B^s_{p,q}(\mathbb{R}^n)}.
    \end{equation}
    \par
    It remains to estimate the $L_p(\fm_0)$-norm of the trace. For
    $\fm_0$-almost every $x\in E$, we have
    \begin{equation}
    \label{eq.trace_telescoping_series_Lp}
        \Tr f(x)
        =
        \operatorname{med}(f, Q_0(x))
        +
        \sum_{k=0}^{\infty}
        \left(
            \operatorname{med}(f, Q_{k+1}(x)) - \operatorname{med}(f, Q_k(x))
        \right).
    \end{equation}
    Arguing as in the proof of
    Lemma~\ref{Lm.pointw_est} with $t=1$, for every $k\in\mathbb{N}_0$ we
    obtain,
    \begin{equation}
    \label{eq.consecutive_averages_trace_bound}
        \left|
            \operatorname{med}(f, Q_{k+1}(x)) - \operatorname{med}(f, Q_k(x))
        \right|
        \lesssim
        2^{-ks}
        \fint\limits_{Q_k(x)}
        h_{k-1}(y)dy.
    \end{equation}
    Furthermore, Lemma~\ref{Lm.meadian_average_relation}, applied with $c=0$, gives
    \begin{equation}
        \label{eq.low_frequency_term_direct}
        |\operatorname{med}(f, Q_0(x))| \lesssim \fint\limits_{Q_0(x)}|f(y)|dy
    \end{equation}
    Combining \eqref{eq.trace_telescoping_series_Lp},
    \eqref{eq.consecutive_averages_trace_bound}, and \eqref{eq.low_frequency_term_direct}, we obtain
    \begin{equation}
    \label{eq.trace_pointwise_Lp_reduction}
        |\Tr f(x)|
        \lesssim
        A_{0}|f|(x)
        +
        \sum_{k=-1}^{\infty}
        2^{-ks}
        \fint\limits_{Q_{k+1}(x)}
        h_k(y)\,dy
    \end{equation}
    for $\fm_0$-almost every $x\in E$.
    \par
    We first estimate the initial average. First assume that $p<\infty$. Then, by Jensen's inequality,
    Fubini's theorem, and property~\ref{cond:M1} of $\{\fm_k\}$,
    \begin{equation}
    \label{eq.initial_average_trace_bound}
        \begin{aligned}
        \|A_{0}|f|\|_{L_p(\fm_0)}^p
        &\le
        \int\limits_{\mathbb{R}^n}
        \fint\limits_{Q_0(x)}
        |f(y)|^p\,dy\,d\fm_0(x)
        \\
        &=
        \frac{1}{|Q_0|}
        \int\limits_{\mathbb{R}^n}
        |f(y)|^p\fm_0(Q_0(y))\,dy
        \lesssim
        \|f\|_{L_p(\mathbb{R}^n)}^p.
        \end{aligned}
    \end{equation}
    Next, for each $k\ge -1$, Jensen's inequality, Fubini's
    theorem, and property~\ref{cond:M1} of $\{\fm_k\}$, give
    \begin{equation}
    \label{eq.averaging_operator_measure_bound}
        \begin{aligned}
        &
        \int\limits_{\mathbb{R}^n}
        \Bigl(
            \fint\limits_{Q_{k+1}(x)}
            h_k(y)\,dy
        \Bigr)^p
        d\fm_0(x)
        \\
        &\qquad\le
        \frac{1}{|Q_{k+1}|}
        \int\limits_{\mathbb{R}^n}
        \bigl(h_k(y)\bigr)^p
        \fm_0(Q_{k+1}(y))\,dy
        \lesssim
        2^{k\theta}
        \|h_k\|_{L_p(\mathbb{R}^n)}^p.
        \end{aligned}
    \end{equation}
    For $p=\infty$, these estimates are immediate. Therefore, Minkowski's inequality,
    \eqref{eq.trace_pointwise_Lp_reduction},
    \eqref{eq.initial_average_trace_bound}, and
    \eqref{eq.averaging_operator_measure_bound} yield
    \begin{equation}
    \label{eq.trace_Lp_weighted_sequence_bound}
        \|\Tr f\|_{L_p(\fm_0)}
        \lesssim
        \|f\|_{L_p(\mathbb{R}^n)}
        +
        \sum_{j=-1}^{\infty}
        2^{-j(s-\frac{\theta}{p})}
        \|h_j\|_{L_p(\mathbb{R}^n)}.
    \end{equation}
    Arguing as in Lemma~\ref{Lm.besov_and_lizorkin_triebel_rapid_convergence}, we obtain
    \begin{equation}
    \label{eq.weighted_sequence_bound_q_greater_one}
        \sum_{j=-1}^{\infty}
        2^{-j(s-\frac{\theta}{p})}
        \|h_j\|_{L_p(\mathbb{R}^n)}
        \lesssim
        \|\vec{h}\|_{\ell_q(L_p(\mathbb{R}^n))}.
    \end{equation}
    As a result,
    \begin{equation}
    \label{eq.direct_estimate2}
        \|\Tr f\|_{L_p(\fm_0)}
        \lesssim
        \|f\|_{L_p(\mathbb{R}^n)}
        +
        \|\vec{h}\|_{\ell_q(L_p(\mathbb{R}^n))}
        \lesssim
        \|f\|_{B^s_{p,q}(\mathbb{R}^n)}.
    \end{equation}

    Combining \eqref{eq.direct_estimate1} and
    \eqref{eq.direct_estimate2} proves
    \eqref{eq.direct_estimate_main}.
\end{proof}
We recall the notation \eqref{eq.lizorkin_triebel_norm_on_thick_set}.
The \emph{second main result} of this section reads as follows.
\begin{Th}
\label{Th.main_direct_Lizorkin_Triebel} Let $p<\infty$, $q\ge p$, and let $\sigma \in (0, p)$. There exists a constant $C>0$ such that for every
    $f\in F^s_{p,q}(\mathbb{R}^n)$,
    \begin{equation}
    \label{eq.direct_estimate_main_Lizorkin_Triebel}
        \|\Tr f\|_{\mathfrak{F}^{s-\theta/p}_{p,q, \sigma}(E)}
        \le
        C\|f\|_{F^s_{p,q}(\mathbb{R}^n)}.
    \end{equation}
\end{Th}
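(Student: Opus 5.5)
The plan is to mirror the proof of Theorem~\ref{Th.main_direct_Besov}, replacing the $\ell_q(L_p)$ bookkeeping by $L_p(\ell_q)$ bookkeeping. The $L_p(\fm_0)$-part of the trace norm needs no new work. Since $q\ge p\ge 1$ we have $F^s_{p,q}(\mathbb{R}^n)\hookrightarrow B^s_{p,\infty}(\mathbb{R}^n)$ continuously: indeed $\|\{\|g_k\|_{L_p}\}\|_{\ell_\infty}\le\|\vec g\|_{L_p(\ell_q)}$. Moreover, the proof of \eqref{eq.direct_estimate2} works for every $q\in(0,\infty]$. Hence
\[
\|\Tr f\|_{L_p(\fm_0)}\lesssim\|f\|_{B^s_{p,\infty}(\mathbb{R}^n)}\lesssim\|f\|_{F^s_{p,q}(\mathbb{R}^n)} .
\]

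For the seminorm, fix $f$ and choose $\vec g\in\mathbb{D}^s(f)$ with $\|\vec g\|_{L_p(\mathbb{R}^n,\ell_q)}\le2\|f\|_{F^s_{p,q}(\mathbb{R}^n)}$. Pick $\varepsilon\in(0,s-\theta/p)$, and use Lemma~\ref{Lm.monotonic_fractional_gradient}, specifically the pointwise estimate \eqref{eq.monotonic_fractional_gradient_triebel}, to pass to an $\varepsilon$-nonincreasing $\vec h\in\mathbb{D}^s(f)$ with $\|\vec h\|_{L_p(\ell_q)}\lesssim\|f\|_{F^s_{p,q}}$. The key point is that, because $s-\varepsilon>\theta/p$ and $\sigma<p$, we can choose $r$ with $\sigma\le r<p$ and still $s-\varepsilon>\theta/r$. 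Proposition~\ref{Prop.pointwise_estimation_direct} then gives, for all $k\in\mathbb{N}_0$ and $x\in\mathbb{R}^n$,
\[
2^{ks}\widetilde{\mathcal{E}}_{\fm_k,\sigma}(\Tr f,Q_k(x))\lesssim \bigl(A_{k-\log_2 10}\,h_{k-3}^r(x)\bigr)^{1/r}\lesssim \bigl(M[h_{k-3}^r](x)\bigr)^{1/r},
\]
where $10Q_k(x)$ is a cube of radius $10\cdot 2^{-k}$, so the average is dominated by the maximal function.

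It then remains to take the $L_p(\ell_q)$ quasi-norm over $k\ge0$. Shifting the index by $3$ and writing $u_j:=h_j^r$, we get
\[
\|\Tr f\|_{\mathfrak f^{s-\theta/p}_{p,q,\sigma}(E)}\lesssim \Bigl\|\Bigl(\sum_{j\ge-3}(M[u_j])^{q/r}\Bigr)^{r/q}\Bigr\|_{L_{p/r}}^{1/r}.
\]
Since $p/r>1$ and $q/r\ge p/r>1$ (or $q=\infty$), Theorem~\ref{Th.Fefferman_Stein_inequality} applies with exponents $(p/r,q/r)$. This bounds the right-hand side by
\[
\bigl\|\{u_j\}\bigr\|_{L_{p/r}(\ell_{q/r})}^{1/r}=\|\{h_j\}_{j\ge-3}\|_{L_p(\ell_q)}\lesssim\|f\|_{F^s_{p,q}(\mathbb{R}^n)} .
\]
Combining this with the $L_p(\fm_0)$ estimate proves \eqref{eq.direct_estimate_main_Lizorkin_Triebel}.

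The only delicate step is ensuring that Fefferman--Stein is applicable. This requires both $r<p$ and $r<q$, and it is exactly where the hypotheses $\sigma<p$ and $q\ge p$ enter; the case $q<p$ would need a different (duality) argument. A minor check is that the index shift $k\mapsto k-3$ for $k\in\{0,1,2\}$ uses $h_{-3},h_{-2},h_{-1}$. These are legitimate components of the full gradient $\vec h$ indexed by $\mathbb{Z}$, so they are already included in $\|\vec h\|_{L_p(\ell_q)}$.
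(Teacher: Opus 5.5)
Your proof is correct and follows the paper's argument essentially step by step. It uses an $\varepsilon$-nonincreasing gradient via Lemma~\ref{Lm.monotonic_fractional_gradient}, a choice $\sigma\le r<p$ with $s-\varepsilon>\theta/r$, Proposition~\ref{Prop.pointwise_estimation_direct} with domination by $(M[h_{k-3}^r])^{1/r}$, and the Fefferman--Stein inequality with exponents $(p/r,q/r)$, exactly as in the paper. The only cosmetic difference is in the $L_p(\fm_0)$ bound: you pass through $F^s_{p,q}(\mathbb{R}^n)\hookrightarrow B^s_{p,\infty}(\mathbb{R}^n)$ and Theorem~\ref{Th.main_direct_Besov}, whereas the paper reruns that estimate and bounds $\sum_{j\ge-1}2^{-j(s-\theta/p)}\|h_j\|_{L_p(\mathbb{R}^n)}$ by $\|\vec h\|_{L_p(\mathbb{R}^n,\ell_q)}$ directly, which is the same computation.
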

\begin{proof}
    Fix $f\in F^s_{p,q}(\mathbb{R}^n)$ and choose
    $\vec{g}\in\mathbb{D}^s(f)$ such that
    \begin{equation}
    \label{eq.direct_estimate_main_Lizorkin_Triebel1}
        \|\vec{g}\|_{L_p(\mathbb{R}^n, \ell_q)}
        \le
        2\|f\|_{F^s_{p,q}(\mathbb{R}^n)}.
    \end{equation}
    Choose $\varepsilon \in (0, s-\frac{\theta}{p})$. By Lemma~\ref{Lm.monotonic_fractional_gradient}, there exists an
    $\varepsilon$-nonincreasing fractional gradient
    $\vec{h}\in\mathbb{D}^s(f)$ such that
    \begin{equation}
    \label{eq.direct_estimate_main_Lizorkin_Triebel2}
        \|\vec{h}\|_{L_p(\mathbb{R}^n, \ell_q)}
        \lesssim
        \|\vec{g}\|_{L_p(\mathbb{R}^n, \ell_q)}
        \lesssim
        \|f\|_{F^s_{p,q}(\mathbb{R}^n)}.
    \end{equation}
    \par
    Fix $r\in [\sigma, p)$ such that $s-\varepsilon>\frac{\theta}{r}$.
    Proposition~\ref{Prop.pointwise_estimation_direct} gives, for every $k\in\mathbb{N}_0$ and every
    $x\in\mathbb{R}^n$,
    \begin{equation}
    \label{eq.direct_estimate_main_Lizorkin_Triebel3}
        2^{ks}
        \widetilde{\mathcal{E}}_{\fm_k, \sigma}
        \bigl(\Tr f,Q_k(x)\bigr)
        \lesssim
        \Bigl(
            \fint\limits_{10Q_k(x)}
            \bigl(h_{k-3}(y)\bigr)^r\,dy
        \Bigr)^{1/r} \le (M[h_{k-3}^r](x))^{1/r}.
    \end{equation}
    Since $r<p\le q$, it follows by the Fefferman--Stein inequality (Theorem~\ref{Th.Fefferman_Stein_inequality}) that
    \begin{equation}
    \label{eq.direct_estimate_main_Lizorkin_Triebel4}
        \begin{split}
        \left\|
            \{2^{ks}
            \widetilde{\mathcal{E}}_{\fm_k, \sigma}
            \bigl(\Tr f,Q_k(\cdot)\bigr)\}_{k\ge 0}
        \right\|_{L_p(\mathbb{R}^n, \ell_q)}\lesssim \left\|\{M[h_{k}^r]\}_{k\ge -3}\right\|_{L_{p/r}(\mathbb{R}^n, l_{q/r})}^{1/r} \\\lesssim \|\{h_k^r\}_{k\ge -3}\|_{L_{p/r}(\mathbb{R}^n, l_{q/r})}^{1/r} \le \|\vec{h}\|_{L_p(\mathbb{R}^n, \ell_q)} \lesssim \|f\|_{F^s_{p, q}(\mathbb{R}^n)}.
        \end{split}
    \end{equation}
    \par
    It remains to estimate the $L_p(\fm_0)$-norm of the trace. Arguing as in Theorem~\ref{Th.main_direct_Besov}, we obtain
    \begin{equation}
    \label{eq.direct_estimate_main_Lizorkin_Triebel5}
        \|\Tr f\|_{L_p(\fm_0)}
        \lesssim
        \|f\|_{L_p(\mathbb{R}^n)}
        +
        \sum_{j=-1}^{\infty}
        2^{-j(s-\frac{\theta}{p})}
        \|h_j\|_{L_p(\mathbb{R}^n)}
    \end{equation}
    Since $s-\frac{\theta}{p}>0$, it follows that
    \begin{equation}
    \label{eq.direct_estimate_main_Lizorkin_Triebel6}
        \begin{aligned}
        \|\Tr f\|_{L_p(\fm_0)}
        \lesssim
        \|f\|_{L_p(\mathbb{R}^n)}
        +
        \|\vec{h}\|_{L_p(\mathbb{R}^n,\ell_q)}
        \lesssim
        \|f\|_{F^s_{p,q}(\mathbb{R}^n)}.
        \end{aligned}
    \end{equation}
    Combining \eqref{eq.direct_estimate_main_Lizorkin_Triebel4} and
    \eqref{eq.direct_estimate_main_Lizorkin_Triebel6} completes the proof.
\end{proof}
\subsection{A duality argument}
In this subsection we develop the machinery needed for the direct trace estimate for Lizorkin--Triebel spaces $F^s_{p, q}(\mathbb{R}^n)$ when $q<p$. The difficulty when \(q<p\) is that the vector-valued maximal-function argument used above no longer gives the required estimate in the appropriate direction. We therefore dualize the \(L^{p/q}\)-norm of the \(q\)-th power of the trace functional. First, we define auxiliary measures and operators. More precisely, for each $k\in \mathbb{N}_0$, we let $\nu_k := 2^{-k\theta}\fm_k$. Given $k\ge 0$ and $m\in \mathbb{Z}$, we set
\begin{equation}
\label{eq.density_measure_definition}
    \mathcal{D}_{k, m}(x):=\frac{\nu_k(Q_{k+m}(x))}{\mathcal{L}^n(Q_{k+m}(x))}, \qquad x\in\mathbb{R}^n.
\end{equation}
Furthermore, we put
\begin{equation}
    \label{eq.modified_density_measure_definition}
    \mathcal{D}^*_{k, m}(x) := \sup_{l\ge k}\mathcal{D}_{l, m}(x), \qquad x\in\mathbb{R}^n.
\end{equation}
Given $m\in\mathbb{Z}$ and $\underline{k}\in \mathbb{N}_0$, we define, for each $H\in L_1^{\loc}(\mathbb{R}^n)$,
\begin{equation}
\label{eq.T_operator_definition}
    T_{m, \underline{k}}[H](x) := \sup_{k\ge 0}\mathcal{D}_{k, m}(x)A_{k-\underline{k}}|H|(x), \qquad x\in\mathbb{R}^n.
\end{equation}
Finally, for $m\in\mathbb{Z}$, $\underline{k}\in\mathbb{N}_0$, and $a\in (1, \infty)$, we set, for each $H\in L_1^{\loc}(\mathbb{R}^n)$,
\begin{equation}
    \label{eq.S_operator_definition}
    S^a_{m, \underline{k}}[H](x):=\sup_{k\ge 0}\mathcal{D}_{k, m}(x)\Bigl(A_{k-\underline{k}}|H|(x)\Bigr)^a, \qquad x\in\mathbb{R}^n.
\end{equation}
We prove that operators $S^a_{m, \underline{k}}: L_a(\mathbb{R}^n) \to L_1(\mathbb{R}^n)$ and $T_{m, \underline{k}}: L_a(\mathbb{R}^n)\to L_a(\mathbb{R}^n)$ are bounded. The argument is standard in spirit: it combines maximal-function estimates with Whitney decompositions of level sets of the Hardy--Littlewood maximal function. Since we have not found a convenient reference for the precise form needed here, we include the details.
We start with two simple properties of $\nu_k$ and $\mathcal{D}_{k, m}$.
\begin{Lm}
    \label{Lm.simple_properties}
    \begin{enumerate}
        \item There exists $C>0$ such that for each $k, j\in\mathbb{N}_0$, $\nu_{k+j}\le C\nu_k$.
        \item Given $m_0 \in\mathbb{Z}$, there exists $C>0$ such that for each $k\in \mathbb{N}_0$ and each $m\ge m_0$,
        \begin{equation}
            \label{eq.density_measure_global_estimate}
            \mathcal{D}_{k, m}(x)\le C 2^{m\theta}, \qquad \text{for all } x\in\mathbb{R}^n.
        \end{equation}
    \end{enumerate}
\end{Lm}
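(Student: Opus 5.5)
Both assertions follow directly from the defining properties \ref{cond:M1}--\ref{cond:M3} of a $d$-regular sequence. The plan is to derive (1) from the weight comparison in \ref{cond:M3}, and then to obtain (2) by splitting into a small-scale and a large-scale regime.

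For (1), fix $k,j\in\mathbb{N}_0$. By \ref{cond:M3} we have $\fm_k=\gamma_k\fm_0$ and $\fm_{k+j}=\gamma_{k+j}\fm_0$, together with the left-hand inequality $\frac{2^{(d-n)j}}{C_3}\gamma_{k+j}\le\gamma_k$ $\fm_0$-a.e. Since $\theta=n-d$, this reads $\gamma_{k+j}\le C_3 2^{j\theta}\gamma_k$. Integrating over an arbitrary Borel set $G$ against $\fm_0$ gives $\fm_{k+j}(G)\le C_3 2^{j\theta}\fm_k(G)$. Multiplying by $2^{-(k+j)\theta}$ then yields
$\nu_{k+j}(G)\le C_3\nu_k(G)$, which is (1) with $C=C_3$.

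For (2), fix $k\in\mathbb{N}_0$, $m\ge m_0$ and $x\in\mathbb{R}^n$, and note that $\mathcal{D}_{k,m}(x)\approx 2^{(k+m)n}2^{-k\theta}\fm_k(Q_{k+m}(x))$. I distinguish two cases according to the sign of $m$.

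In the first case, $m\ge 0$. Then the side $2^{-k-m}\le 2^{-k}$, so \ref{cond:M1} gives $\fm_k(Q_{k+m}(x))\le C_12^{-(k+m)d}$. Substituting, $\mathcal{D}_{k,m}(x)\lesssim 2^{(k+m)(n-d)}2^{-k\theta}=2^{m\theta}$.

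In the second case, $m_0\le m<0$. Here $Q_{k+m}(x)\subset 2^{-m_0}Q_k(x)$, so Corollary~\ref{Ca.M1_inflated} with $c=2^{-m_0}$ gives $\fm_k(Q_{k+m}(x))\le C2^{-kd}$. Hence $\mathcal{D}_{k,m}(x)\lesssim 2^{(k+m)n}2^{-kd}2^{-k\theta}=2^{mn}$. Since $m$ ranges over the finite set $\{m_0,\dots,-1\}$, we have $2^{mn}\le C(m_0)2^{m\theta}$, and the bound follows.

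A subtlety is that when $k+m<0$ the cube $Q_{k+m}$ is simply large. This is harmless: Corollary~\ref{Ca.M1_inflated} holds for all $x$ and any $c\ge1$, so the argument above needs no modification.

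The only mild obstacle is keeping the constant uniform in $m$. For $m\ge0$ the estimate is exact up to $C_1$, and for negative $m$ uniformity comes precisely from the lower bound $m\ge m_0$. This is why $C$ is allowed to depend on $m_0$.
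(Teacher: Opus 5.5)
Your proof is correct and follows the paper's route: you read off (1) from the weight comparison in \ref{cond:M3}, and you obtain (2) from the upper bound $\fm_k(Q_{k+m}(x))\lesssim 2^{-(k+m)d}$ via \ref{cond:M1} and Corollary~\ref{Ca.M1_inflated}. Your explicit split into $m\ge 0$ and $m_0\le m<0$ just spells out what the paper compresses into one line; in the second case, $2^{mn}\le 2^{m\theta}$ in fact holds with constant $1$.
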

\begin{proof}
    The first assertion follows directly from property~\ref{cond:M3} of $\{\fm_k\}$. Indeed,
    \begin{equation}
        \nu_{k+j}=2^{-(k+j)\theta}\gamma_{k+j}\fm_0 \lesssim 2^{-k\theta}\gamma_k\fm_0 = \nu_k.
    \end{equation}
    To prove the second assertion, we use Corollary~\ref{Ca.M1_inflated}. Since $m\ge m_0$, it follows that
    \begin{equation}
        \mathcal{D}_{k, m}(x) \lesssim \frac{2^{-k\theta}2^{-(k+m)d}}{2^{-(k+m)n}} = 2^{m\theta}.
    \end{equation}
    for all $x\in\mathbb{R}^n$.
\end{proof}
The following property of $\mathcal{D}^*_{k, m}$ is a local analogue of the weak-$(1, 1)$ estimate for the Hardy--Littlewood maximal function.
\begin{Lm}
    \label{Lm.modified_density_estimation}
    Given $m_0\in \mathbb{Z}$, there exists $C>0$ such that, for each $k\in \mathbb{N}_0$, $m\ge m_0$, $\lambda>0$, and $x\in \mathbb{R}^n$,
    \begin{equation}
        \label{eq.modified_density_estimation}
        \mathcal{L}^n(\{y\in Q_k(x): \mathcal{D}^*_{k, m}(y)>\lambda\})\le C\frac{\mathcal{L}^n(Q_k(x))}{\lambda}.
    \end{equation}
\end{Lm}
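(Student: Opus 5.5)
The plan is a Vitali-type covering argument on the level set, in the spirit of the usual proof of the weak-$(1,1)$ bound for the Hardy--Littlewood maximal function. The two structural inputs are the comparison $\nu_l\lesssim\nu_k$ for $l\ge k$ from Lemma~\ref{Lm.simple_properties}, and the upper bound $\fm_k(cQ_k(x))\lesssim 2^{-kd}$ from Corollary~\ref{Ca.M1_inflated}.

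Fix $k\in\mathbb{N}_0$, $m\ge m_0$, $\lambda>0$, $x\in\mathbb{R}^n$, and set $\Omega:=\{y\in Q_k(x):\mathcal{D}^*_{k,m}(y)>\lambda\}$. For each $y\in\Omega$ there is an index $l(y)\ge k$ with
\begin{equation*}
\nu_{l(y)}(Q_{l(y)+m}(y))>\lambda\,\mathcal{L}^n(Q_{l(y)+m}(y)).
\end{equation*}
By the first part of Lemma~\ref{Lm.simple_properties} this gives
\begin{equation*}
\nu_k(Q_{l(y)+m}(y))>\lambda\,\mathcal{L}^n(Q_{l(y)+m}(y))/C.
\end{equation*}
Hence every point of $\Omega$ is the center of a cube $Q_y:=Q_{l(y)+m}(y)$ that carries $\nu_k$-mass comparable to $\lambda$ times its volume.

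These cubes have half-side $2^{-l(y)-m}\le 2^{-k-m_0}$, so their sizes are uniformly bounded. Since their centers lie in $Q_k(x)$, they are all contained in $cQ_k(x)$ with $c:=1+2^{-m_0}$, a constant depending only on $m_0$. To avoid measurability issues with $\Omega$, I will estimate $\mathcal{L}^n(K)$ for an arbitrary compact $K\subset\Omega$, or equivalently use outer measure. I cover $K$ by finitely many cubes $Q_y$, apply the Vitali covering lemma to extract a disjoint subfamily $\{Q_{y_i}\}$ with $K\subset\bigcup_i 5Q_{y_i}$, and estimate
\begin{equation*}
\mathcal{L}^n(K)\le 5^n\sum_i\mathcal{L}^n(Q_{y_i})\le\frac{5^nC}{\lambda}\sum_i\nu_k(Q_{y_i})\le\frac{5^nC}{\lambda}\,\nu_k(cQ_k(x)).
\end{equation*}

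Finally, Corollary~\ref{Ca.M1_inflated}, which is valid for all $x\in\mathbb{R}^n$, gives
\begin{equation*}
\nu_k(cQ_k(x))=2^{-k\theta}\fm_k(cQ_k(x))\lesssim 2^{-k\theta}2^{-kd}=2^{-kn}\approx\mathcal{L}^n(Q_k(x)).
\end{equation*}
This yields \eqref{eq.modified_density_estimation} with a constant depending only on $m_0$, $n$ and the constants of $\{\fm_k\}$.

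The only delicate points are bookkeeping: checking that the inflation factor $c$ is uniform in $m\ge m_0$ (this is exactly why the lower bound $m_0$ is needed), and handling measurability of $\Omega$. For the latter I will rely on inner approximation by compact sets, or just work with outer Lebesgue measure throughout. I do not expect a genuine obstacle beyond this.
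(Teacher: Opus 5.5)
Your proposal is correct and follows essentially the same argument as the paper: a $5B$/Vitali covering of the level set by the cubes $Q_{l(y)+m}(y)$, the comparison $\nu_l\lesssim\nu_k$ for $l\ge k$, containment of all these cubes in a fixed enlargement $cQ_k(x)$ with $c$ depending only on $m_0$, and the bound from Corollary~\ref{Ca.M1_inflated}. Your compact-set and finite-Vitali treatment of measurability is only a harmless refinement of the paper's countable covering step.
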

\begin{proof}
    Assume that $\mathcal{D}^*_{k, m}(y)>\lambda$. Then there is $l(y)\ge k$ such that $\mathcal{D}_{l(y), m}(y)>\lambda$, that is,
    \begin{equation}
        \label{eq.modified_density_estimation1}
        \nu_{l(y)}(Q_{l(y)+m}(y))>\lambda \mathcal{L}^n(Q_{l(y)+m}(y)).
    \end{equation}
    Since the family $\{Q_{l(y)+m}(y)\}_{y\in Q_k(x)}$ forms a covering of $\{z\in Q_k(x): \mathcal{D}^*_{k, m}(z)>\lambda\}$, the \(5B\)-covering lemma yields a countable subfamily $\{Q_j:=Q_{l_j+m}(y_j)\}$ of pairwise disjoint cubes such that $\{z\in Q_k(x): \mathcal{D}^*_{k, m}(z)>\lambda\}\subset \bigcup_{j}5Q_j$. Furthermore, Lemma~\ref{Lm.simple_properties} guarantees that $\nu_l \lesssim \nu_k$ for all $l\ge k$. Since $m\ge m_0$, it follows that all the cubes $Q_j$ are contained in a fixed enlargement $cQ_k(x)$. Consequently, by Corollary~\ref{Ca.M1_inflated},
    \begin{equation}
        \lambda\sum_{j}\mathcal{L}^n(Q_j) < \sum_{j}\nu_{l_j}(Q_j)\lesssim \sum_{j}\nu_k(Q_j)\le \nu_k(cQ_k(x)) \lesssim2^{-k(n-d)}2^{-kd}=2^{-kn}.
    \end{equation}
    Thus, \eqref{eq.modified_density_estimation} follows immediately.
\end{proof}
\begin{Ca}
    \label{Ca.modified_density_estimation}
    Given $m_0\in \mathbb{Z}$, there exists $C>0$ such that, for each $k\in \mathbb{N}_0$, $m> m_0$, and $x\in \mathbb{R}^n$,
    \begin{equation}
        \label{eq.modefied_density_L_1_esstimation}
        \int\limits_{Q_k(x)}\mathcal{D}^*_{k, m}(y)dy \le C(m-m_0)\mathcal{L}^n(Q_k(x)).
    \end{equation}
\end{Ca}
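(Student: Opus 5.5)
The plan is to combine the layer-cake formula with the weak-type bound of Lemma~\ref{Lm.modified_density_estimation}. The key point is that $\mathcal{D}^*_{k,m}$ is also bounded pointwise by $C2^{m\theta}$, by Lemma~\ref{Lm.simple_properties}(2), since each $\mathcal{D}_{l,m}\le C2^{m\theta}$ uniformly in $l\ge k$. Fix $k$, $m>m_0$, and $x$, and write $Q:=Q_k(x)$. Then
\begin{equation*}
    \int\limits_{Q}\mathcal{D}^*_{k,m}(y)\,dy=\int\limits_0^{C2^{m\theta}}\mathcal{L}^n(\{y\in Q:\mathcal{D}^*_{k,m}(y)>\lambda\})\,d\lambda .
\end{equation*}
Split the $\lambda$-integral at a threshold $\lambda_0$, to be chosen. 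For $\lambda\le\lambda_0$, bound the integrand trivially by $\mathcal{L}^n(Q)$. For $\lambda\in(\lambda_0,C2^{m\theta}]$, use \eqref{eq.modified_density_estimation} to bound it by $C\mathcal{L}^n(Q)/\lambda$. This yields
\begin{equation*}
    \int\limits_{Q}\mathcal{D}^*_{k,m}\,dy\le \mathcal{L}^n(Q)\Bigl(\lambda_0+C\log\frac{C2^{m\theta}}{\lambda_0}\Bigr).
\end{equation*}

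The only delicate point is to obtain a bound linear in $m-m_0$ rather than in $m$, with a constant depending only on $m_0$. Choosing $\lambda_0:=1$ gives the bound $\mathcal{L}^n(Q)\bigl(1+C'+C\theta(m)\log 2\bigr)$. It remains to compare $m$ with $m-m_0$, and this is where one should check that the $\lambda_0$-choice does not destroy uniformity.

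If $m_0\ge0$, then $m\le m-m_0+m_0\le(1+m_0)(m-m_0)$ because $m-m_0\ge1$. Likewise $1+C'\le(1+C')(m-m_0)$. If $m_0<0$, one instead picks $\lambda_0:=2^{m_0\theta}$, so that $\log(C2^{m\theta}/\lambda_0)\approx(m-m_0)\theta+C$, and $\lambda_0\le1\le m-m_0$. In both cases the total bound is $\lesssim(m-m_0)\mathcal{L}^n(Q)$. The constants from Lemma~\ref{Lm.simple_properties} and Lemma~\ref{Lm.modified_density_estimation} depend only on $m_0$, which closes the argument.

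If $\theta=0$, the pointwise bound is simply a constant, and the estimate holds even without the logarithm. So no separate treatment is needed beyond noting that $\log$ of a bounded quantity is bounded.
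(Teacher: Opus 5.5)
Your proposal is correct and follows essentially the paper's argument. Both use the layer-cake formula, truncated at the pointwise ceiling $C2^{m\theta}$ from Lemma~\ref{Lm.simple_properties}, with a trivial bound for small $\lambda$ and the weak-type estimate of Lemma~\ref{Lm.modified_density_estimation} above the split. The only difference is bookkeeping: the paper enlarges the ceiling constant so that $c2^{m_0\theta}>1$ and always splits at $\lambda=1$, whereas you split at $\lambda_0=2^{m_0\theta}$ when $m_0<0$; either way the logarithm is $\lesssim m-m_0$.
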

\begin{proof}
    By the layer cake representation,
    \begin{equation}
        \label{eq.modefied_density_L_1_esstimation1}
        \int\limits_{Q_k(x)}\mathcal{D}^*_{k, m}(y)dy = \int\limits_0^{\infty}\mathcal{L}^n(\{y\in Q_k(x): \mathcal{D}^*_{k, m}(y)>\lambda\})d\lambda.
    \end{equation}
    For $\lambda \in (0, 1)$ we use the trivial estimate
    \begin{equation}
    \label{eq.modefied_density_L_1_esstimation2}
        \mathcal{L}^n(\{y\in Q_k(x): \mathcal{D}^*_{k, m}(y)>\lambda\}) \le \mathcal{L}^n(Q_k(x)).
    \end{equation}
    Moreover, by Lemma~\ref{Lm.simple_properties}, $\mathcal{D}_{k, m}^* \lesssim2^{m\theta}$ everywhere. Therefore, the upper limit of the integral in the right-hand side of \eqref{eq.modefied_density_L_1_esstimation1} can be restricted to $c2^{m\theta}$. Without loss of generality, we assume $c2^{m_0\theta}>1$. Consequently, by Lemma~\ref{Lm.modified_density_estimation},
    \begin{equation}
     \label{eq.modefied_density_L_1_esstimation4}
        \int\limits_1^{\infty}\mathcal{L}^n(\{y\in Q_k(x): \mathcal{D}^*_{k, m}(y)>\lambda\})d\lambda. \lesssim \int\limits_1^{c2^{m\theta}}\frac{\mathcal{L}^n(Q_k(x))}{\lambda}d\lambda \lesssim (m-m_0)\mathcal{L}^n(Q_k(x)).
    \end{equation}
    The proof is complete.
\end{proof}
\begin{Prop}
    \label{Prop.S_m_operator_bound} Given $m_0\in \mathbb{Z}$, $\underline{k}\in\mathbb{N}_0$, and $a\in (1, \infty)$, there exists $C>0$ such that for each $m> m_0$ and $H \in L_a(\mathbb{R}^n)$,
    \begin{equation}
        \label{eq.S_m_operator_bound}
        \|S^a_{m, \underline{k}}[H]\|_{L_1(\mathbb{R}^n)}\le C(m-m_0)\|H\|_{L_a(\mathbb{R}^n)}^a.
    \end{equation}
\end{Prop}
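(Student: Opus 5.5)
We plan to bound $S^a_{m,\underline{k}}[H]$ by splitting according to the level sets of the maximal function, as the preceding remark about Whitney decompositions suggests. We may assume $H\ge 0$. For $x\in\mathbb{R}^n$ and $k\ge 0$ we have $A_{k-\underline{k}}H(x)\le M[H](x)$. The naive bound $S^a_{m,\underline{k}}[H]\le \sup_k\mathcal{D}_{k,m}\,(M[H])^a$ fails because $\sup_k \mathcal{D}_{k,m}$ is not bounded by a constant (only by $2^{m\theta}$). So we have to couple the scale $k$ at which the density is large with the scale of the average.

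For $i\in\mathbb{Z}$ put $\Omega_i:=\{M[H]>2^i\}$, an open set of finite measure with $\sum_i 2^{ia}\mathcal{L}^n(\Omega_i)\lesssim \|M[H]\|_{L_a}^a\lesssim\|H\|_{L_a}^a$ by Theorem~\ref{Th.maximal_function_boundedness} (here $a>1$ is used). Fix $x$ and $k$ with $A_{k-\underline{k}}H(x)\in(2^{i},2^{i+1}]$. Then every point of $Q_{k-\underline{k}}(x)$ lies in $\Omega_{i-c}$ for a fixed $c=c(n,\underline{k})$, since the maximal function at any $y\in Q_{k-\underline k}(x)$ dominates a comparable average. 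Hence $Q_{k-\underline{k}}(x)\subset\Omega_{i-c}$. Take a Whitney decomposition $W_{\Omega_{i-c}}$ of $\Omega_{i-c}$ (Theorem~\ref{Th.Whitney_decomposition} applied to the complement). Since the cube $Q_{k-\underline k}(x)$ lies inside $\Omega_{i-c}$, the Whitney cube $P$ containing $x$ has side at most a constant times $\operatorname{dist}(x,\partial\Omega_{i-c})$ and $2^{-k}\lesssim \operatorname{dist}(x,\partial\Omega_{i-c})\approx \ell(P)$. Thus $k\ge k_P-c'$, where $2^{-k_P}\approx\ell(P)$.

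This yields the pointwise bound
\[
S^a_{m,\underline{k}}[H](x)\lesssim \sum_{i\in\mathbb{Z}}2^{ia}\sum_{P\in W_{\Omega_{i-c}}}\chi_P(x)\,\sup_{k\ge \max\{k_P-c',0\}}\mathcal{D}_{k,m}(x)
\le\sum_i2^{ia}\sum_{P}\chi_P(x)\,\mathcal{D}^*_{\max\{k_P-c',0\},m}(x).
\]
Replacing the sup over $k\ge 0$ by a sum over $i$ is legitimate because, for the given $x$, each $k$ falls into exactly one dyadic range of $A_{k-\underline k}H(x)$, and the sup is bounded by the sum of the contributions.

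Integrating over $x$ and applying Corollary~\ref{Ca.modified_density_estimation} to each $P$ gives the estimate. The corollary applies because $P$ is contained in a boundedly enlarged cube $Q_{k_P-c'}(\cdot)$; if $k_P-c'<0$ we cover $P$ by $\lesssim \mathcal{L}^n(P)$ unit cubes $Q_0$. We obtain
\[
\int_P\mathcal{D}^*_{\max\{k_P-c',0\},m}\lesssim (m-m_0)\mathcal{L}^n(P).
\]
This needs a version of the corollary with $m$ shifted by a constant, which is harmless after adjusting $m_0$ and constants. Summing over $P$ (they are disjoint up to boundaries) gives $(m-m_0)\mathcal{L}^n(\Omega_{i-c})$. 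Summing over $i$ then gives
\[
\|S^a_{m,\underline k}[H]\|_{L_1}\lesssim(m-m_0)\sum_i2^{ia}\mathcal{L}^n(\Omega_{i-c})\lesssim(m-m_0)\|H\|_{L_a}^a.
\]

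The main obstacle is the geometric step that converts the information "$Q_{k-\underline k}(x)\subset\Omega_{i-c}$" into the lower bound $k\ge k_P-c'$ uniformly in $x\in P$. A second delicate point is ensuring that the enlargement of $P$ used in Corollary~\ref{Ca.modified_density_estimation} is compatible with the shift in $m$ caused by rescaling $Q_{k+m}$ cubes. I would first fix the constants using property~\ref{cond:W2} together with the inclusion $Q_{k-\underline{k}}(x)\subset\Omega_{i-c}$.
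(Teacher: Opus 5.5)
Your proposal is correct and follows essentially the paper's proof: split $A_{k-\underline k}|H|(x)$ into dyadic levels, take a Whitney decomposition of the corresponding level set of $M[H]$, use the scale comparison $2^{-k}\lesssim 2^{-k(\kappa)}$ to bound the sup by $\mathcal{D}^*_{\max\{k(\kappa),0\},m}$ on each Whitney cube, then apply Corollary~\ref{Ca.modified_density_estimation} cube by cube and finish with the $L_a$-boundedness of $M$. The differences are cosmetic. You obtain $k\ge k_P-c'$ directly from $Q_{k-\underline k}(x)\subset\Omega_{i-c}$ and property~\ref{cond:W2}, whereas the paper argues by contradiction through a point $z_\kappa\notin\Omega_j$; and the ``shift in $m$'' you worry about never arises, since the corollary applies with the same $m$ on $Q_{\max\{k_P-c',0\}}(x_P)\supset P$ at the cost of a factor $2^{c'n}$.
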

\begin{proof}
    We begin with a simple observation. For each $u\ge 0$,
    \begin{equation}
        \label{eq.S_m_operator_bound1}
        u^a \lesssim \sum_{j\in\mathbb{Z}}2^{ja}\chi_{\{u>2^{j}\}}.
    \end{equation}
    Therefore,
    \begin{equation}
        \label{eq.S_m_operator_bound2}
        S_{m, \underline{k}}^a[H](x)\lesssim \sum_{j\in\mathbb{Z}}2^{ja}\sup_{\substack{k\in\mathbb{N}_0:\\ A_{k-\underline{k}}|H|(x)> 2^{j}}}\mathcal{D}_{k, m}(x).
    \end{equation}
    For brevity, we set
    \begin{equation}
    \label{eq.S_m_operator_bound3}
        G_{j, m}(x):=\sup_{\substack{k\in\mathbb{N}_0:\\ A_{k-\underline{k}}|H|(x)> 2^{j}}}\mathcal{D}_{k, m}(x), \qquad (x, j)\in \mathbb{R}^n\times \mathbb{Z}.
    \end{equation}
    \par
    Fix $c_0<1$ sufficiently small. For each $j\in \mathbb{Z}$, set
    \begin{equation}
    \label{eq.S_m_operator_bound4}
        \Omega_j:=\{x\in \mathbb{R}^n: M[H](x)>c_02^j\}.
    \end{equation}
    Assume that $G_{j, m}(x)>0$. Then there is $k\in \mathbb{N}_0$ such that $A_{k-\underline{k}}|H|(x)>2^{j}$, and hence $x\in \Omega_j$. Consequently, $\operatorname{supp}G_{j, m}\subset \Omega_j$.
    \par
    Applying Theorem~\ref{Th.Whitney_decomposition} to the closed set $\mathbb R^n\setminus\Omega_j$, we obtain a Whitney decomposition $\{Q_\kappa\}_{\kappa \in I_j}$ of $\Omega_j$. Let $x\in Q_{\kappa}$ be such that $A_{k-\underline{k}}|H|(x)>2^{j}$. By property~\ref{cond:W2} of the Whitney decomposition (see also \eqref{eq.distance_to_projection} in Lemma~\ref{Lm.distance_estimates}), there exists $z_{\kappa} \in \mathbb{R}^n\setminus \Omega_j$ such that $|z_{\kappa} - x| \le 10\cdot 2^{-k(\kappa)}$. We claim that $2^{-k}\le 2^{-k(\kappa)}$. Assume to the contrary that $2^{-k}>2^{-k(\kappa)}$. Then $Q_{k-\underline{k}}(x) \subset c_1Q_k(z_{\kappa})$, where $c_1:=10+2^{\underline{k}}$ and in particular is independent of $x, \kappa$, and $k$. Therefore,
    \begin{equation}
        \label{eq.S_m_operator_bound5}
        M[H](z_{\kappa}) \ge \fint\limits_{c_1Q_k(z_{\kappa})}|H(y)|dy \ge c_0 \fint\limits_{Q_{k-\underline{k}}(x)}|H(y)|dy > c_02^{j},
    \end{equation}
    provided that $c_0$ is small enough. Thus, $z_\kappa \in \Omega_j$ contradicting $z_{\kappa} \in \mathbb{R}^n\setminus \Omega_j$. As a result, for each $x\in Q_{\kappa}$,
    \begin{equation}
        \label{eq.S_m_operator_bound6}
        G_{j, m}(x) \le \sup_{k\ge \max\{k(\kappa), 0\}}\mathcal{D}_{k, m}(x)=\mathcal{D}^*_{\max\{k(\kappa), 0\}, m}(x).
    \end{equation}
    If $k(\kappa)\ge 0$, then, by Corollary~\ref{Ca.modified_density_estimation}, we obtain
    \begin{equation}
        \label{eq.S_m_operator_bound7}
        \int\limits_{Q_{\kappa}}G_{j, m}(x)dx \lesssim (m-m_0)\mathcal{L}^n(Q_{\kappa}).
    \end{equation}
    If $k(\kappa)<0$, then the same estimate follows from the following argument. Divide $Q_{\kappa}$ into cubes with side length $2$ and with disjoint interiors. Applying Corollary~\ref{Ca.modified_density_estimation} on each small cube, we obtain \eqref{eq.S_m_operator_bound7}.
    Consequently,
    \begin{equation}
        \label{eq.S_m_operator_bound8}
        \int\limits_{\mathbb{R}^n}G_{j, m}(x)dx = \int\limits_{\Omega_j}G_{j, m}(x)dx = \sum_{\kappa \in I_j}\int\limits_{Q_{\kappa}}G_{j, m}(x)dx \lesssim (m-m_0)\mathcal{L}^n(\Omega_j).
    \end{equation}
    Thus, using the definition of $\Omega_j$, \eqref{eq.S_m_operator_bound1}, and Theorem~\ref{Th.maximal_function_boundedness}, we obtain
    \begin{equation}
    \begin{split}
        \label{eq.S_m_operator_bound9}
        \|S_{m, \underline{k}}^a[H]\|_{L_1(\mathbb{R}^n)} \lesssim(m-m_0) \sum_{j\in \mathbb{Z}}\quad\int\limits_{\mathbb{R}^n}2^{ja}\chi_{\{M[H]>c_02^{j}\}}(x)dx\\ \lesssim (m-m_0)\|M[H]\|_{L_a(\mathbb{R}^n)}^a\lesssim (m-m_0)\|H\|_{L_a(\mathbb{R}^n)}^a.
    \end{split}
    \end{equation}
    The proof is complete.
\end{proof}
\begin{Prop}
\label{Prop.T_m_operator_bound}
    Given $m_0\in \mathbb{Z}$, $\underline{k}\in \mathbb{N}_0$, and $a\in (1, \infty)$, there exists $C>0$ such that for each $m>m_0$ and $H\in L_a(\mathbb{R}^n)$,
    \begin{equation}
    \label{eq.T_m_operator_bound}
        \|T_{m, \underline{k}}[H]\|_{L_a(\mathbb{R}^n)} \le C (m-m_0)^{1/a}2^{m\theta\frac{a-1}{a}}\|H\|_{L_a(\mathbb{R}^n)}.
    \end{equation}
\end{Prop}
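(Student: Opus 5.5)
The plan is to derive the bound for $T_{m,\underline{k}}$ from Proposition~\ref{Prop.S_m_operator_bound} and the uniform bound in Lemma~\ref{Lm.simple_properties} by a pointwise interpolation between these two estimates. Fix $x\in\mathbb{R}^n$ and $k\ge 0$. Write
\begin{equation*}
\mathcal{D}_{k,m}(x)A_{k-\underline{k}}|H|(x)=\mathcal{D}_{k,m}(x)^{\frac{a-1}{a}}\cdot\Bigl(\mathcal{D}_{k,m}(x)\bigl(A_{k-\underline{k}}|H|(x)\bigr)^a\Bigr)^{1/a}.
\end{equation*}
By \eqref{eq.density_measure_global_estimate} (valid since $m>m_0$), the first factor is at most $C2^{m\theta\frac{a-1}{a}}$, uniformly in $k$ and $x$. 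The second factor is at most $\bigl(S^a_{m,\underline{k}}[H](x)\bigr)^{1/a}$ by the definition \eqref{eq.S_operator_definition}. Taking the supremum over $k\ge0$ gives the pointwise estimate
\begin{equation*}
T_{m,\underline{k}}[H](x)\le C2^{m\theta\frac{a-1}{a}}\bigl(S^a_{m,\underline{k}}[H](x)\bigr)^{1/a}\qquad\text{for all }x\in\mathbb{R}^n.
\end{equation*}

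Next, raise this to the power $a$ and integrate over $\mathbb{R}^n$:
\begin{equation*}
\|T_{m,\underline{k}}[H]\|_{L_a(\mathbb{R}^n)}^a\le C2^{m\theta(a-1)}\|S^a_{m,\underline{k}}[H]\|_{L_1(\mathbb{R}^n)}\le C(m-m_0)2^{m\theta(a-1)}\|H\|_{L_a(\mathbb{R}^n)}^a,
\end{equation*}
where the last inequality is Proposition~\ref{Prop.S_m_operator_bound}. Taking $a$-th roots yields \eqref{eq.T_m_operator_bound} with the stated dependence $(m-m_0)^{1/a}2^{m\theta\frac{a-1}{a}}$.

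I do not expect a genuine obstacle here. The real work, namely the Whitney decomposition of the level sets of $M[H]$ and the local weak-type bound for $\mathcal{D}^*_{k,m}$, has already been carried out in Proposition~\ref{Prop.S_m_operator_bound}. The only points to check are minor. First, the constant in Lemma~\ref{Lm.simple_properties} must be uniform over all $m\ge m_0$ and $k\in\mathbb{N}_0$, which it is. Second, measurability of the supremum defining $T_{m,\underline{k}}[H]$ must be handled; this is a countable supremum of Borel functions, so it is Borel.
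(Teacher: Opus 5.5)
Your proposal is correct and follows the paper's proof: the paper also bounds $\mathcal{D}_{k,m}(x)^a \lesssim 2^{m\theta(a-1)}\mathcal{D}_{k,m}(x)$ via Lemma~\ref{Lm.simple_properties}, which gives $(T_{m,\underline{k}}[H])^a\lesssim 2^{m\theta(a-1)}S^a_{m,\underline{k}}[H]$ pointwise, and then applies Proposition~\ref{Prop.S_m_operator_bound}. Your factorization $\mathcal{D}_{k,m}A_{k-\underline{k}}|H| = \mathcal{D}_{k,m}^{(a-1)/a}\bigl(\mathcal{D}_{k,m}(A_{k-\underline{k}}|H|)^a\bigr)^{1/a}$ is the same step taken before raising to the power $a$ rather than after.
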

\begin{proof}
    The required assertion is easily reduced to Proposition~\ref{Prop.S_m_operator_bound}. Indeed, for all $x \in \mathbb{R}^n$,
    \begin{equation}
        \label{eq.T_m_operator_bound1}
        (T_{m, \underline{k}}[H](x))^a = \sup_{k\in\mathbb{N}_0}\mathcal{D}_{k, m}(x)^a\Bigl(A_{k-\underline{k}}|H|(x)\Bigr)^a.
    \end{equation}
    Furthermore, by the global upper estimate on $\mathcal{D}_{k, m}$, we obtain
    \begin{equation}
        \label{eq.T_m_operator_bound2}
        \mathcal{D}_{k, m}(x)^a \lesssim 2^{m\theta(a-1)}\mathcal{D}_{k, m}(x).
    \end{equation}
    Consequently,
    \begin{equation}
        (T_{m, \underline{k}}[H](x))^a  \lesssim 2^{m\theta(a-1)} S_{m, \underline{k}}^a[H](x),
    \end{equation}
    for all $x\in\mathbb{R}^n$. The application of Proposition~\ref{Prop.S_m_operator_bound} then yields \eqref{eq.T_m_operator_bound}, which complets the proof.
\end{proof}
We are now ready to prove the \emph{third main result} of this section.
\begin{Th}
    \label{Th.lizorkin_triebel_direct_q_le_p}
    Assume that $q<p<\infty$ and $\sigma \in (0, q]$. There exists $C>0$ such that for each $f\in F^s_{p, q}(\mathbb{R}^n)$,
    \begin{equation}
        \label{eq.lizorkin_triebel_direct_q_le_p}
        \|\Tr f\|_{\mathfrak{F}^{s-\theta/p}_{p, q, \sigma}(E)} \le C \|f\|_{F^s_{p, q}(\mathbb{R}^n)}.
    \end{equation}
\end{Th}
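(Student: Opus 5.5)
The plan is to handle the $L_p(\fm_0)$-part exactly as in Theorem~\ref{Th.main_direct_Lizorkin_Triebel}; that argument used only $s>\frac{\theta}{p}$ and the nesting $\ell_q\hookrightarrow\ell_\infty$, so it gives $\|\Tr f\|_{L_p(\fm_0)}\lesssim\|f\|_{F^s_{p,q}(\mathbb{R}^n)}$ verbatim. For the seminorm, the functional is monotone in $\sigma$, so I would assume $\sigma=q$.

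Fix $\vec g\in\mathbb{D}^s(f)$ nearly optimal. Using Lemma~\ref{Lm.monotonic_fractional_gradient}, replace it by an $\varepsilon$-nonincreasing $\vec h$ with $\varepsilon<s-\frac{\theta}{p}$ and $\|\vec h\|_{L_p(\mathbb{R}^n,\ell_q)}\lesssim\|f\|_{F^s_{p,q}(\mathbb{R}^n)}$. The obstruction to the previous argument is that Proposition~\ref{Prop.pointwise_estimation_direct} requires $\frac{\theta}{r}<s-\varepsilon$ with $r$ below $q$ for Fefferman--Stein, which may fail when $q<p$. I would therefore not average $h$ over Lebesgue cubes before integrating against $\fm_k$. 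Instead I apply Lemma~\ref{Lm.pointw_est} with $t=1$ at $\fm_0$-a.e.\ $y\in 2Q_k(x)\cap E$ (Lebesgue points of $f$ after redefining $f=\Tr f$ on $E$). This gives
\[
|\Tr f(y)-\operatorname{med}(f,Q_k(x))|\lesssim\sum_{m\ge -\underline{k}}2^{-(k+m)s}A_{k+m}h_{k+m}(y).
\]
Next I raise this to the power $q$, integrate against $\fm_k$ over $2Q_k(x)$, and divide by $\fm_k(2Q_k(x))\approx 2^{-kd}$ (valid when $Q_k(x)\cap E\neq\emptyset$). I split the sum over $m$ using the weights $2^{-m(s-\varepsilon')}$ with $\varepsilon'$ slightly larger than $\varepsilon$, via H\"older for $q>1$ or subadditivity for $q\le1$. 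The key Fubini identity is
\[
\int_{2Q_k(x)}A_{k+m}h\,d\fm_k=\int h(z)\frac{\fm_k(2Q_k(x)\cap Q_{k+m}(z))}{\mathcal L^n(Q_{k+m})}\,dz\le 2^{k\theta}\int_{cQ_k(x)}h(z)\mathcal{D}_{k,m}(z)\,dz .
\]
With Jensen (moving the power $q$ inside the $A_{k+m}$-average on the $\fm_k$-side), I aim for
\[
2^{ksq}\widetilde{\mathcal{E}}_{\fm_k,q}(\Tr f,Q_k(x))^q\lesssim\sum_{m\ge-\underline{k}}2^{-m(s-\varepsilon')q}\,A_{k-\underline{k}}\bigl[\mathcal{D}_{k,m}\,h_{k+m}^q\bigr](x).
\]
The factor $2^{(k+m)sq}$ has been absorbed using $h_{k+m}\le 2^{m\varepsilon}h_k$ only where needed. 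The main risk is bookkeeping of the indices; if the argument of $\mathcal D$ sits on the wrong variable, I would enlarge cubes and use Corollary~\ref{Ca.relaxed_doubling_property}.

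For the duality step, note $\|\{2^{ks}\Phi_k\}\|_{L_p(\ell_q)}^q=\|\sum_k(2^{ks}\Phi_k)^q\|_{L_{p/q}}$. Take $G\ge0$ with $\|G\|_{L_a}=1$, where $a=(p/q)'\in(1,\infty)$. Integrate the bound against $G$ and move the averaging operator onto $G$ by Fubini, since $A_{k-\underline k}$ is self-adjoint up to constants. This gives
\[
\int\sum_k(2^{ks}\Phi_k)^qG\lesssim\sum_m2^{-m(s-\varepsilon')q}\int\sum_kh_{k+m}^q(z)\,\mathcal{D}_{k,m}(z)A_{k-\underline{k}}G(z)\,dz .
\]
Now $\mathcal{D}_{k,m}A_{k-\underline k}G\le T_{m,\underline{k}}[G]$, so H\"older in $L_{p/q}\times L_a$ bounds the $m$-th term by $\|\vec h\|_{L_p(\ell_q)}^q\,\|T_{m,\underline k}[G]\|_{L_a}$. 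Proposition~\ref{Prop.T_m_operator_bound}, applied with $m_0=-\underline{k}-1$, gives $\|T_{m,\underline k}G\|_{L_a}\lesssim (m+\underline k+1)^{1/a}2^{m\theta q/p}$, since $\frac{a-1}{a}=\frac qp$. The $m$-series is then $\sum_m m^{1/a}2^{-mq(s-\varepsilon'-\theta/p)}<\infty$, because $\varepsilon'<s-\frac{\theta}{p}$. Taking the supremum over $G$ yields the seminorm bound.

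The main obstacle is the pointwise-to-dual inequality: arranging the Fubini so that exactly $\mathcal D_{k,m}$ at scale $k+m$ appears together with a Lebesgue average of $G$ at scale $k-\underline k$. This is the form $T_{m,\underline k}$ is designed for, and getting it requires careful choice of enlargement constants and of $\underline k$.
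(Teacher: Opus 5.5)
Your architecture is the paper's: dualize $\bigl\|\sum_k(2^{ks}\widetilde{\mathcal{E}}_{\fm_k,\sigma})^q\bigr\|_{L_{p/q}}$ against $H\in L_{(p/q)'}$, and apply Lemma~\ref{Lm.pointw_est} at points of $E\cap 2Q_k(x)$. Then split the $m$-sum with a geometric weight, and use Fubini to produce $\mathcal{D}_{k,m}(z)$ together with a Lebesgue average of $H$. Finally, dominate by $T_{m,\underline{k}}[H]$ and sum Proposition~\ref{Prop.T_m_operator_bound} over $m$. Your treatment of the $L_p(\fm_0)$-part, via $\|h_j\|_{L_p}\le\|\vec h\|_{L_p(\ell_q)}$, is also valid; the paper instead uses $\ell_q\hookrightarrow\ell_p$ and the Besov estimate with $q=p$.

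However, one step fails as written. You apply Lemma~\ref{Lm.pointw_est} with $t=1$, and then use Jensen to replace $(A_{k+m}h_{k+m}(y))^q$ by $A_{k+m}(h_{k+m}^q)(y)$.
\begin{itemize}
\item That inequality holds only for $q\ge1$; for $q<1$ concavity reverses it.
\item This range is not marginal: since $p\ge 1$ and $q<p$, the entire case $p=1$ has $q<1$.
\item Without that step you are left with $\fint_{2Q_k(x)}(A_{k+m}h_{k+m})^q\,d\fm_k$. This does not reduce to the linear expression $\int h_{k+m}^q\,\mathcal{D}_{k,m}\,dz$, so the duality with $T_{m,\underline k}$ cannot be set up. For example, if $h$ is the indicator of a tiny set, then $(Ah)^q\gg A(h^q)$ on a large region.
\end{itemize}

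The repair is to apply Lemma~\ref{Lm.pointw_est} with $t=q$ instead; the lemma allows any $t\in(0,\infty)$, and this is what the paper does.
\begin{itemize}
\item The pointwise bound then reads $\sum_{j\ge k-3}2^{-js}\bigl(A_jh_j^q(y)\bigr)^{1/q}$.
\item After the H\"older/subadditivity splitting, the $q$-th power gives $\sum_{m\ge-3}2^{-m\alpha q}A_{k+m}(h_{k+m}^q)(y)$ directly, with $\alpha\in(\frac{\theta}{p},s)$, for every $q\in(0,\infty)$.
\item No Jensen step is needed.
\end{itemize}

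With that change your bookkeeping goes through, once the indices are fixed:
\begin{itemize}
\item the lemma's shift is $3$ (from $c=2$);
\item the averaging scale is $k-4$, since $10Q_k(x)\subset Q_{k-4}(x)$, so the operator is $T_{m,4}$;
\item Proposition~\ref{Prop.T_m_operator_bound} is applied with $m_0=-4$.
\end{itemize}
The remark about absorbing $2^{(k+m)sq}$ via $h_{k+m}\le 2^{m\varepsilon}h_k$ is unnecessary, since your final bound keeps $h_{k+m}$.
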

\begin{proof}
Fix $f\in F^s_{p, q}(\mathbb{R}^n)$ and let $\phi:=\Tr f$. 
\par
\emph{Step 1.} First, we estimate the $\mathfrak{f}^{s-\theta/p}_{p, q, \sigma}(E)$-functional of $\phi$. To this end, we define
\begin{equation}
\label{eq.lizorkin_triebel_direct_q_le_p1}
    \Phi(x) := \sum_{k=0}^{\infty}2^{ksq}\widetilde{\mathcal{E}}_{\fm_k, \sigma}(\phi, Q_k(x))^q, \qquad x\in\mathbb{R}^n.
\end{equation}
Let $R:=\frac{p}{q}$. Our goal is to establish that
\begin{equation}
\label{eq.lizorkin_triebel_direct_q_le_p2}
    \|\Phi\|_{L_R(\mathbb{R}^n)} \lesssim \|f\|_{F^s_{p, q}(\mathbb{R}^n)}^q.
\end{equation}
Since $R>1$, duality gives
\begin{equation}
\label{eq.lizorkin_triebel_direct_q_le_p3}
    \|\Phi\|_{L_R(\mathbb{R}^n)} = \sup_{\substack{H\ge0:\\\|H\|_{L_{R'}(\mathbb{R}^n)} = 1}}\quad\int\limits_{\mathbb{R}^n}\Phi(x)H(x)dx.
\end{equation}
Choose $\varepsilon \in (0, s)$ and $\varepsilon$-nonincreasing $\vec{h}\in \mathbb{D}^s(f)$ such that $\|\vec{h}\|_{L_p(\mathbb{R}^n, \ell_q)} \lesssim\|f\|_{F^s_{p, q}(\mathbb{R}^n)}$. By Lemma~\ref{Lm.pointw_est}, applied with $c=2$ and $t=q$, for $\fm_0$-a.e. $y\in E \cap 2Q_k(x)$, we have
\begin{equation}
\label{eq.lizorkin_triebel_direct_q_le_p4}
    |\phi(y)-\operatorname{med}(f, Q_k(x))| \lesssim \sum_{j=k-3}^{\infty}2^{-js}\Bigl(A_{j}h_j^q(y) \Bigr)^{1/q} =: T_k(y).
\end{equation}
Since $\sigma\le q$, Jensen's inequality gives, for each $k\in \mathbb{N}_0$ and $x\in \mathbb{R}^n$,
\begin{equation}
\label{eq.lizorkin_triebel_direct_q_le_p5}
    \widetilde{\mathcal{E}}_{\fm_k, \sigma}(\phi, Q_k(x))^q \lesssim \fint\limits_{2Q_k(x)}T_k(y)^qd\fm_k(y).
\end{equation}
Consequently,
\begin{equation}
\label{eq.lizorkin_triebel_direct_q_le_p6}
\begin{split}
    \int\limits_{\mathbb{R}^n}\Phi(x)H(x)dx \lesssim \sum_{k=0}^{\infty}2^{ksq+kd}\int\limits_{\mathbb{R}^n}H(x)\int\limits_{Q_{k-1}(x)}T_k(y)^qd\fm_k(y) \\ = \sum_{k=0}^{\infty}\quad\int\limits_{\mathbb{R}^n} (2^{ks}T_k(y))^q \fint\limits_{Q_{k-1}(y)} H(x)dx d\nu_k(y).
\end{split}
\end{equation}
Choose an arbitrary $\alpha \in (\frac{n-d}{p}, s)$. For each $k\in \mathbb{N}_0$, by H\"older's inequality when $q>1$ and by the subadditivity when $q\le 1$, we obtain
\begin{equation}
\label{eq.lizorkin_triebel_direct_q_le_p7}
    (2^{ks}T_k(y))^q \lesssim \sum_{j=k-3}^{\infty}2^{-(j-k)\alpha q} A_jh_j^q(y).
\end{equation}
Therefore,
\begin{equation}
\label{eq.lizorkin_triebel_direct_q_le_p8}
\begin{split}
    \int\limits_{\mathbb{R}^n}\Phi(x)H(x)dx \lesssim \sum_{k=0}^{\infty}\sum_{j=k-3}^{\infty}2^{-(j-k)\alpha q}\int\limits_{\mathbb{R}^n}A_jh_j^q(y)A_{k-1}H(y)d\nu_k(y) \\ = \sum_{k=0}^{\infty}\sum_{m=-3}^{\infty}2^{-m\alpha q}\int\limits_{\mathbb{R}^n}A_{k+m}h_{k+m}^q(y)A_{k-1}H(y)d\nu_k(y).
\end{split}
\end{equation}
\par
For each $m\ge -3$ and each $k\ge 0$, Fubini's theorem yields
\begin{equation}
\label{eq.lizorkin_triebel_direct_q_le_p9}
    \int\limits_{\mathbb{R}^n}A_{k+m}h_{k+m}^q(y)A_{k-1}H(y)d\nu_k(y) \le \int\limits_{\mathbb{R}^n} h_{k+m}(z)^q \int\limits_{Q_{k+m}(z)}\frac{A_{k-1}H(y)}{\mathcal{L}^n(Q_{k+m}(y))}d\nu_k(y)dz.
\end{equation}
Fix $z \in \mathbb{R}^n$. For each $y \in Q_{k+m}(z)$, we have $Q_{k-1}(y)\subset Q_{k-4}(z)$ (recall that $m\ge -3$). Therefore, $A_{k-1}H(y) \lesssim A_{k-4}H(z)$ for all $y \in Q_{k+m}(z)$, and hence
\begin{equation}
    \label{eq.lizorkin_triebel_direct_q_le_p10}
    \int\limits_{\mathbb{R}^n}A_{k+m}h_{k+m}^q(y)A_{k-1}H(y)d\nu_k(y)\lesssim \int\limits_{\mathbb{R}^n}h_{k+m}(z)^q \mathcal{D}_{k, m}(z)A_{k-4}H(z)dz.
\end{equation}
As a result, H\"older's inequality and Proposition~\ref{Prop.T_m_operator_bound} give
\begin{equation}
    \label{eq.lizorkin_triebel_direct_q_le_p11}
    \begin{split}
\sum_{k=0}^{\infty}\quad\int\limits_{\mathbb{R}^n}A_{k+m}h_{k+m}^q(y)A_{k-1}H(y)d\nu_k(y) \lesssim \int\limits_{\mathbb{R}^n}\sum_{k=0}^{\infty}h_{k+m}(z)^q 
T_{m, 4}[H](z)dz \\ \le \|\vec{h}\|_{L_p(\mathbb{R}^n, \ell_q)}^q \|T_{m, 4}[H]\|_{L_{R'}(\mathbb{R}^n)} \lesssim (m+4)^{1-\frac{q}{p}}2^{m\theta \frac{q}{p}}\|H\|_{L_{R'}(\mathbb{R}^n)}\|\vec{h}\|_{L_p(\mathbb{R}^n, \ell_q)}^q
\end{split}    
\end{equation}
Combining the latter estimate with \eqref{eq.lizorkin_triebel_direct_q_le_p8} yields
\begin{equation}
    \begin{split}
        \int\limits_{\mathbb{R}^n}\Phi(x)H(x)dx &\lesssim \|H\|_{L_{R'}(\mathbb{R}^n)}\|\vec{h}\|_{L_p(\mathbb{R}^n, \ell_q)}^q\sum_{m=-3}^{\infty}(m+4)^{1-\frac{q}{p}}2^{-m(\alpha - \frac{\theta}{p})q}\\& \lesssim \|H\|_{L_{R'}(\mathbb{R}^n)}\|f\|_{F^s_{p, q}(\mathbb{R}^n)}^q.
    \end{split}
\end{equation}
Thus, by \eqref{eq.lizorkin_triebel_direct_q_le_p3}, we obtain 
\begin{equation}
    \label{eq.lizorkin_triebel_direct_q_le_p12}
    \|\phi\|_{\mathfrak{f}^{s-\theta/p}_{p, q, \sigma}(E)} \lesssim \|f\|_{F^s_{p, q}(\mathbb{R}^n)}.
\end{equation}
\par
\emph{Step 2.} Now we estimate the $L_p(\fm_0)$-norm of the trace. Since $q<p$, it follows that
\begin{equation}
    \|\vec{h}\|_{\ell_p(L_p(\mathbb{R}^n))} =  \|\vec{h}\|_{L_p(\mathbb{R}^n, \ell_p)} \le \|\vec{h}\|_{L_p(\mathbb{R}^n, \ell_q)}.
\end{equation}
Therefore, by \eqref{eq.direct_estimate2} in Theorem~\ref{Th.main_direct_Besov}, applied with $q=p$, we obtain
\begin{equation}
\label{eq.lizorkin_triebel_direct_q_le_p13}
    \|\phi\|_{L_p(\fm_0)}\lesssim \|f\|_{L_p(\mathbb{R}^n)} + \|\vec{h}\|_{\ell_p(L_p(\mathbb{R}^n))} \lesssim \|f\|_{F^s_{p, q}(\mathbb{R}^n)}
\end{equation}
Combining \eqref{eq.lizorkin_triebel_direct_q_le_p12} and \eqref{eq.lizorkin_triebel_direct_q_le_p13} completes the proof.
\end{proof}
\section{Proofs of the main results}
\label{section.proof_of_the_main_results}
Throughout this section we fix:
\begin{conditions}{\textbf{D}.6.}
    \item\label{cond:D61} a parameter $d\in [0,n]$ and a closed $d$-thick set
    $E\subset\mathbb{R}^n$;
    \item\label{cond:D62} a strongly $d$-regular sequence of measures
    $\{\fm_k\}_{k=0}^{\infty}$ on $E$;
    \item\label{cond:D63} parameters $p\in [1,\infty]$, $q\in (0,\infty]$, and
    $s\in \left(\frac{n-d}{p},1\right)$.
\end{conditions}
\par
We prove Theorems~\ref{Th.main_stated_Besov} and~\ref{Th.main_stated_LT}.
\begin{proof}[Proof of Theorem~\ref{Th.main_stated_Besov}]
Fix $\sigma \in (0, \infty)$ such that $\sigma\le p$. If $\phi \in B^s_{p, q}(\mathbb{R}^n)\big|_E^{\fm_0}$, then Theorem~\ref{Th.main_direct_Besov} gives
\begin{equation}
    \|\phi\|_{\mathfrak{B}^{s-\theta/p}_{p, q, \sigma}(E)} \lesssim \inf\{\|f\|_{B^s_{p, q}(\mathbb{R}^n)}:f\in B^s_{p, q}(\mathbb{R}^n), \phi=\Tr f\} = \|\phi\|_{B^s_{p, q}(\mathbb{R}^n)\big|_E^{\fm_0}}.
\end{equation}
For each $k\in \mathbb{N}_0$ and each $x\in E$ define $\mathcal{M}^x_k:L_{\sigma}(Q_k(x), \fm_k)\to \mathbb{R}$ by
\begin{equation}
    \mathcal{M}^x_k\phi := \operatorname{med}_{\fm_k}(\phi, Q_k(x)).
\end{equation}
By Lemma~\ref{Lm.average_median_approximation_property}, there is $\lambda=\lambda(\sigma)$ such that the family $\mathcal{M} := \{\mathcal{M}^x_k\}$ is $\lambda $-almost best approximating. Hence, for $\phi \in L_{\sigma}^{\loc}(\{\fm_k\})$, Theorem~\ref{Th.main_inverse_Besov} and Theorem~\ref{Th.extension_property} yield
\begin{equation}
    \|\phi\|_{B^s_{p, q}(\mathbb{R}^n)\big|_E^{\fm_0}} \le \|\operatorname{Ext}_{\mathcal{M}}\phi\|_{B^s_{p, q}(\mathbb{R}^n)} \lesssim \|\phi\|_{\mathfrak{B}^{s-\theta/p}_{p, q, \sigma}(E)}.
\end{equation}
Thus, we proved the equivalence in Theorem~\ref{Th.main_stated_Besov}. In particular, for each $\sigma\le p$ functionals $\mathfrak{B}^{s-\theta/p}_{p, q, \sigma}(E)$ and $\mathfrak{B}^{s-\theta/p}_{p, q, 1}(E)$ are equivalent.
\par
To construct a bounded linear extension operator, we define $\mathcal{A} = \{\mathcal{A}^x_k\}_{x\in E, k\in \mathbb{N}_0}$ by
\begin{equation}
    \mathcal{A}^x_k\phi = A_{k, \fm_k}\phi(x).
\end{equation}
By Lemma~\ref{Lm.average_median_approximation_property}, each $\mathcal{A}^x_k$ is $\lambda$-almost best approximating on $L_1(Q_k(x), \fm_k)$ for some $\lambda>1$. Therefore,
\begin{equation}
    \|\Ext_{\mathcal{A}}\phi\|_{B^s_{p, q}(\mathbb{R}^n)} \lesssim \|\phi\|_{\mathfrak{B}^{s-\theta/p}_{p, q, 1}(E)} \lesssim \|\phi\|_{\mathfrak{B}^{s-\theta/p}_{p, q, \sigma}(E)}.
\end{equation}
Since each $\mathcal{A}^x_k$ is linear, $\operatorname{Ext}_{A}$ is linear.
The proof is complete.
\end{proof}
\begin{proof}[Proof of Theorem~\ref{Th.main_stated_LT}]
    The same argument applies, using Theorems~\ref{Th.main_direct_Lizorkin_Triebel}, \ref{Th.lizorkin_triebel_direct_q_le_p}, and~\ref{Th.main_inverse_Lizorkin_Triebel}. If $p>1$ and $q\ge 1$, then we can choose $\sigma = 1$. Hence, integral averages give a family of linear almost best approximating operators. Therefore, the extension operator associated with this family gives a bounded linear extension.
\end{proof}
\section{Examples}
\label{section.Examples}
The goal of this section is to recover the classical trace results for Ahlfors--David regular sets from
Theorems~\ref{Th.main_stated_Besov} and~\ref{Th.main_stated_LT} and to provide several other examples of interest.
\subsection{Traces to Ahlfors--David regular sets}
The following result can be found in \cite[Example~5.1]{tyul} and \cite[Lemma~2.2]{tyulenev_thick_sets_R_n}.
\begin{Lm}
\label{Lm.regular_hence_thick}
    Let $d\in [0, n]$ and let $E\subset\mathbb{R}^n$ be Ahlfors--David $d$-regular. Then $E$ is $d$-thick. Furthermore, the sequence of measures $\fm_k:=\mathcal{H}^d\lfloor_E$, $k\in\mathbb{N}_0$, is strongly $d$-regular.
\end{Lm}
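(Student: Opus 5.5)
We check the two assertions of Lemma~\ref{Lm.regular_hence_thick} separately: first the thickness of $E$, then conditions \ref{cond:M1}--\ref{cond:M3} together with the strong regularity condition \eqref{eq.strongly_regular_sequence_of_measures}.

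\emph{Thickness.} Fix $(x,r)\in E\times(0,1]$ and an arbitrary countable cover $\{Q_{r_i}(x_i)\}$ of $E\cap Q_r(x)$. We want $\sum_i r_i^d\gtrsim r^d$. If some $r_i\ge r$, this is immediate. Otherwise every $r_i<r\le1$. We may assume each cube meets $E$, pick $y_i\in E\cap Q_{r_i}(x_i)$, and use $Q_{r_i}(x_i)\subset Q_{2r_i}(y_i)$. The upper Ahlfors bound then gives $\mathcal H^d(Q_{2r_i}(y_i)\cap E)\lesssim r_i^d$ (if $2r_i>1$, cover by boundedly many unit cubes centered in $E$). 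By countable subadditivity and the lower Ahlfors bound,
\[
C_1r^d\le\mathcal H^d(E\cap Q_r(x))\le\sum_i\mathcal H^d(E\cap Q_{2r_i}(y_i))\lesssim\sum_i r_i^d .
\]
Taking the infimum over covers gives $\mathcal H^d_\infty(E\cap Q_r(x))\gtrsim r^d$.

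\emph{Regularity of the constant sequence.} Put $\fm_k:=\mathcal H^d\lfloor_E$ for every $k$.
\begin{itemize}
\item $\operatorname{supp}\fm_k=E$, because every cube centered in $E$ has positive measure by the lower bound, and $E$ is closed.
\item \ref{cond:M1} follows from the upper Ahlfors bound after recentering at a point of $E$, exactly as in the thickness step.
\item \ref{cond:M2} is the lower Ahlfors bound.
\item \ref{cond:M3} holds with $\gamma_k\equiv1$, since $2^{(d-n)j}\le1$.
\end{itemize}

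\emph{Strong regularity.} This is the step needing a genuine tool. I would use the Lebesgue differentiation theorem for the doubling measure $\mu:=\mathcal H^d\lfloor_E$. It is locally finite, and it is doubling on cubes centered in $E$ of radius at most $1$ by the two-sided Ahlfors bounds. The Vitali covering theorem therefore applies to such centered cubes. For a Borel set $G\subset E$ this gives
\[
\frac{\mu(Q_k(x)\cap G)}{\mu(Q_k(x))}\to\chi_G(x)=1\qquad\text{for $\mu$-a.e. } x\in G .
\]
Hence the $\limsup$ in \eqref{eq.strongly_regular_sequence_of_measures} equals $1>0$ almost everywhere on $G$. If $\mu(G)$ is infinite, first intersect $G$ with bounded cubes $Q_{-m}(0)$, as in Step~2 of the proof of Theorem~\ref{Th.good_representative_on_thick_set}.

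The main obstacle is justifying differentiation for a measure that is doubling only on centered cubes of bounded size. The standard Besicovitch or Vitali argument only needs small radii and centers in the support, so I expect this step to be routine; alternatively one can simply cite the differentiation theorem for doubling metric measure spaces applied to $(E,|\cdot|,\mu)$.
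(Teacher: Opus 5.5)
Your proof is correct. The paper gives no argument of its own for this lemma; it only cites \cite[Example~5.1]{tyul} and \cite[Lemma~2.2]{tyulenev_thick_sets_R_n}. Your verification is the standard argument those references rely on: recentering plus the upper Ahlfors bound to compare $\mathcal{H}^d_\infty$ with $\mathcal{H}^d$ on $E$, a direct check of \ref{cond:M1}--\ref{cond:M3} with $\gamma_k\equiv1$, and the density theorem giving $\limsup=1$ in \eqref{eq.strongly_regular_sequence_of_measures}.

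You leave two things implicit, both immediate: $\mathcal{H}^d\lfloor_E$ is locally finite (by your \ref{cond:M1} bound) and Borel regular (because $E$ is Borel). The step you flag as the main obstacle is harmless. Your local-doubling Vitali argument works, and alternatively Besicovitch's covering theorem for centered cubes gives the density theorem for every locally finite Borel measure on $\mathbb{R}^n$ with no doubling assumption.
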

First we recall the oscillation-based definition of Besov and Lizorkin--Triebel spaces (see, e.g., \cite{AKZ}).
\begin{Def}
     Let $E\subset\mathbb{R}^n$ be Ahlfors--David $d$-regular,
    $d\in(0,n]$, and let $s\in(0,1)$.
    Given $p,q\in(0,\infty]$, the Besov space $B^s_{p,q}(E)$
    consists of all $f\in L_p(\mathcal{H}^d\lfloor_E)$ such that
    \begin{equation}
    \label{eq.besov_seminorm_ahlfors_regular}
        \|f\|_{b^s_{p, q, \sigma}(E)} := \|\{2^{ks}\mathcal{E}_{\mathcal{H}^d\lfloor_E, \sigma}(f, Q_{k}(\cdot))\}_{k=0}^{\infty}\|_{\ell_q(L_p(\mathcal{H}^d\lfloor_E))}<\infty
    \end{equation}
    for some (equivalently, every) $\sigma\in(0,p]$.
    For every such $\sigma$, an equivalent quasi-norm on
    $B^s_{p,q}(E)$ is given by
    \begin{equation}
    \label{eq.besov_norm_ahlfors_regular}
        \|f\|_{B^s_{p, q, \sigma}(E)} := \|f\|_{L_p(\mathcal{H}^d\lfloor_E)} + \|f\|_{b^s_{p, q, \sigma}(E)}.
    \end{equation}
    \par
     If, in addition, $p<\infty$, the Lizorkin--Triebel space
    $F^s_{p,q}(E)$ consists of all
    $f\in L_p(\mathcal{H}^d\lfloor_E)$ such that
    \begin{equation}
    \label{eq.LT_seminorm_ahlfors_regular}
        \|f\|_{f^s_{p, q, \sigma}(E)} := \|\{2^{ks}\mathcal{E}_{\mathcal{H}^d\lfloor_E, \sigma}(f, Q_{k}(\cdot))\}_{k=0}^{\infty}\|_{L_p(\mathcal{H}^d\lfloor_E, \ell_q)}<\infty
    \end{equation}
    for some (equivalently, every)
    $\sigma\in(0,\min\{p,q\})$.
    For every such $\sigma$, an equivalent quasi-norm on
    $F^s_{p,q}(E)$ is given by
    \begin{equation}
    \label{eq.LT_norm_ahlfors_regular}
        \|f\|_{F^s_{p, q, \sigma}(E)} := \|f\|_{L_p(\mathcal{H}^d\lfloor_E)} + \|f\|_{f^s_{p, q, \sigma}(E)}.
    \end{equation}
    When the parameter $\sigma$ is immaterial, we will suppress it from the notation of the corresponding norms.
\end{Def}
\begin{Lm}
\label{Lm.local_approximation_on_regular}
    Assume that $d\in (0, n)$ and that $E\subset\mathbb{R}^n$ is Ahlfors--David $d$-regular. Given $p\in [1, \infty]$ and $\sigma \in (0, \infty)$, there exists $C>0$ such that for each $\phi \in L_{\sigma}^{\loc}(\mathcal{H}^d\lfloor_E)$ and each $k \in \mathbb{N}_0$,
    \begin{equation}
        \label{eq.local_approximation_on_regular}
        \| \widetilde{\mathcal{E}}_{\mathcal{H}^d\lfloor_E, \sigma}(\phi, Q_k(\cdot))\|_{L_p(\mathbb{R}^n)} \le C 2^{-k\frac{n-d}{p}}\| {\mathcal{E}}_{\mathcal{H}^d\lfloor_E, \sigma}(\phi, Q_{k-4}(\cdot))\|_{L_p(\mathcal{H}^d\lfloor_E)}.
    \end{equation}
    Moreover,
\begin{equation}
\label{eq.local_approximation_on_regular_lower}
2^{-k\frac{n-d}{p}}
\|\mathcal{E}_{\mathcal{H}^d\lfloor_E,\sigma}(\phi,Q_k(\cdot))\|_{L_p(\mathcal{H}^d\lfloor_E)}
\le
C
\|\widetilde{\mathcal{E}}_{\mathcal{H}^d\lfloor_E,\sigma}
(\phi,Q_k(\cdot))\|_{L_p(\mathbb{R}^n)}.
\end{equation}
\end{Lm}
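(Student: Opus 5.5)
Both estimates come from comparing the set where $\widetilde{\mathcal{E}}_{\mathcal{H}^d\lfloor_E,\sigma}(\phi,Q_k(\cdot))$ can be nonzero, namely the neighbourhood $\{x:\operatorname{dist}(x,E)\le 2^{-k}\}$, with $E$ itself. We transfer integrals over that neighbourhood, taken with respect to $\mathcal{L}^n$, to integrals over $E$ with respect to $\mu:=\mathcal{H}^d\lfloor_E$. The basic tool is Ahlfors--David regularity: $\mu(Q_r(y))\approx r^d$ for $y\in E$, $r\le 1$. For small $k$ (i.e.\ $k<4$), a cube $Q_{k-4}$ may have radius exceeding $1$. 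There we use only the upper bound $\mu(Q_r(y))\lesssim r^d$, which holds for bounded $r$ by a covering argument, and note that finitely many $k$ only affect constants.

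\emph{Upper estimate.} Fix $x$ with $Q_k(x)\cap E\ne\emptyset$; otherwise the left integrand vanishes. For every $y\in E\cap Q_k(x)$ we have $Q_{2^{1-k}}(x)\subset Q_{k-2}(y)$. By regularity, $\mu(Q_{2^{1-k}}(x))\gtrsim 2^{-kd}$, because this cube contains $Q_k(y)$. Also $\mu(Q_{k-4}(y))\lesssim 2^{-kd}$. Monotonicity of the integral, taken at an almost optimal constant for $Q_{k-4}(y)$, then gives
\[
\mathcal{E}_{\mu,\sigma}(\phi,Q_{2^{1-k}}(x))\lesssim \mathcal{E}_{\mu,\sigma}(\phi,Q_{k-4}(y)) \qquad\text{for all } y\in E\cap Q_k(x).
\]
Since this holds for all such $y$, we raise it to the power $p$ and average over $y\in E\cap Q_k(x)$ with respect to $\mu$; the set has $\mu$-measure $\approx 2^{-kd}$. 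Integrating in $x$ and applying Fubini, we use that $\mathcal{L}^n(\{x:y\in Q_k(x)\})\approx 2^{-kn}$. This yields
\[
\|\widetilde{\mathcal{E}}\|_{L_p(\mathbb{R}^n)}^p\lesssim 2^{kd}2^{-kn}\int_E \mathcal{E}_{\mu,\sigma}(\phi,Q_{k-4}(y))^p\,d\mu(y),
\]
which is the claim. For $p=\infty$ we take suprema directly.

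\emph{Lower estimate.} For $y\in E$ and any $x\in Q_k(y)$, we have $x\in\mathbb{R}^n$ with $Q_k(x)\ni y$. The cube $Q_{2^{1-k}}(x)$ contains $Q_k(y)$, and $\mu(Q_{2^{1-k}}(x))\lesssim 2^{-kd}\approx\mu(Q_k(y))$. Hence
\[
\mathcal{E}_{\mu,\sigma}(\phi,Q_k(y))\lesssim \widetilde{\mathcal{E}}_{\mu,\sigma}(\phi,Q_k(x)) \qquad\text{for all } x\in Q_k(y).
\]
We average over $x\in Q_k(y)$ with respect to $\mathcal{L}^n$, integrate over $y$ with respect to $\mu$, and apply Fubini. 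The key fact is that $\mu(Q_k(x))\lesssim 2^{-kd}$ for every $x$. This gives $\int_E\mathcal{E}^p\,d\mu\lesssim 2^{kn}2^{-kd}\|\widetilde{\mathcal{E}}\|_{L_p}^p$, which is exactly the lower estimate.

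The main point of care is the small-$k$ regime: there $Q_{k-4}$ has radius larger than $1$, so only the upper measure bound is available. The lower bound is needed only at scale $2^{-k}\le 1$, so the argument goes through.
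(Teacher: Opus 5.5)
Your lower estimate is correct and is essentially the paper's argument. The upper estimate, however, rests on a false claim. You average over $E\cap Q_k(x)$ and assert that this set has $\mu$-measure $\approx 2^{-kd}$, but only the upper bound holds. The cube $Q_k(x)$ is merely known to meet $E$, and it may do so in an arbitrarily small piece near its boundary.

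For example, take the diagonal $E=\{(t,t):t\in\mathbb{R}\}\subset\mathbb{R}^2$, which is Ahlfors--David $1$-regular, and $x=(2^{-k}-\varepsilon_1,\,-2^{-k}+\varepsilon_2)$ with small $\varepsilon_1,\varepsilon_2>0$. Then $E\cap Q_k(x)=\{(t,t):-\varepsilon_1\le t\le\varepsilon_2\}$, which has measure $\approx\varepsilon_1+\varepsilon_2$, and such $x$ fill a set of positive Lebesgue measure. So the bound $\mu(E\cap Q_k(x))^{-1}\lesssim 2^{kd}$ that you feed into Fubini is not available, and the factor $2^{-k(n-d)}$ does not follow as written. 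Your pointwise comparison $\mathcal{E}_{\mu,\sigma}(\phi,Q_{2^{1-k}}(x))\lesssim\mathcal{E}_{\mu,\sigma}(\phi,Q_{k-4}(y))$ is fine; only the choice of averaging set is wrong.

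The repair is easy because your inclusion has room to spare: average over $E\cap Q_{2^{1-k}}(x)$ instead.
\begin{itemize}
\item If $y\in E$ and $|x-y|\le 2^{1-k}$, then $Q_{2^{1-k}}(x)\subset Q_{2^{2-k}}(y)\subset Q_{k-4}(y)$. Hence your comparison holds for every such $y$.
\item For any $y_0\in E\cap Q_k(x)$ you have $Q_k(y_0)\subset Q_{2^{1-k}}(x)$. Hence $\mu(E\cap Q_{2^{1-k}}(x))\approx 2^{-kd}$ genuinely.
\end{itemize}
Fubini with $\mathcal{L}^n(Q_{2^{1-k}}(y))=4^n2^{-kn}$ then gives the upper estimate.

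Once repaired, your upper estimate takes a different route from the paper. The paper splits $\mathbb{R}^n\setminus E$ into Whitney cubes, using $\mathcal{L}^n(E)=0$. It bounds $\widetilde{\mathcal{E}}$ on each $Q_\kappa$ by the infimum of $\mathcal{E}_{\mu,\sigma}(\phi,Q_{k-4}(\cdot))$ over the projected cube $\widetilde{Q}_\kappa$. Since $\widetilde{Q}_\kappa$ is centred on $E$, the normalization problem above never arises. The paper then sums using bounded overlap at each Whitney level and $\sum_{j\ge k}2^{-j(n-d)}\approx 2^{-k(n-d)}$; both this sum and $\mathcal{L}^n(E)=0$ use $d<n$.

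Your single Fubini argument is shorter and needs neither $\mathcal{L}^n(E)=0$ nor the codimension series. The paper's organization by distance to $E$ pays off in Lemma~\ref{Lm.LT_on_regular}, where the pointwise $\ell_q$-sum must be truncated at the Whitney level $k(\kappa)$.
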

\begin{proof}
    Fix $\phi \in L_{\sigma}^{\loc}(\mathcal{H}^d\lfloor_E)$ and $k\in \mathbb{N}_0$. For $j \in \mathbb{Z}$, we put $\Phi_j(x):= \widetilde{\mathcal{E}}_{\mathcal{H}^d\lfloor_E, \sigma}(\phi, Q_j(x))$ and $E_j(x):=\mathcal{E}_{\mathcal{H}^d\lfloor_E, \sigma}(\phi, Q_j(x))$ for brevity. From the definition, it is clear that $\operatorname{supp} \Phi_j \subset \overline{U_j(E)}$.
    Let $\{Q_{\kappa} = Q_{r_{\kappa}}(x_{\kappa})\}_{\kappa \in I}$ be a Whitney decomposition of $\mathbb{R}^n\setminus E$. We begin with \eqref{eq.local_approximation_on_regular}.
    \par
    \emph{Step 1.} Assume first that $p<\infty$. Since $\operatorname{supp} \Phi_j \subset \overline{U_j(E)}$ and $\mathcal{L}^n(E) = 0$ if $E$ is $d$-regular with $d<n$, it follows from property~\ref{cond:W1} of the Whitney decomposition that
    \begin{equation}
        \label{eq.local_approximation_on_regular1}
        \|\Phi_k\|_{L_p(\mathbb{R}^n)}^p = \sum_{\substack{\kappa \in I:\\r_{\kappa}\le 2^{-k}}}\int\limits_{Q_{\kappa}} \Phi_k(x)^pdx.
    \end{equation}
     Using \eqref{eq.distance_to_projection}, we get, for every $x\in Q_{\kappa}$ and every $y\in \widetilde{Q}_{\kappa}$,
    \begin{equation}
        \label{eq.local_approximation_on_regular2}
        |x-y| \le |x-x_{\kappa}| + |x_{\kappa}-\widetilde{x}_{\kappa}|+|\widetilde{x}_{\kappa}-y| \le 11\cdot 2^{-k(\kappa)} \le 11\cdot 2^{-k}.
    \end{equation}
    Consequently, arguing as in Lemma~\ref{Lm.local_approximation_outside_E}, we obtain
    \begin{equation}
        \label{eq.local_approximation_on_regular3}
        \Phi_k(x) \lesssim \inf_{y\in \widetilde{Q}_{\kappa}} {\mathcal{E}}_{\mathcal{H}^d\lfloor_E, \sigma}(\phi, 16Q_{k}(y)) = \inf_{y\in \widetilde{Q}_{\kappa}} E_{k-4}(y).
    \end{equation}
    We remark that for every fixed $k$, the family $\{\widetilde{Q}_{\kappa}\}_{k(\kappa) = k}$ has uniformly bounded multiplicity. Averaging the latter estimate over $\widetilde{Q}_{\kappa}$ with respect to $\mathcal{H}^d\lfloor_E$ then gives
    \begin{equation}
        \label{eq.local_approximation_on_regular4}
        \begin{split}
        &\|\Phi_k\|_{L_p(\mathbb{R}^n)}^p  \lesssim \sum_{\substack{\kappa \in I:\\r_{\kappa}\le 2^{-k}}}2^{-k(\kappa)n}\fint\limits_{\widetilde{Q}_{\kappa}} E_{k-4}(y)^pd\mathcal{H}^d\lfloor_E(y) \\= & \sum_{\substack{\kappa \in I:\\r_{\kappa}\le 2^{-k}}}2^{-k(\kappa)(n-d)}\int\limits_{\widetilde{Q}_{\kappa}} E_{k-4}(y)^pd\mathcal{H}^d\lfloor_E(y)\lesssim 2^{-k(n-d)}\|E_{k-4}\|_{L_p(\mathcal{H}^d\lfloor_E)}^p.
        \end{split}
    \end{equation}
    \par
    \emph{Step 2.} Now assume that $p=\infty$. Then \eqref{eq.local_approximation_on_regular3} yields 
    \begin{equation}
        \Phi_k(x) \lesssim \|E_{k-4}\|_{L_{\infty}(\mathcal{H}^d\lfloor_E)}
    \end{equation}
    for all $x\in Q_{\kappa}$, where $r_{\kappa}\le 2^{-k}$. Thus, we obtain \eqref{eq.local_approximation_on_regular}. 
    \par
    \emph{Step 3.} We now prove \eqref{eq.local_approximation_on_regular_lower}. The proof follows the same argument. For $x\in E$ and $y\in Q_k(x)$, Lemma~\ref{Lm.local_approximation_outside_E} gives $E_k(x)\lesssim \Phi_k(y)$. Averaging over $y \in Q_k(x)$ with respect to $\mathcal{L}^n$ then yields
    \begin{equation}
        E_k(x)\lesssim \fint\limits_{Q_k(x)}\Phi_k(y)dy.
    \end{equation}
    Consequently, by Jensen's inequality and Fubini's theorem when $p<\infty$, and trivially when $p=\infty$, we obtain
    \begin{equation}
        \int\limits_{E}E_k(x)^pd\mathcal{H}^d\lfloor_E(x) \lesssim \int\limits_{\mathbb{R}^n}\Phi_k(y)^p \int\limits_{Q_k(y)}\frac{d\mathcal{H}^d\lfloor_E(x)}{2^{-kn}}dy \lesssim 2^{k(n-d)}\|\Phi_k\|_{L_p(\mathbb{R}^n)}^p.
    \end{equation}
    Thus, \eqref{eq.local_approximation_on_regular_lower} follows immediately. The proof is complete.
\end{proof}
We are ready to recover the trace result for Besov spaces (compare, e.g., \cite{saks}).
\begin{Th}
    \label{Th.Besov_on_regular}
    Let $E\subset \mathbb{R}^n$ be Ahlfors--David $d$-regular, $d\in (0, n)$. Assume that $p\in [1, \infty]$, $q\in (0, \infty]$, $s\in (\frac{n-d}{p}, 1)$. Then $\phi \in L_p(\mathcal{H}^d\lfloor_E)$ belongs to the space $B^s_{p, q}(\mathbb{R}^n)\big|_E^{\mathcal{H}^d\lfloor_E}$ if and only if $\phi \in B^{s-(n-d)/p}_{p, q}(E)$.
    Furthermore, $ \|\phi\|_{B^s_{p, q}(\mathbb{R}^n)\big|_E^{\mathcal{H}^d\lfloor_E}}\approx \|\phi\|_{{B}^{s-(n-d)/p}_{p, q}(E)}$.  Finally, there exists a bounded linear extension operator $\Ext_{\mathcal{H}^d\lfloor_E}: B^s_{p, q}(\mathbb{R}^n)\big|_E^{\mathcal{H}^d\lfloor_E} \to B^s_{p, q}(\mathbb{R}^n)$.
\end{Th}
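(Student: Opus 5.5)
We deduce Theorem~\ref{Th.Besov_on_regular} from Theorem~\ref{Th.main_stated_Besov}, applied with the constant sequence $\fm_k:=\mathcal{H}^d\lfloor_E$, $k\in\mathbb{N}_0$. By Lemma~\ref{Lm.regular_hence_thick}, $E$ is $d$-thick and this sequence is strongly $d$-regular. Hence Theorem~\ref{Th.main_stated_Besov} (with, say, $\sigma=1\le p$) gives three things: the trace space $B^s_{p,q}(\mathbb{R}^n)\big|_E^{\mathcal{H}^d\lfloor_E}$ is characterized by finiteness of $\|\phi\|_{\mathfrak{B}^{s-\theta/p}_{p,q,1}(E)}$, the quotient norm is equivalent to this functional, and there is a bounded linear extension operator. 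Note that the linear operator $\Ext_{\mathcal{A}}$ from the proof uses averages and $\sigma=1$. Moreover, $L_p(\fm_0)=L_p(\mathcal{H}^d\lfloor_E)$ and $L^{\loc}_1(\{\fm_k\})=L^{\loc}_1(\mathcal{H}^d\lfloor_E)$. Since $p\ge1$, every $\phi\in L_p(\mathcal{H}^d\lfloor_E)$ is locally $\mathcal{H}^d\lfloor_E$-integrable, so the theorem applies to every such $\phi$. It therefore suffices to prove
\begin{equation*}
\|\phi\|_{\mathfrak{b}^{s-\theta/p}_{p,q,1}(E)}+\|\phi\|_{L_p(\mathcal{H}^d\lfloor_E)}\approx \|\phi\|_{b^{s-\theta/p}_{p,q,1}(E)}+\|\phi\|_{L_p(\mathcal{H}^d\lfloor_E)}.
\end{equation*}

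This comparison comes from Lemma~\ref{Lm.local_approximation_on_regular}, applied with $\sigma=1$. Write $e_k:=\|\mathcal{E}_{\mathcal{H}^d\lfloor_E,1}(\phi,Q_k(\cdot))\|_{L_p(\mathcal{H}^d\lfloor_E)}$ and $\Phi_k:=\widetilde{\mathcal{E}}_{\mathcal{H}^d\lfloor_E,1}(\phi,Q_k(\cdot))$.

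For the lower bound, the second estimate of the lemma gives $2^{k(s-\theta/p)}e_k\lesssim 2^{ks}\|\Phi_k\|_{L_p(\mathbb{R}^n)}$ for every $k\ge0$. Taking $\ell_q$ norms yields $\|\phi\|_{b^{s-\theta/p}_{p,q}(E)}\lesssim\|\phi\|_{\mathfrak{b}^{s-\theta/p}_{p,q,1}(E)}$.

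For the upper bound, the first estimate gives
\begin{equation*}
2^{ks}\|\Phi_k\|_{L_p(\mathbb{R}^n)}\lesssim 2^{k(s-\theta/p)}e_{k-4}=2^{4(s-\theta/p)}\,2^{(k-4)(s-\theta/p)}e_{k-4},
\end{equation*}
so the sequences are comparable up to an index shift by $4$. This is harmless for the $\ell_q$ norm except for the finitely many terms $k\in\{0,1,2,3\}$, where $k-4<0$. For these, $\mathcal{E}_{\mathcal{H}^d\lfloor_E,1}(\phi,Q_{k-4}(x))$ involves cubes of side up to $2^5$, and we bound it crudely. Taking $c=0$ in the infimum and using Ahlfors regularity on large cubes, $\mathcal{H}^d(Q_{k-4}(x)\cap E)\gtrsim 1$ for $x\in E$ (regularity is stated for $r\le1$, but a cube of radius $\le 2^4$ centred on $E$ contains $Q_1(x)$, and the upper bound follows by covering with boundedly many unit cubes). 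This gives
\begin{equation*}
\mathcal{E}_{\mathcal{H}^d\lfloor_E,1}(\phi,Q_{k-4}(x))\lesssim\fint\limits_{Q_{k-4}(x)}|\phi|\,d\mathcal{H}^d\lfloor_E.
\end{equation*}
By Jensen's inequality, Fubini's theorem and the upper regularity bound $\mathcal{H}^d(Q_{k-4}(y)\cap E)\lesssim1$, the $L_p(\mathcal{H}^d\lfloor_E)$ norm of the right-hand side is $\lesssim\|\phi\|_{L_p(\mathcal{H}^d\lfloor_E)}$ (trivially so when $p=\infty$). Hence $\|\phi\|_{\mathfrak{b}^{s-\theta/p}_{p,q,1}(E)}\lesssim\|\phi\|_{b^{s-\theta/p}_{p,q}(E)}+\|\phi\|_{L_p(\mathcal{H}^d\lfloor_E)}$, with a constant depending only on $q$ through the quasi-triangle inequality in $\ell_q$.

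Combining both directions with Theorem~\ref{Th.main_stated_Besov} proves the characterization and the norm equivalence. The linear extension operator is $\Ext_{\mathcal{A}}$ built from averages with $\sigma=1$. The main technical point is the handling of the low frequencies $k<4$, together with the need for the lemma's hypothesis $d<n$, which is guaranteed by the assumption $d\in(0,n)$. Beyond that, one has to check that the definition of $B^s_{p,q}(E)$ with $\sigma=1$ is admissible; it is, since $1\le p$.
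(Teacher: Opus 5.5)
Your proposal is correct and follows the paper's proof. Like the paper, you apply Theorem~\ref{Th.main_stated_Besov} to the constant sequence $\fm_k=\mathcal{H}^d\lfloor_E$ (strongly $d$-regular by Lemma~\ref{Lm.regular_hence_thick}) and compare the two functionals via the two estimates of Lemma~\ref{Lm.local_approximation_on_regular} with $\sigma=1$. Your explicit treatment of the terms $k\in\{0,1,2,3\}$ simply spells out the paper's remark that finitely many low-frequency terms are absorbed by $\|\phi\|_{L_p(\mathcal{H}^d\lfloor_E)}$; the upper measure bound you need on large cubes is also available directly from Corollary~\ref{Ca.M1_inflated} with $k=0$.
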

\begin{proof}
    For each $\phi \in L_{p}(\mathcal{H}^d\lfloor_E)$, Lemma~\ref{Lm.local_approximation_on_regular} gives
    \begin{equation}
    \begin{split}
        \|\phi\|_{\mathfrak{B}^{s-\theta/p}_{p, q, 1}(E)} = \|\phi\|_{L_p(\mathcal{H}^d\lfloor_E)}+ \|\{2^{ks}\widetilde{\mathcal{E}}_{\mathcal{H}^d\lfloor_E}(\phi, Q_k(\cdot))\}_{k=0}^{\infty}\|_{\ell_q(L_p(\mathbb{R}^n))} \approx\\ \|\phi\|_{L_p(\mathcal{H}^d\lfloor_E)}+\|\{{2^{k(s-\frac{\theta}{p})}\mathcal{E}}_{\mathcal{H}^d\lfloor_E}(\phi, Q_k(\cdot))\}_{k=0}^{\infty}\|_{\ell_q(L_p(\mathcal{H}^d\lfloor_E))} \approx \|\phi\|_{B^{s-\theta/p}_{p, q, 1}(E)},
    \end{split}
    \end{equation}
   where finitely many additional low-frequency terms are absorbed by $\|\phi\|_{L_p(\mathcal{H}^d\lfloor_E)}$.
    Therefore, Theorem~\ref{Th.main_stated_Besov} yields the desired result.
\end{proof}
Now we consider the Lizorkin--Triebel case.
The following porosity property of Ahlfors--David regular sets is well-known (see, e.g., \cite[Proposition~9.18]{Triebel_porousity}).
\begin{Lm}
\label{Lm.regular_set_porosity}
Let $E\subset\mathbb{R}^n$ be Ahlfors--David $d$-regular,
$d<n$. Then there exists $\eta\in(0,1)$ such that for every
$x\in E$ and every $r\in(0,1]$, there exists $y\in Q_r(x)$ such
that $Q_{\eta r}(y)\subset Q_{r}(x)\setminus E$.
\end{Lm}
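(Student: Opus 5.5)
The statement is the porosity lemma: for Ahlfors--David $d$-regular $E$ with $d<n$, every cube $Q_r(x)$, $x\in E$, $r\in(0,1]$, contains a subcube of side comparable to $r$ that misses $E$. I would argue by a counting (volume/packing) argument using the two-sided regularity estimate \eqref{eq.regular_sets_definition}.

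Fix $x\in E$, $r\in(0,1]$, and an integer $N\ge 4$ to be chosen. Subdivide $Q_{r/2}(x)$ into $N^n$ congruent closed subcubes $P_i$ of half side length $\rho:=\frac{r}{2N}$. Suppose, for contradiction, that every $P_i$ meets $E$; choose $z_i\in P_i\cap E$. The cubes $Q_{\rho}(z_i)$ have bounded overlap (each point lies in at most $3^n$ of them, since $z_i\in P_i$ and the $P_i$ form a grid of side $2\rho$), and all lie in $Q_r(x)$ because $\rho\le r/2$. By the lower regularity bound (applicable since $\rho\le 1$),
\begin{equation*}
N^n C_1\rho^d\le \sum_i \mathcal H^d(Q_\rho(z_i)\cap E)\le 3^n\,\mathcal H^d(Q_r(x)\cap E)\le 3^nC_2 r^d .
\end{equation*}
Substituting $\rho=r/(2N)$ gives $N^{n-d}\le 3^n2^dC_2/C_1$. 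Since $d<n$, this fails once $N$ is large enough, with $N$ depending only on $n,d,C_1,C_2$.

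Hence for such $N$ some $P_i$ does not meet $E$. Take $y$ to be the center of that $P_i$; then $Q_{\rho}(y)=P_i\subset Q_r(x)\setminus E$. The claim therefore holds with $\eta:=\frac{1}{2N}$, which is independent of $x$ and $r$.

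The only delicate point is the bounded-overlap count and making sure the lower bound is used only at admissible radii ($\rho\le r\le1$, centers in $E$). Both are routine, so I expect no real obstacle.
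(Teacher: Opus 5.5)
The paper gives no proof of this lemma; it only quotes it as well known from Triebel (\cite[Proposition~9.18]{Triebel_porousity}). Your packing argument is a correct, self-contained proof of it.

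The individual steps check out:
\begin{itemize}
\item The overlap bound $\sum_i\chi_{Q_\rho(z_i)}\le 3^n$ holds. For a point $w$, in each coordinate the interval $[w_j-\rho,w_j+\rho]$ can meet at most three of the grid intervals of length $2\rho$.
\item The inclusions $Q_\rho(z_i)\subset Q_{r/2+\rho}(x)\subset Q_r(x)$ are correct.
\item Both regularity bounds are applied only at admissible data: centres in $E$ and radii $\rho\le r\le 1$.
\item The arithmetic giving $N^{n-d}\le 3^n2^dC_2/C_1$ is right, so any $N$ with $N^{n-d}>3^n2^dC_2/C_1$ yields a cell $P_i$ disjoint from $E$.
\end{itemize}

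Compared with the citation, your route has three advantages:
\begin{itemize}
\item It gives an explicit $\eta=\frac{1}{2N}$ depending only on $n,d,C_1,C_2$.
\item It covers the whole range $0\le d<n$ in the statement. For $d=0$ the same computation works with $\mathcal H^0$ as counting measure.
\item It uses regularity only at scales in $(0,1]$, which is exactly what the paper's definition of Ahlfors--David regularity provides. So you never have to check that the cited result is stated under the same normalization.
\end{itemize}
The citation buys only brevity.
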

The argument used below for the key estimates in the Lizorkin--Triebel case is inspired by \cite[Section~6.2]{Ihnat}.
\begin{Lm}
\label{Lm.LT_on_regular}
Let $p\in [1, \infty)$, $q\in (0, \infty]$, and $\sigma \in (0, \infty)$.
    Assume that $d\in (0, n)$, and $E\subset\mathbb{R}^n$ is Ahlfors--David $d$-regular. If $s\in (\frac{n-d}{p}, 1)$, then there exists $C>0$ such that for each $\phi \in L_{\sigma}^{\loc}(\mathcal{H}^d\lfloor_E)$,
    \begin{equation}
        \label{eq.LT_on_regular}
        \|\{2^{ks} \widetilde{\mathcal{E}}_{\mathcal{H}^d\lfloor_E, \sigma}(\phi, Q_k(\cdot))\}_{k=0}^{\infty}\|_{L_p(\mathbb{R}^n, \ell_q)} \le C \|\{2^{k(s-\frac{\theta}{p})}{\mathcal{E}}_{\mathcal{H}^d\lfloor_E, \sigma}(\phi, Q_k(\cdot))\}_{k=-3}^{\infty}\|_{L_p(\mathcal{H}^d\lfloor_E, \ell_p)}
    \end{equation}
    and 
    \begin{equation}
        \label{eq.LT_on_regular_lower}
       \|\{2^{k(s-\frac{\theta}{p})}{\mathcal{E}}_{\mathcal{H}^d\lfloor_E, \sigma}(\phi, Q_k(\cdot))\}_{k=3}^{\infty}\|_{L_p(\mathcal{H}^d\lfloor_E, \ell_p)}\le C \|\{2^{ks} \widetilde{\mathcal{E}}_{\mathcal{H}^d\lfloor_E, \sigma}(\phi, Q_k(\cdot))\}_{k=0}^{\infty}\|_{L_p(\mathbb{R}^n, \ell_q)}.
    \end{equation}
\end{Lm}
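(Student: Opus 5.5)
The plan is to reduce both inequalities to Whitney-type geometry near $E$. Two facts do most of the work: $\mathcal{L}^n(E)=0$ for $d<n$, and the positivity $\theta/p>0$ lets us trade the $\ell_q$-norm for an $\ell_p$-norm using geometric decay. Throughout, set $\Phi_k:=\widetilde{\mathcal{E}}_{\mathcal{H}^d\lfloor_E,\sigma}(\phi,Q_k(\cdot))$ and $E_k:=\mathcal{E}_{\mathcal{H}^d\lfloor_E,\sigma}(\phi,Q_k(\cdot))$.

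\textbf{Upper bound \eqref{eq.LT_on_regular}.} Take a Whitney decomposition $\{Q_\kappa\}$ of $\mathbb{R}^n\setminus E$ and put $l:=k(\kappa)$.
\begin{enumerate}
\item Localize in $k$. For $x\in Q_\kappa$ we have $\operatorname{dist}(x,E)\approx 2^{-l}$, so $\Phi_k(x)=0$ unless $k\le l+c$.
\item Compare with $E$. As in \eqref{eq.local_approximation_on_regular3}, $\Phi_k(x)\lesssim E_{k-4}(y)$ for every $y\in\widetilde{Q}_\kappa$ and every $k\le l+c$.
\item Split the weight. Write $2^{ks}=2^{k(s-\theta/p)}\,2^{l\theta/p}\,2^{-(l-k)\theta/p}$ and set $a_k(y):=2^{k(s-\theta/p)}E_{k-4}(y)$. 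Since $\theta/p>0$, the factor $2^{-(l-k)\theta/p}$ is geometric in $l-k$, and we get
\begin{equation*}
\|\{2^{ks}\Phi_k(x)\}_{k}\|_{\ell_q}^p\lesssim 2^{l\theta}\sum_{k\le l+c}2^{-(l-k)\delta}a_k(y)^p
\end{equation*}
for some $\delta>0$. If $q\ge p$ this follows from $\ell_p\hookrightarrow\ell_q$. If $q<p$ it follows from H\"older's inequality, spending half of the geometric factor.
\item Integrate. Integrate over $Q_\kappa$, using $|Q_\kappa|\approx 2^{-ln}$, and average in $y$ over $\widetilde{Q}_\kappa$ with respect to $\mathcal{H}^d\lfloor_E$, using $\mathcal{H}^d(\widetilde{Q}_\kappa)\approx 2^{-ld}$. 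The powers of $2^l$ cancel exactly ($-ln+l\theta+ld=0$), leaving $\int_{\widetilde{Q}_\kappa}\sum_k 2^{-(l-k)\delta}a_k^p\,d\mathcal{H}^d$.
\item Sum over $\kappa$. For fixed $l$, the cubes $\widetilde{Q}_\kappa$ with $k(\kappa)=l$ have bounded overlap. Interchanging the sums over $l$ and $k$, the geometric factor $2^{-(l-k)\delta}$ is summable in $l\ge k-c$.
\item Handle large cubes. Whitney cubes with $l$ small only involve finitely many $k$; these terms are absorbed by the starting index $k=-3$ on the right-hand side.
\end{enumerate}

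\textbf{Lower bound \eqref{eq.LT_on_regular_lower}.} Fix $x\in E$ and $k\ge 3$, and let $G(z):=\|\{2^{js}\Phi_j(z)\}_j\|_{\ell_q}$.
\begin{enumerate}
\item Choose a good set. Let $P_k(x):=\{z\in Q_{k-1}(x):\operatorname{dist}(z,E)\ge\eta 2^{-k}\}$. By Lemma~\ref{Lm.regular_set_porosity}, $\mathcal{L}^n(P_k(x))\gtrsim 2^{-kn}$. Working with this set rather than a chosen porous subcube avoids measurability issues.
\item Pointwise bound. Lemma~\ref{Lm.local_approximation_outside_E} gives $E_k(x)\lesssim \Phi_{k-c}(z)$ for all $z\in P_k(x)$. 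Keeping only one term of the $\ell_q$-norm gives $2^{(k-c)s}\Phi_{k-c}(z)\le G(z)$.
\item Average. Raising to the power $p$ and averaging over $P_k(x)$,
\begin{equation*}
2^{k(s-\theta/p)p}E_k(x)^p\lesssim 2^{-k\theta}\,2^{kn}\int_{P_k(x)}G(z)^p\,dz.
\end{equation*}
\item Fubini. Sum over $k$ and integrate in $d\mathcal{H}^d(x)$. If $z\in P_k(x)$, then $x\in Q_{k-1}(z)$, so the $x$-integral contributes at most $\mathcal{H}^d(E\cap Q_{k-1}(z))\lesssim 2^{-kd}$. The powers of $2^k$ again cancel.
\item Bounded overlap in $k$. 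The point $z$ lies in the layer $\{\operatorname{dist}(\cdot,E)\approx 2^{-k}\}$, and these layers overlap boundedly as $k$ varies. Hence the total is $\lesssim\|G\|_{L_p(\mathbb{R}^n)}^p$, which is the claim.
\end{enumerate}

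\textbf{Main obstacle.} The delicate step is the passage from $\ell_q$ to $\ell_p$ in the upper bound when $q<p$. There the decay $2^{-(l-k)\theta/p}$, which relies on $d<n$, must be split so that one part pays for H\"older's inequality and the other part keeps the sum over Whitney generations $l$ convergent. The lower bound is comparatively routine once porosity supplies a set of points at distance $\approx 2^{-k}$ from $E$ with measure comparable to $2^{-kn}$.
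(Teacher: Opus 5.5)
Your proposal is correct and is in substance the paper's proof. For \eqref{eq.LT_on_regular} you use the same Whitney/projected-cube reduction, converting $\ell_q$ to $\ell_p$ pointwise on each $Q_\kappa$ (H\"older or $\ell_p\hookrightarrow\ell_q$ against $2^{-(l-k)\theta/p}$) where the paper sums first and then applies the discrete Hardy inequality; for \eqref{eq.LT_on_regular_lower} you use the same porosity, comparison and bounded overlap of distance layers, with Fubini over $P_k(x)$ replacing the paper's $5B$-covering by porous subcubes $R^k_l$.

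One bookkeeping slip: with the shift $E_{k-4}$ your right-hand side starts at index $-4$, and that term cannot simply be absorbed into those with $k\ge -3$. To fix this, use that only $k\le l-1$ contribute (since $\operatorname{dist}(x,E)\ge 2\cdot 2^{-l}$ on $Q_\kappa$), so $|x-y|\le 11\cdot 2^{-l}\le \frac{11}{2}2^{-k}$ and $Q_{2\cdot 2^{-k}}(x)\subset Q_{k-3}(y)$, which gives $\Phi_k(x)\lesssim E_{k-3}(y)$ exactly as in the paper.
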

\begin{proof}
Fix $\phi \in L_{\sigma}^{\loc}(\mathcal{H}^d\lfloor_E)$. For $j \in \mathbb{Z}$, we put $\Phi_j(x):= \widetilde{\mathcal{E}}_{\mathcal{H}^d\lfloor_E, \sigma}(\phi, Q_j(x))$ and $E_j(x):=\mathcal{E}_{\mathcal{H}^d\lfloor_E, \sigma}(\phi, Q_j(x))$ for brevity. Furthermore, we set $\Phi:=\|\{2^{js}\Phi_j\}_{j=0}^{\infty}\|_{\ell_q}$. Let $\{Q_{\kappa} = Q_{r_{\kappa}}(x_{\kappa})\}_{\kappa \in I}$ be a Whitney decomposition of $\mathbb{R}^n\setminus E$. We begin with \eqref{eq.LT_on_regular}.
    \par
    \emph{Step 1.} Take an arbitrary $\kappa \in I$ such that $k(\kappa)\ge 0$. Since $\operatorname{supp} \Phi_j \subset \overline{U_j(E)}$, it follows by property~\ref{cond:W2} of the Whitney decomposition that, for each $x\in Q_{\kappa}$,
    \begin{equation}
        \label{eq.LT_on_regular1}
        \Phi(x) = \|\{2^{js}\Phi_j(x)\}_{j=0}^{k(\kappa)-1}\|_{\ell_q}.
    \end{equation}
    Using \eqref{eq.distance_to_projection}, we get, for every $j\in \{0, \ldots, k(\kappa)-1\}$, $x\in Q_{\kappa}$ and $y\in \widetilde{Q}_{\kappa}$,
    \begin{equation}
        \label{eq.LT_on_regular2}
        |x-y| \le |x-x_{\kappa}| + |x_{\kappa}-\widetilde{x}_{\kappa}|+|\widetilde{x}_{\kappa}-y| \le 11\cdot 2^{-k(\kappa)} \le 6\cdot 2^{-j}.
    \end{equation}
    Consequently, arguing as in Lemma~\ref{Lm.local_approximation_on_regular}, we obtain
    \begin{equation}
        \label{eq.LT_on_regular3}
        \Phi(x) \lesssim \inf_{y\in \widetilde{Q}_{\kappa}} \|\{2^{js}E_{j-3}(y)\}_{j=0}^{k(\kappa)-1}\|_{\ell_q}.
    \end{equation}
    Averaging the latter estimate over $\widetilde{Q}_{\kappa}$ with respect to $\mathcal{H}^d\lfloor_E$ then gives
    \begin{equation}
        \label{eq.LT_regular4}
        \begin{split}
        \|\Phi\|_{L_p(Q_{\kappa})}^p \lesssim 2^{-k(\kappa)(n-d)}\|\|\{2^{js}E_{j-3}(y)\}_{j=0}^{k(\kappa)-1}\|_{\ell_q}\|_{L_p(\widetilde{Q}_{\kappa}, \mathcal{H}^d\lfloor_E)}^p.
        \end{split}
    \end{equation}
    Since $\operatorname{supp}\Phi \subset \overline{U_0(E)}$, we consider only $\kappa\in I$ with $k(\kappa)\ge 1$.
    Summing over the Whitney cubes and taking into account that $\mathcal{L}^n(E) = 0$, we obtain
    \begin{equation}
        \label{eq.LT_regular5}
        \|\Phi\|_{L_p(\mathbb{R}^n)}^p \lesssim \sum_{k=0}^{\infty} 2^{-k(n-d)}\int\limits_{E}\|\{2^{js}E_{j-3}(y)\}_{j=0}^{k-1}\|_{\ell_q}^pd\mathcal{H}^d\lfloor_E(y).
    \end{equation} 
    Using the Hardy inequality when $p\ge q$ and subadditivity when $p<q$, we obtain
    \begin{equation}
    \label{eq.LT_regular6}
        \sum_{k=0}^{\infty} 2^{-k(n-d)}\|\{2^{js}E_{j-3}(y)\}_{j=0}^{k-1}\|_{\ell_q}^p \lesssim \|\{2^{j(s-\frac{n-d}{p})}E_{j-3}(y)\}_{j=0}^{\infty}\|_{\ell_p}^p.
    \end{equation}
    Combining \eqref{eq.LT_regular5} and \eqref{eq.LT_regular6} gives \eqref{eq.LT_on_regular}.
    \par
    \emph{Step 2.} We now prove \eqref{eq.LT_on_regular_lower}. Fix $k\ge3$. By the $5B$-covering lemma, there exists a
pairwise disjoint collection of cubes $Q_l^k:=Q_{2^{-k}}(x_{k,l})$, $x_{k,l}\in E$,
such that $E\subset\bigcup_l5Q_l^k$.
By Lemma~\ref{Lm.regular_set_porosity}, for every $l$ there exists
$y_{k,l}\in Q_l^k$ such that $Q_{\eta2^{-k}}(y_{k,l})\subset Q_l^k\setminus E$. We put $ R_l^k:=Q_{\frac{\eta}{2}2^{-k}}(y_{k,l})$.
Then $\mathcal{L}^n(R_l^k)\approx 2^{-kn}$,
and, for every $y\in R_l^k$,
\begin{equation}
    \frac{\eta}{2}2^{-k}
    \le \operatorname{dist}(y,E)
    \le 2^{-k}.
\end{equation}
     For every $y\in R^k_l$ and $x\in 5Q^k_l$, we clearly have $|x-y|\le 6 \cdot 2^{-k}$. Hence, Lemma~\ref{Lm.local_approximation_outside_E} gives $E_k(x) \lesssim \Phi_{k-3}(y)$.
    Averaging over $y \in R_l^k$ with respect to $\mathcal{L}^n$ then yields
    \begin{equation}
        E_{k}(x)\lesssim \fint\limits_{R_l^k}\Phi_{k-3}(y)dy.
    \end{equation}
    Since, for each $k\ge 3$, the family $\{5Q_{l}^k\}_{l}$ forms a covering of $E$ with uniformly bounded multiplicity, we obtain
    \begin{equation}
        \|E_k\|_{L_p(\mathcal{H}^d\lfloor_E)}^p \lesssim \sum_l \int\limits_{5Q^k_l}E_k(x)^pd\mathcal{H}^d\lfloor_E(x) \lesssim 2^{-k(d-n)}\sum_{l}  \int\limits_{R_l^k}\Phi_{k-3}(y)^pdy.
    \end{equation}
    If $y\in R^k_l$, then $\frac{\eta}{2}2^{-k}\le\operatorname{dist}(y, E) \le 2^{-k}$. Furthermore, for every fixed $k\ge 3$, the cubes $\{R^k_l\}_l$ are pairwise disjoint. Therefore, the family $\{R^k_l\}_{k\ge 3, l}$ has bounded multiplicity.
    Consequently,
    \begin{equation}
    \begin{split}
        \sum_{k=3}^{\infty}2^{k(ps-\theta )}\|E_k\|_{L_p(\mathcal{H}^d\lfloor_E)}^p \lesssim \sum_{k=3}^{\infty} 2^{kps}\sum_{l}\int\limits_{R^k_l} \Phi_{k-3}(y)^pdy \\ \lesssim \int\limits_{\mathbb{R}^n}\Bigl(\sup_{k\ge 0}2^{ks}\Phi_k(y) \Bigr)^pdy \le \int\limits_{\mathbb{R}^n}\Phi(y)^pdy.
    \end{split}
    \end{equation}
    The proof is complete.
\end{proof}
In particular, for $s\in (\frac{\theta}{p}, 1)$, $p\in [1, \infty)$, $q\in (0, \infty]$, and $\sigma \in (0, \min\{p, q\})$, the latter lemma yields
\begin{equation}
    \|\phi\|_{\mathfrak{F}^{s-\theta/p}_{p, q, \sigma}} \approx \|\phi\|_{B^{s-\theta/p}_{p, p, \sigma}(E)}.
\end{equation}
Combining this observation with Theorem~\ref{Th.main_stated_LT}, we obtain the following result (compare, e.g., \cite{saks}).
\begin{Th}
    \label{Th.LT_on_regular}
    Let $E\subset \mathbb{R}^n$ be Ahlfors--David $d$-regular, $d\in (0, n)$. Assume that $p\in [1, \infty)$, $q\in (0, \infty]$, $s\in (\frac{n-d}{p}, 1)$. Then $\phi \in L_p(\mathcal{H}^d\lfloor_E)$ belongs to the space $F^s_{p, q}(\mathbb{R}^n)\big|_E^{\mathcal{H}^d\lfloor_E}$ if and only if $\phi \in B^{s-(n-d)/p}_{p, p}(E)$.
    Furthermore, $ \|\phi\|_{F^s_{p, q}(\mathbb{R}^n)\big|_E^{\mathcal{H}^d\lfloor_E}}\approx \|\phi\|_{{B}^{s-(n-d)/p}_{p, p}(E)}$.  Finally, there exists a bounded extension operator $\Ext_{\mathcal{H}^d\lfloor_E}: F^s_{p, q}(\mathbb{R}^n)\big|_E^{\mathcal{H}^d\lfloor_E} \to F^s_{p, q}(\mathbb{R}^n)$. If $p>1$ and $q\ge1$, the extension operator can be chosen to be bounded and linear.
\end{Th}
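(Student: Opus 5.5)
The plan is to reduce Theorem~\ref{Th.LT_on_regular} to Theorem~\ref{Th.main_stated_LT}, using Lemma~\ref{Lm.regular_hence_thick} and Lemma~\ref{Lm.LT_on_regular}. By Lemma~\ref{Lm.regular_hence_thick}, $E$ is $d$-thick, and the constant sequence $\fm_k:=\mathcal{H}^d\lfloor_E$ is strongly $d$-regular. In particular $\fm_0=\mathcal{H}^d\lfloor_E$, so the trace space in Theorem~\ref{Th.main_stated_LT} is exactly $F^s_{p,q}(\mathbb{R}^n)\big|_E^{\mathcal{H}^d\lfloor_E}$.

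Fix $\sigma\in(0,\min\{p,q\})$ with $\sigma\le 1$; if $p>1$ and $q\ge1$, take $\sigma=1$ instead. Then $\sigma<p$ and $\sigma\le q$, so every hypothesis of Theorem~\ref{Th.main_stated_LT} holds. For $\phi\in L_p(\mathcal{H}^d\lfloor_E)$ we also have $\phi\in L^{\loc}_\sigma(\{\fm_k\})$, since $\sigma\le p$ and $\mathcal{H}^d\lfloor_E$ is locally finite. Theorem~\ref{Th.main_stated_LT} then gives that $\phi$ is a trace if and only if $\|\phi\|_{\mathfrak{F}^{s-\theta/p}_{p,q,\sigma}(E)}<\infty$, with $\|\phi\|_{\mathfrak{F}^{s-\theta/p}_{p,q,\sigma}(E)}$ equivalent to the quotient norm. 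It also provides an extension operator, linear when $p>1$ and $q\ge1$. So the whole statement reduces to proving
\[
\|\phi\|_{\mathfrak{F}^{s-\theta/p}_{p,q,\sigma}(E)}\approx\|\phi\|_{B^{s-\theta/p}_{p,p,\sigma}(E)} .
\]

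\emph{Upper bound.} Start from \eqref{eq.LT_on_regular}. Its right-hand side is $\bigl(\sum_{k\ge-3}2^{k(s-\theta/p)p}\|E_k\|^p_{L_p(\mathcal{H}^d\lfloor_E)}\bigr)^{1/p}$, because $L_p(\ell_p)=\ell_p(L_p)$. The terms with $k\ge0$ form precisely the $b^{s-\theta/p}_{p,p,\sigma}(E)$ seminorm. The finitely many terms $k\in\{-3,-2,-1\}$ are low-frequency: bound $E_k(x)$ by $\bigl(\fint_{Q_k(x)}|\phi|^\sigma d\mathcal{H}^d\bigr)^{1/\sigma}$ (take $c=0$), apply Jensen's inequality (valid as $\sigma\le p$), then Fubini together with Ahlfors regularity. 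This bounds these terms by $\|\phi\|_{L_p(\mathcal{H}^d\lfloor_E)}$, giving $\lesssim$.

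\emph{Lower bound.} Use \eqref{eq.LT_on_regular_lower}, which controls the $k\ge3$ part of the $B_{p,p}$ seminorm. The terms $k=0,1,2$ are again bounded by $\|\phi\|_{L_p}$ in the same way.

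The only delicate point is the low-frequency bookkeeping, namely verifying $\|E_k\|_{L_p(\mathcal{H}^d\lfloor_E)}\lesssim\|\phi\|_{L_p(\mathcal{H}^d\lfloor_E)}$ uniformly for the bounded range of $k$. This uses Ahlfors regularity at scales up to $2^3$ (the definition covers only $r\le 1$), so the covering-number argument there costs a constant. Finally, $B^{s-\theta/p}_{p,p}(E)$ is independent of the admissible $\sigma$ by its definition, so the choice of $\sigma$ above causes no loss.
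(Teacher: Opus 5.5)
Your proposal is correct and follows the paper's route. The paper also invokes Lemma~\ref{Lm.regular_hence_thick} for the constant sequence $\fm_k=\mathcal{H}^d\lfloor_E$, uses the two estimates of Lemma~\ref{Lm.LT_on_regular} to get $\|\phi\|_{\mathfrak{F}^{s-\theta/p}_{p,q,\sigma}(E)}\approx\|\phi\|_{B^{s-\theta/p}_{p,p,\sigma}(E)}$, and then applies Theorem~\ref{Th.main_stated_LT}. Your treatment of the low-frequency indices $k=-3,-2,-1$ and $k=0,1,2$ via Jensen, Fubini and Ahlfors regularity, and your choice of $\sigma$, supply bookkeeping that the paper leaves implicit; the linear extension for $p>1$, $q\ge1$ already comes from Theorem~\ref{Th.main_stated_LT} itself.
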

We emphasize that our main results also cover traces to Ahlfors--David $n$-regular sets, and give a linear extension operator for $p\in [1, \infty]$, $q\in (0, \infty]$ in the Besov case and for $p\in (1, \infty)$, $q\in [1, \infty]$ in the Lizorkin--Triebel space. These facts are related to results in \cite{Heikkinen, Rychkov, shvartsman_regular}. In this case, the trace norm can also be made more transparent for some parameters.
\begin{Lm}
    \label{Lm.trace_to_n_regular}
    Let $s\in (0, 1)$, $p\in [1, \infty]$, and $q\in (0, \infty]$, and let $E \subset \mathbb{R}^n$ be Ahlfors--David $n$-regular. Let $\fm_k := \mathcal{H}^n\lfloor_E$, $k\in \mathbb{N}_0$, be the corresponding strongly $n$-regular sequence of measures. Then for each $\phi \in L_p(\mathcal{H}^n\lfloor_E)$ and each $\sigma \in (0, \infty)$ satisfying $\sigma \le p$,
    \begin{equation}
        \|\phi\|_{\mathfrak{B}^{s}_{p, q, \sigma}(E)} \approx \|\phi\|_{B^s_{p, q, \sigma}(E)}.
    \end{equation}
    If, in addition, $p\in (1, \infty)$, $q\in (1, \infty]$, and $\sigma <\min\{p, q\}$, then
    \begin{equation}
        \|\phi\|_{\mathfrak{F}^s_{p, q, \sigma}(E)} \approx \|\phi\|_{F^s_{p, q, \sigma}(E)}.
    \end{equation}
\end{Lm}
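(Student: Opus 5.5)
The plan is to compare the ambient functional with the intrinsic one directly. Since $d=n$, we have $\theta=0$, and by Lemma~\ref{Lm.regular_hence_thick} the sequence $\fm_k:=\mathcal{H}^n\lfloor_E$ is strongly $n$-regular. The $L_p(\fm_0)$ parts of the two norms coincide, so only the seminorms need to be compared. For $j\in\mathbb{Z}$ write
\[
\Phi_j:=\widetilde{\mathcal{E}}_{\mathcal{H}^n\lfloor_E,\sigma}(\phi,Q_j(\cdot)), \qquad E_j:=\mathcal{E}_{\mathcal{H}^n\lfloor_E,\sigma}(\phi,Q_j(\cdot)).
\]
Because $E$ is $n$-regular, $\mathcal{L}^n\lfloor_E\approx\mathcal{H}^n\lfloor_E$. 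I will split $\mathbb{R}^n=E\cup(\mathbb{R}^n\setminus E)$.

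\emph{Lower bound.} By Remark~\ref{Rm.two_local_approximations_relation}, $E_k(x)\lesssim\Phi_k(x)$ for $x\in E$. Hence
\[
\|\{2^{ks}E_k\}\|_{\ell_q(L_p(\mathcal{H}^n\lfloor_E))}\lesssim\|\{2^{ks}\Phi_k\}\|_{\ell_q(L_p(E,\mathcal{L}^n))}\le\|\phi\|_{\mathfrak{b}^s_{p,q,\sigma}(E)},
\]
and the same pointwise argument gives the $L_p(\ell_q)$ version. This direction is therefore immediate for both $B$ and $F$, with no restriction on $p$ or $q$.

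\emph{Upper bound.} On $E$ itself, $\Phi_k(x)=E_{k-1}(x)$. Off $E$, I use the Whitney decomposition together with the sets $U_\kappa\subset 10Q_\kappa\cap E$ from Theorem~\ref{Th.substitute_of_projections}. Fix $x\in Q_\kappa$, and note that $\Phi_j(x)=0$ unless $j\lesssim k(\kappa)$. Arguing as in \eqref{eq.local_approximation_on_regular3}, every $y\in U_\kappa$ satisfies $|x-y|\lesssim 2^{-k(\kappa)}\lesssim 2^{-j}$, so
\[
\Phi_j(x)\lesssim E_{j-c}(y) \quad\text{for a fixed integer } c.
\]
Taking the infimum over $y\in U_\kappa$, then averaging the $p$-th power over $U_\kappa$, and using $\mathcal{L}^n(Q_\kappa)\le c_1\mathcal{L}^n(U_\kappa)$ together with the bounded overlap of $\{U_\kappa\}$, I get in the Besov case
\[
\int_{\mathbb{R}^n\setminus E}\Phi_j^p\,dx\lesssim\int_E E_{j-c}^p\,d\mathcal{H}^n.
\]
In the Lizorkin--Triebel case, the same argument applied to $\|\{2^{js}\Phi_j(x)\}_j\|_{\ell_q}$ gives
\[
\|\phi\|_{\mathfrak{f}^s}\lesssim\|\{2^{js}E_{j-c}\}_{j\ge0}\|_{L_p(\mathcal{H}^n\lfloor_E,\ell_q)}.
\]
The finitely many negative indices are absorbed into $\|\phi\|_{L_p}$, using $E_j\le(A_{j,\fm_0}|\phi|^\sigma)^{1/\sigma}$, Jensen's inequality ($\sigma\le p$) and Fubini's theorem, exactly as in Step~3 of Theorem~\ref{Th.main_inverse_Besov}.

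\emph{Main obstacle: the dilation shift.} It remains to replace $E_{j-c}$, i.e.\ oscillations over the enlarged cubes $2^cQ_j(y)$, by $E_j$. In the Besov case this is routine: cover $2^cQ_j(y)\cap E$ by boundedly many cubes $Q_j(z_i)$ with $z_i\in E$. Each median comparison $|m_{Q_j(z_i)}-m_{Q_j(z_{i'})}|$ along a chain is then bounded, via Lemma~\ref{Lm.meadian_average_relation}, by an average of $E_{j}$ over $E$-points at distance $\lesssim2^{-j}$; after Fubini in $L_p(\mathcal{H}^n\lfloor_E)$ this yields $\|E_{j-c}\|_{L_p}\lesssim\|E_j\|_{L_p}$ for any $\sigma\le p$. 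The point needing care is that $E$ may be disconnected; this is handled because the $n$-regularity and the doubling property of $\mathcal{H}^n\lfloor_E$ make the averages comparable.

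In the Lizorkin--Triebel case the same chaining gives only
\[
E_{j-c}(y)^\sigma\lesssim M_E[E_j^\sigma](y),
\]
where $M_E$ is the maximal operator on the doubling space $(E,\mathcal{H}^n\lfloor_E)$. I would then apply the Fefferman--Stein inequality (Theorem~\ref{Th.Fefferman_Stein_inequality}, valid on spaces of homogeneous type) in $L_{p/\sigma}(\ell_{q/\sigma})$. This requires $p/\sigma>1$ and $q/\sigma>1$, which is exactly why the hypotheses $p\in(1,\infty)$, $q\in(1,\infty]$ and $\sigma<\min\{p,q\}$ appear. This is the step I expect to be the crux. Should a pointwise chaining bound fail, I would instead invoke the known independence of the intrinsic $F^s_{p,q}(E)$ norm from the dilation factor of the cubes (cf.\ \cite{AKZ}).
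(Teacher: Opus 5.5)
The step you call the crux is not needed, and as you state it, it is false. The index shift costs nothing, because it is just a reindexing:
\[
\bigl\|\{2^{js}E_{j-c}\}_{j\ge 0}\bigr\|_{\ell_q}=2^{cs}\bigl\|\{2^{ks}E_{k}\}_{k\ge -c}\bigr\|_{\ell_q}.
\]
The finitely many terms with $-c\le k\le -1$ are exactly the low-frequency terms you already absorb into $\|\phi\|_{L_p(\mathcal{H}^n\lfloor_E)}$. This is how the paper handles the same shift; compare the range $k\ge -3$ in \eqref{eq.LT_on_regular} and the remark on low-frequency terms in the proof of Theorem~\ref{Th.Besov_on_regular}.

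By contrast, the per-scale bound $\|E_{j-c}\|_{L_p}\lesssim\|E_j\|_{L_p}$ and the pointwise bound $E_{j-c}^\sigma\lesssim M_E[E_j^\sigma]$ both fail for disconnected $E$. Doubling does not rescue the chaining. Fix $j\ge 3$ and take $E=[0,1]^n\cup\bigl([1+3\cdot 2^{-j},2+3\cdot 2^{-j}]\times[0,1]^{n-1}\bigr)$, which is $n$-regular with constants independent of $j$. Let $\phi$ be the indicator of the second cube.
\begin{itemize}
\item Every $Q_j(z)$ with $z\in E$ meets only one of the two cubes, so $E_j\equiv 0$ on $E$.
\item $E_{j-2}>0$ on a set of positive measure near the gap.
\end{itemize}
The oscillation across the gap is recorded at a coarser level of the same sum, which is why reindexing loses nothing. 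Your fallback, independence of the norm from the dilation factor of the cubes, is correct, but it amounts to this reindexing.

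With that step deleted, your argument is complete. Your upper estimate is the paper's: Whitney cubes, the sets $U_\kappa$ of Theorem~\ref{Th.substitute_of_projections} in place of $\widetilde{Q}_\kappa$, and bounded overlap of $\{U_\kappa\}$ over all $\kappa$. There is no factor $2^{-k(\kappa)\theta}$, since $\theta=0$.

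You genuinely differ in the reverse estimate.
\begin{itemize}
\item In the Lizorkin--Triebel case the paper averages, $E_k(x)\lesssim\fint_{Q_k(x)}\Phi_k(y)\,dy\le M[\Phi_k](x)$, and then applies the Fefferman--Stein inequality. That is the only place where $p>1$ and $q>1$ are used.
\item You instead restrict the $L_p(\mathbb{R}^n,\ell_q)$ norm to $E$. This is legitimate because $\mathcal{H}^n\lfloor_E\lesssim\mathcal{L}^n\lfloor_E$. You then use $E_k\lesssim\Phi_k$ pointwise on $E$ (Remark~\ref{Rm.two_local_approximations_relation}), with no maximal function at all.
\end{itemize}
Consequently your repaired argument uses only $\sigma\le p$, for the low-frequency terms, and never $p>1$ or $q>1$. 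Your explanation of where those hypotheses enter is therefore mistaken. In the paper they come only from the Fefferman--Stein step of its reverse estimate, not from any dilation issue.
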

\begin{proof}
    The proof follows the arguments used in Lemma~\ref{Lm.local_approximation_on_regular} and Lemma~\ref{Lm.LT_on_regular} with $\{U_{\kappa}\}$ from Theorem~\ref{Th.substitute_of_projections} instead of $\{\widetilde{Q}_{\kappa}\}$ in the upper estimate. For the reverse Lizorkin--Triebel estimate, the porosity argument used in Step~2 of Lemma~\ref{Lm.LT_on_regular} is unnecessary. Instead, for every \(k\in\mathbb N_0\) and \(x\in E\), we have
    \begin{equation}
        \mathcal{E}_{\mathcal{H}^n\lfloor_{E}, \sigma}(\phi, Q_{k}(x))  \lesssim \fint\limits_{Q_k(x)} \widetilde{\mathcal{E}}_{\mathcal{H}^n\lfloor_E, \sigma}(\phi, Q_{k}(y))dy.
    \end{equation}
   The required estimate then follows from the Fefferman--Stein inequality.
\end{proof}
Combining the preceding lemma with Theorem~\ref{Th.main_stated_Besov} and~\ref{Th.main_stated_LT}, we obtain the following result (compare, e.g., \cite{shvartsman_regular}).
\par
\begin{Th}
     Let $E\subset \mathbb{R}^n$ be Ahlfors--David $n$-regular. Assume that $p\in [1, \infty]$, $q\in (0, \infty]$, $s\in (0, 1)$. Then $\phi \in L_p(\mathcal{H}^n\lfloor_E)$ belongs to the space $B^s_{p, q}(\mathbb{R}^n)\big|_E^{\mathcal{H}^n\lfloor_E}$ if and only if $\phi \in B^{s}_{p, q}(E)$.
    Furthermore, $ \|\phi\|_{B^s_{p, q}(\mathbb{R}^n)\big|_E^{\mathcal{H}^n\lfloor_E}}\approx \|\phi\|_{{B}^{s}_{p, q}(E)}$. Finally, there exists a bounded linear extension operator $\Ext_{\mathcal{H}^n\lfloor_E}: B^s_{p, q}(\mathbb{R}^n)\big|_E^{\mathcal{H}^n\lfloor_E} \to B^s_{p, q}(\mathbb{R}^n)$. 
    \par
    Assume additionally that $p, q>1$. Then $\phi \in L_p(\mathcal{H}^n\lfloor_E)$ belongs to the space $F^s_{p, q}(\mathbb{R}^n)\big|_E^{\mathcal{H}^n\lfloor_E}$ if and only if $\phi \in F^{s}_{p, q}(E)$.
    Furthermore, $ \|\phi\|_{F^s_{p, q}(\mathbb{R}^n)\big|_E^{\mathcal{H}^n\lfloor_E}}\approx \|\phi\|_{{F}^{s}_{p, q}(E)}$. Finally, there exists a bounded linear extension operator $\Ext_{\mathcal{H}^n\lfloor_E}: F^s_{p, q}(\mathbb{R}^n)\big|_E^{\mathcal{H}^n\lfloor_E} \to F^s_{p, q}(\mathbb{R}^n)$. 
\end{Th}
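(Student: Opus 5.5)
The plan is to obtain the statement by combining Lemma~\ref{Lm.trace_to_n_regular} with Theorems~\ref{Th.main_stated_Besov} and~\ref{Th.main_stated_LT}, specialized to $d=n$, $\theta=0$. By Lemma~\ref{Lm.regular_hence_thick}, an Ahlfors--David $n$-regular set $E$ is $n$-thick, and the constant sequence $\fm_k:=\mathcal{H}^n\lfloor_E$ is strongly $n$-regular. Since $d=n$, the condition $s>\frac{n-d}{p}=0$ holds automatically for every $s\in(0,1)$, so the full parameter range of the statement is admissible in both main theorems. The trace class $\Tr\big|_E^{\fm_0}$ then coincides with $\Tr\big|_E^{\mathcal{H}^n\lfloor_E}$, and $L_\sigma^{\loc}(\{\fm_k\})=L_\sigma^{\loc}(\mathcal{H}^n\lfloor_E)\supset L_p(\mathcal{H}^n\lfloor_E)$ whenever $\sigma\le p$, so every $\phi$ in the statement is admissible input.

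\emph{Besov case.} Fix $\sigma\le p$, for instance $\sigma=1$ when $p\ge1$. Theorem~\ref{Th.main_stated_Besov} gives $\phi\in B^s_{p,q}(\mathbb{R}^n)\big|_E^{\mathcal{H}^n\lfloor_E}$ if and only if $\|\phi\|_{\mathfrak{B}^{s}_{p,q,\sigma}(E)}<\infty$, with equivalent quasi-norms. Lemma~\ref{Lm.trace_to_n_regular} identifies $\|\phi\|_{\mathfrak{B}^{s}_{p,q,\sigma}(E)}\approx\|\phi\|_{B^s_{p,q,\sigma}(E)}$, and the latter is an equivalent quasi-norm on $B^s_{p,q}(E)$ by definition. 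The linear extension operator is the operator $\Ext_{\mathcal{A}}$ built from the averages $A_{k,\fm_k}$ in the proof of Theorem~\ref{Th.main_stated_Besov}; it is linear for the full range $p\in[1,\infty]$, $q\in(0,\infty]$.

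\emph{Lizorkin--Triebel case.} Assume $p,q>1$, and note that $p<\infty$ is implicitly required for $F$-spaces. Choose $\sigma=1<\min\{p,q\}$. Then the hypotheses of Theorem~\ref{Th.main_stated_LT} hold, namely $\sigma<p$ and $\sigma\le q$, and so do the extra hypotheses $p\in(1,\infty)$, $q\in(1,\infty]$, $\sigma<\min\{p,q\}$ of the second part of Lemma~\ref{Lm.trace_to_n_regular}. Theorem~\ref{Th.main_stated_LT} characterizes the trace space by $\|\phi\|_{\mathfrak{F}^s_{p,q,1}(E)}$. The lemma converts this to $\|\phi\|_{F^s_{p,q,1}(E)}$, which is an equivalent quasi-norm on $F^s_{p,q}(E)$. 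Since $p>1$ and $q\ge1$, the final clause of Theorem~\ref{Th.main_stated_LT} supplies a bounded linear extension.

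The main obstacle is bookkeeping rather than new analysis. One must check that the trace space defined relative to $\fm_0$ is the one in the statement, which holds since $\fm_0=\mathcal{H}^n\lfloor_E$. One must also check that every $\phi\in L_p(\mathcal{H}^n\lfloor_E)$ with finite intrinsic norm satisfies the local integrability hypothesis of the main theorems, which holds since $\sigma\le p$ and the measures are locally finite. Finally, the $\sigma$-independence built into the definitions of $B^s_{p,q}(E)$ and $F^s_{p,q}(E)$ lets us fix $\sigma=1$ throughout.
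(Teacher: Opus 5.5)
Your proposal is correct and follows the same route as the paper, which obtains this theorem by combining Lemma~\ref{Lm.trace_to_n_regular} with Theorems~\ref{Th.main_stated_Besov} and~\ref{Th.main_stated_LT} in the case $d=n$, $\theta=0$. As in the paper, you use the constant strongly $n$-regular sequence $\fm_k=\mathcal{H}^n\lfloor_E$ and take $\sigma=1$, so that integral averages give the linear extension operator.
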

\subsection{Other examples}
Here we briefly discuss several other applications of the general results obtained above. Two main issues arise when applying Theorems~\ref{Th.main_stated_Besov} and~\ref{Th.main_stated_LT}. First, constructing a strongly \(d\)-regular sequence of measures may not be immediate. Second, one would like to replace the trace norm appearing in our main results by a more explicit equivalent intrinsic norm (see, e.g., \cite{tyul_picewise_regular}). The first two examples below were considered in \cite{tyul, tyulenev_thick_sets_R_n}, where explicit regular sequences of measures were constructed.
\par
\textit{Example 1.} In this example, we consider \emph{the cusp}. Let $\beta:[0, \infty) \to [0, \infty)$ be a strictly increasing continuous function such that $\beta(0) = 0$. Let $\beta^{-1}$ be its inverse function. Then the cusp
\begin{equation}
    G^{\beta} = \{x = (x', x_n) \in \mathbb{R}^n: \max_{1\le i \le n-1}|x_i| \le \beta(x_n)\}
\end{equation}
is $1$-thick (see \cite[Example~6.3]{tyulenev_thick_sets_R_n}). Furthermore, the sequence of weighted measures $\fm_k := w_k^{\beta}\mathcal{H}^n\lfloor_{G^{\beta}}$ is $1$-regular, where
\begin{equation}
    w_k^{\beta}(x) := w_k^{\beta}(x', x_n) := \begin{cases}
        \beta(x_n)^{1-n}, \quad x_n \in [0, \beta^{-1}(2^{-k})], \\
        2^{k(n-1)}, \quad x_n \ge \beta^{-1}(2^{-k}),\\
        0, \quad x \notin G^{\beta}.
    \end{cases}
\end{equation}
The Lebesgue density theorem. also shows that this sequence is strongly $1$-regular. Thus, our results give an intrinsic description of traces of Besov and Lizorkin--Triebel spaces to the cusp $G^{\beta}$.
\par
\textit{Example 2.} In this example we recall the construction of a strongly $d$-regular sequence of measures on the union of Ahlfors--David regular sets (see \cite[Example~5.2]{tyul}). Let $E_{i}$, $i \in \{1, \ldots, N\}$, be Ahlfors--David $d_i$-regular sets, $d_i \in [0, n]$, and let $E := \bigcup_{i=1}^N E_i$. Then $E$ is $d$-thick, where $d := \min\{d_1, \ldots, d_N\}$, and the sequence $\fm_k := \sum_{i=1}^N 2^{k(d_i-d)}\mathcal{H}^{d_i}\lfloor_{E_i}$, $k\in \mathbb{N}_0$, is strongly $d$-regular.
\par
The next example was considered in \cite{tyul} for Sobolev spaces and provides an example in which the trace norm can be simplified. 
\par
\textit{Example 3.} In this example we consider a particular union of Ahlfors--David regular sets and give a simplified form of the trace norm (see also \cite[Example~11.4]{tyul}). Let $n\ge 2$, let $\underline{Q} = Q_{R}(x)$, and let $\gamma:[0, 1] \to \mathbb{R}^n$ be a rectifiable curve such that $\Gamma = \gamma([0, 1])$ is Ahlfors--David $1$-regular. Assume that $\Gamma \cap \underline{Q} = \{\underline{x}\}$ for some $\underline{x} \in \partial \underline{Q}$. Suppose moreover that there exists $\lambda > 0$ such that $\operatorname{dist}(\gamma(t), \underline{Q}) \ge \lambda \ell(\gamma([0, t]))$ for all $t\in [0, 1]$. We set $E = \underline{Q}\cup \Gamma$. By the preceding example, $E$ is $1$-thick, and the sequence $\fm_k = 2^{k(n-1)}\mathcal{H}^n\lfloor_{\underline{Q}} + \mathcal{H}^1\lfloor_{\Gamma}$, $k \in \mathbb{N}_0$, is strongly $1$-regular.
\par
By the equivalence of the Besov-type functionals, it suffices to consider $\sigma=1$. Fix $p\in (n-1, \infty]$, $q\in (0, \infty]$, and $s \in (\frac{n-1}{p}, 1)$. For each $\phi \in L_{p}(\fm_0)$ and each $k \in \mathbb{N}_0$, we define the \emph{gluing function} by
\begin{equation}
    \operatorname{gl}_{k}(\phi, \underline{x}) = \fint\limits_{\underline{Q}\cap Q_k(\underline{x})} \fint\limits_{\Gamma \cap Q_k(\underline{x})} |\phi(y) - \phi(z)|d\mathcal{H}^1(z) d\mathcal{H}^n(y).
\end{equation}
\par
Let $\underline{k} := \min\{k\in \mathbb{N}: 2^k > 2+\frac{4}{\lambda}\}$. The choice of \(\underline k\) ensures that, for \(k>\underline k+1\), a cube \(Q_{k-1}(x)\) can meet both \(\underline Q\) and \(\Gamma\) only when \(x\) lies within \(O(2^{-k})\) of the junction point \(\underline x\). Thus, away from \(\underline x\), the two components may be treated separately by the usual trace estimates, whereas near \(\underline x\) their interaction is measured by the gluing function.
\par
Fix $k\le \underline{k}+1$. Assume that $x\in \mathbb{R}^n$ satisfies $\widetilde{\mathcal{E}}_{\fm_k}(\phi, Q_k(x)) \neq 0$. Then $Q_k(x) \cap E \neq \emptyset$, and hence, by Corollary~\ref{Ca.relaxed_doubling_property}, we have $\fm_k(2Q_k(x)) \approx 2^{-k}$. Consequently, using Jensen's inequality, Fubini's theorem, and the definition of $\mathcal{\widetilde{E}}_{\fm_k}$, we obtain
\begin{equation}
    \|\widetilde{\mathcal{E}}_{\fm_k}(\phi, Q_k(\cdot))\|_{L_p(\mathbb{R}^n)}^p \lesssim 2^{k}\int\limits_{E}|\phi(y)|^p   \int\limits_{2Q_{k}(y)}dx d\fm_k(y) \lesssim 2^{-k(n-1)}\|\phi\|_{L_p(\fm_k)}^p
\end{equation}
with the usual adjustments when $p=\infty$. As a result,
\begin{equation}
    \label{eq.ball_curve_example1}
    \|\{2^{ks} \|\widetilde{\mathcal{E}}_{\fm_k}(\phi, Q_k(\cdot))\|_{L_p(\mathbb{R}^n)}\}_{k=0}^{\underline{k}+1}\|_{l_q} \lesssim \|\phi\|_{L_p(\fm_0)} \approx \|\phi\|_{L_p(\underline{Q}, \mathcal{H}^n)} + \|\phi\|_{L_p(\Gamma, \mathcal{H}^1)}.
\end{equation}
\par
Now fix $k>\underline{k}+1$ and assume that $x\in \mathbb{R}^n$ satisfies $\widetilde{\mathcal{E}}_{\fm_k}(\phi, Q_k(x)) \neq 0$. If $x\in \mathbb{R}^n \setminus Q_{k-\underline{k}}(\underline{x})$, then the arguments used in Lemma~\ref{Lm.local_approximation_on_regular} and~\ref{Lm.trace_to_n_regular} apply almost verbatim. Indeed, if $Q_{k-1}(x) \cap \underline{Q} \neq \emptyset$, then $Q_{k-1}(x) \cap \Gamma = \emptyset$ and vice versa. Therefore,
\begin{equation}
    \label{eq.ball_curve_example2}
    \begin{split}
    \|\widetilde{\mathcal{E}}_{\fm_k}(\phi, Q_k(\cdot))\|_{L_p(\mathbb{R}^n\setminus Q_{k-\underline{k}}(\underline{x}))}^p \lesssim \|\mathcal{E}_{\mathcal{H}^n\lfloor_{\underline{Q}}}(\phi, Q_{k-4}(\cdot))\|_{L_p(\mathcal{H}^n\lfloor_{\underline{Q}})}^p \\+ 2^{-k(n-1)} \|\mathcal{E}_{\mathcal{H}^1\lfloor_{\Gamma}}(\phi, Q_{k-4}(\cdot))\|_{L_p(\mathcal{H}^1\lfloor_{\Gamma})}^p.
    \end{split}
\end{equation}
Furthermore, if $x \in Q_{k-\underline{k}}(\underline{x})$, then it easily follows from Corollary~\ref{Ca.relaxed_doubling_property} that $\widetilde{\mathcal{E}}_{\fm_k}(\phi, Q_{k}(x)) \lesssim \operatorname{gl}_{k-\underline{k}-1}(\phi, \underline{x})$. Therefore, by Corollary~\ref{Ca.relaxed_doubling_property},
\begin{equation}
    \label{eq.ball_curve_example3}
    \|\widetilde{\mathcal{E}}_{\fm_k}(\phi, Q_k(\cdot))\|_{L_p( Q_{k-\underline{k}}(\underline{x}))}^p \lesssim 2^{-kn}\bigl( \operatorname{gl}_{k-\underline{k}-1}(\phi, \underline{x})\bigr)^p.
\end{equation}
The converse estimate is obtained similarly. Indeed, if $x\in Q_{k-\underline{k}}(\underline{x})$, then Corollary~\ref{Ca.relaxed_doubling_property} and Lemma~\ref{Lm.local_approximation_outside_E} yield $\operatorname{gl}_{k-\underline{k}}(\phi, \underline{x})\lesssim \widetilde{\mathcal{E}}_{\fm_{k-\underline{k}}}(\phi, Q_{k-\underline{k}}(x))$. Consequently,
\begin{equation}
    \label{eq.ball_curve_example4}
    \bigl( \operatorname{gl}_{k-\underline{k}}(\phi, \underline{x})\bigr)^p \lesssim 2^{kn}\|\widetilde{\mathcal{E}}_{\fm_{k-\underline{k}}}(\phi, Q_{k-\underline{k}}(\cdot))\|_{L_p(Q_{k-\underline{k}}(\underline{x}))}^p.
\end{equation}
\par
We set $\mathcal{GL}_q(\phi, \underline{x}):= \|\{2^{k(s-\frac{n}{p})}\operatorname{gl}_k(\phi, \underline{x})\}_{k=1}^{\infty}\|_{\ell_q}$. The exponent \(s-\frac{n}{p}\) reflects the fact that the compatibility condition is imposed at the zero-dimensional junction point.
Combining \eqref{eq.ball_curve_example1}-\eqref{eq.ball_curve_example3}, we obtain
\begin{equation}
    \label{eq.ball_curve_example5}
    \begin{split}
   \|\phi\|_{\mathfrak{B}^{s-(n-1)/p}_{p, q, 1}(E)} \lesssim \|\phi\|_{B^s_{p, q, 1}(\underline{Q})} + \|\phi\|_{B^{s-(n-1)/p}_{p, q, 1}(\Gamma)} + \mathcal{GL}_q(\phi, \underline{x}).
    \end{split}
\end{equation}
On the other hand, if $f\in B^s_{p, q}(\mathbb{R}^n)$ and $\phi = f\big|^{\fm_0}_E$, then by Lemma~\ref{Lm.local_approximation_on_regular} and Lemma~\ref{Lm.trace_to_n_regular},
\begin{equation}
    \label{eq.ball_curve_example6}
    \|\phi\|_{B^s_{p, q, 1}(\underline{Q})} + \|\phi\|_{B^{s-(n-1)/p}_{p, q, 1}(\Gamma)} \lesssim \|f\|_{B^s_{p, q}(\mathbb{R}^n)}.
\end{equation}
Combining \eqref{eq.ball_curve_example4}-\eqref{eq.ball_curve_example6}, we obtain the following result.
\par
\textit{A function $\phi\in L_p(\underline{Q}, \mathcal{H}^n)\cap L_p(\Gamma, \mathcal{H}^1)$ belongs to the trace-space $B^s_{p, q}(\mathbb{R}^n)\big|^{\fm_0}_{E}$ if and only if $\phi\in B^s_{p, q}(\underline{Q}) \cap B^{s-(n-1)/p}_{p, q}(\Gamma)$ and $\mathcal{GL}_q(\phi, \underline{x})<\infty$. Furthermore,
\begin{equation}
    \|\phi\|_{B^s_{p, q}(\mathbb{R}^n)\big|^{\fm_0}_E} \approx \|\phi\|_{B^s_{p, q}(\underline{Q})} + \|\phi\|_{B^{s-(n-1)/p}_{p, q}(\Gamma)} + \mathcal{GL}_q(\phi, \underline{x}).
\end{equation}
Finally, there exists a bounded linear extension operator $\operatorname{Ext}: B^s_{p, q}(\mathbb{R}^n)\big|^{\fm_0}_E \to B^s_{p, q}(\mathbb{R}^n)$.}
\par
The same geometric decomposition applies in the Lizorkin--Triebel case. The essential difference is the order of the \(L_p\)- and \(\ell_q\)-norms. Since the junction cubes are nested, the contribution near \(\underline x\) leads to an \(\ell_p\)-gluing condition rather than an \(\ell_q\)-condition. More specifically, assume that $p \in (n-1, \infty)$, $q\in (1, \infty]$, $s\in (\frac{n-1}{p}, 1)$, and $\phi \in L_p(\fm_0)$. The low frequency terms with $k \le \underline{k}+1$ are absorbed by the $L_p(\fm_0)$-norm of $\phi$. For \(k>\underline k+1\), we split the norm into the contribution from \(Q_{k-\underline k}(\underline x)\) and that from its complement:
\begin{equation}
    \label{eq.ball_curve_example_LT1}
    \begin{split}
    \|\{2^{ks}\widetilde{\mathcal{E}}_{\fm_k}(\phi, Q_k(\cdot))\}_{k=\underline{k}+2}^{\infty}\|_{L_p(\mathbb{R}^n, \ell_q)} \le \|\{2^{ks}\chi_{ Q_{k-\underline{k}}(\underline{x})}(\cdot)\widetilde{\mathcal{E}}_{\fm_k}(\phi, Q_k(\cdot))\}_{k=\underline{k}+2}^{\infty}\|_{L_p(\mathbb{R}^n, \ell_q)}\\+ \|\{2^{ks}\chi_{\mathbb{R}^n\setminus Q_{k-\underline{k}}(\underline{x})}(\cdot)\widetilde{\mathcal{E}}_{\fm_k}(\phi, Q_k(\cdot))\}_{k=\underline{k}+2}^{\infty}\|_{L_p(\mathbb{R}^n, \ell_q)}
    \end{split}
\end{equation}
The second term in the latter estimate is bounded by the $B^{s-(n-1)/p}_{p, p}(\Gamma)$-norm and the $F^s_{p, q}(\underline{Q})$-norm of $\phi$. For the first term, Hardy's inequality together with the pointwise estimates of the local approximation and the gluing function yields
\begin{equation}
    \label{eq.ball_curve_example_LT2}
    \begin{split}
    \|\{2^{ks}\chi_{ Q_{k-\underline{k}}(\underline{x})}(\cdot)\widetilde{\mathcal{E}}_{\fm_k}(\phi, Q_k(\cdot))\}_{k=\underline{k}+2}^{\infty}\|_{L_p(\mathbb{R}^n, \ell_q)} \approx \|\{2^{k(s-\frac{n}{p})}\operatorname{gl}_{k}(\phi, \underline{x})\}_{k=1}^{\infty}\|_{\ell_p}.
    \end{split}
\end{equation}
Thus, we obtain the following intrinsic characterization of the trace-space.
\par
\textit{A function $\phi\in L_p(\underline{Q}, \mathcal{H}^n)\cap L_p(\Gamma, \mathcal{H}^1)$ belongs to the trace-space $F^s_{p, q}(\mathbb{R}^n)\big|^{\fm_0}_{E}$ if and only if $\phi\in F^s_{p, q}(\underline{Q}) \cap B^{s-(n-1)/p}_{p, p}(\Gamma)$ and $\mathcal{GL}_p(\phi, \underline{x})<\infty$. Furthermore,
\begin{equation}
    \|\phi\|_{F^s_{p, q}(\mathbb{R}^n)\big|^{\fm_0}_E} \approx \|\phi\|_{F^s_{p, q}(\underline{Q})} + \|\phi\|_{B^{s-(n-1)/p}_{p, p}(\Gamma)} + \mathcal{GL}_p(\phi, \underline{x}).
\end{equation}
Finally, there exists a bounded linear extension operator $\operatorname{Ext}: F^s_{p, q}(\mathbb{R}^n)\big|^{\fm_0}_E \to F^s_{p, q}(\mathbb{R}^n)$.}
\par

\renewcommand{\bibname}{References}
\bibliographystyle{plainurl}
\bibliography{refs}
\end{document}